\definecolor{bgcolor}{rgb}{0.8,1,1}
\definecolor{bgcolor2}{rgb}{0.8,1,0.8}
\definecolor{niceblue}{rgb}{0.0,0.19,0.56}
\newtheorem{theorem}{Theorem}[section]
\newtheorem{lemma}[theorem]{Lemma}
\newtheorem{definition}[theorem]{Definition}
\newtheorem{proposition}[theorem]{Proposition}
\newtheorem{assumption}[theorem]{Assumption}
\newtheorem{corollary}[theorem]{Corollary}
\newtheorem{remark}[theorem]{Remark}
\theoremstyle{plain}
\newcommand{\algname}[1]{{\sf  #1}\xspace}
\newcommand{\circledOne}{\text{\ding{172}}}
\newcommand{\circledTwo}{\text{\ding{173}}}
\newcommand{\circledThree}{\text{\ding{174}}}
\newcommand{\circledFour}{\text{\ding{175}}}
\newcommand{\circledFive}{\text{\ding{176}}}
\renewcommand{\endproof}{\hfill$\square$}
\newcommand{\1}{\mathbbm 1}
\newcommand{\Exp}{\mathbb{E}}
\newcommand{\Prob}{\mathbb{P}}
\newcommand{\R}{\mathbb{R}}
\newcommand{\eqdef}{\stackrel{\text{def}}{=}}
\def\<#1,#2>{\left\langle #1,#2\right\rangle}
\renewcommand{\leq}{\leqslant}
\renewcommand{\le}{\leqslant}
\renewcommand{\geq}{\geqslant}
\renewcommand{\ge}{\geqslant}
\newcommand{\Tr}{\mathrm{Tr}}
\newcommand{\rmd}{\mathrm d}
\newcommand{\Med}{\mathtt{Med}}
\newcommand{\Mean}{\mathtt{Mean}}
\newcommand{\SMM}{\mathtt{SMoM}}
\newcommand{\cG}{{\cal G}}
\newcommand{\cH}{{\cal H}}
\newcommand{\cN}{{\cal N}}
\newcommand{\cO}{{\cal O}}
\newcommand{\sfF}{\mathsf F}
\newcommand{\sfG}{\mathsf G}
\newcommand{\sfp}{\mathsf p}
\newcommand{\sfP}{\mathsf P}
\newcommand{\sfq}{\mathsf q}
\newcommand{\sfr}{\mathsf r}
\newcommand{\sfs}{\mathsf s}
\newcommand{\scH}{\mathscr H}
\newcommand{\mA}{{\bf A}}
\newcommand{\mI}{{\bf I}}
\newcommand{\clip}{\texttt{clip}}
\newcommand{\EE}{\mathbb{E}}
\newcommand{\tX}{\widetilde{X}}
\newcommand{\tnabla}{\widetilde{\nabla}}
\newtheorem*{rep@theorem}{\rep@title}
\newcommand{\newreptheorem}[2]{%
\newenvironment{rep#1}[1]{%
 \def\rep@title{#2 \ref{##1}}%
 \begin{rep@theorem}}%
 {\end{rep@theorem}}}
\newlength{\dhatheight}
\pgfplotsset{compat=1.15}
\begin{document}

\runningtitle{Breaking the Heavy-Tailed Noise Barrier in Stochastic Optimization Problems}

\runningauthor{Puchkin, Gorbunov, Kutuzov, Gasnikov}

\twocolumn[

\aistatstitle{Breaking the Heavy-Tailed Noise Barrier \\in Stochastic Optimization Problems}

\aistatsauthor{
Nikita Puchkin
\And Eduard Gorbunov \And Nikolay Kutuzov
\And Alexander Gasnikov
}

\aistatsaddress{
HSE University,\\
IITP RAS
\And 
MBZUAI \And MIPT
\And
University Innopolis,\\ MIPT, ISP RAS
}
]

\begin{abstract}
We consider stochastic optimization problems with heavy-tailed noise with structured density. For such problems, we show that it is possible to 
get faster rates of convergence than $\cO(K^{\nicefrac{-2(\alpha - 1)}{\alpha}})$,
when the stochastic gradients have finite moments of order $\alpha \in (1, 2]$. In particular, our analysis allows the noise norm to have an unbounded expectation. To achieve these results, we stabilize stochastic gradients, using smoothed medians of means. We prove that the resulting estimates have negligible bias and controllable variance. This allows us to carefully incorporate them into \algname{clipped-SGD} and \algname{clipped-SSTM} and derive new high-probability complexity bounds in the considered setup.
\end{abstract}

\section{INTRODUCTION}

Stochastic optimization problems with heavy-tailed noise have been gaining a lot of attention in the machine learning community. This phenomenon can be partially explained due to the growing popularity of large language models \citep{brown2020language, gpt4} where stochastic gradients are often far from being well-concentrated \citep{zhang2020adaptive}. In theoretical studies, such behaviour is reflected in the so-called bounded (central) $\alpha$-th moment assumption with $\alpha \in (1,2]$ \citep{nemirovskij1983problem, zhang2020adaptive}, written as
\begin{equation}
	\EE \big[ \|g(x) - \nabla f(x)\|^\alpha \big] \leq \sigma^\alpha, \label{eq:bounded_alpha_moment} 
\end{equation}
where $\nabla f(x)$ is the gradient of the objective function $f(x)$, $g(x)$ is the stochastic gradient, and $\sigma \geq 0$. 
While in the classical literature on stochastic optimization the authors usually require the noise to have bounded variance (see, for instance, \citep{nemirovski2009robust, ghadimi2013stochastic}), many recent results on high-probability/in-expectation rates of convergence were obtained under a strictly weaker condition $1 < \alpha < 2$.

In deep learning and machine learning communities, one of the most popular techniques to deal with heavy-tailed noise is gradient clipping. In \citep{pascanu2013difficulty}, the authors showed that such a simple trick helps to stabilize neural network training via stochastic gradient descent. Their algorithm called \algname{clipped-SGD} and clipping in general were then studied in a series of papers including \citep{abadi2016deep, zhang2019gradient, chen2020understanding, zhang2020adaptive, mai2021stability, karimireddy2021learning}.
In particular, for strongly convex functions \citet{zhang2020adaptive} showed that the expected error of \algname{clipped-SGD} \citep{pascanu2013difficulty} decreases as $\cO(K^{\nicefrac{-2(\alpha - 1)}{\alpha}})$ when the number of iterations $K$ grows. In \citep{sadiev2023high}, the authors extended this result and proved that a similar bound (up to logarithmic factors) holds with high probability. According to \citep[Theorem 5]{zhang2020adaptive}, the rate of convergence $\cO(K^{\nicefrac{-2(\alpha - 1)}{\alpha}})$ is tight and cannot be improved if no assumptions, except for \eqref{eq:bounded_alpha_moment}, are made. However, this rate deteriorates when $\alpha$ is close to $1$, and, if $\alpha = 1$, the convergence is not even guaranteed. Fortunately, authors usually have to construct quite specific families of discrete distributions to attain the lower bound $\Omega(K^{\nicefrac{-2(\alpha - 1)}{\alpha}})$ (see, e. g., \citep{nemirovskij1983problem, devroye2016sub, zhang2020adaptive, cherapanamjeri2022optimal, vural2022mirror}). Such an extreme situation is unlikely to hold in practice and we can hope for more optimistic error guarantees. This brings us to a natural question: \emph{is it possible to achieve better rates of convergence in stochastic optimization problems with heavy-tailed noise under refined assumptions on its structure?} In this paper, we give an affirmative answer to this question.

\paragraph{Contribution.}

We consider a novel stochastic convex optimization setup with smooth (quasi-strongly/strongly) convex objective and structured noise (see Assumption~\ref{as:convolution} below), going beyond the standard bounded $\alpha$-th moment condition with $\alpha \in (1, 2]$. We provide new high-probability upper bounds on the error of versions of \algname{clipped-SGD} and its accelerated variant called \algname{clipped-SSTM} in smooth (quasi-strongly/strongly) problems, properly tailored to our setting. In particular, we do not assume boundedness of $\alpha$-th moments and get $\widetilde{\cO}(K^{-1/2})$ bound, which outperforms $\cO(K^{\nicefrac{-2(\alpha - 1)}{\alpha}})$ for $\alpha < \nicefrac{4}{3}$. Moreover, for symmetric noise distributions, we obtain rates of convergence, which match (up to logarithmic factors) the state-of-the-art ones derived under the bounded variance assumption \citep{nazin2019algorithms, davis2021low, gorbunov2020stochastic}. In particular, for smooth strongly convex problems, the dominating term in our upper bound decreases as $\widetilde{\cO}(K^{-1})$. Our approach relies on new non-asymptotic results on the performance of smoothed median of means.

\paragraph{Paper structure.}
The rest of the paper is organized as follows. In Section \ref{sec:setup}, we introduce our notation and formulate problem setup. Section \ref{sec:related_work} is devoted to an overview of related work. In Sections \ref{sec:smoothed_mom} and \ref{sec:main_results}, we present our main results and illustrate the performance of suggested algorithms in Section \ref{sec:numerical}. Many technical details are deferred to Appendix.

\section{SETUP AND NOTATION}
\label{sec:setup}
Before we formulate our main contributions, we need to introduce the notation and formalize the problem and setup we focus on. 

\paragraph{Notation.} Throughout the paper, we denote the standard Euclidean norm in $\R^d$ as $\|\cdot\|$. To simplify the bounds in the main text, we use $\widetilde{\cO}(\cdot)$ notation that hides constant and polylogarithmic factors. For any $x_1, \dots, x_n \in \R^d$, we denote $\Mean(x_1, \dots, x_n) = (x_1 + \ldots + x_n) / n$. For any random vectors $\xi_1, \dots, \xi_{2m + 1}$, $\Med(\xi_1, \dots, \xi_{2m + 1})$ stands for the $(m + 1)$-th order statistic (also called the median), taken in the component-wise fashion. For any non-zero $x \in \R^d$ and $\lambda > 0$, $\clip(x, \lambda) = \min\{1, \lambda / \|x\|\} x$ denotes the clipping operator. We also define $\clip(0, \lambda) = 0$ for all $\lambda > 0$.
For any $\theta > 0$, 
\[
	\Phi_\theta(t) = \frac1{\sqrt{2\pi} \theta} \int\limits_{-\infty}^t e^{-u^2 / (2 \theta^2)} \, \rmd u
\]
stands for the CDF of a Gaussian random variable with zero mean and variance $\theta^2$. Sometimes, we use the notation $a \land b$ and $a \lor b$, instead of $\min\{a, b\}$ and $\max\{a, b\}$, respectively. Along with the standard $\cO(\cdot)$ notation, we use 
the relations $g \lesssim h$ and $h \gtrsim g$, which are equivalent to $g = \cO(h)$.
Finally, for any functions $g: \R^d \rightarrow \R $ and $h: \R^d \rightarrow \R$, their convolution is denoted as
$
    g * h(x) = \int_{\R^d} g(x - y) h(y) \, \rmd y.
$
We also adopt the notation $g^{*k}(x) = \underbrace{g * \ldots * g}_{\text{$k$ times}}(x)$.

\paragraph{Setup.} We consider an unconstrained smooth convex optimization problem
\begin{equation}
    \min\limits_{x\in\R^d}f(x), \label{eq:minimization_problem}
\end{equation}
where the function $f:\R^d \to \R$ is accessible through the stochastic first-order oracle $\cG:\R^d \to \R^d$ that for given point $x\in\R^d$ returns some estimate of $\nabla f_{\xi}(x)$ of the true gradient $\nabla f(x)$. We make the following assumption about the distribution of $\nu = \nabla f_{\xi}(x) - \nabla f(x)$.

\begin{assumption}
    \label{as:convolution}
    For any $x \in \R^d$ and each $j \in \{1, \dots, d\}$, the marginal density $\sfp_j$ of the $j$-th component of the noise $\nu = \nabla f_\xi(x) - \nabla f(x)$ satisfies the following conditions:
    \begin{itemize}
        \item there exists $M_j > 0$, such that $\sfr_j(u) = (\sfp_j(u) - \sfp_j(-u)) / 2$ fulfils
        \[
            \int\limits_{-\infty}^{+\infty} u \sfr_j(u) \, \rmd u = 0
            \quad \text{and} \quad 
            \int\limits_{-\infty}^{+\infty} u^2 \left|\sfr_j(u)\right| \, \rmd u \leq M_j.
        \]
        \item there are $B_j > 0$ and $\beta_j \geq 1$, such that, for any $k \in \mathbb N$,
        \[
            \sfs_j^{*k}(u) \leq \frac{B_j k}{k^{\nicefrac{(\beta_j + 1)}{\beta_j}} + |u|^{1 + \beta_j}}, 
        \]
        where $\sfs_j(x) = (\sfp_j(x) + \sfp_j(-x)) / 2$.
    \end{itemize}
\end{assumption}

In Assumption \ref{as:convolution}, we split marginal  densities of noise components into a sum of symmetric and antisymmetric parts. For each $j \in \{1, \dots, d\}$, the antisymmetric remainder $\sfr_j(u)$ is a signed density with a finite second moment. However, the symmetric term $\sfs_j(u)$ may decay much slower, than $\sfr_j(u)$. As a result, the density $\sfp_j(u)$ has finite moments up to order $\alpha < (\beta_j \land 2)$. Note that if $\beta_j = 1$ for some $j \in \{1, \dots, d\}$, then the noise norm $\|\nu\|$ may have no expectation.

We proceed with several examples of $\sfs_j(u)$, satisfying Assumption \ref{as:convolution}. Obviously, if the $j$-th component of $\nu(x)$ has a standard Cauchy distribution, that is, $\sfs_j(u) = 1 / \pi \cdot 1 / (1 + u^2)$, then, for any $k \in \mathbb N$,
$
    \sfs_j^{*k}(u) = 1/\pi \cdot k/(k^2 + u^2),
$
and Assumption \ref{as:convolution} is fulfilled with $B_j = 1 / \pi$ and $\beta_j = 1$. This is a particular example of a symmetric $\alpha$-stable distribution with parameter $\alpha = 1$. All symmetric $\alpha$-stable distributions have a characteristic function of the form $\varphi(y) = e^{-|y / \sigma|^\alpha}$, where $\sigma > 0$ and $\alpha \in (0, 2]$ (see, e.g., \citep[Chapter XVII, Sections 5-6]{feller1971}). If $1 \leq \alpha \leq 2$, they also satisfy Assumption \ref{as:convolution} with $\beta_j = \alpha$ and some $B_j > 0$. In general, if $\sfs_j(u) \sim B / |u|^{1 + \beta_j}$, then it is known from probability theory that $\sfs_j^{*k}(u) \sim B k / |u|^{1 + \beta_j}$ for any $k \in \mathbb N$. Assumption \ref{as:convolution} can be viewed as a non-asymptotic version of this property.

We also make several standard assumptions about function $f$ itself. Similarly to \citep{gorbunov2021near, sadiev2023high}, it is sufficient for our analysis to make all the assumptions only on some compact subset (ball) of $\R^d$ since we show that with high probability the considered methods do not leave this compact. We start with the standard smoothness assumption.

\begin{assumption}\label{as:L_smoothness}
    There exists a set $Q\subseteq \R^d$ and constant $L > 0$ such that for all $x, y \in Q$
    \begin{eqnarray}
        \|\nabla f(x) - \nabla f(y)\| &\leq& L\|x - y\|, \label{eq:L_smoothness}\\
        \|\nabla f(x)\|^2 &\leq& 2L\left(f(x) - f_*\right), \label{eq:L_smoothness_cor_2}
    \end{eqnarray}
    where $f_* = \inf_{x \in Q}f(x) > -\infty$.
\end{assumption}

When $Q =\R^d$, \eqref{eq:L_smoothness_cor_2} follows from \eqref{eq:L_smoothness}. However, when $Q \neq \R^d$, condition \eqref{eq:L_smoothness_cor_2} can be derived from \eqref{eq:L_smoothness}, if the latter is assumed on a slightly larger set, see \citep[Appendix B]{sadiev2023high} for additional discussion.

We assume convexity or strong convexity of $f$ for the results with accelerated rates.

\begin{assumption}\label{as:str_cvx}
    There exists set $Q \subseteq \R^d$ and constant $\mu \geq 0$ such that $f$ is $\mu$-strongly convex, i.e., for all $x, y \in Q$, it holds that
    \begin{equation}
        f(y) \geq f(x) + \langle \nabla f(x), y - x \rangle + \frac{\mu}{2}\|y - x\|^2. \label{eq:str_cvx}
    \end{equation}
\end{assumption}

Finally, for non-accelerated case, it is sufficient to assume a relaxed condition called quasi-strong convexity.

\begin{assumption}\label{as:QSC}
    There exists set $Q \subseteq \R^d$ and constant $\mu \geq 0$ such that $f$ is $\mu$-quasi-strongly convex, that is, for all $x \in Q$ and $x^* = \arg\min\limits_{x\in \R^d} f(x)$
    \begin{equation}
        f(x^*) \geq f(x) + \langle \nabla f(x), x^* - x \rangle + \frac{\mu}{2}\|x - x^*\|^2. \label{eq:QSC}
    \end{equation}
\end{assumption}

The above assumption belongs to the class of conditions on structured non-convexity. When $\mu > 0$ it (together with smoothness) implies linear convergence for Gradient Descent \citep{necoara2019linear}.

\section{RELATED WORK}
\label{sec:related_work}

\paragraph{High-probability complexity bounds.} Under sub-Gaussian noise assumption, optimal (up to logarithmic factors) high-probability complexity bounds\footnote{Such results establish upper bounds for the number of oracle calls needed for a method to find point $x$ such that $f(x) - f(x^*)$ or $\|x - x^*\|^2$ or $\|\nabla f(x)\|^2$ are less than $\varepsilon$ with probability at least $\delta$, where $x^*$ is a solution of \eqref{eq:minimization_problem}.} are proven by \citet{nemirovski2009robust} for (strongly) convex non-smooth problems with bounded sub-gradients, by \citet{ghadimi2012optimal} for (strongly) convex smooth problems, and by \citet{li2020high} for smooth non-convex problems. These results are achieved for the same methods that are optimal in terms of the in-expectation convergence. However, when the noise has just a finite variance, some algorithmic changes seem to be necessary, e.g., as it is shown in \citep[Section 2]{sadiev2023high}, standard \algname{SGD} has provably bad (inverse-power instead of $\mathrm{poly}(\log(\nicefrac{1}{\delta}))$) dependence on the confidence level $\delta$ in this case.

A popular tool for overcoming this issue is gradient clipping, i.e., the application of the clipping operator to the gradient estimator. A version of gradient clipping is used by \citet{nazin2019algorithms} who derive the first (non-accelerated) high-probability complexity bounds for smooth (strongly) convex problems on compact domains with logarithmic dependence on $\nicefrac{1}{\delta}$ under bounded variance assumption. Accelerated results are obtained by \citet{davis2021low} and \citet{gorbunov2020stochastic} for smooth strongly convex and smooth convex problems respectively. \citet{gorbunov2021near} generalize these results to the case of problems with H\"older continuous gradients.

State-of-the-art high-probability complexity bounds are derived under bounded (central) $\alpha$-th moment assumption \eqref{eq:bounded_alpha_moment}. The first work in this direction is \citep{cutkosky2021high} where the authors derived optimal (up to logarithmic factors) bounds in the smooth non-convex regime with the additional assumption of boundedness of the gradients. Without this assumption, a worse bound is derived by \citet{sadiev2023high}, and the optimal one is obtained by \citet{nguyen2023high}. For (strongly) convex problems the results for \algname{clipped-SGD} and its accelerated version \algname{clipped-SSTM} \citep{gorbunov2020stochastic} are derived by \citet{sadiev2023high}. Up to logarithmic factors, these results match the known lower bounds in the strongly convex case \citep{zhang2020adaptive}. \citet{nguyen2023improved} improve the logarithmic factors in the upper bounds from \citet{sadiev2023high, nguyen2023high}. 
Recently, the generalization of the results from \citep{sadiev2023high} to the case of composite and distributed optimization were obtained by \citet{gorbunov2023high}.

\paragraph{Other results under heavy-tailed noise.} Although in our work we primarily focus on high-probability convergence results, we briefly discuss here other existing works devoted to the convergence of stochastic methods under heavy-tailed noise assumption. For convex functions with bounded gradients, \citet{nemirovskij1983problem} show $\cO(K^{\nicefrac{-(\alpha - 1)}{\alpha}})$ in-expectation convergence rate for Mirror Descent and \citet{vural2022mirror} propose an extension of this result for uniformly convex functions. For strongly convex functions with bounded gradients, \citet{zhang2020adaptive} show $\cO(K^{-\nicefrac{2(\alpha-1)}{\alpha}})$ in-expectation rate of convergence. In the smooth non-convex case, $\cO(K^{-\nicefrac{2(\alpha-1)}{(3\alpha-2)}})$  in-expectation convergence rate is achieved by \citet{zhang2020adaptive} who also derive a matching lower bound.

In the case of the noise with symmetric density function and bounded first moment, \citet{jakovetic2023nonlinear} derive $\cO(K^{-\zeta})$ in-expectation convergence rate for $\mu$-strongly convex $L$-smooth functions and \algname{SGD}-type methods with general non-linearities. However, parameter $\zeta$ is proportional to $\nicefrac{\mu}{L\sqrt{d}}$ in the worst case. Therefore, this rate can be much slower than $\cO(K^{\nicefrac{-2(\alpha-1)}{\alpha}})$ for ill-conditioned/large-scale problems though \citet{jakovetic2023nonlinear} do not assume \eqref{eq:bounded_alpha_moment} and consider general class of non-linearities. Our analysis also does not rely on \eqref{eq:bounded_alpha_moment}, but we additionally allow non-symmetric noise distributions and do not assume the existence of the finite first moment of the noise.

\paragraph{Median estimates.}

Median, median of means, and smoothed median were extensively used in the problems of robust mean estimation and robust machine learning (see, for instance, \citep{nemirovskij1983problem, minsker15, devroye2016sub, lugosi2019sub, lugosi2020, lecue2020, cherapanamjeri2022optimal}). A reader is referred to a comprehensive survey of \citet{lugosi2019} on this topic. Usually, the authors use median of means or its modifications to get sub-Gaussian rates of convergence, assuming the existence of only two moments. In \cite{cherapanamjeri2022optimal}, the authors went further and derived a minimax optimal upper bound  in the problem of mean estimation when observations have finite moments of order $\alpha \in (1, 2]$. However, the authors faced the same problem as \citet{zhang2020adaptive}: the rate of convergence became very slow when $\alpha$ approached $1$. This happens, because the family of distributions of interest is extremely large if one assumes the existence of $\alpha$-th moment only. In our paper, we exploit the special noise structure, described in Section \ref{sec:setup}. Under Assumption \ref{as:convolution}, we derive new non-asymptotic bounds on the performance of the smoothed median of means, which do not deteriorate even if the underlying density has quite heavy tails.

\section{SMOOTHED MEDIAN OF MEANS AND ITS PROPERTIES}
\label{sec:smoothed_mom}

In this section, we describe how to get reliable gradient estimates from noisy stochastic gradients given by the first-order oracle. Let us start with a simple example. Fix an arbitrary $x \in \R^d$ and assume that the noise $\nu = \nabla f_\xi(x) - \nabla f(x) \in \R^d$ has a symmetric absolutely continuous distribution. For any $j \in \{1, \dots, d\}$, let $\sfp_j(u)$ be the marginal density of $\nu_j$, the $j$-th component of $\nu$. Then the following proposition holds true.

\begin{proposition}
    \label{prop:median_symmetric_case}
    Fix any $j \in \{1, \dots, d\}$ and assume that the marginal density of $\nu_j$ is symmetric, that is, $\sfp_j(u) = \sfp_j(-u)$ for all $u \in \R$. Suppose that there exist positive numbers $B_j$ and $\beta_j$, such that
    \[
        \sfp_j(u) \leq \frac{B_j}{1 \vee |u|^{\beta_j + 1}},
        \quad \text{for all $u \in \R$.}
    \]
    Let $\nu_{j, 1}, \dots, \nu_{j, (2m + 1)}$ be independent copies of $\nu_j$. If $m > 3 / \beta_j$, then $\Exp \, \Med(\nu_{j, 1}, \dots, \nu_{j, (2m + 1)}) = 0$ and
    $\Exp \, \Med\left(\nu_{j, 1}, \dots, \nu_{j, (2m + 1)} \right)^2$ is finite.
\end{proposition}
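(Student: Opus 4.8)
The plan is to first observe that the sample median inherits symmetry from $\sfp_j$ — so that, once it is known to be integrable, its mean is automatically $0$ — and then to control its tail by a binomial tail estimate combined with the polynomial decay of $\sfp_j$, deducing finiteness of the second moment by integrating the tail.

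\textbf{Step 1: symmetry of the median.} Since $\sfp_j$ is symmetric, $(\nu_{j,1},\dots,\nu_{j,2m+1})$ and $(-\nu_{j,1},\dots,-\nu_{j,2m+1})$ have the same law. For an odd number $n=2m+1$ of points the middle order statistic satisfies $\Med(-a_1,\dots,-a_n)=-\Med(a_1,\dots,a_n)$, because negating reverses the ordering and the $(m+1)$-th smallest of the negated values equals minus the $(m+1)$-th largest of the original ones, which coincides with minus the $(m+1)$-th smallest since $n-(m+1)+1=m+1$. Hence $\Med(\nu_{j,1},\dots,\nu_{j,2m+1})\overset{d}{=}-\Med(\nu_{j,1},\dots,\nu_{j,2m+1})$, i.e. its distribution is symmetric about $0$; once Step~3 shows it is integrable, this yields $\Exp\,\Med(\nu_{j,1},\dots,\nu_{j,2m+1})=0$.

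\textbf{Step 2: tail bound.} Put $\overline{\sfP}_j(t)=\int_t^{+\infty}\sfp_j(u)\,\rmd u$. The event $\{\Med>t\}$ forces at least $m+1$ of the $\nu_{j,i}$ to exceed $t$, so a union bound over $(m+1)$-element subsets gives $\Prob(\Med>t)\le\binom{2m+1}{m+1}\overline{\sfP}_j(t)^{m+1}\le 2\cdot4^{m}\,\overline{\sfP}_j(t)^{m+1}$, and by symmetry $\Prob(|\Med|>t)\le 4^{m+1}\,\overline{\sfP}_j(t)^{m+1}$. For $t\ge1$ the hypothesis $\sfp_j(u)\le B_j|u|^{-(\beta_j+1)}$ gives $\overline{\sfP}_j(t)\le B_j/(\beta_j t^{\beta_j})$, while trivially $\Prob(|\Med|>t)\le1$ for $t<1$.

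\textbf{Step 3: integrating the tail.} Combining these,
\[
    \Exp\,\Med^2=\int_0^{\infty}2t\,\Prob(|\Med|>t)\,\rmd t\le\int_0^1 2t\,\rmd t+2\cdot4^{m+1}\Big(\frac{B_j}{\beta_j}\Big)^{m+1}\int_1^{\infty}t^{\,1-\beta_j(m+1)}\,\rmd t,
\]
and the last integral converges exactly when $\beta_j(m+1)>2$. The hypothesis $m>3/\beta_j$ gives $\beta_j(m+1)>3+\beta_j\ge4$, so $\Exp\,\Med^2<\infty$; in particular $\Exp|\Med|<\infty$, which together with Step~1 forces $\Exp\,\Med=0$.

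\textbf{Expected main obstacle.} The argument is largely routine; the step that needs the most care is the tail estimate of Step~2 — correctly translating ``the median exceeds $t$'' into a statement about the binomial count and keeping the (harmless, since $m$ is fixed) combinatorial constant $\binom{2m+1}{m+1}$ explicit — together with checking that the resulting exponent and the assumed decay of $\sfp_j$ make the tail integral converge under $m>3/\beta_j$, a comfortably (though not tightly) sufficient condition.
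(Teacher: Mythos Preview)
Your proof is correct and takes a genuinely different route from the paper's. The paper works directly with the density of the median,
\[
    (2m+1)\binom{2m}{m}\,\sfP_j(t)^m\bigl(1-\sfP_j(t)\bigr)^m\,\sfp_j(t),
\]
bounds the product $\sfP_j(t)\bigl(1-\sfP_j(t)\bigr)$ by $\tfrac14\wedge \tfrac{B_j}{\beta_j|t|^{\beta_j}}$, and then controls $\sup_t t^2\,\sfP_j(t)^m\bigl(1-\sfP_j(t)\bigr)^m$; this yields the explicit variance bound $(2m+1)\bigl(1\vee 4B_j/\beta_j\bigr)^{2/\beta_j}$, which the paper later uses (e.g.\ in Corollary~\ref{cor:SGD_symmetric_noise}). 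Your argument instead bounds the tail $\Prob(|\Med|>t)$ via the binomial/union bound and integrates with the layer-cake formula. This is arguably more elementary --- you never need the explicit order-statistic density --- but it only delivers finiteness, not the sharp $m$-linear constant the paper obtains (your implicit constant contains $4^{m+1}(B_j/\beta_j)^{m+1}$, which can grow exponentially in $m$).

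Two small remarks. First, your claim ``$\beta_j(m+1)>3+\beta_j\ge4$'' tacitly assumes $\beta_j\ge1$, which the proposition does not; but this is harmless, since you only need $\beta_j(m+1)>2$, and $m\beta_j>3$ already gives $\beta_j(m+1)>3$. Second, both your condition $\beta_j(m+1)>2$ and the paper's $m\beta_j\ge2$ are comfortably implied by the stated hypothesis $m>3/\beta_j$, so neither argument is tight with respect to that assumption.
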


The proof of the proposition with an explicit bound on the variance of $\Med(\nu_{j, 1}, \dots, \nu_{j, (2m + 1)})$ is postponed to Appendix.
Proposition \ref{prop:median_symmetric_case} shows that, despite the heavy tails of the underlying density $\sfp$, $m > \max\{3 / \beta_j : 1 \leq j \leq d\}$ oracle calls are enough to produce an unbiased estimate $g(x) = \Med(\nabla f_{\xi_1}(x), \dots, \nabla f_{\xi_m}(x))$ of $\nabla f(x)$ with a finite variance. After that, we can use the standard clipping technique to solve the optimization problem \eqref{eq:minimization_problem}.

Unfortunately, the symmetry assumption, which played the central role in Proposition \ref{prop:median_symmetric_case}, is rather restrictive. To deal with asymmetric distributions, we use more sophisticated gradient estimates, based on smoothed median of means.

\begin{definition}
    \label{def:smom}
    Let $\zeta$ be a random element in $\R^d$ and let $\theta > 0$ be an arbitrary number. For any positive integers $m$ and $n$, the smoothed median of means $\SMM_{m, n}(\zeta, \theta)$ is defined as follows:
    \[
        \SMM_{m, n}(\zeta, \theta)
        = \Med\left(\upsilon_1, \dots, \upsilon_{2m +1}\right),
    \]  
    where, for each $j \in \{0, \dots, 2m\}$,
    \[
        \upsilon_j = \Mean(\zeta_{j n + 1}, \dots, \zeta_{(j + 1) n}) + \theta \, \eta_{j + 1},
    \]
    $\zeta_1, \dots, \zeta_{(2m + 1) n}$ are i.i.d. copies of $\zeta$, and
    $\eta_1, \dots, \eta_{2m + 1} \sim \cN(0, \mI_d)$ are independent standard Gaussian random vectors.
\end{definition}

Let us briefly describe the idea behind our approach. Assuming that $\nabla f_\xi(x) - \nabla f(x)$ at a point $x \in \R^d$ has a density $\sfp(u)$, we represent the latter in the following form:
\[
    \sfp(u) = \sfs(u) + \sfr(u),
\]
where $\sfs(u) = (\sfp(u) + \sfp(-u)) / 2$ is a symmetric part and $\sfr(u) = (\sfp(u) - \sfp(-u)) / 2$ is an antisymmetric remainder. If the tails of the remainder $\sfr(u)$ are much lighter than the ones of $\sfp(u)$, we can make $n$ oracle calls at the point $x$ and take the average of $\nabla f_{\xi_1}(x), \dots, \nabla f_{\xi_n}(x)$. If $n$ is large enough, then the distribution of $\Mean(\nabla f_{\xi_1}(x), \dots, \nabla f_{\xi_n}(x))$ is almost symmetric. Hence, we can use the same trick as in Proposition \ref{prop:median_symmetric_case} to get an estimate of $\nabla f(x)$ with a finite variance. We add small Gaussian noise to ensure that the density of our estimate is infinitely differentiable, as we need it for technical purposes. Note that, in general, the expectation of $\SMM_{m, n}(\nabla f_\xi(x) - \nabla f(x), \theta)$ is not equal to zero, and it is a challenging task to show that it is sufficiently small.

Before we move to the heavy-tailed setup, let us illustrate the efficiency of our approach in the case when the stochastic gradients have a finite second moment.

\begin{lemma}
    \label{lem:smom_moments_light_tails}
    Assume that stochastic gradient $\nabla f_{\xi}(x)$ at a point $x \in \R^d$ has an absolutely continuous distribution and a finite second moment $\Exp (\nabla f_{\xi}(x) - \nabla f(x))  (\nabla f_{\xi}(x) - \nabla f(x))^\top = \Sigma$. Then, for any positive integer $m$ and $n$, it holds that
    \begin{align*}
        &
        \Exp \left\| \SMM_{m, n}\big(\nabla f_{\xi}(x), \theta \big) - \nabla f(x) \right\|^2
        \\&
        \leq 4 (2m + 1) \left( \frac{\Tr(\Sigma)}n + \theta^2 d \right).
    \end{align*}
    If, in addition, $m \geq 3$ and $n\theta^2 \gtrsim m \|\Sigma\|$, then
    \begin{align*}
        \left\| \Exp \SMM_{m, n}\big(\nabla f_{\xi}(x), \theta \big) - \nabla f(x) \right\|
        \lesssim \frac{m} {\theta n} \sqrt{\Tr(\Sigma^2)}.
    \end{align*}
\end{lemma}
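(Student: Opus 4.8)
The plan is to treat the two claims separately, handling the variance bound first and the bias bound second, in both cases reducing the statement about the $d$-dimensional $\SMM$ estimator to coordinatewise statements about ordinary medians of $2m+1$ i.i.d. smoothed block-averages. Write $Z_i = \Mean(\nabla f_{\xi_{(i-1)n+1}}(x), \dots, \nabla f_{\xi_{in}}(x)) - \nabla f(x)$ for $i \in \{1,\dots,2m+1\}$, so that the $Z_i$ are i.i.d. with mean zero and covariance $\Sigma/n$, and $\upsilon_i = \nabla f(x) + Z_i + \theta \eta_i$. The estimator error is $\SMM_{m,n} - \nabla f(x) = \Med(Z_1 + \theta\eta_1, \dots, Z_{2m+1} + \theta\eta_{2m+1})$, taken componentwise.

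For the variance bound, the first step is the elementary observation that a componentwise median is one of the $2m+1$ input vectors in each coordinate, hence $\|\Med(w_1,\dots,w_{2m+1})\|^2 \le \sum_{i=1}^{2m+1}\|w_i\|^2$ (a crude but valid bound: in coordinate $j$ the median equals some $w_{i(j),j}$, and $\sum_j w_{i(j),j}^2 \le \sum_j \sum_i w_{i,j}^2$). Applying this with $w_i = Z_i + \theta\eta_i$, taking expectations, and using $\Exp\|Z_i\|^2 = \Tr(\Sigma)/n$ and $\Exp\|\theta\eta_i\|^2 = \theta^2 d$ together with independence of $Z_i$ and $\eta_i$ gives exactly the claimed $4(2m+1)(\Tr(\Sigma)/n + \theta^2 d)$ — in fact with constant $2m+1$, so the factor $4$ leaves room; I would just present the clean version. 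This part is routine.

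The bias bound is the substantive one. Fix a coordinate $j$ and let $W_{i} = (Z_i + \theta\eta_i)_j$, i.i.d. with mean zero, variance $s_j^2 := \Sigma_{jj}/n + \theta^2 \le \|\Sigma\|/n + \theta^2$, and — crucially — a density that is smooth and symmetric up to the contribution of $(Z_i)_j$, which is itself a normalized sum of $n$ i.i.d. mean-zero terms. The key idea is an Edgeworth/Berry–Esseen-type argument: the distribution function $F_j$ of $W_i$ satisfies $F_j(t) = \Phi_{s_j}(t) + (\text{correction})$, and since $\Exp\Med$ of $2m+1$ i.i.d. copies of a random variable with symmetric distribution is zero, the bias is governed by the asymmetry of $F_j$ near its median, i.e. by how far $F_j(0)$ and the derivative structure at $0$ deviate from the symmetric Gaussian $\Phi_{s_j}$. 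One writes the median's expectation via the standard identity $\Exp\Med = \int_0^\infty \Prob(\Med > t)\,dt - \int_{-\infty}^0 \Prob(\Med < t)\,dt$ with $\Prob(\Med > t)$ expressed through the binomial tail of $1 - F_j(t)$, then Taylor-expands $F_j$ around the true median $0$ of the symmetrized version; the leading surviving term is proportional to $|F_j(0) - 1/2|$ and to $1/(f_j(0))$ where $f_j$ is the density, and $F_j(0) - 1/2 = \Prob((Z_i)_j + \theta\eta_{i,j} < 0) - 1/2$, whose size is controlled by the third moment of $(Z_i)_j$. Since $(Z_i)_j = \frac1n\sum_{\ell=1}^n (\text{noise})_{\ell,j}$, this third moment is $O(\Exp(\nu_j^2)/n \cdot 1/\sqrt{\text{something}})$; combined with $f_j(0) \gtrsim 1/s_j \gtrsim 1/(\theta\sqrt{1 + \|\Sigma\|/(n\theta^2)})$, which under the hypothesis $n\theta^2 \gtrsim m\|\Sigma\|$ is $\gtrsim 1/\theta$, one collects a per-coordinate bias of order $\frac{m}{\theta n}\sqrt{\Sigma_{jj}^{(2)}}$-type, and summing over $j$ via Cauchy–Schwarz produces $\frac{m}{\theta n}\sqrt{\Tr(\Sigma^2)}$. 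The condition $m \ge 3$ enters to make the binomial-tail estimates and the integrability of $\Med$ work, and $n\theta^2 \gtrsim m\|\Sigma\|$ is what keeps the Gaussian smoothing dominant so the density lower bound $f_j(0) \gtrsim 1/\theta$ holds.

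The main obstacle is making the bias argument rigorous and non-asymptotic: ordinary Berry–Esseen only controls $\sup_t |F_j(t) - \Phi_{s_j}(t)|$, but here I need a \emph{localized} second-order expansion of $F_j$ near $0$ together with control of the median of the binomial-smoothed version, and I must track how the $O(1/n)$ asymmetry propagates through the median of $2m+1$ samples without losing the $1/n$ factor or picking up extra powers of $m$ beyond the linear one claimed. I expect the cleanest route is to compare $\Med(W_1,\dots,W_{2m+1})$ with $\Med(W_1',\dots,W_{2m+1}')$ where $W_i'$ has the symmetrized density $\sfs_j * (\text{Gaussian})$-analogue, bounding the coupling error by the total-variation or Wasserstein distance between the two distributions — which is $O(\Exp|\nu_j|^3 / (n^{?}))$ — and invoking a Lipschitz-in-distribution property of the median functional; the bookkeeping of constants in $m$ is where care is needed, and this is presumably where the paper's Appendix does the real work.
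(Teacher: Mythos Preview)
Your variance argument is correct and in fact simpler than the paper's: you use the elementary pointwise bound $\|\Med(w_1,\dots,w_{2m+1})\|^2 \le \sum_i\|w_i\|^2$ and take expectations, while the paper works coordinatewise with the order-statistic density $(2m+1)\binom{2m}{m}\sfF^m(1-\sfF)^m\sfF'$, bounds $\sup_t t^2\sfF(t)^m(1-\sfF(t))^m$ via Chebyshev tails of $\sfF$, and exploits $\binom{2m}{m}\le 4^m$. Your route is shorter and even drops the factor $4$; the paper's route has the advantage of reusing the same machinery in the heavy-tailed Lemma~\ref{lem:smom_moments_heavy_tails}, where your crude bound would be useless because the $w_i$ themselves need not have a second moment.

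Your bias argument, however, has a real gap. You repeatedly invoke third moments (``Berry--Esseen'', ``Edgeworth'', ``controlled by the third moment of $(Z_i)_j$'', ``$O(\Exp|\nu_j|^3/n^{?})$''), but the lemma assumes only a finite second moment; no third moment exists in general. The paper avoids this entirely. Its key device is Lemma~\ref{lem:expectation_comparison}: a Lindeberg-style telescoping that compares $\int h(x/n)\sfp_j^{*n}(x)\,\rmd x$ with the same integral against the \emph{symmetrized} density $\sfs_j^{*n}$. Because the antisymmetric part $\sfr_j$ satisfies both $\int\sfr_j=0$ and $\int u\,\sfr_j(u)\,\rmd u=0$, the zeroth- and first-order Taylor terms vanish and the second-order remainder is controlled by $\sup|h''|\cdot\int u^2|\sfr_j(u)|\,\rmd u \le \sup|\Phi_\theta''|\cdot\Sigma_{jj}$. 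This yields the uniform bound $|\sfF(t)-\sfG(t)|\lesssim \Sigma_{jj}/(\theta^2 n)$ using only second moments---the $1/\theta^2$ is exactly $\sup|\Phi_\theta''|$ and is the reason the Gaussian smoothing is there. Your heuristic ``bias $\approx (F_j(0)-\tfrac12)/f_j(0)$'' is not made rigorous either; the paper instead proves a dedicated comparison lemma (Lemma~\ref{lem:f-g_comparison}) that bounds $\big|\int t\,\sfF^m(1-\sfF)^m\sfF'\,\rmd t - \int t\,\sfG^m(1-\sfG)^m\sfG'\,\rmd t\big|$ by integration by parts and a second-order expansion in $\sfF-\sfG$, then combines this with Chebyshev tail bounds on $\sfF,\sfG$ and Proposition~\ref{prop:integral} to close the estimate. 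The condition $m\ge 3$ is used there to make the relevant integrals converge, and $n\theta^2\gtrsim m\|\Sigma\|$ keeps the higher-order terms in that expansion dominated by the leading one. Your proposed coupling via total variation or Wasserstein would not recover the $O(1/n)$ rate without the two-vanishing-moments structure of $\sfr_j$, which is precisely what the Lindeberg argument exploits.
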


\begin{remark}
    Lemma \ref{lem:smom_moments_light_tails} also yields that
    \begin{align*}
        &
        \big\| \Exp \SMM_{m, n}\big(\nabla f_{\xi}(x), \theta \big) - \nabla f(x) \big\|
        \\&
        \lesssim \sqrt{m} \left( \sqrt{\frac{\Tr(\Sigma)}n} + \theta \sqrt{d} \right).
    \end{align*}
    Though this bound is enough for our purposes, nevertheless, we find it useful to prove a dimension-free $\cO(1/n)$ upper bound on expectation of the smoothed median of means, which follows from the analysis of impact of antisymmetric density part on the expectation (see Lemma \ref{lem:expectation_comparison} in Appendix). 
\end{remark}

Lemma \ref{lem:smom_moments_light_tails} shows that, if the noise vector has a finite second moment, then the smoothed median of means has a small controllable shift and bounded variance. In this case, it behaves similarly to clipping. However, if we deal with heavy-tailed noise, the standard clipping technique fails, while the
smoothed median of means still has a small bias and finite variance.
We proceed with the main result of this section.

\begin{lemma}
    \label{lem:smom_moments_heavy_tails}
    Assume that the stochastic gradient $\nabla f_{\xi}(x)$ at a point $x \in \R^d$ has an absolutely continuous distribution. Suppose that, for any $j \in \{1, \dots, d\}$ and any $x \in \R^d$, the density of $\nu_j = \nabla f_{\xi}(x) - \nabla f(x)$ meets Assumption \ref{as:convolution}. Then, if $m > 2 + 3 / \beta_j$ and $\theta^2 n \geq (2 \lor m^2) M_j$ for all $j \in \{1, \dots, d\}$, it holds that
    \begin{align*}
        &
        \Exp \big\| \SMM_{m, n}\big(\nabla f_{\xi}(x), \theta \big) - \nabla f(x) \big\|^2
        \\&
        \lesssim m \Bigg\{ (1 + \theta^2) d + \sum\limits_{j = 1}^d \Bigg[ \left( \frac{M_j}{\theta n} \right)^2 + \left( \frac{2^{\beta_j} B_j}{\beta_j n^{\beta_j - 1}} \right)^{2 / \beta_j}
        \\&\hspace{4.5cm}
        + \left( \frac{B_j M_j}{\theta n^{\beta_j}} \right)^{2 / (\beta_j + 1)} \Bigg] \Bigg\}
    \end{align*}
    and
    \begin{align*}
        &
        \left\| \Exp \SMM_{m, n}\big(\nabla f_{\xi}(x), \theta \big) - \nabla f(x) \right\|
        \\&
        \lesssim \frac{m (1 + \theta)}{\theta^2 n} \sqrt{ \sum\limits_{j = 1}^d M_j^2} +
        \frac{m}{\theta^2 n}  \sqrt{\sum\limits_{j = 1}^d M_j^2 \left( \frac{2^{\beta_j} B_j}{n^{\beta_j - 1}} \right)^{2 / \beta_j}}.
    \end{align*}
\end{lemma}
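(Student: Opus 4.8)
The plan is to reduce the statement to a one-dimensional claim about the median of i.i.d.\ copies of a Gaussian-smoothed block average, prove the needed tail and perturbation estimates for that one-dimensional object, and then reconstitute the two displayed inequalities by summing over coordinates.

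\textbf{Step 1: reduction to one coordinate.} I would first use that the median in Definition~\ref{def:smom} acts component-wise, that the Gaussian smoothing is isotropic with independent coordinates, that Assumption~\ref{as:convolution} constrains only the marginals of $\nu=\nabla f_\xi(x)-\nabla f(x)$, and that $\Med$ commutes with a common additive shift. Hence $\big(\SMM_{m,n}(\nabla f_\xi(x),\theta)-\nabla f(x)\big)_j=\Med(W^{(j)}_1,\dots,W^{(j)}_{2m+1})$, where $W^{(j)}_i=\bar\nu^{(j)}_i+\theta\eta^{(j)}_i$, $\bar\nu^{(j)}_i$ is the mean of the $i$-th block of $n$ i.i.d.\ copies of the $j$-th noise component, and the $\eta^{(j)}_i\sim\cN(0,1)$ are independent. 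So $\Exp\|\SMM_{m,n}-\nabla f(x)\|^2=\sum_{j=1}^d\Exp\,\Med(W^{(j)}_1,\dots)^2$ and $\|\Exp\SMM_{m,n}-\nabla f(x)\|^2=\sum_{j=1}^d(\Exp\,\Med(W^{(j)}_1,\dots))^2$, and it is enough to bound $\Exp\,\Med(W_1,\dots,W_{2m+1})^2$ and $|\Exp\,\Med(W_1,\dots,W_{2m+1})|$ for a single coordinate (dropping $j$); summing over $j$ — the first directly, the second via the triangle inequality in $\ell_2$ — then recovers the two displayed estimates.

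\textbf{Step 2: single-coordinate tail/perturbation analysis (the core).} I would split $\sfp_j=\sfs_j+\sfr_j$ and correspondingly split the law of the block mean: the density of $\bar\nu_j$ is $n\sfs_j^{*n}(n\,\cdot)+n\Delta_n(n\,\cdot)$ with $\Delta_n=\sfp_j^{*n}-\sfs_j^{*n}$, so after adding $\theta\eta$ the law of $W_1$ is that of a symmetric (about $0$) variable $W_1^{\mathrm{sym}}$ plus a signed perturbation $\widetilde\Delta=n\Delta_n(n\,\cdot)*\varphi_\theta$, where $\varphi_\theta$ is the $\cN(0,\theta^2)$ density. From the convolution-power bound in Assumption~\ref{as:convolution}, $\sfs_j^{*n}(u)\le B_jn/|u|^{1+\beta_j}$, whence $\Prob(|W_1^{\mathrm{sym}}|>s)\lesssim B_j/(\beta_jn^{\beta_j-1}s^{\beta_j})+e^{-s^2/(8\theta^2)}$ and $\Exp\,\Med(W_1^{\mathrm{sym}},\dots,W_{2m+1}^{\mathrm{sym}})=0$ by symmetry. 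For the perturbation I would use the telescoping identity
\[
\Delta_n=\sum_{l=0}^{n-1}\sfp_j^{*l}*\sfr_j*\sfs_j^{*(n-1-l)},
\]
so that $\int_s^\infty\widetilde\Delta=\sum_{l=0}^{n-1}\int\sfr_j(b)\,H_l\!\big(s-\tfrac bn\big)\,\rmd b$ for suitable tail functions $H_l$; a second-order Taylor expansion of $H_l$ at $b=0$ together with $\int\sfr_j=\int u\,\sfr_j=0$ and $\int u^2|\sfr_j|\le M_j$ extracts a factor $M_j/n^2$ per summand against $\sup|H_l''|$. Using $\sup|H_l''|\lesssim\|\varphi_\theta'\|_1\asymp 1/\theta$ (the Gaussian smoothing is essential here) and summing the $n$ terms gives $|\int_s^\infty\widetilde\Delta|\lesssim M_j/(\theta n)$, and bounding instead $\sup_{|t|\ge s/2}|H_l''|$ via the decay of $\sfs_j^{*k}$ gives the sharper $|\int_s^\infty\widetilde\Delta|\lesssim B_jM_j2^{\beta_j}/(\theta n^{\beta_j}s^{1+\beta_j})$ for large $s$. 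Collecting these, there is a threshold
\[
s_0\asymp (1+\theta)+\Big(\tfrac{B_j}{\beta_jn^{\beta_j-1}}\Big)^{1/\beta_j}+\tfrac{M_j}{\theta n}+\Big(\tfrac{B_jM_j}{\theta n^{\beta_j}}\Big)^{1/(\beta_j+1)}
\]
above which $\Prob(W_1>s)\vee\Prob(W_1<-s)\le\tfrac14$ — this is where the hypothesis $\theta^2n\ge(2\lor m^2)M_j$ enters — and the one-sided tail of $W_1$ is bounded by a sum of an $s^{-\beta_j}$ term, an $s^{-(1+\beta_j)}$ term, and a Gaussian term with the scales above.

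\textbf{Step 3: assembling.} For i.i.d.\ $W_1,\dots,W_{2m+1}$, the event $\{|\Med(W_1,\dots,W_{2m+1})|>t\}$ forces at least $m+1$ of the $W_i$ to fall outside $[-t,t]$ on one side, so $\Prob(|\Med(W_1,\dots,W_{2m+1})|>t)\le 2^{2m+1}\big(\Prob(W_1>t)^{m+1}+\Prob(W_1\le -t)^{m+1}\big)$. For the second moment I would write $\Exp\,\Med^2=2\int_0^\infty t\,\Prob(|\Med|>t)\,\rmd t$: on $[0,s_0]$ bound the integrand by $1$, contributing $\lesssim s_0^2$, which expands precisely into the claimed terms ($(1+\theta^2)$ plus the three squared scales); on $[s_0,\infty)$ plug in the tail bound from Step~2, use $(a+b+c)^{m+1}\le 3^{m+1}(a^{m+1}+b^{m+1}+c^{m+1})$, and integrate term by term — each integral $\int_{s_0}^\infty t\,(c/t)^{\gamma(m+1)}\,\rmd t$ and $\int_{s_0}^\infty t\,e^{-(m+1)t^2/(8\theta^2)}\,\rmd t$ converges because $m>2+3/\beta_j$ forces $\beta_j(m+1)>3(\beta_j+1)>2$, and with $s_0$ taken large enough to absorb the constants (including $2^{2m+1}$) it is again $\lesssim s_0^2$. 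For the bias I would compare $\Med(W_1,\dots,W_{2m+1})$ with $\Med(W_1^{\mathrm{sym}},\dots,W_{2m+1}^{\mathrm{sym}})$, whose expectation vanishes; the difference of expectations is controlled by the half-line estimates on $\widetilde\Delta$ from Step~2 (this is the content of Lemma~\ref{lem:expectation_comparison} in the Appendix) and produces the two terms of the claimed bias bound. Tracking the polynomial dependence on $m$ through the binomial factors and the choice of $s_0$, and summing over $j$, finishes the proof.

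\textbf{Main obstacle.} The delicate part will be Step~2: the $n$-fold convolution $\sfp_j^{*n}$ (and its symmetric and antisymmetric components) must be controlled without the exponential-in-$n$ loss that the naive domination $\sfp_j\le 2\sfs_j$ would incur. The remedy — keeping the dependence on $n$ linear via the telescoping identity $\sfp_j^{*n}-\sfs_j^{*n}=\sum_l\sfp_j^{*l}*\sfr_j*\sfs_j^{*(n-1-l)}$, exploiting the two vanishing low-order moments of $\sfr_j$ to gain the factor $n^{-2}$, and using the Gaussian smoothing to make the required derivative (equivalently, Fourier) estimates finite — has to be arranged so that the exponents $2/\beta_j$ and $2/(\beta_j+1)$ come out exactly, while simultaneously checking that the heavy polynomial tails of $\sfs_j^{*n}$ do not destroy the integrals defining the moments of the median.
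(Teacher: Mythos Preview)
Your overall architecture --- coordinate-wise reduction, the symmetric/antisymmetric split $\sfp_j=\sfs_j+\sfr_j$, comparison of the CDFs $\sfF(t)=\int\Phi_\theta(t-u/n)\,\sfp_j^{*n}(u)\,\rmd u$ and $\sfG(t)=\int\Phi_\theta(t-u/n)\,\sfs_j^{*n}(u)\,\rmd u$, and then reading off the median moments from the tails --- matches the paper. The gap is in Step~2, in the \emph{non-uniform} (decaying in $s$) control of $\sfF-\sfG$. Your Lindeberg telescoping $\sfp_j^{*n}-\sfs_j^{*n}=\sum_l\sfp_j^{*l}*\sfr_j*\sfs_j^{*(n-1-l)}$ leaves the factors $\sfp_j^{*l}$, and Assumption~\ref{as:convolution} gives a pointwise convolution-power bound only for $\sfs_j^{*k}$, not for $\sfp_j^{*k}$. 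Concretely, after one Taylor step you need $\sup_{|\tau|\ge s/2}|H_l''(\tau)|$ to decay in $s$; since $H_l''=-\big(\Phi_\theta''*\mu_l\big)$ with $\mu_l$ the law of $(\sum_{i\le l}X_i+\sum_{i>l+1}Y_i)/n$, this requires the tail of $\mu_l$, hence of $\sfp_j^{*l}$, at large arguments. The only available pointwise bound $\sfp_j\le 2\sfs_j$ gives $\sfp_j^{*l}\le 2^l\sfs_j^{*l}$, i.e., exactly the exponential loss you flagged. Your telescoping does yield the \emph{uniform} bound (and this is what Lemma~\ref{lem:expectation_comparison} proves; note $\sup|H_l''|\le\|\Phi_\theta''\|_\infty\asymp\theta^{-2}$, not $\theta^{-1}$), but a non-decaying perturbation makes $\sup_t\,t^2(\sfF(1-\sfF))^m=\infty$ and the bias-comparison integrals diverge, so the non-uniform bound is not optional.

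The paper's fix is to replace the Lindeberg telescoping by the full binomial expansion
\[
\sfp_j^{*n}-\sfs_j^{*n}=\sum_{k=1}^n\binom{n}{k}\,\sfr_j^{*k}*\sfs_j^{*(n-k)},
\]
which leaves no $\sfp_j^{*l}$ factors. For the $k$-th term one iterates your Taylor step $k$ times (once per copy of $\sfr_j$), landing on $\Phi_\theta^{(2k)}$ integrated against $\prod_{i=1}^k\big(u_i^2\,\sfr_j(u_i)/n^2\big)\cdot\sfs_j^{*(n-k)}(y)$; the decay in $t$ then comes \emph{only} from $\sfs_j^{*(n-k)}$, which Assumption~\ref{as:convolution} controls (this is Lemma~\ref{lem:f-g_non-uniform_bound}). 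The price is a factor $\sup|\Phi_\theta^{(2k)}|\lesssim\sqrt{(2k-1)!}\,\theta^{-2k}$ from the Hermite-polynomial bound, but the gain is $(M_j/(2\theta^2 n^2))^k$, and the sum $\sum_k\binom{n}{k}\sqrt{(2k)!}\,(M_j/(2\theta^2 n^2))^k\lesssim M_j/(\theta^2 n)$ converges precisely under the hypothesis $\theta^2 n\ge 2M_j$ (Lemma~\ref{lem:sums}). With the resulting bound $|\sfF(t)-\sfG(t)|\lesssim\frac{M_j}{\theta n|t|}\big(1+\frac{\theta}{|t|}+\frac{B_j}{n^{\beta_j-1}|t|^{\beta_j}}\big)$ in hand, the second-moment and bias arguments in your Step~3 go through; for the bias the relevant comparison is Lemma~\ref{lem:f-g_comparison}, not Lemma~\ref{lem:expectation_comparison}.
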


\begin{remark}
    The bounds on $\big\| \Exp \SMM_{m, n}\big(\nabla f_{\xi}(x), \theta \big) - \nabla f(x) \big\|$ in Lemma \ref{lem:smom_moments_light_tails} and Lemma \ref{lem:smom_moments_heavy_tails} rely on the fact that convolution with the Gaussian density is infinitely differentiable. However, if $\sfs_1, \dots, \sfs_d$ are also sufficiently smooth, then one can take $\theta = 0$ and apply a similar technique as in Lemma \ref{lem:expectation_comparison} and Lemma \ref{lem:f-g_non-uniform_bound}.
\end{remark}

The proof of Lemma \ref{lem:smom_moments_heavy_tails} is moved to Appendix. It shows that the bias of the smoothed median of means decays rapidly with the growth of the batch size, though the noise may have extremely heavy tails. The reason for that is the special noise structure guaranteed by Assumption \ref{as:convolution}. This favourable property allows us to obtain faster rates of convergence in heavy-tailed stochastic convex optimization problems.

\section{MAIN RESULTS FOR STOCHASTIC OPTIMIZATION}
\label{sec:main_results}

In view of the results of the previous section, Assumption~\ref{as:convolution} allows constructing an estimator with bounded bias and variance using the smoothed median of means. Since in the analysis of the stochastic first-order methods we only use these two properties, we formulate them as a separate assumption for convenience.

\begin{assumption}\label{as:bounded_bias_and_variance}
    There exists $N \in \mathbb{N}$, aggregation rule $\mathcal{R}$ and (possibly dependent on $N$) constants $b \geq 0, \sigma \geq 0$ such that for an $x \in \R^d$ i.i.d.\ samples $\nabla f_{\xi_1}(x),\ldots, \nabla f_{\xi_N}(x)$ from the oracle $\cG(x)$ satisfy the following relations:
    \begin{eqnarray}
        \left|\EE[\nabla f_{\Xi}(x)] - \nabla f(x)\right| &\leq& b, \label{eq:bias_bound}\\
        \EE\left[\left\|\nabla f_{\Xi}(x) - \EE[\nabla f_{\Xi}(x)]\right\|^2\right] &\leq& \sigma^2, \label{eq:variance_bound}
    \end{eqnarray}
    where $\nabla f_{\Xi}(x) = \mathcal{R}(\nabla f_{\xi_1}(x),\ldots, \nabla f_{\xi_N}(x))$ and expectations are taken w.r.t.\ $\nabla f_{\xi_1}(x),\ldots, \nabla f_{\xi_N}(x)$.
\end{assumption}

We emphasize once again that Assumption~\ref{as:bounded_bias_and_variance} holds whenever Assumption~\ref{as:convolution} is satisfied. Indeed, we can take $\nabla f_{\Xi}(x) = \SMM_{m, n}\big( \nabla f_{\xi}(x), \theta \big)$ with parameters $m$, $n$, and $\theta$, satisfying the conditions of Lemma \ref{lem:smom_moments_heavy_tails}. In this case, the batch size is equal to $N = (2m + 1)n$.

\subsection{Convergence of \algname{clipped-SGD}}

We start with \algname{clipped-SGD} defined as follows:
\begin{equation}
    x^{k+1} = x^k - \gamma_k \clip(\nabla f_{\Xi^k}(x^k), \lambda_k), \label{eq:clipped_SGD}
\end{equation}
where $\nabla f_{\Xi^k}(x^k)$ is an estimator satisfying Assumption~\ref{as:bounded_bias_and_variance} sampled independently from previous iterations. Below we formulate the main convergence result for \algname{clipped-SGD} in the quasi-convex case.

\begin{theorem}\label{thm:clipped_SGD_main_cvx}
    Let Assumptions~\ref{as:L_smoothness} and \ref{as:QSC} with $\mu = 0$ hold on $Q = B_{2R}(x^*)$, where $R \geq \|x^0 - x^*\|$. Suppose that $\nabla f_{\Xi^k}(x^k)$ satisfies Assumption~\ref{as:bounded_bias_and_variance} with parameters $b_k, \sigma_k$ for $k = 0,1,\ldots,K$, $K > 0$ and $\gamma_k = \gamma = \Theta(\min\{\nicefrac{1}{L A}, \nicefrac{R}{\sigma  \sqrt{KA}}, \nicefrac{R}{bA}, \nicefrac{R}{b(K+1)}\}),$ 
    $
    \lambda_{k} \equiv \lambda  = \Theta(\nicefrac{R}{\gamma A}),
    $ 
    where $A = \ln(\nicefrac{4(K+1)}{\delta})$ and $b = \max_{k=0,1,\ldots,K} b_k$, $\sigma = \max_{k=0,1,\ldots,K} \sigma_k$. Then the iterates produced by \algname{clipped-SGD} after $K$ iterations with probability at least $1-\delta$ satisfy
    \begin{equation}
        f(\overline{x}^K) - f(x^*) = \widetilde\cO\left(\max\left\{\frac{LR^2}{K}, \frac{\sigma R}{\sqrt{K}}, bR\right\}\right), \notag
    \end{equation}
    where $\overline{x} = \frac{1}{K+1}\sum_{k=0}^K x^k$.
\end{theorem}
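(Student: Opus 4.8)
The plan is to run the by-now-standard high-probability analysis of \algname{clipped-SGD} (in the spirit of \citet{gorbunov2020stochastic} and \citet{sadiev2023high}), treating the aggregated estimator $\nabla f_{\Xi^k}(x^k)$ as a black box satisfying Assumption~\ref{as:bounded_bias_and_variance} with the worst-case constants $b=\max_k b_k$, $\sigma=\max_k\sigma_k$. I would write $\tnabla f(x^k)=\clip(\nabla f_{\Xi^k}(x^k),\lambda)$, let $\EE_k[\cdot]$ denote the conditional expectation given $x^0,\dots,x^k$, and split the clipped error $\omega^k:=\tnabla f(x^k)-\nabla f(x^k)$ as $\omega^k=\omega^k_u+\omega^k_b$ with $\omega^k_b:=\EE_k[\omega^k]$ and $\omega^k_u:=\omega^k-\EE_k[\omega^k]$. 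Two deterministic consequences of clipping are used throughout: $\|\tnabla f(x^k)\|\le\lambda$ (hence $\|\omega^k_u\|\le2\lambda$), and the standard clipped-estimator bias/variance bounds — if $\lambda\ge2\|\nabla f(x^k)\|$ then $\|\omega^k_b\|\lesssim b+\sigma^2/\lambda$ and $\EE_k\|\omega^k_u\|^2\lesssim\lambda b+\sigma^2$ — which follow from \eqref{eq:bias_bound}--\eqref{eq:variance_bound} (cf. \citealp{sadiev2023high}). The premise $\lambda\ge2\|\nabla f(x^k)\|$ holds on $B_{2R}(x^*)$: there $f(x^k)-f(x^*)\le\tfrac L2\|x^k-x^*\|^2\le2LR^2$ by smoothness, so $\|\nabla f(x^k)\|\le2LR$ by \eqref{eq:L_smoothness_cor_2}, while $\gamma\lesssim1/(LA)$ gives $\lambda=\Theta(R/(\gamma A))\gtrsim LR$.

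Next I would derive the one-step recursion. Expanding $\|x^{k+1}-x^*\|^2=\|x^k-x^*\|^2-2\gamma\ve{\tnabla f(x^k)}{x^k-x^*}+\gamma^2\|\tnabla f(x^k)\|^2$, substituting $\tnabla f(x^k)=\nabla f(x^k)+\omega^k$, using star-convexity (Assumption~\ref{as:QSC} with $\mu=0$) in the form $\ve{\nabla f(x^k)}{x^k-x^*}\ge f(x^k)-f(x^*)$, bounding $\|\tnabla f(x^k)\|^2\le2\|\nabla f(x^k)\|^2+2\|\omega^k\|^2\le4L(f(x^k)-f(x^*))+2\|\omega^k\|^2$ via \eqref{eq:L_smoothness_cor_2}, and absorbing $4\gamma^2L(f(x^k)-f(x^*))$ into $-2\gamma(f(x^k)-f(x^*))$ since $\gamma\le1/(4L)$, I obtain on $\{x^k\in B_{2R}(x^*)\}$
\begin{align*}
\|x^{k+1}-x^*\|^2 &\le \|x^k-x^*\|^2-\gamma\big(f(x^k)-f(x^*)\big)-2\gamma\ve{\omega^k_u}{x^k-x^*}\\
&\quad -2\gamma\ve{\omega^k_b}{x^k-x^*}+2\gamma^2\|\omega^k\|^2.
\end{align*}
Then I would bound $|\ve{\omega^k_b}{x^k-x^*}|\le2R\|\omega^k_b\|\lesssim R(b+\sigma^2/\lambda)$, use $\|\omega^k\|^2\le2\|\omega^k_u\|^2+2\|\omega^k_b\|^2$ and $\|\omega^k_u\|^2=\EE_k\|\omega^k_u\|^2+(\|\omega^k_u\|^2-\EE_k\|\omega^k_u\|^2)$, sum over $k=0,\dots,K$, and apply Jensen's inequality ($f(\bar x^K)-f(x^*)\le\tfrac1{K+1}\sum_k(f(x^k)-f(x^*))$) together with $\|x^0-x^*\|\le R$ to get
\begin{align*}
\gamma(K+1)\big(f(\bar x^K)-f(x^*)\big)&\le R^2+\underbrace{\sum_{k=0}^K\big(-2\gamma\ve{\omega^k_u}{x^k-x^*}\big)}_{=:(\mathrm I)}+\underbrace{\sum_{k=0}^K4\gamma^2\big(\|\omega^k_u\|^2-\EE_k\|\omega^k_u\|^2\big)}_{=:(\mathrm{II})}\\
&\quad+\cO\!\big(\gamma KR(b+\sigma^2/\lambda)+\gamma^2K(\lambda b+\sigma^2)\big).
\end{align*}

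The next step is to control $(\mathrm I)$ and $(\mathrm{II})$ with high probability, conditionally on the iterates staying in $B_{2R}(x^*)$: $(\mathrm I)$ is a martingale-difference sum with increments $\le8\gamma\lambda R$ and conditional variances $\le16\gamma^2R^2\EE_k\|\omega^k_u\|^2\lesssim\gamma^2R^2(\lambda b+\sigma^2)$, so Bernstein's inequality bounds $|(\mathrm I)|$ by $\widetilde{\cO}\big(\gamma R\sqrt{(\lambda b+\sigma^2)K}+\gamma\lambda R\big)$ with probability $\ge1-\delta/2$; $(\mathrm{II})$ has increments $\le16\gamma^2\lambda^2$ and, via $\EE_k\|\omega^k_u\|^4\le4\lambda^2\EE_k\|\omega^k_u\|^2$, conditional variances $\lesssim\gamma^4\lambda^2(\lambda b+\sigma^2)$, so Bernstein bounds $|(\mathrm{II})|$ by $\widetilde{\cO}\big(\gamma^2\lambda\sqrt{(\lambda b+\sigma^2)K}+\gamma^2\lambda^2\big)$ with probability $\ge1-\delta/2$. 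Plugging in $\lambda=\Theta(R/(\gamma A))$ and $\gamma=\Theta\big(\min\{\tfrac1{LA},\tfrac R{\sigma\sqrt{KA}},\tfrac R{bA},\tfrac R{b(K+1)}\}\big)$ and simplifying, one checks that each of these terms and the deterministic remainder is $\widetilde{\cO}(R^2)$ — this is precisely where the four competing terms in the definition of $\gamma$ enter, the two $b$-terms handling the two regimes of $\gamma R\sqrt{\lambda bK}$ — so dividing by $\gamma(K+1)$ gives $f(\bar x^K)-f(x^*)=\widetilde{\cO}\big(R^2/(\gamma(K+1))\big)=\widetilde{\cO}\big(\max\{LR^2/K,\sigma R/\sqrt K,bR\}\big)$.

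The main obstacle is that everything above is valid only while $x^k\in B_{2R}(x^*)$ (smoothness, star-convexity, the clipped-estimator lemma, the boundedness feeding Bernstein), so one must show the iterates never leave this ball. The hard part will be the usual bootstrapping induction: set $E_k:=\{\|x^t-x^*\|\le2R\text{ for all }t\le k\}$ and prove by induction on $k$ that $\Prob(E_k)\ge1-(k+1)\delta/(K+1)$ and $\|x^k-x^*\|^2\1_{E_{k-1}}\le4R^2$. In the inductive step one replaces $\omega^t_u$ by $\omega^t_u\1_{E_{t-1}}$ inside $(\mathrm I)$ and $(\mathrm{II})$: these are still martingale differences for the natural filtration, they coincide with the original summands on $E_k$, and Bernstein then applies to the truncated partial sums unconditionally. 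Because the tuning of $\gamma$ and $\lambda$ makes $|(\mathrm I)|\1_{E_k}$, $|(\mathrm{II})|\1_{E_k}$ and the deterministic remainder each at most $R^2$ (with the hidden constants chosen accordingly), the accumulated inequality yields $\|x^{k+1}-x^*\|^2\1_{E_k}\le4R^2$, so $E_k\setminus E_{k+1}$ is contained in the probability-$\le\delta/(K+1)$ event where Bernstein fails at step $k+1$; a union bound over $k=0,\dots,K$ gives $\Prob(E_K)\ge1-\delta$, on which the displayed rate holds. The remaining effort — fixing all absolute constants so the induction closes and verifying the parameter substitutions — is routine but bookkeeping-heavy.
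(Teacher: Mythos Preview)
Your plan is correct and follows essentially the same route as the paper's proof (Theorem~\ref{thm:clipped_SGD_cvx_appendix}): the same one-step recursion, the same decomposition $\omega^k=\omega^k_u+\omega^k_b$ with Bernstein's inequality applied to the two resulting martingale sums, and the same induction-with-truncation device to keep the iterates in the ball (the paper truncates $x^t-x^*-\gamma\nabla f(x^t)$ to a bounded vector $\eta_t$; your indicator $\1_{E_{t-1}}$ plays the same role). One small correction: the clipping-variance bound that actually follows from the cited lemma is $\EE_k\|\omega^k_u\|^2\lesssim\sigma^2$ (no $\lambda b$ term --- the paper gets $18\sigma^2$), and its hypothesis is $\|\EE_k[\nabla f_{\Xi^k}(x^k)]\|\le\lambda/2$ rather than $\lambda\ge 2\|\nabla f(x^k)\|$, so you also need $b\lesssim\lambda$, which your constraint $\gamma\lesssim R/(bA)$ indeed ensures.
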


The rate of convergence in the above result matches (up to logarithmic factors) the best-known one for \algname{clipped-SGD} under bounded variance assumption \citep{gorbunov2021near}. Due to systematic bias bounded by $b$ the method reaches only $\widetilde{\cO}(bR)$ error after a sufficiently large number of steps. When the bias is just bounded and cannot be controlled, this situation is standard \citep{devolder2014first}. The proof of the above result follows the ones given in \citep{gorbunov2021near, sadiev2023high}: using the induction argument, we show that under a proper choice of parameters, the iterates stay in a bounded set with high probability, which allows us to apply standard Bernstein inequality (see Lemma~\ref{lem:Bernstein_ineq}). In particular, this proof technique differs from the standard ones that rely on the boundedness of the noise \citep{rakhlin2011making} or on the assumption that the noise is sub-Gaussian \citep{harvey2019simple}.

However, in our setup, we can control the bias. For example, if the distribution is symmetric and has a bounded moment of the order $\beta$ for some $\beta > 0$, then according to Proposition~\ref{prop:median_symmetric_case}, it is sufficient to use coordinate-wise median estimator to get $\nabla f_{\Xi}(x)$ satisfying Assumption~\ref{as:bounded_bias_and_variance} with $b = 0$ and $\sigma^2 = d(2m + 1) \left(1 \vee \frac{4 B_j}{\beta_j} \right)^{2 / \beta_j}$ (see \eqref{eq:var_of_median}) using $\cO(\nicefrac{1}{\beta})$ samples of $\nabla f_\xi(x)$. In this case, we have the following result.

\begin{corollary}[Symmetric noise]\label{cor:SGD_symmetric_noise}
    Let the assumptions of Theorem~\ref{thm:clipped_SGD_main_cvx} hold and for all $x \in \R^d$ the noise $\nu = \nabla f_{\xi}(x) - \nabla f(x)$ satisfies the conditions from Proposition~\ref{prop:median_symmetric_case}. Then the iterates produced after $K$ iterations of \algname{clipped-SGD} with $\nabla f_{\Xi^k}(x^k)$ being a coordinate-wise median of $2m+1$ samples $\nabla f_{\xi}(x^k)$ with $m > \max\{3 / \beta_j : 1 \leq j \leq d\}$ and $\gamma_k = \gamma = \Theta(\min\{\nicefrac{1}{L A}, \nicefrac{R}{\sigma  \sqrt{KA}}\}),$ 
    $
    \lambda_{k} \equiv \lambda  = \Theta(\nicefrac{R}{\gamma A})
    $ for $\sigma^2 = d(2m + 1) \left(1 \vee \frac{4 B_j}{\beta_j} \right)^{2 / \beta_j}$ and 
    where $A = \ln(\nicefrac{4(K+1)}{\delta})$ with probability at least $1-\delta$ satisfy
    \begin{equation}
        f(\overline{x}^K) - f(x^*) = \widetilde\cO\left(\max\left\{\frac{LR^2}{K}, \frac{\sigma R}{\sqrt{K}}\right\}\right) \notag
    \end{equation}
    and the overall number of stochastic oracle calls equals $(2m+1)K = \cO(K\max\{1/\beta_j: 1 \leq j \leq d\})$.
\end{corollary}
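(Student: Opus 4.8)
The plan is to derive Corollary~\ref{cor:SGD_symmetric_noise} as an immediate specialization of Theorem~\ref{thm:clipped_SGD_main_cvx}, the only real work being to verify that the coordinate-wise median estimator fits into the framework of Assumption~\ref{as:bounded_bias_and_variance} with a \emph{zero} bias term and an explicit variance proxy. First I would invoke Proposition~\ref{prop:median_symmetric_case}: since the noise $\nu = \nabla f_\xi(x) - \nabla f(x)$ has symmetric marginals satisfying $\sfp_j(u) \le B_j / (1 \vee |u|^{\beta_j+1})$ and we take $2m+1$ samples with $m > \max\{3/\beta_j : 1 \le j \le d\}$, each coordinate of the median $\nabla f_{\Xi}(x) = \Med(\nabla f_{\xi_1}(x),\dots,\nabla f_{\xi_{2m+1}}(x))$ is unbiased for the corresponding coordinate of $\nabla f(x)$, and has finite second moment. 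Summing over the $d$ coordinates gives $\EE[\nabla f_{\Xi}(x)] = \nabla f(x)$, i.e.\ \eqref{eq:bias_bound} holds with $b_k \equiv 0$, and $\EE\|\nabla f_{\Xi}(x) - \nabla f(x)\|^2 \le \sigma^2$ with $\sigma^2 = d(2m+1)\left(1 \vee \tfrac{4B_j}{\beta_j}\right)^{2/\beta_j}$, which is exactly the quantity \eqref{eq:var_of_median} referenced in the statement. Thus Assumption~\ref{as:bounded_bias_and_variance} is satisfied with $N = 2m+1$, aggregation rule $\mathcal{R} = \Med$, $b_k = 0$, and the stated $\sigma_k \equiv \sigma$.

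Next I would feed these parameters into Theorem~\ref{thm:clipped_SGD_main_cvx}. Because $b = \max_k b_k = 0$, the three stepsize terms involving $b$, namely $\nicefrac{R}{bA}$ and $\nicefrac{R}{b(K+1)}$, are vacuous (formally $+\infty$), so the stepsize prescription $\gamma = \Theta(\min\{\nicefrac{1}{LA}, \nicefrac{R}{\sigma\sqrt{KA}}, \nicefrac{R}{bA}, \nicefrac{R}{b(K+1)}\})$ collapses to $\gamma = \Theta(\min\{\nicefrac{1}{LA}, \nicefrac{R}{\sigma\sqrt{KA}}\})$, matching the choice in the corollary; the clipping level $\lambda = \Theta(\nicefrac{R}{\gamma A})$ is unchanged, and $A = \ln(\nicefrac{4(K+1)}{\delta})$ is the same. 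Applying the theorem verbatim then yields, with probability at least $1-\delta$,
\[
    f(\overline{x}^K) - f(x^*) = \widetilde\cO\left(\max\left\{\frac{LR^2}{K}, \frac{\sigma R}{\sqrt{K}}, bR\right\}\right) = \widetilde\cO\left(\max\left\{\frac{LR^2}{K}, \frac{\sigma R}{\sqrt{K}}\right\}\right),
\]
since the $bR$ term vanishes. Finally, the oracle-complexity claim is a simple count: each of the $K$ iterations of \algname{clipped-SGD} consumes $N = 2m+1$ stochastic gradients, and one may pick $m = \lceil \max\{3/\beta_j : 1 \le j \le d\}\rceil + 1$, giving a total of $(2m+1)K = \cO\big(K \max\{1/\beta_j : 1 \le j \le d\}\big)$ oracle calls.

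The only point requiring a little care — and thus the ``main obstacle,'' though it is minor — is confirming that the independence structure demanded by \algname{clipped-SGD} is respected: the theorem requires $\nabla f_{\Xi^k}(x^k)$ to be sampled independently across iterations, so one must note that the $2m+1$ fresh oracle calls used to form the median at step $k$ are drawn independently of all previous steps, which is automatic in the i.i.d.\ oracle model. A secondary subtlety is that Proposition~\ref{prop:median_symmetric_case} is stated per-coordinate with possibly different constants $B_j, \beta_j$, so the variance proxy should really be $\sigma^2 = (2m+1)\sum_{j=1}^d \left(1 \vee \tfrac{4B_j}{\beta_j}\right)^{2/\beta_j}$, which is what the compact notation $d(2m+1)(1 \vee \tfrac{4B_j}{\beta_j})^{2/\beta_j}$ abbreviates; one also needs $m$ to simultaneously exceed $3/\beta_j$ for \emph{every} $j$, hence the $\max$ over $j$. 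Beyond these bookkeeping observations, no new estimates are needed — the corollary is genuinely a corollary.
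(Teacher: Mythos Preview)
Your proposal is correct and follows exactly the approach the paper takes: the paper's justification for the corollary is the short paragraph preceding its statement, which says precisely that Proposition~\ref{prop:median_symmetric_case} yields Assumption~\ref{as:bounded_bias_and_variance} with $b=0$ and $\sigma^2$ given by \eqref{eq:var_of_median}, after which Theorem~\ref{thm:clipped_SGD_main_cvx} applies with the $b$-dependent terms dropping out. Your additional remarks on the independence structure and the per-coordinate summation for $\sigma^2$ are accurate clarifications that the paper leaves implicit.
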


This result implies that as long as the distribution is symmetric, its tails can be even heavier than the ones of Cauchy distribution, i.e., moments of order larger than $\beta$ for some $\beta \in (0,1]$ can be unbounded, but \algname{clipped-SGD} with coordinate-wise median estimator inside will still converge as in the case when the stochastic gradients are unbiased and have bounded variance. We emphasize that the existing state-of-the-art high probability convergence results \citep{sadiev2023high, nguyen2023high, nguyen2023improved} have a slower decreasing main term (of the order $\widetilde{\cO}(K^{\nicefrac{-(\alpha-1)}{\alpha}})$) and are derived for much lighter tails. However, in contrast to Corollary~\ref{cor:SGD_symmetric_noise}, the mentioned results do not rely on the symmetry.

Finally, we consider the general case, when the noise satisfies Assumption~\ref{as:convolution}, i.e., the noise also has a non-symmetric component. In this case, Lemma~\ref{lem:smom_moments_heavy_tails} implies that the smoothed median of means gives an estimator $\nabla f_{\Xi}(x)$ satisfying Assumption~\ref{as:bounded_bias_and_variance} with 
\begin{align}
    b &= \cO(\nicefrac{C}{n}),\quad \sigma^2 = \cO(d(1+\theta^2)+D), \label{eq:bias_var_general_case}\\
    C &= \frac{(1 + \theta)}{\theta^2} \sqrt{ \sum_{j = 1}^d M_j^2}\notag\\
    &\qquad+ \frac{1}{\theta^2}  \sqrt{\sum_{j = 1}^d M_j^2 \left( \frac{2^{\beta_j} B_j}{n^{\beta_j - 1}} \right)^{2 / \beta_j}}, \label{eq:general_case_C_constant}\\
    D &= \sum_{j = 1}^d \left( \left( \frac{M_j}{\theta n} \right)^2 + \left( \frac{2^{\beta_j} B_j}{\beta_j n^{\beta_j - 1}} \right)^{2 / \beta_j}\right)\notag\\
    &\qquad+ \sum_{j = 1}^d\left( \frac{B_j M_j}{\theta n^{\beta_j}} \right)^{2 / (\beta_j + 1)} \label{eq:general_case_D_constant}
\end{align}
using $\cO(n)$ samples $\nabla f_\xi(x)$ (when $m = \cO(1)$). Together with Theorem~\ref{thm:clipped_SGD_main_cvx} this implies the following result.

\begin{corollary}[General noise]\label{cor:SGD_biased_heavy_noise}
    Let the assumptions of Theorem~\ref{thm:clipped_SGD_main_cvx} hold and for all $x \in \R^d$ the noise $\nu = \nabla f_{\xi}(x) - \nabla f(x)$ satisfies Assumption~\ref{as:convolution}. Then \algname{clipped-SGD} with $\nabla f_{\Xi^k}(x^k)$ being the smoothed median of means of $\cO(n)$ samples $\nabla f_{\xi}(x^k)$ and $\gamma_k = \gamma = \Theta(\min\{\nicefrac{1}{L A}, \nicefrac{R}{\sigma  \sqrt{KA}}, \nicefrac{R}{bA}, \nicefrac{R}{b(K+1)}\}),$ 
    $\lambda_{k} \equiv \lambda  = \Theta(\nicefrac{R}{\gamma A})$ for $b$ and $\sigma$ defined in \eqref{eq:bias_var_general_case} with probability at least $1-\delta$ after $K$ iterations ensures that $f(\overline{x}^K) - f(x^*)$ equals
    \begin{align*}
        \widetilde\cO\!\left(\!\max\!\left\{\frac{LR^2}{K}, \frac{\sqrt{(1 +  \theta^2) d + D} R}{\sqrt{K}}, \frac{(1 + \theta)CR}{\theta^2n}\right\}\!\right)\!, \notag
    \end{align*}
    where $n \geq \Omega(\nicefrac{\max_{j\in [d]} M_j}{\theta^2})$ and $C$, $D$ are defined in \eqref{eq:general_case_C_constant}-\eqref{eq:general_case_D_constant}. The overall number of oracle calls equals $\cO(nK)$.
\end{corollary}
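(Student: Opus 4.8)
The plan is to obtain Corollary~\ref{cor:SGD_biased_heavy_noise} as a direct instantiation of Theorem~\ref{thm:clipped_SGD_main_cvx}, the only genuine content being the verification that the smoothed median of means supplies a stochastic oracle satisfying Assumption~\ref{as:bounded_bias_and_variance} with the advertised constants. First I would fix the aggregation rule to be $\nabla f_{\Xi^k}(x^k) = \SMM_{m, n}\big(\nabla f_{\xi}(x^k), \theta\big)$, where $m$ is a constant chosen so that $m > \max_{1 \le j \le d}\{2 + 3/\beta_j\}$; since each $\beta_j \geq 1$ under Assumption~\ref{as:convolution}, any $m \geq 6$ works, so $m = \cO(1)$ and the per-step batch size is $N = (2m+1)n = \cO(n)$. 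Because fresh i.i.d.\ samples are drawn at every iteration, the estimator at step $k$ is independent of $x^0, \dots, x^k$, as required by \eqref{eq:clipped_SGD}.

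Next I would check the hypotheses of Lemma~\ref{lem:smom_moments_heavy_tails}: we need $\theta^2 n \geq (2 \vee m^2) M_j$ for every $j$, which is implied by the standing assumption $n \geq \Omega(\max_{j \in [d]} M_j / \theta^2)$ once $m = \cO(1)$. Lemma~\ref{lem:smom_moments_heavy_tails} then yields the bias bound $\big\|\Exp\SMM_{m,n}(\nabla f_\xi(x),\theta) - \nabla f(x)\big\| = \cO(C/n)$ and, using $\Exp\|\zeta - \Exp\zeta\|^2 \leq \Exp\|\zeta - \nabla f(x)\|^2$ to pass from the deviation around $\nabla f(x)$ to the centered second moment, the variance bound $\Exp\big\|\SMM_{m,n}(\nabla f_\xi(x),\theta) - \Exp\SMM_{m,n}(\nabla f_\xi(x),\theta)\big\|^2 = \cO\big(d(1+\theta^2) + D\big)$, with $C$, $D$ as in \eqref{eq:general_case_C_constant}--\eqref{eq:general_case_D_constant}. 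Hence Assumption~\ref{as:bounded_bias_and_variance} holds at every point (Assumption~\ref{as:convolution} is posited for all $x \in \R^d$, in particular on $Q = B_{2R}(x^*)$) with $b = \cO(C/n)$ and $\sigma^2 = \cO(d(1+\theta^2)+D)$, i.e.\ exactly \eqref{eq:bias_var_general_case}.

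With these $b_k \equiv b$ and $\sigma_k \equiv \sigma$ in hand, the step-size and clipping-level prescriptions in the statement coincide with those of Theorem~\ref{thm:clipped_SGD_main_cvx}, so that theorem applies verbatim and gives, with probability at least $1 - \delta$,
\[
    f(\overline{x}^K) - f(x^*) = \widetilde\cO\!\left(\max\!\left\{\frac{LR^2}{K}, \frac{\sigma R}{\sqrt{K}}, bR\right\}\right).
\]
Substituting $\sigma = \cO\big(\sqrt{d(1+\theta^2) + D}\big)$ and $b = \cO(C/n)$ and collecting the constants inside $\widetilde\cO$ (the $(1+\theta)/\theta^2$ prefactor being already carried by $C$) produces the claimed bound, and the total oracle budget is $NK = (2m+1)nK = \cO(nK)$.

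The argument is thus essentially bookkeeping: the heavy lifting is entirely inside Theorem~\ref{thm:clipped_SGD_main_cvx} and Lemma~\ref{lem:smom_moments_heavy_tails}, and no new estimate is needed. The only points requiring care will be (i) confirming that a single constant $m$ serves all coordinates at once; (ii) checking that $n \geq \Omega(\max_j M_j/\theta^2)$ really does imply the per-coordinate requirement $\theta^2 n \geq (2 \vee m^2) M_j$ of Lemma~\ref{lem:smom_moments_heavy_tails}; and (iii) making sure no circularity arises from Assumption~\ref{as:L_smoothness} being imposed only on $B_{2R}(x^*)$ — this is harmless because the estimator bounds hold globally, while the ``iterates stay in the ball with high probability'' induction is internal to the proof of Theorem~\ref{thm:clipped_SGD_main_cvx}.
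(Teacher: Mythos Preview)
Your proposal is correct and mirrors the paper's own argument exactly: the paper states the corollary as an immediate consequence of Lemma~\ref{lem:smom_moments_heavy_tails} (which gives the bias and variance bounds \eqref{eq:bias_var_general_case} for the smoothed median of means, thereby verifying Assumption~\ref{as:bounded_bias_and_variance}) combined with Theorem~\ref{thm:clipped_SGD_main_cvx}. Your additional care in spelling out that a single constant $m \geq 6$ works for all coordinates, that $n \geq \Omega(\max_j M_j/\theta^2)$ triggers the hypothesis of Lemma~\ref{lem:smom_moments_heavy_tails}, and that the variance of $\SMM_{m,n}$ around its mean is dominated by its second moment around $\nabla f(x)$, fills in details the paper leaves implicit.
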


Due to the heavy-tailedness of the symmetric part of the noise distribution and the presence of bias, the variance term does not necessarily improve with the growth of the number of samples. However, the bias term still can be smaller than any predefined level via increasing $n$. When the bias is large, i.e., the noise is sufficiently non-symmetric, then one can take $n = \widetilde\cO(\varepsilon^{-1})$ and $K = \widetilde\cO(\varepsilon^{-2})$ to guarantee $f(\overline{x}^K) - f(x^*) \leq \varepsilon$ with probability at least $1-\delta$, i.e., the total oracle complexity is $\widetilde\cO(\varepsilon^{-3})$. However, when the bias is small, i.e., $\max_{j\in [d]}M_j$ are sufficiently small\footnote{In practice, this can happen when a non-symmetric noise is added to the stochastic gradients with symmetric noise, e.g., this can happen in some mechanisms for ensuring differential privacy such as the one from \citep{guo2023privacy} (the non-symmetric part of the noise in the resulting vector after averaging over multiple clients can have a small variance when the number of clients is large, which is typical for modern Federated Learning applications \citep{kairouz2021advances}).}, then for $\varepsilon = \Theta(\nicefrac{(1+\theta)CR}{\theta^2})$ one can achieve even $K = \widetilde\cO(\varepsilon^{-2})$ total oracle complexity that matches (up to logarithmic factors and constants related to the variance) the main term in the optimal complexities under bounded variance assumption \citep{gorbunov2020stochastic}. However, in contrast to the existing results, we do not require the noise to have a finite first moment.

The following theorem gives a convergence rate for \algname{clipped-SGD} in the quasi-strongly convex case.

\begin{theorem}\label{thm:clipped_SGD_main_str_cvx}
    Let Assumptions~\ref{as:L_smoothness} and \ref{as:QSC} with $\mu > 0$ hold on $Q = B_{2R}(x^*)$, where $R \geq \|x^0 - x^*\|$. Suppose that $\nabla f_{\Xi^k}(x^k)$ satisfies Assumption~\ref{as:bounded_bias_and_variance} with parameters $b_k, \sigma_k$ for $k = 0,1,\ldots,K$, $K > 0$ and $\gamma_k = \gamma = \Theta(\min\{\nicefrac{1}{LA}, \nicefrac{\min\{\ln B_K, \ln C_K, \ln D\}}{\mu(K+1)}\}),$ 
    $
    \lambda_k = \Theta(\nicefrac{\exp(-\gamma\mu(1 + \nicefrac{k}{2}))R}{\gamma A}),
    $ 
    where $A = \ln(\nicefrac{4(K+1)}{\delta})$ and $B_K, C_K, D$ are some parameters dependent on $b = \max_{k=0,1,\ldots,K} b_k$, $\sigma = \max_{k=0,1,\ldots,K} \sigma_k$, $\mu$, $R$, and $K$. Then the final iterate produced by \algname{clipped-SGD} after $K$ iterations with probability at least $1-\delta$ satisfies
    \begin{align*}
        \|x^{K} &- x^*\|^2
        \\&
        = \widetilde\cO \left( \max\left\{ R^2\exp\left(- \frac{\mu K}{L \ln \tfrac{K}{\delta}} \right), \frac{\sigma^2}{\mu K}, \frac{bR}{\mu} \right\} \right).
    \end{align*}
\end{theorem}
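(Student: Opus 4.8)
The plan is to follow the high-probability induction scheme of \citet{gorbunov2021near, sadiev2023high}, adapted to track a geometrically shrinking trust region dictated by strong convexity. Write $\widetilde g^k = \nabla f_{\Xi^k}(x^k)$, $g^k = \clip(\widetilde g^k,\lambda_k)$, $\nabla^k = \nabla f(x^k)$, and let $\mathbb E_k$ denote conditional expectation given $x^0,\dots,x^k$. Introduce the ``deterministic radius'' $R_k^2 = \exp(-\gamma\mu k)R^2$ and the events $E_k = \{x^t\in B_{2R}(x^*)$ for all $t\le k$, and $\|x^t-x^*\|^2\lesssim R_t^2$ for all $t\le k\}$ (with a polylogarithmic inflation factor absorbed into $\lesssim$); the goal is to prove $\Prob(E_K)\ge 1-\delta$ by induction on $k$, after which the stated bound for the last iterate is read off from the recursion at $k=K$. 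On $E_k$, Assumptions~\ref{as:L_smoothness} and \ref{as:QSC} are in force at $x^k$, and smoothness gives $\|\nabla^k\|\le L\|x^k-x^*\|\lesssim LR_k$; the prescribed $\lambda_k=\Theta(\exp(-\gamma\mu(1+k/2))R/(\gamma A))$ together with $\gamma\le 1/(LA)$ then guarantees $\|\nabla^k\|\le\lambda_k/2$ — this is precisely where the contraction built into the induction hypothesis must be exploited.

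\textbf{One-step recursion.} Next I would expand $\|x^{k+1}-x^*\|^2=\|x^k-x^*\|^2-2\gamma\langle g^k,x^k-x^*\rangle+\gamma^2\|g^k\|^2$, write $g^k=\nabla^k+\omega_k$, use quasi-strong convexity \eqref{eq:QSC} to lower-bound $\langle\nabla^k,x^k-x^*\rangle$ by $f(x^k)-f(x^*)+\tfrac\mu2\|x^k-x^*\|^2$, and use the smoothness consequence $\|\nabla^k\|^2\le 2L(f(x^k)-f(x^*))$ with $\gamma\le 1/(4L)$ to absorb the $\gamma^2\|\nabla^k\|^2$ contribution into the descent term. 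This yields
\[
\|x^{k+1}-x^*\|^2\;\le\;(1-\gamma\mu)\|x^k-x^*\|^2-\gamma\big(f(x^k)-f(x^*)\big)-2\gamma\langle\omega_k,x^k-x^*\rangle+2\gamma^2\|\omega_k\|^2.
\]
I would then split $\omega_k=\omega_k^{\mathrm b}+\omega_k^{\mathrm u}$ with $\omega_k^{\mathrm b}=\mathbb E_k g^k-\nabla^k$ deterministic given the past and $\omega_k^{\mathrm u}=g^k-\mathbb E_k g^k$ a martingale difference. Using $\|g^k\|\le\lambda_k$, $\|\nabla^k\|\le\lambda_k/2$ on $E_k$, and Assumption~\ref{as:bounded_bias_and_variance} (bias $\le b_k$, variance $\le\sigma_k^2$ of $\widetilde g^k$), the standard clipped-estimator lemmas of \citet[Appendix]{sadiev2023high} give $\|\omega_k^{\mathrm b}\|\lesssim b_k+\sigma_k^2/\lambda_k$, $\mathbb E_k\|\omega_k^{\mathrm u}\|^2\lesssim\sigma_k^2+\lambda_k b_k$, and $\|\omega_k^{\mathrm u}\|\le 2\lambda_k$.

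\textbf{Unrolling and Bernstein.} Unrolling with the factor $(1-\gamma\mu)$ gives $\|x^{K+1}-x^*\|^2\le(1-\gamma\mu)^{K+1}R^2 + S_K^{\mathrm{det}} + S_K^{\mathrm{mart}}$, where $S_K^{\mathrm{det}}=\sum_{k=0}^K(1-\gamma\mu)^{K-k}\big(2\gamma^2\mathbb E_k\|\omega_k^{\mathrm u}\|^2+4\gamma\|\omega_k^{\mathrm b}\|\|x^k-x^*\|+2\gamma^2\|\omega_k^{\mathrm b}\|^2\big)$ and $S_K^{\mathrm{mart}}$ collects $-2\gamma(1-\gamma\mu)^{K-k}\langle\omega_k^{\mathrm u},x^k-x^*\rangle$ together with $2\gamma^2(1-\gamma\mu)^{K-k}(\|\omega_k^{\mathrm u}\|^2-\mathbb E_k\|\omega_k^{\mathrm u}\|^2)$, a sum of bounded martingale differences. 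On $E_k$ the $k$-th increment is $\cO(\gamma\lambda_k R_k+\gamma^2\lambda_k^2)$ in magnitude with conditional variance $\cO(\gamma^2(\sigma_k^2+\lambda_k b_k)R_k^2+\gamma^4\lambda_k^2(\sigma_k^2+\lambda_k b_k))$; substituting $\lambda_k,\gamma$, summing the geometric weights, and applying Bernstein's inequality (Lemma~\ref{lem:Bernstein_ineq}) with confidence $\delta/(K+1)$ shows $S_K^{\mathrm{mart}}$ is dominated by $S_K^{\mathrm{det}}$. Finally $\sum_k(1-\gamma\mu)^{K-k}\gamma^2\sigma^2\lesssim\gamma\sigma^2/\mu$ and $\sum_k(1-\gamma\mu)^{K-k}\gamma b R_k\lesssim bR/\mu$, while $(1-\gamma\mu)^{K+1}R^2\le e^{-\gamma\mu(K+1)}R^2$; with $\gamma$ set to the prescribed minimum and $B_K,C_K,D$ chosen to balance the three terms, this is exactly the claimed $\widetilde\cO(\max\{R^2\exp(-\mu K/(L\ln\tfrac K\delta)),\,\sigma^2/(\mu K),\,bR/\mu\})$. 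Running the same estimate for every prefix $t\le k$ and taking a union bound over $k=0,\dots,K$ closes the induction, i.e. $\Prob(E_K)\ge1-\delta$.

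\textbf{Main obstacle.} The delicate point is the self-referential induction in the strongly convex regime: the clipping-bias bound $\|\omega_k^{\mathrm b}\|\lesssim b_k+\sigma_k^2/\lambda_k$ is only valid once $\|\nabla^k\|\le\lambda_k/2$, i.e. once $\|x^k-x^*\|\lesssim R_k$ — which is the contraction we are trying to establish — and simultaneously the accumulated Bernstein fluctuations in $S_k^{\mathrm{mart}}$ must be shown not to violate that contraction at any intermediate step. Arranging the geometric weights, the geometrically shrinking thresholds $\lambda_k$, the bias floor $bR/\mu$, and the variance floor to all fit under one self-consistent induction hypothesis — with the logarithmic factor in $\gamma$ tuned so that $e^{-\gamma\mu K}R^2$ is of the same order as the noise terms — is the real work; each individual inequality is then a routine application of smoothness, quasi-strong convexity, the clipping lemmas, and Lemma~\ref{lem:Bernstein_ineq}.
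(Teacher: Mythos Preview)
Your proposal is correct and follows essentially the same route as the paper: a step-by-step induction on the event $E_k=\{\|x^t-x^*\|^2\le 2\exp(-\gamma\mu t)R^2\text{ for all }t\le k\}$, the same one-step recursion obtained from quasi-strong convexity and \eqref{eq:L_smoothness_cor_2}, the same bias/martingale splitting $\theta_k=\theta_k^b+\theta_k^u$ with the clipping lemmas (Lemma~\ref{lem:bias_and_variance_clip}) yielding $\|\theta_k^b\|\le 4\sigma^2/\lambda_k+b$ and $\EE_k\|\theta_k^u\|^2\le 18\sigma^2$, and Bernstein (Lemma~\ref{lem:Bernstein_ineq}) with a union bound over $k$ to close the induction.

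Two small remarks. First, the paper keeps the cross term exactly and works with $\langle x^k-x^*-\gamma\nabla f(x^k),\theta_k\rangle$ rather than your cruder $-2\gamma\langle\omega_k,x^k-x^*\rangle+2\gamma^2\|\omega_k\|^2$; this only affects constants. Second, to apply Bernstein you need the increments to be bounded \emph{almost surely}, not merely on $E_k$; the paper handles this by introducing a truncated proxy $\eta_t$ equal to $x^t-x^*-\gamma\nabla f(x^t)$ when that vector has norm $\le\sqrt{2}(1+\gamma L)\exp(-\gamma\mu t/2)R$ and zero otherwise, then arguing that on $E_{T-1}$ the proxy coincides with the true quantity. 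Your sketch implicitly assumes this device; make it explicit when you write out the details. The extra $\lambda_k b_k$ term you put in the variance bound is unnecessary (Lemma~\ref{lem:bias_and_variance_clip} gives $18\sigma^2$ once $\|\EE_k\widetilde g^k\|\le\lambda_k/2$, and the bias only enters $\|\theta_k^b\|$), but it does no harm.
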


Similarly to the convex case, in the case of symmetric noise with bounded $\beta$-th moment for some $\beta > 0$, the above result matches the best-known one for \algname{clipped-SGD} under bounded variance and strong convexity \citep{gorbunov2021near}. In particular, for such noise distributions, condition \eqref{eq:bounded_alpha_moment} is not necessarily satisfied, and even if it is satisfied, our rate $\widetilde\cO(K^{-1})$ is better than the lower bound $\Omega(K^{-\nicefrac{2(\alpha-1)}{\alpha}})$ under condition \eqref{eq:bounded_alpha_moment} for $\alpha \in (1,2)$ \citep{zhang2020adaptive}. However, it is worth mentioning that we do rely on the symmetry of the noise distribution to achieve this rate, while the lower bound holds for any distributions satisfying \eqref{eq:bounded_alpha_moment}.

In the non-symmetric case, we combine Theorem~\ref{thm:clipped_SGD_main_str_cvx} with Lemma~\ref{lem:smom_moments_heavy_tails} and get the following result.

\begin{corollary}[General noise]\label{cor:SGD_biased_heavy_noise_str_cvx}
    Let the assumptions of Theorem~\ref{thm:clipped_SGD_main_str_cvx} hold and for all $x \in \R^d$ the noise $\nu = \nabla f_{\xi}(x) - \nabla f(x)$ satisfies Assumption~\ref{as:convolution}. Then the iterates produced after $K$ iterations of \algname{clipped-SGD} with $\nabla f_{\Xi^k}(x^k)$ being the smoothed median of means of $\cO(n)$ samples $\nabla f_{\xi}(x^k)$ and $\gamma_k = \gamma = \Theta(\min\{\nicefrac{1}{LA}, \nicefrac{\min\{\ln B_K, \ln C_K, \ln D\}}{\mu(K+1)}\}),$  $\lambda_k = \Theta(\nicefrac{\exp(-\gamma\mu(1 + \nicefrac{k}{2}))R}{\gamma A}),$ 
    where $A = \ln(\nicefrac{4(K+1)}{\delta})$ and $B_K, C_K, D$ are some parameters dependent on $b$ and $\sigma$ defined in \eqref{eq:bias_var_general_case}, with probability at least $1-\delta$ satisfy
    \begin{align*}
        \|x^{K} &- x^*\|^2
        = \widetilde\cO\left( R^2\exp\left(- \frac{\mu K}{L \ln \tfrac{K}{\delta}}\right) \right)
        \\&
        + \widetilde\cO\left(\!\max\!\left\{\!\frac{(1 + \theta^2) d + D}{\mu K}, \frac{(1 + \theta) CR}{\theta^2 n\mu}\!\right\}\!\right),
    \end{align*}
    where $n \geq \Omega(\nicefrac{\max_{j\in [d]} M_j}{\theta^2})$ and $C$, $D$ are defined in \eqref{eq:general_case_C_constant}-\eqref{eq:general_case_D_constant}. The overall number of oracle calls equals $\cO(nK)$.
\end{corollary}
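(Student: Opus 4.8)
\emph{Proof sketch (plan).} The plan is to obtain this corollary by feeding the bias and variance estimates of the smoothed median of means from Lemma~\ref{lem:smom_moments_heavy_tails} into the abstract high-probability guarantee of Theorem~\ref{thm:clipped_SGD_main_str_cvx}. So the only genuine work is to check that the estimator $\nabla f_{\Xi^k}(x^k) = \SMM_{m, n}\big(\nabla f_{\xi}(x^k), \theta\big)$ realises Assumption~\ref{as:bounded_bias_and_variance} on $Q = B_{2R}(x^*)$ with the advertised $b$ and $\sigma$, after which the result is a substitution.

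First I would fix $m$ to be an absolute constant with $m > 2 + 3/\beta_j$ for every $j$; this is possible because $\beta_j \geq 1$, so e.g. $m = 6$ works. Then the per-iteration batch size is $N = (2m+1)n = \cO(n)$, and the hypothesis $\theta^2 n \geq (2 \lor m^2) M_j$ of Lemma~\ref{lem:smom_moments_heavy_tails} reduces to $n \geq \Omega(\max_{j \in [d]} M_j / \theta^2)$, exactly the assumption of the corollary. Since the noise $\nu = \nabla f_\xi(x) - \nabla f(x)$ satisfies Assumption~\ref{as:convolution} at \emph{every} point $x$, Lemma~\ref{lem:smom_moments_heavy_tails} applies at each $x \in Q$; using $m = \cO(1)$ to absorb the factor $m$, its two displays give $\|\Exp\, \SMM_{m,n}(\nabla f_{\xi}(x),\theta) - \nabla f(x)\| \lesssim C/n$ and $\Exp\|\SMM_{m,n}(\nabla f_{\xi}(x),\theta) - \nabla f(x)\|^2 \lesssim (1+\theta^2)d + D$, with $C, D$ as in \eqref{eq:general_case_C_constant}--\eqref{eq:general_case_D_constant}. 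The first bound is \eqref{eq:bias_bound} with $b = \cO(C/n)$. For \eqref{eq:variance_bound} I would use the bias--variance decomposition $\Exp\|\SMM - \Exp\, \SMM\|^2 = \Exp\|\SMM - \nabla f(x)\|^2 - \|\Exp\, \SMM - \nabla f(x)\|^2 \leq \Exp\|\SMM - \nabla f(x)\|^2$, so that $\sigma^2 = \cO((1+\theta^2)d + D)$ works; these are the values in \eqref{eq:bias_var_general_case}. As the bounds are uniform over $x \in Q$, Assumption~\ref{as:bounded_bias_and_variance} holds with $b_k = b$, $\sigma_k = \sigma$ for every $k$, and the estimators at different iterations are independent by construction.

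It then remains to invoke Theorem~\ref{thm:clipped_SGD_main_str_cvx}: the step sizes $\gamma_k = \gamma$ and clipping radii $\lambda_k$ in the corollary are precisely those the theorem prescribes for this $b, \sigma$ (the auxiliary quantities $B_K, C_K, D$ being internal to that theorem and now pinned down through $b$ and $\sigma$), so it yields $\|x^K - x^*\|^2 = \widetilde\cO(\max\{R^2\exp(-\mu K/(L\ln\tfrac{K}{\delta})),\ \sigma^2/(\mu K),\ bR/\mu\})$ with probability at least $1-\delta$. Substituting $\sigma^2 = \cO((1+\theta^2)d + D)$ and $b = \cO(C/n)$ and separating the exponentially decaying term from the two statistical ones reproduces the stated bound, while the oracle count is $NK = (2m+1)nK = \cO(nK)$.

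I do not expect a real obstacle: all the substance already lives in Lemma~\ref{lem:smom_moments_heavy_tails} (the heavy-tailed bias control via the convolution structure of Assumption~\ref{as:convolution}) and in the induction behind Theorem~\ref{thm:clipped_SGD_main_str_cvx} (keeping the iterates inside $Q$ with high probability, via Bernstein's inequality). The only points needing care are the three bookkeeping ones above --- choosing $m$ as an absolute constant uniformly in $j$, passing from the variance about $\nabla f(x)$ to the variance about $\Exp\, \SMM$, and checking that the parameter schedules match --- plus tracking the numerical constants (the $(1+\theta)/\theta^2$ factors inside $C$) precisely enough that the final bias term reads exactly as in the statement.
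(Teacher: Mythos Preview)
Your proposal is correct and follows exactly the paper's approach: the paper itself does not give a standalone proof of this corollary but simply states that it is obtained by combining Theorem~\ref{thm:clipped_SGD_main_str_cvx} with Lemma~\ref{lem:smom_moments_heavy_tails}, which is precisely your plan. Your write-up is actually more careful than the paper's one-line justification, explicitly handling the choice of $m$ as an absolute constant, the passage from second moment about $\nabla f(x)$ to variance about $\Exp\,\SMM$, and the oracle-call bookkeeping.
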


Taking $n = \widetilde\cO(\varepsilon^{-1})$ and $K = \widetilde\cO(\varepsilon^{-1})$, one can guarantee $\|x^{K} - x^*\|^2 \leq \varepsilon$ with probability at least $1-\delta$, i.e., the total oracle complexity is $\widetilde\cO(\varepsilon^{-2})$. This result is worse than the one for the case of symmetric distribution, but it still does not require the existence of the expectation of the noise and is better than $\widetilde\cO(\varepsilon^{-\nicefrac{\alpha}{2(\alpha-1)}})$, which is known to be optimal (up to logarithmic factors) under assumption \eqref{eq:bounded_alpha_moment}, when $\alpha < \nicefrac{4}{3}$.

\subsection{Convergence of \algname{clipped-SSTM}}

Next, we consider an accelerated variant of \algname{clipped-SGD} called \algname{clipped-SSTM} \citep{gorbunov2020stochastic, gasnikov2016universal}:
\begin{eqnarray}
    x^{k+1} &=& \frac{A_ky^k + \alpha_{k+1} z^k}{A_{k+1}}, \label{eq:x_SSTM}\\
    z^{k+1} &=& z^k - \alpha_{k+1} \clip(\nabla f_{\Xi^k}(x^{k+1}), \lambda_{k+1}), \label{eq:z_SSTM}\\
    y^{k+1} &=& \frac{A_ky^k + \alpha_{k+1} z^{k+1}}{A_{k+1}}, \label{eq:y_SSTM}
\end{eqnarray}
where $z^0 = y^0 = x^0$, $\alpha_{0} = A_0$, $\alpha_{k+1} = \frac{k+2}{2aL}$ for some parameter $a_{k+1}> 0$, $A_{k+1} = A_k + \alpha_{k+1}$, and $\nabla f_{\Xi^k}(x^{k+1})$ is an estimator satisfying Assumption~\ref{as:bounded_bias_and_variance} sampled independently from previous iterations. Below we formulate the main convergence result for \algname{clipped-SSTM} in the convex case.

\begin{theorem}\label{thm:clipped_SSTM_main_cvx}
    Let Assumptions~\ref{as:L_smoothness} and \ref{as:str_cvx} with $\mu = 0$ hold on $Q = B_{3R}(x^*)$, where $R \geq \|x^0 - x^*\|$. Suppose that $\nabla f_{\Xi^k}(x^{k+1})$ satisfies Assumption~\ref{as:bounded_bias_and_variance} with parameters $b_k, \sigma_k$ for $k = 0,1,\ldots,K$, $K > 0$ and $a = \Theta(\min\{A^2, \nicefrac{\sigma (K+1)^{\nicefrac{3}{2}} \sqrt{A}}{LR}, \nicefrac{b(K+2)^2}{LR}\}),$ 
    $
    \lambda_{k}  = \Theta(\nicefrac{R}{\alpha_{k+1} A}),
    $ 
    where $A = \ln(\nicefrac{4(K+1)}{\delta})$ and $b = \max_{k=0,1,\ldots,K} b_k$, $\sigma = \max_{k=0,1,\ldots,K} \sigma_k$. Then the iterates produced by \algname{clipped-SSTM} after $K$ iterations with probability at least $1-\delta$ satisfy
    \begin{equation}
        f(y^K) - f(x^*) = \widetilde\cO\left(\max\left\{\frac{LR^2}{K^2}, \frac{\sigma R}{\sqrt{K}}, bR\right\}\right). \notag
    \end{equation}
\end{theorem}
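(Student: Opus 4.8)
The plan is to adapt the analysis of \algname{clipped-SSTM} from \citep{gorbunov2020stochastic, sadiev2023high, gorbunov2021near} to the biased-estimator setting of Assumption~\ref{as:bounded_bias_and_variance}. The working object is the ``similar triangles'' Lyapunov function $\Psi_k = A_k\big(f(y^k) - f(x^*)\big) + \tfrac12\|z^k - x^*\|^2$. Writing $\widetilde g^k = \clip(\nabla f_{\Xi^k}(x^{k+1}), \lambda_{k+1})$, I would split the gradient error as $\widetilde g^k - \nabla f(x^{k+1}) = \theta_u^k + \theta_b^k$, where $\theta_u^k = \widetilde g^k - \EE_k \widetilde g^k$ is a martingale-difference term with $\|\theta_u^k\| \le 2\lambda_{k+1}$ almost surely, and $\theta_b^k = \EE_k \widetilde g^k - \nabla f(x^{k+1})$ collects both the estimator bias from \eqref{eq:bias_bound} and the bias introduced by clipping. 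A standard clipping lemma shows that whenever $\lambda_{k+1} \ge 2\|\EE_k \nabla f_{\Xi^k}(x^{k+1})\|$ one has $\|\theta_b^k\| \lesssim b + \sigma_k^2/\lambda_{k+1}$ and $\EE_k\|\theta_u^k\|^2 \lesssim \sigma_k^2$; these two bounds are all that is used about the estimator, which is why Assumption~\ref{as:bounded_bias_and_variance} suffices.

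The argument proceeds by induction over $k$ on the event $E_k = \{x^0, \dots, x^k, z^0, \dots, z^{k-1}, y^0, \dots, y^{k-1} \in B_{3R}(x^*)\}$. On $E_k$, smoothness and $\nabla f(x^*) = 0$ give $\|\nabla f(x^{k+1})\| \le 3LR$, so the prescribed $\lambda_{k+1} = \Theta\big(R/(\alpha_{k+1}A)\big)$ dominates the true gradient (which pins down the constant in the choice of $\lambda_k$) and the bias/variance estimates above apply. From $L$-smoothness, convexity (Assumption~\ref{as:str_cvx} with $\mu=0$), the updates \eqref{eq:x_SSTM}--\eqref{eq:y_SSTM}, and the step-size condition $L\alpha_{k+1}^2 \le A_{k+1}$ (ensured by the choice of $a$), one obtains the telescoped estimate
\[
\Psi_K \le \Psi_0 - \sum_{k=0}^{K-1} \alpha_{k+1}\langle \theta_u^k + \theta_b^k,\, z^k - x^*\rangle + 2\sum_{k=0}^{K-1}\alpha_{k+1}^2\big(\|\theta_u^k\|^2 + \|\theta_b^k\|^2\big).
\]
Each term on the right is then controlled on $E_K$: the deterministic $\theta_b^k$-contributions by Cauchy--Schwarz with $\|z^k - x^*\| \le 3R$ and the bound on $\|\theta_b^k\|$; the martingale term $-\sum_k \alpha_{k+1}\langle \theta_u^k, z^k - x^*\rangle$ by Bernstein's inequality (Lemma~\ref{lem:Bernstein_ineq}), whose increments are at most $6R\alpha_{k+1}\lambda_{k+1} = \Theta(R^2/A)$ and whose predictable variance is $\lesssim R^2\sigma^2\sum_k\alpha_{k+1}^2$; and the quadratic noise term $\sum_k \alpha_{k+1}^2\|\theta_u^k\|^2$ by peeling off its conditional expectation ($\lesssim \sigma^2\sum_k\alpha_{k+1}^2$) and applying Bernstein once more to the centered remainder, whose increments are at most $4\alpha_{k+1}^2\lambda_{k+1}^2 = \Theta(R^2/A^2)$.

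Plugging in $\alpha_{k+1} = (k+2)/(2aL)$, $A_K = \Theta(K^2/(aL))$, $\lambda_{k+1} = \Theta(R/(\alpha_{k+1}A))$, and the stated three-part choice of $a$, every error term is kept below a fixed multiple of $R^2$ (with the multiple not involving logarithms, the $\ln(4(K+1)/\delta)$ factors being absorbed into $A$ and $a$), so that on a $(1-\delta)$-probability event $\Psi_K \lesssim R^2$. This simultaneously (i) closes the induction, since $\|z^K - x^*\|^2 \le 2\Psi_K$ keeps $z^K$ in $B_{3R}(x^*)$ while $x^{K+1}$ and $y^K$ are convex combinations of $y$- and $z$-iterates already in the ball plus a displacement of size at most $\alpha_{K+1}\|\widetilde g^K\| \le \alpha_{K+1}\lambda_{K+1} = \Theta(R/A)$, and (ii) yields $f(y^K) - f(x^*) \le \Psi_K/A_K = \widetilde\cO(R^2/A_K) = \widetilde\cO\big(\max\{LR^2/K^2,\ \sigma R/\sqrt{K},\ bR\}\big)$ once the value of $a$ is substituted into $A_K$.

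\textbf{The main obstacle} is exactly this bookkeeping: one must fix the numerical constants inside $a$ and $\lambda_k$ so that, conditioned on $E_k$, the two Bernstein tail bounds together with the deterministic bias and second-moment estimates leave $\Psi_{k+1}$ under the threshold that guarantees $z^{k+1}$ (hence $x^{k+1}, y^{k+1}$) remains in $B_{3R}(x^*)$, while a union bound over the $K+1$ steps costs only the single $\ln(4(K+1)/\delta)$ factor hidden in $A$. The feature distinguishing the present setting from \citep{gorbunov2020stochastic} is that $\theta_b^k$ is not zero-mean and therefore cannot be folded into the martingale; it propagates deterministically through the recursion, and forcing its cumulative effect to be merely $\widetilde\cO(bR)$ rather than something growing with $K$ is precisely what necessitates the extra term $b(K+2)^2/(LR)$ in the choice of $a$ (mirroring the role of the $R/(bA)$ and $R/(b(K+1))$ options in the step size of Theorem~\ref{thm:clipped_SGD_main_cvx}).
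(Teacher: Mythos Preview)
Your overall architecture---induction on the event that the iterates stay in $B_{3R}(x^*)$, the $\theta_u^k/\theta_b^k$ split, Bernstein's inequality for the martingale part and deterministic control of the bias---is exactly the route the paper takes, and your identification of the extra $b(K+2)^2/(LR)$ term in $a$ as the device that tames the accumulated bias is correct.

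There is, however, a real gap in the step where you claim that ``smoothness and $\nabla f(x^*)=0$ give $\|\nabla f(x^{k+1})\|\le 3LR$, so the prescribed $\lambda_{k+1}=\Theta(R/(\alpha_{k+1}A))$ dominates the true gradient''. The clipping level is \emph{not} constant: since $\alpha_{k+1}=(k+2)/(2aL)$, one has $\lambda_{k+1}=\Theta\big(aLR/((k+2)A)\big)$, which for $k\approx K$ and $a=\Theta(A^2)$ is of order $ALR/K$. This is far smaller than $3LR$ once $K\gg A$, so your bound $\|\nabla f(x^{k+1})\|\le 3LR$ does \emph{not} guarantee $\|\nabla f(x^{k+1})\|\le \lambda_{k+1}/2$, and without that the clipping-bias estimate $\|\theta_b^k\|\lesssim b+\sigma^2/\lambda_{k+1}$ is unavailable. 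Forcing $\lambda_{k+1}\gtrsim LR$ uniformly would require $a\gtrsim KA$, which collapses $A_K$ to order $K/(aL)=\Theta(1/(AL))\cdot K$ and destroys the $1/K^2$ acceleration.

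The paper closes this gap by feeding the induction hypothesis back into the gradient bound: from $f(y^t)-f(x^*)\le 6aLR^2/(t(t+3))$ and \eqref{eq:L_smoothness_cor_2} one gets $\|\nabla f(y^t)\|\le\sqrt{12aL^2R^2/(t(t+3))}$, and combining this with $\|x^{t+1}-y^t\|=(\alpha_{t+1}/A_t)\|x^{t+1}-z^t\|\lesssim R\alpha_{t+1}/A_t$ yields $\|\nabla f(x^{t+1})\|\lesssim \sqrt{a}LR/t$, which \emph{does} match the decay of $\lambda_t$ (this is where the condition $a\gtrsim A^2$ is actually used). You need this refined, $t$-dependent gradient bound; the naive Lipschitz bound is not enough for the accelerated method.
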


As expected for accelerated methods, the above bound has a better $\widetilde\cO(K^{-2})$ ``deterministic'' term in contrast to the $\widetilde\cO(K^{-1})$ corresponding term in the upper bound for \algname{clipped-SGD}. When the noise is symmetric and has bounded $\beta$-th moment for some $\beta > 0$ (not necessarily larger than $1$), then one can construct an estimator with $b = 0$ and finite $\sigma$ (see Proposition~\ref{prop:median_symmetric_case}). In this case, the result matches (up to logarithmic factors) the optimal ones derived under bounded variance \citep{gorbunov2020stochastic} or sub-Gaussian noise \citep{ghadimi2012optimal} assumptions. The improvement of the deterministic part can be utilized when parallel independent computations of the estimator are possible with marginal overheads (e.g., on communications/aggregation of the results of parallel computations).

Finally, for non-symmetric noise distributions satisfying Assumption~\ref{as:convolution}, Theorem~\ref{thm:clipped_SSTM_main_cvx} with Lemma~\ref{lem:smom_moments_heavy_tails} imply the following result.

\begin{corollary}[General noise]\label{cor:SSTM_biased_heavy_noise_str_cvx}
    Let the assumptions of Theorem~\ref{thm:clipped_SSTM_main_cvx} hold and for all $x \in \R^d$ the noise $\nu = \nabla f_{\xi}(x) - \nabla f(x)$ satisfies Assumption~\ref{as:convolution}. Then \algname{clipped-SSTM} with $\nabla f_{\Xi^k}(x^k)$ being the smoothed median of means of $\cO(n)$ samples $\nabla f_{\xi}(x^k)$ and $a = \Theta(\min\{A^2, \nicefrac{\sigma (K+1)^{\nicefrac{3}{2}} \sqrt{A}}{LR}, \nicefrac{b(K+2)^2}{LR}\}),$ $\lambda_{k}  = \Theta(\nicefrac{R}{\alpha_{k+1} A}),$ 
    where $A = \ln(\nicefrac{4(K+1)}{\delta})$ and $b$ and $\sigma$ defined in \eqref{eq:bias_var_general_case}, with probability at least $1-\delta$ after $K$ iterations ensures that $f(y^K) - f(x^*)$ equals
    \begin{align*}
        \widetilde\cO\!\left(\!\max\!\left\{\frac{LR^2}{K^2}, \frac{\sqrt{(1 + \theta^2) d + D} R}{\sqrt{K}}, \frac{(1 + \theta) CR}{\theta^2n}\right\}\!\right)\!,
    \end{align*}
    where $n \geq \Omega(\nicefrac{\max_{j\in [d]} M_j}{\theta^2})$ and $C$, $D$ are defined in \eqref{eq:general_case_C_constant}-\eqref{eq:general_case_D_constant}. The overall number of oracle calls equals $\cO(nK)$.
\end{corollary}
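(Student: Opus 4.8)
This corollary is obtained by feeding the bias/variance guarantees of Lemma~\ref{lem:smom_moments_heavy_tails} into the black-box convergence bound of Theorem~\ref{thm:clipped_SSTM_main_cvx}, which depends on the gradient estimator only through Assumption~\ref{as:bounded_bias_and_variance}. The plan is: first, pick the parameters $m,n,\theta$ of $\SMM_{m,n}$ so that Lemma~\ref{lem:smom_moments_heavy_tails} is applicable and the batch size is $\cO(n)$; second, translate its two conclusions into the constants $b$ and $\sigma^2$ of Assumption~\ref{as:bounded_bias_and_variance}; third, substitute those into Theorem~\ref{thm:clipped_SSTM_main_cvx} and simplify the resulting $\max$.

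For the first step I would fix $m$ to be any constant exceeding $5$; since Assumption~\ref{as:convolution} forces $\beta_j\geq 1$ for every $j$, this guarantees $m>2+3/\beta_j$ for all coordinates simultaneously, and it makes the batch size $N=(2m+1)n=\cO(n)$, as claimed. The remaining hypothesis of Lemma~\ref{lem:smom_moments_heavy_tails}, namely $\theta^2 n\geq (2\lor m^2)M_j$ for all $j$, is exactly the assumed lower bound $n\geq\Omega(\max_{j\in[d]}M_j/\theta^2)$. At iteration $k$ I would draw $N$ fresh i.i.d.\ oracle samples at the query point $x^{k+1}$, independent of the history, and set $\nabla f_{\Xi^k}(x^{k+1})=\SMM_{m,n}(\nabla f_\xi(x^{k+1}),\theta)$, which is precisely the kind of independently sampled estimator required in \eqref{eq:z_SSTM}.

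For the second step I would invoke Lemma~\ref{lem:smom_moments_heavy_tails}: its first bound gives $\Exp\|\SMM_{m,n}(\nabla f_\xi(x^{k+1}),\theta)-\nabla f(x^{k+1})\|^2\lesssim m\{(1+\theta^2)d+D\}$ with $D$ as in \eqref{eq:general_case_D_constant}, and since the mean minimizes the expected squared deviation the centred second moment in \eqref{eq:variance_bound} is no larger; absorbing the constant $m$, this yields $\sigma^2=\cO((1+\theta^2)d+D)$. Its second bound gives $\|\Exp\SMM_{m,n}(\nabla f_\xi(x^{k+1}),\theta)-\nabla f(x^{k+1})\|\lesssim mC/n$ with $C$ as in \eqref{eq:general_case_C_constant}, i.e.\ $b=\cO(C/n)$. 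Both estimates are uniform in $k$, so $b=\max_k b_k$ and $\sigma=\max_k\sigma_k$ are of this order, matching \eqref{eq:bias_var_general_case}.

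For the third step, the hypotheses of Theorem~\ref{thm:clipped_SSTM_main_cvx} hold by assumption and the stated choices of $a$ and $\lambda_k$ coincide with those of the theorem once $b,\sigma$ are as above, so with probability at least $1-\delta$
\begin{align*}
f(y^K)-f(x^*)
&= \widetilde\cO\!\left(\max\!\left\{\tfrac{LR^2}{K^2}, \tfrac{\sigma R}{\sqrt{K}}, bR\right\}\right)\\
&= \widetilde\cO\!\left(\max\!\left\{\tfrac{LR^2}{K^2}, \tfrac{\sqrt{(1+\theta^2)d+D}\,R}{\sqrt{K}}, \tfrac{(1+\theta)CR}{\theta^2 n}\right\}\right),
\end{align*}
where in the last line I substituted $\sigma^2=\cO((1+\theta^2)d+D)$ for the middle term and used the bias bound $b=\cO(C/n)$ of Lemma~\ref{lem:smom_moments_heavy_tails}, written in the same form $\widetilde\cO((1+\theta)CR/(\theta^2 n))$ as in the \algname{clipped-SGD} corollaries, for the last term, with $C,D$ as in \eqref{eq:general_case_C_constant}--\eqref{eq:general_case_D_constant}. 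Counting $K$ iterations at $\cO(n)$ samples each gives the $\cO(nK)$ oracle complexity. I do not anticipate a genuine obstacle: everything reduces to two already-proved results, and the only points to watch are that a single constant $m$ clears $m>2+3/\beta_j$ for every coordinate (this is where $\beta_j\geq 1$ is used) and that the per-step samples entering $\SMM$ are drawn independently of the past, both of which hold by construction.
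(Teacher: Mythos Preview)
The proposal is correct and follows exactly the approach the paper indicates: the paper does not give a standalone proof of this corollary but simply states that ``Theorem~\ref{thm:clipped_SSTM_main_cvx} with Lemma~\ref{lem:smom_moments_heavy_tails} imply the following result,'' and your three-step plan---fix $m=\cO(1)$ so the batch is $\cO(n)$ and Lemma~\ref{lem:smom_moments_heavy_tails} applies, read off $b=\cO(C/n)$ and $\sigma^2=\cO((1+\theta^2)d+D)$ as in \eqref{eq:bias_var_general_case}, then plug into Theorem~\ref{thm:clipped_SSTM_main_cvx}---is precisely that combination spelled out. The auxiliary observations (using $\beta_j\geq 1$ to make a single constant $m>5$ work for all coordinates, and bounding the centred second moment by the uncentred one) are the right way to connect the hypotheses.
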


When the non-symmetric part is large, then the same comments are valid as the ones we make after Corollary~\ref{cor:SGD_biased_heavy_noise}. However, when the non-symmetric part is small, then there are regimes when the effect of acceleration is noticeable (for small enough $\theta$).

For the strongly convex problems, we consider a restarted version of \algname{clipped-SSTM}. We provide the results for this method in Appendix~\ref{appendix:restarted_clipped_sstm}.

\section{NUMERICAL EXPERIMENTS}
\label{sec:numerical}

\begin{figure*}[t]
    \includegraphics[width=\textwidth]{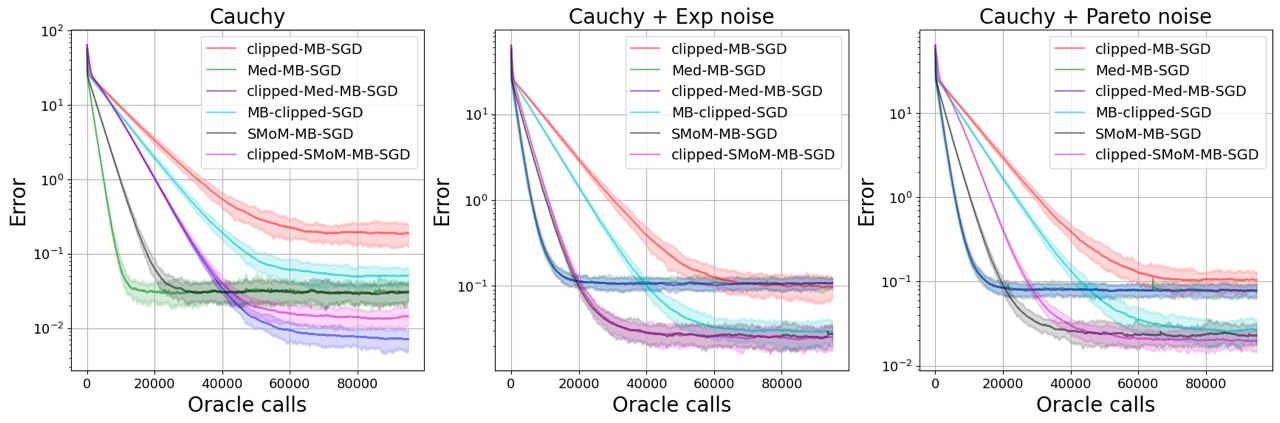}
    \centering
    \caption{Dependence of the mean error on the oracle calls number with a 95th and 5th percentile bounds.}
    \label{fig:errors}
\end{figure*}
In this section, we illustrate the performance of \algname{clipped-SGD} combined with the median and smoothed median of means on a simple quadratic problem:
\begin{equation}
    \min\limits_{x\in \R^d}\frac{1}{2}x^\top \mA x, \label{eq:quadratic_problem}
\end{equation}
where $\mA \in \R^{d\times d}$ is a randomly generated symmetric positive definite matrix. The code of numerical experiments is available on GitHub\footnote{\href{https://github.com/Kutuz4/AISTATS2024_SMoM}{\texttt{https://github.com/Kutuz4/AISTATS2024\_SMoM}}}. We consider stochastic gradients of the form $\nabla f_{\xi}(x) = \mA x + \xi$, where $\xi$ is an artificial noise following one of the following distributions.

\noindent
\emph{Example 1.}\quad Cauchy distribution with the density $\sfp_{c}(x)=\frac{1}{\pi (1 + x ^ 2)}$.

\noindent
\emph{Example 2.}\quad The mixture of Cauchy and exponential distributions with the density $\sfp(x) = 0.7 \cdot \sfp_{c}(x) + 0.3 \cdot e ^ {- (x + 1)} \cdot \1\{x \geq -1\}$.

\noindent
\emph{Example 3.}\quad The mixture of Cauchy and Pareto distributions with the density $\sfp(x) = 0.7 \cdot \sfp_{c}(x) + 0.3 \cdot \frac{3}{(x + 1.5) ^ 4} \cdot \1\{x \geq -1.5\}$.

The experiments check the ability of median and smoothed median of means to deal with symmetric and asymmetrical heavy-tailed noise. We consider two examples of asymmetrical distributions with rapidly (Example 2) and slowly (Example 3) decaying antisymmetric part to examine its influence on the performance of optimization procedures. 

We compare the following baselines:
\begin{itemize}
    \item \algname{clipped-MB-SGD}: \algname{clipped-SGD}, where clipping is taken after mini-batching/averaging;
    \item \algname{MB-clipped-SGD} (mini-batched \algname{clipped-SGD}, where averaging is taken after clipping);
    \item \algname{Med-MB-SGD} (mini-batched \algname{SGD} with median instead of averaging);
    \item \algname{clipped-Med-MB-SGD} (mini-batched \algname{SGD} with median instead of averaging and clipping operation after median);
    \item \algname{SMoM-MB-SGD} (mini-batched \algname{SGD} with the smoothed median of means);
    \item \algname{clipped-SMoM-MB-SGD} (mini-batched \algname{SGD} with clipping of the smoothed median of means).
\end{itemize}

For all methods, except for \algname{SMoM-MB-SGD} and \algname{clipped-SMoM-MB-SGD}, the batch size is 5, while for \algname{SMoM-MB-SGD} and \algname{clipped-SMoM-MB-SGD} we took $\SMM_{m, n}$ with $m = n = 2$. We have chosen $x_0 = 8 / \sqrt{d} \cdot (1, 1, 1, ..., 1)^\top$, where $d = 50$, as a starting point, launched all the methods $50$ times and computed the average errors. The results are displayed in Figure \ref{fig:errors}.

In the case of a symmetric distribution, \algname{Med-MB-SGD} and \algname{clipped-Med-MB-SGD} perform better than  \algname{clipped-SMoM-MB-SGD} due to lower oracle calls count for one iteration. However, as we expected, \algname{Med-MB-SGD} and \algname{clipped-Med-MB-SGD} cannot achieve high accuracy in the case of asymmetric distributions due to the presence of a bias, while the smoothed median of means successfully adapts to this situation. Suddenly, averaging gradients after clipping has good performance in the asymmetric case, but it still converges slower compared to \algname{clipped-SMoM-MB-SGD}. In terms of the number of steps, \algname{clipped-SMoM-MB-SGD} converges much faster than other methods on asymmetric noise because it needs $(2m + 1)n = 10$ oracle calls on each iteration. We also see that \algname{SMoM-MB-SGD} has a similar convergence rate to \algname{clipped-SMoM-MB-SGD} in the case of distributions with less heavy tails.

\section{CONCLUSION}

In this work, we show that under some structural assumptions on the noise distribution with heavy tails, one can achieve faster convergence in solving of stochastic optimization problems. The key instrument we use is the smoothed median of means, which provably has a small bias and a finite variance for quite a wide class of distributions. Although our results are given for smooth convex/strongly convex problems, using similar technique, one can derive high-probability convergence results for smooth non-convex problems \citep{sadiev2023high, nguyen2023high}, non-smooth convex and strongly convex problem \citep{gorbunov2021near}, variational inequalities under some structured non-monotonicity assumptions \citep{gorbunov2022clipped}, and composite and distributed optimization problems \citep{gorbunov2023high}. One can also improve the logarithmic factors in our results using the technique from \citep{nguyen2023improved}.

\subsubsection*{Acknowledgements}

The work of A. Gasnikov was supported by a grant for research centers in the field of artificial
intelligence, provided by the Analytical Center for the Government of the Russian Federation in
accordance with the subsidy agreement (agreement identifier 000000D730321P5Q0002) and the
agreement with the Ivannikov Institute for System Programming of the Russian Academy of Sciences
dated November 2, 2021 No. 70-2021-00142.

\bibliography{refs}
\newpage

\section*{Checklist}

 \begin{enumerate}

 \item For all models and algorithms presented, check if you include:
 \begin{enumerate}
   \item A clear description of the mathematical setting, assumptions, algorithm, and/or model. \textcolor{blue}{Yes, see Sections \ref{sec:setup}, \ref{sec:smoothed_mom} and \ref{sec:main_results}.}
   \item An analysis of the properties and complexity (time, space, sample size) of any algorithm. \textcolor{blue}{Yes, see our results in Sections \ref{sec:smoothed_mom} and \ref{sec:main_results}.}
   \item (Optional) Anonymized source code, with specification of all dependencies, including external libraries. \textcolor{blue}{Yes, we provide the source code with supplementary materials.}
 \end{enumerate}

 \item For any theoretical claim, check if you include:
 \begin{enumerate}
   \item Statements of the full set of assumptions of all theoretical results. \textcolor{blue}{Yes.}
   \item Complete proofs of all theoretical results. \textcolor{blue}{Yes, the proofs are collected in Appendix.}
   \item Clear explanations of any assumptions. \textcolor{blue}{Yes.}   
 \end{enumerate}

 \item For all figures and tables that present empirical results, check if you include:
 \begin{enumerate}
   \item The code, data, and instructions needed to reproduce the main experimental results (either in the supplemental material or as a URL). \textcolor{blue}{Yes.}
   \item All the training details (e.g., data splits, hyperparameters, how they were chosen). \textcolor{blue}{Yes, see Section \ref{sec:numerical} and Appendix.}
    \item A clear definition of the specific measure or statistics and error bars (e.g., with respect to the random seed after running experiments multiple times). \textcolor{blue}{Yes.}
    \item A description of the computing infrastructure used. \textcolor{blue}{Not Applicable. We do not use any computing infrastructure, except for an ordinary laptop.}
 \end{enumerate}

 \item If you are using existing assets (e.g., code, data, models) or curating/releasing new assets, check if you include:
 \begin{enumerate}
   \item Citations of the creator If your work uses existing assets. \textcolor{blue}{Not Applicable.}
   \item The license information of the assets, if applicable. \textcolor{blue}{Not Applicable.}
   \item New assets either in the supplemental material or as a URL, if applicable. \textcolor{blue}{Not Applicable.}
   \item Information about consent from data providers/curators. \textcolor{blue}{Not Applicable.}
   \item Discussion of sensible content if applicable, e.g., personally identifiable information or offensive content. \textcolor{blue}{Not Applicable.}
 \end{enumerate}

 \item If you used crowdsourcing or conducted research with human subjects, check if you include:
 \begin{enumerate}
   \item The full text of instructions given to participants and screenshots. \textcolor{blue}{Not Applicable.}
   \item Descriptions of potential participant risks, with links to Institutional Review Board (IRB) approvals if applicable. \textcolor{blue}{Not Applicable.}
   \item The estimated hourly wage paid to participants and the total amount spent on participant compensation. \textcolor{blue}{Not Applicable.}
 \end{enumerate}

 \end{enumerate}

\newpage

\appendix
\onecolumn

\tableofcontents
\newpage

\section{SMOOTHED MEDIAN OF MEANS ESTIMATOR}

\subsection{Proof of Proposition \ref{prop:median_symmetric_case}}

Let $\sfP_j$ be the cumulative distribution function of $\nu_j$:
\[
    \sfP_j(t) = \int\limits_{-\infty}^t \sfp_j(u) \rmd u,
    \quad \text{for all $t \in \R$.}
\]
Then the probability density of the median $\Med(\nu_{j, 1}, \dots, \nu_{j, 2m + 1})$ is given by
\[
    (2m + 1) \binom{2m}{m} \sfP_j(t)^m \big(1 - \sfP_j(t)\big)^m \sfp_j(t).
\]
Let us prove that the variance is bounded. To be more precise, we are going to show that
\[
    \int\limits_{-\infty}^{+\infty} t^2 \, \sfP_j(t)^m \big(1 - \sfP_j(t)\big)^m \sfp_j(t) \, \rmd t \leq \left(1 \vee \frac{4 B_j}{\beta_j} \right)^{2 / \beta_j} \cdot 4^{-m}.
\]
Since, according to the conditions of the proposition, $\sfp_j(u) \leq B_j / (1 \vee |u|)^{1 + \beta_j}$ for any $u \in \R$, it holds that
\[
    1 - \sfP_j(t)
    \leq \int\limits_t^{+\infty} \frac{B_j}{u^{1 + \beta_j}} \rmd u
    = \frac{B_j}{\beta_j} \cdot \frac1{t^{\beta_j}}
    \quad \text{for any $t \geq 1$.}
\]
Similarly, for any $t \leq -1$, we have
\[
    \sfP_j(t) \leq \frac{B_j}{\beta_j} \cdot \frac1{|t|^{\beta_j}}.
\]
Hence, for any $t \in \R$, $\sfP_j(t) \big( 1 - \sfP_j(t) \big)$ satisfies the inequality
\begin{equation}
    \label{eq:p_tail}
    \sfP_j(t) \big( 1 - \sfP_j(t) \big)
    \leq
    \begin{cases}
        4^{-m} \land \left( \frac{B_j}{\beta_j |t|^{\beta_j}} \right)^m
        \qquad \text{if $|t| \geq 1$},\\
        4^{-m} \qquad \text{otherwise.}
    \end{cases}
\end{equation}
This implies that the integral of $t^2 \, \sfP_j(t)^m (1 - \sfP_j(t))^m \sfp_j(t)$ over the real line does not exceed
\begin{align*}
    \int\limits_{-\infty}^{+\infty} t^2 \, \sfP_j(t)^m \big(1 - \sfP_j(t)\big)^m \sfp_j(t) \, \rmd t
    &
    \leq \sup\limits_{t \in \R} \left\{ t^2 \, \sfP_j(t)^m \big(1 - \sfP_j(t)\big)^m \right\} \cdot \int\limits_{-\infty}^{+\infty} \sfp_j(u) \, \rmd u
    \\&
    = \sup\limits_{t \in \R} \left\{ t^2 \, \sfP_j(t)^m \big(1 - \sfP_j(t)\big)^m \right\}
    \\&
    = \max\left\{\sup\limits_{|t| \leq 1} \left\{ t^2 \, \sfP_j(t)^m \big(1 - \sfP_j(t)\big)^m \right\},  \sup\limits_{|t| \geq 1} \left\{ t^2 \, \sfP_j(t)^m \big(1 - \sfP_j(t)\big)^m \right\} \right\}
    \\&
    \leq \max\left\{4^{-m},  \sup\limits_{|t| \geq 1} \left\{ t^2 \, \sfP_j(t)^m \big(1 - \sfP_j(t)\big)^m \right\}\right\}.
\end{align*}
We use \eqref{eq:p_tail} to bound the supremum in the right-hand side:
\begin{align*}
    &
    \sup\limits_{|t| \geq 1} \left\{ t^2 \, \sfP_j(t)^m \big(1 - \sfP_j(t)\big)^m \right\}
    \leq \sup\limits_{|t| \geq 1} \left\{ t^2 \left( \frac14 \land \frac{B_j}{\beta_j |t|^{\beta_j}} \right)^m \right\}
    \\&
    \leq \max\left\{ \sup\limits_{1 \leq |t| \leq (4B_j / \beta_j)^{1 / \beta_j}} \left\{ t^2 \cdot 4^{-m} \right\}, \sup\limits_{|t| \geq (4B_j / \beta_j)^{1 / \beta_j}} \left\{ t^2 \left( \frac{B_j}{\beta_j |t|^{\beta_j}} \right)^m \right\} \right\}.
\end{align*}
If $m \beta_j \geq 2$, then we have
\[
    \sup\limits_{|t| \geq (4B_j / \beta_j)^{1 / \beta_j}} \left\{ t^2 \left( \frac{B_j}{\beta_j |t|^{\beta_j}} \right)^m \right\}
    = \left( \frac{4 B_j}{\beta_j} \right)^{2 / \beta_j} \cdot 4^{-m},
\]
and, hence,
\[
    \sup\limits_{|t| \geq 1} \left\{ t^2 \, \sfP_j(t)^m \big(1 - \sfP_j(t)\big)^m \right\}
    \leq \sup\limits_{|t| \geq 1} \left\{ t^2 \left( \frac14 \land \frac{B_j}{\beta_j |t|^{\beta_j}} \right)^m \right\}
    \leq \left(1 \vee \frac{4 B_j}{\beta_j} \right)^{2 / \beta_j} \cdot 4^{-m}.
\]
Thus, we obtain that
\begin{align*}
    \int\limits_{-\infty}^{+\infty} t^2 \, \sfP_j(t)^m \big(1 - \sfP_j(t)\big)^m \sfp_j(t) \, \rmd t
    &
    \leq \sup\limits_{t \in \R} \left\{ t^2 \, \sfP_j(t)^m \big(1 - \sfP_j(t)\big)^m \right\}
    \leq \left(1 \vee \frac{4 B_j}{\beta_j} \right)^{2 / \beta_j} \cdot 4^{-m}.
\end{align*}
It only remains to note that
\[
    \binom{2m}{m}
    = \frac{(2m)!}{m! \cdot m!}
    = \prod\limits_{j = 1}^m \frac{2j}{j} \cdot \prod\limits_{j = 1}^m \frac{2j - 1}{j}
    \leq 2^m \cdot 2^m
    = 4^m
\]
to derive the desired bound
\begin{equation}
    \Exp \, \Med\left(\nu_{j, 1}, \dots, \nu_{j, 2m + 1} \right)^2
    \leq (2m + 1) \left(1 \vee \frac{4 B_j}{\beta_j} \right)^{2 / \beta_j}. \label{eq:var_of_median}
\end{equation}
Concerning the expectation of $\Med\left(\nu_{j, 1}, \dots, \nu_{j, 2m + 1} \right)$, we point out that it is finite, because $\Med\left(\nu_{j, 1}, \dots, \nu_{j, 2m + 1} \right)$ has a finite second moment. Moreover, due to the symmetry of $\sfp_j$, we have $\sfP_j(-t) = 1 - \sfP_j(t)$ and, thus,
\[
    (-t) \sfP_j(-t)^m \big(1 - \sfP_j(-t) \big)^m \sfp_j(-t)
    = - t \big(1 - \sfP_j(t) \big)^m \sfP_j(t)^m \sfp_j(t)
    \quad \text{for all $t \in \R$.}
\]
Hence, it holds that
\[
    \Exp \, \Med\left(\nu_{j, 1}, \dots, \nu_{j, 2m + 1} \right)
    = \int\limits_{-\infty}^{+\infty} t \sfP_j(t)^m \big(1 - \sfP_j(t) \big)^m \sfp_j(t) \, \rmd t
    = 0.
\]
\endproof

\subsection{Proof of Lemma \ref{lem:smom_moments_light_tails}}

Let $\Sigma_{11}, \dots, \Sigma_{dd}$ be the diagonal elements of $\Sigma$. Denote the difference $\nabla f_\xi(x) - \nabla f(x)$ by $\nu = (\nu_1, \dots, \nu_d)^\top$.
It is enough to show that
\begin{align*}
    \left| \Exp \, \SMM_{m, n}(\nu_j, \theta) \right|
    &
    \leq (2m + 1) \cdot \frac{\Sigma_{jj}}{\theta^2 n} \sqrt{\theta^2 + \frac{\|\Sigma\|}n} \cdot \left[ \frac{4m}{(2m - 1) \sqrt{2 \pi e}} + \frac{m(2m - 1)}{(2m - 2) \pi e} \cdot \frac{\|\Sigma\|}{\theta^2 n} + 32\left(\frac{m \|\Sigma\|}{\theta^2 n}\right)^2 \right]
    \\&
    \lesssim (2m + 1) \cdot \frac{\Sigma_{jj}}{\theta n} \sqrt{1 + \frac{\|\Sigma\|}{\theta^2 n}} \cdot \left[ 1 + \frac{m \|\Sigma\|}{\theta^2 n} + \left(\frac{m \|\Sigma\|}{\theta^2 n}\right)^2\right]
\end{align*}
and
\[
    \Exp \, \SMM_{m, n}(\nu_j, \theta)^2
    \leq 4 (2m + 1) \left( \frac{\Sigma_{jj}}n + \theta^2 \right).
\]
for all $j \in \{1, \dots, d\}$. We start with the upper bound on the second moment. We split the rest of the proof into several steps for convenience.

\medskip

\noindent\textbf{Step 1: bound on the second moment.}
\quad
For a fixed $j \in \{1, \dots, d\}$, let  $\sfp_j(u)$ be the marginal density of $\nu_j$ and let
\[
    \sfF(t)
    = \int\limits_{-\infty}^{+\infty} \Phi_\theta\left(t - \frac un \right) \sfp_j^{*n}(u) \, \rmd u
\]
stand for the cumulative distribution function of $\Mean(\nu_{j, 1}, \dots, \nu_{j, n}) + \theta \, \eta_j$, where $\nu_{j, 1}, \dots, \nu_{j, n}$ are i.i.d. copies of $\nu_j$.
Then the density of $\SMM_{m, n}(\nu_j, \theta)$ is equal to
\[
    (2m + 1) \binom{2m}{m} \sfF(t)^m \, \big(1 - \sfF(t) \big)^m \, \sfF'(t).
\]
If we manage to prove that
\[
    \sup\limits_{t \in \R} \left\{ t^2 \, \sfF(t)^m \, \big(1 - \sfF(t) \big)^m \right\}
    \leq \frac{4 (\Sigma_{jj} / n + \theta^2)}{4^m},
\]
then we immediately obtain 
\[
    \int\limits_{-\infty}^{+\infty} t^2 \, \sfF(t)^m \, \big(1 - \sfF(t) \big)^m \, \sfF'(t) \, \rmd t
    \leq \sup\limits_{t \in \R} \left\{ t^2 \, \sfF(t)^m \, \big(1 - \sfF(t) \big)^m \right\} \cdot \int\limits_{-\infty}^{+\infty} \sfF'(u) \, \rmd u
    = \sup\limits_{t \in \R} \left\{ t^2 \, \sfF(t)^m \, \big(1 - \sfF(t) \big)^m \right\}.
\]
Let $\nu_{j, 1}, \dots, \nu_{j, n}$ be independent copies of $\nu_j$ and let $\eta_j \sim \cN(0, 1)$ be a Gaussian random variable, which is independent of $\nu_{j, 1}, \dots, \nu_{j, n}$. Then, according to the definition of $\sfF(t)$, for any $t \in \R$, it holds that
\[
    1 - \sfF(t)
    = \Prob\left( \frac{\nu_{j, 1} + \dots + \nu_{j, n}}n + \theta \eta_j \geq t \right).
\]
Since $\nu_{j, 1}, \dots, \nu_{j, n}$ and $\eta_j$ have finite variance, we can apply Chebyshev's inequality to derive an upper bound on the right tail of $\sfF(t)$:
\[
    1 - \sfF(t)
    \leq \frac{\Exp \left(\nu_{j, 1} + \dots + \nu_{j, n} \right)^2 / n^2 + \theta^2 \Exp \eta_j^2}{t^2}
    = \frac{\Sigma_{jj}}{n t^2} + \frac{\theta^2}{t^2}
    \quad \text{for all $t > 0$.}
\]
Similarly, for any $t < 0$, we have
\[
    \sfF(t) \leq \frac{\Sigma_{jj}}{n t^2} + \frac{\theta^2}{t^2}.
\]
Combining these bounds with the inequality $\sfF(t) (1 - \sfF(t)) \leq 1/4$, which holds for any $t \in \R$, we deduce that
\begin{equation}
    \label{eq:cdf_upper_bound}
    \sfF(t) \big(1 - \sfF(t)\big)
    \leq \frac14 \land \left(\frac{\Sigma_{jj}}{n t^2} + \frac{\theta^2}{t^2} \right) =
    \begin{cases}
        \frac14, \quad \text{if $|t| \leq 2 \sqrt{\frac{\Sigma_{jj}}n + \theta^2}$,}\\
        \\
        \frac{\Sigma_{jj}}n + \theta^2, \quad \text{otherwise.}
    \end{cases}
\end{equation}
Hence, for any $m \geq 1$,
\[
    \sup\limits_{t \in \R} \left\{ t^2 \, \sfF(t)^m \, \big(1 - \sfF(t) \big)^m \right\}
    \leq \left( \max\limits_{t^2 \leq 4 \Sigma_{jj} / n + 4 \theta^2} \frac{t^2}{4^m} \right) \lor \left( \max\limits_{t^2 \geq 4 \Sigma_{jj} / n + 4 \theta^2} t^{2} \left( \frac{\Sigma_{jj}}{n t^2} + \frac{\theta^2}{t^2} \right)^{m} \right)
    = \frac{\Sigma_{jj} / n + \theta^2}{4^{m - 1}},
\]
as we announced. This implies that
\[
    \Exp \, \SMM_{m, n}(\nu_j, \theta)^2
    \leq (2m + 1) \binom{2m}{m} \int\limits_{-\infty}^{+\infty} t^2 \, \sfF(t)^m \, \big(1 - \sfF(t) \big)^m \, \sfF'(t) \, \rmd t
    \leq (2m + 1) \binom{2m}{m} \cdot \frac{\Sigma_{jj} / n + \theta^2}{4^{m - 1}}.
\]
Similarly to the proof of Proposition \ref{prop:median_symmetric_case}, we use the inequality
\[
    \binom{2m}{m} = \frac{(2m)!}{m! \cdot m!} = \prod\limits_{j = 1}^m \frac{2j}{j} \cdot \prod\limits_{j = 1}^m \frac{2j - 1}{j} \leq 4^m,
\]
which yields that
\[
    \Exp \, \SMM_{m, n}(\nu_j, \theta)^2
    \leq 4 (2m + 1) \left( \frac{\Sigma_{jj}}n + \theta^2 \right).
\]

\medskip

\noindent
\textbf{Step 2: bound on the expectation.}
\quad
The rest of the proof is devoted to an upper bound on the expectation of $\SMM_{m, n}(\nu_j, \theta)$. One could apply the Cauchy-Schwarz inequality to show that $\Exp \, \SMM_{m, n}(\nu_j, \theta)$ decreases with the growth of $n$. However, we are going to prove a stronger bound. Our approach is based on decomposition of $\sfp_j(u)$ into the sum of symmetric and antisymmetric part:
\[
    \sfp_j(u) = \sfs_j(u) + \sfr_j(u),
    \quad \text{where} \quad
    \sfs_j(u) = \frac{\sfp_j(u) + \sfp_j(-u)}2
    \quad \text{and} \quad
    \sfr_j(u) = \frac{\sfp_j(u) - \sfp_j(-u)}2.
\]
If $\sfr_j$ was equal to zero, we could say that $\Exp \, \SMM_{m, n}(\nu_j, \theta) = 0$ as well. However, in a general situation, has some impact on the mean of $\SMM_{m, n}(\nu_j, \theta)$. To quantify it, we compare the integrals
\[
    \int\limits_{-\infty}^{+\infty} t \, \sfF(t)^m \, \big(1 - \sfF(t) \big)^m \, \sfF'(t) \rmd t
    \quad \text{and} \quad
    \int\limits_{-\infty}^{+\infty} t \, \sfG(t)^m \, \big(1 - \sfG(t) \big)^m \, \sfG'(t) \rmd t,
\]
where $\sfG$ is a cumulative distribution function defined as
\[
    \sfG(t) = \int\limits_{-\infty}^{+\infty} \Phi_\theta\left(t - \frac un \right) \sfs_j^{*n}(u) \, \rmd u.
\]
In other words, $\sfG$ corresponds to the CDF of $\Mean(\nu_{j, 1}, \dots, \nu_{j, n}) + \theta \eta_j$, where $\nu_{j, 1}, \dots, \nu_{j, n}$ are i.i.d. copies of $\nu_j$ and $\eta_j \sim \cN(0, 1)$, in the symmetric case. We are going to show that $\sfr_j$ has minor influence on the expectation of the smoothed median of means if $m$ and $n$ are sufficiently large.

First, let us show that the cumulative distribution functions $\sfF(t)$ and $\sfG(t)$ are close to each other. It is straightforward to check that
\begin{equation}
    \label{eq:phi_second_derivative_bound}
    \sup\limits_{x \in \R} \left| \Phi_\theta''(x) \right|
    = \frac{1}{\sqrt{2 \pi}} \cdot \frac1{\theta^2} \cdot \sup\limits_{x \in \R} \left| -\frac x\theta e^{-x^2 / (2 \theta^2)} \right|
    \leq \frac{1}{\sqrt{2 \pi} \theta^2} \cdot \left.\left( y e^{-y^2 / 2} \right)\right|_{y = 1}
    = \frac{1}{\sqrt{2 \pi e}} \cdot \frac1{\theta^2}.
\end{equation}
Then Lemma \ref{lem:expectation_comparison} (see Appendix \ref{sec:smoothmed_aux} below) implies that
\begin{equation}
    \label{eq:f-g_sup_norm}
    \left| \sfF(t) - \sfG(t) \right| \leq \frac{1}{2 \sqrt{2 \pi e}} \cdot \frac{\Sigma_{jj}}{\theta^2 n}
    \quad \text{for all $t \in \R$.}
\end{equation}
In view of \eqref{eq:cdf_upper_bound}, for any $m \geq 3$ it holds that
\[
    |t| \sfF(t)^m \big(1 - \sfF(t) \big)^m \rightarrow 0
    \quad \text{and} \quad
    |t| \sfF(t)^{m - 1} \big(1 - \sfF(t) \big)^{m - 1} \rightarrow 0
    \quad \text{as $t \rightarrow \infty$.}
\]
Then, according to Lemma \ref{lem:f-g_comparison}, we have
\begin{align*}
    &
    \left| \int\limits_{-\infty}^{+\infty} t \, \sfF(t)^m \big(1 - \sfF(t)\big)^m \sfF'(t) \,\rmd t - \int\limits_{-\infty}^{+\infty} t \, \sfG(t)^m \big(1 - \sfG(t)\big)^m \sfG'(t) \,\rmd t \right|
    \\&
    \leq \frac{1}{2 \sqrt{2 \pi e}} \cdot \frac{\Sigma_{jj}}{\theta^2 n} \int\limits_{-\infty}^{+\infty} \sfF(t)^m \big(1 - \sfF(t)\big)^m \,\rmd t
    + \frac{m}{16 \pi e} \cdot \left(\frac{\Sigma_{jj}}{\theta^2 n}\right)^2 \int\limits_{-\infty}^{+\infty} \sfF(t)^{m - 1} \big(1 - \sfF(t)\big)^{m - 1} \,\rmd t
    \\&\quad
    + \left(\frac{m \Sigma_{jj}}{\theta^2 n}\right)^2 \sup\limits_{t \in \R} \left\{ |t| \max\limits_{\theta \in [\sfF(t)\land \sfG(t), \sfF(t)\lor \sfG(t)]} \left( \theta^{m - 2} (1 - \theta)^{m - 2} \right) \right\}.
\end{align*}
Let us remind the reader that the CDF $\sfF(t)$ satisfies the inequalities
\[
    1 - \sfF(t)
    \leq \frac{\Sigma_{jj}}{n t^2} + \frac{\theta^2}{t^2}
    \quad \text{for all $t > 0$ and} \quad
    \sfF(t) \leq \frac{\Sigma_{jj}}{n t^2} + \frac{\theta^2}{t^2}
    \quad \text{for all $t < 0$}.
\]
Due to the Chebyshev inequality, a similar bound holds for $\sfG(t)$:
\[
    1 - \sfG(t)
    \leq \frac{\Sigma_{jj}}{n t^2} + \frac{\theta^2}{t^2}
    \quad \text{for all $t > 0$ and} \quad
    \sfG(t) \leq \frac{\Sigma_{jj}}{n t^2} + \frac{\theta^2}{t^2}
    \quad \text{for all $t < 0$}.
\]
This yields that
\begin{align*}
    &
    \left| \int\limits_{-\infty}^{+\infty} t \, \sfF(t)^m \big(1 - \sfF(t)\big)^m \sfF'(t) \,\rmd t - \int\limits_{-\infty}^{+\infty} t \, \sfG(t)^m \big(1 - \sfG(t)\big)^m \sfG'(t) \,\rmd t \right|
    \\&
    \leq \frac{1}{2 \sqrt{2 \pi e}} \cdot \frac{\Sigma_{jj}}{\theta^2 n} \int\limits_{-\infty}^{+\infty} \left(\frac14 \land \frac{\Sigma_{jj}/n + \theta^2}{t^2} \right)^m \,\rmd t
    + \frac{m}{16 \pi e} \cdot \left(\frac{\Sigma_{jj}}{\theta^2 n}\right)^2 \int\limits_{-\infty}^{+\infty} \left(\frac14 \land \frac{\Sigma_{jj}/n + \theta^2}{t^2} \right)^{m - 1} \,\rmd t
    \\&\quad
    + \left(\frac{m \Sigma_{jj}}{\theta^2 n}\right)^2 \sup\limits_{t \in \R} \left\{ |t| \left(\frac14 \land \frac{\Sigma_{jj}/n + \theta^2}{t^2} \right)^{m - 2} \right\}.
\end{align*}
Applying Proposition \ref{prop:integral}, we obtain that
\begin{align*}
    &
    \left| \int\limits_{-\infty}^{+\infty} t \, \sfF(t)^m \big(1 - \sfF(t)\big)^m \sfF'(t) \,\rmd t - \int\limits_{-\infty}^{+\infty} t \, \sfG(t)^m \big(1 - \sfG(t)\big)^m \sfG'(t) \,\rmd t \right|
    \\&
    \leq \frac{1}{2 \sqrt{2 \pi e}} \cdot \frac{\Sigma_{jj}}{\theta^2 n} \cdot \frac{2m}{(2m - 1) \cdot 4^{m - 1}} \sqrt{\theta^2 + \frac{\Sigma_{jj}}n} 
    \\&\quad
    + \frac{m}{16 \pi e} \cdot \left(\frac{\Sigma_{jj}}{\theta^2 n}\right)^2 \frac{2m - 1}{(2m - 2) \cdot 4^{m - 2}} \sqrt{\theta^2 + \frac{\Sigma_{jj}}n} 
    \\&\quad
    + \left(\frac{m \Sigma_{jj}}{\theta^2 n}\right)^2 \sup\limits_{t \in \R} \left\{ |t| \left(\frac14 \land \frac{\Sigma_{jj}/n + \theta^2}{t^2} \right)^{m - 2} \right\}.
\end{align*}
Similarly to Step 1, we can prove that
\[
    \sup\limits_{t \in \R} \left\{ |t| \left( \frac14 \land \frac{\theta^2 + \Sigma_{jj} / n}{t^2} \right)^{m - 2} \right\}
    = \frac{2 \sqrt{\theta^2 + \Sigma_{jj} / n}}{4^{m - 2}},
\]
and then it holds that
\begin{align*}
    &
    \left| \int\limits_{-\infty}^{+\infty} t \, \sfF(t)^m \big(1 - \sfF(t)\big)^m \sfF'(t) \,\rmd t - \int\limits_{-\infty}^{+\infty} t \, \sfG(t)^m \big(1 - \sfG(t)\big)^m \sfG'(t) \,\rmd t \right|
    \\&
    \leq \left[ \frac{m}{(2m - 1) \cdot 4^{m - 1} \sqrt{2 \pi e}} \cdot \frac{\Sigma_{jj}}{\theta^2 n} + \frac{m(2m - 1)}{(2m - 2) \cdot 4^m \pi e} \cdot \left(\frac{\Sigma_{jj}}{\theta^2 n}\right)^2 + \frac{32}{4^m} \cdot \left(\frac{m \Sigma_{jj}}{\theta^2 n}\right)^2 \right] \sqrt{\theta^2 + \frac{\Sigma_{jj}}n}.
\end{align*}
Taking into account
\[
    \binom{2m}{m} = \frac{(2m)!}{m! \cdot m!} = \prod\limits_{j = 1}^m \frac{2j}{j} \cdot \prod\limits_{j = 1}^m \frac{2j - 1}{j} \leq 4^m,
\]
we immediately obtain that
\begin{align*}
    \Exp \, \SMM_{m, n}(\nu_j, \theta)
    &
    \leq (2m + 1) \cdot \frac{\Sigma_{jj}}{\theta n} \sqrt{1 + \frac{\Sigma_{jj}}{\theta^2 n}} \cdot \left[ \frac{4m}{(2m - 1) \sqrt{2 \pi e}} + \frac{m(2m - 1)}{(2m - 2) \pi e} \cdot \frac{\Sigma_{jj}}{\theta^2 n} + 32\left(\frac{m \Sigma_{jj}}{\theta^2 n}\right)^2 \right]
    \\&
    \leq (2m + 1) \cdot \frac{\Sigma_{jj}}{\theta n} \sqrt{1 + \frac{\|\Sigma\|}{\theta^2 n}} \cdot \left[ \frac{4m}{(2m - 1) \sqrt{2 \pi e}} + \frac{m(2m - 1)}{(2m - 2) \pi e} \cdot \frac{\|\Sigma\|}{\theta^2 n} + 32\left(\frac{m \|\Sigma\|}{\theta^2 n}\right)^2 \right]
    \\&
    \lesssim (2m + 1) \cdot \frac{\Sigma_{jj}}{\theta n} \sqrt{1 + \frac{\|\Sigma\|}{\theta^2 n}} \cdot \left[ 1 + \frac{m \|\Sigma\|}{\theta^2 n} + \left(\frac{m \|\Sigma\|}{\theta^2 n}\right)^2\right].
\end{align*}

\subsection{Proof of Lemma \ref{lem:smom_moments_heavy_tails}}

Let $\nu = (\nu_1, \dots, \nu_d)^\top$ stand for the difference $\nabla f_\xi(x) - \nabla f(x)$ and let us show that, for any $j \in \{1, \dots, d\}$, it holds that
\begin{align*}
    \left| \Exp \, \SMM_{m, n}(\nu_j, \theta) \right|
    &
    \lesssim \frac{m M_j}{\theta^2 n}  \left[1 + \theta + \left( \frac{2^{\beta_j} B_j}{n^{\beta_j - 1}} \right)^{1 / \beta_j} \right]
\end{align*}
and
\[
    \Exp \, \SMM_{m, n}(\nu_j, \theta)^2
    \lesssim m \left[ 1 + \theta^2 + \left( \frac{M_j}{\theta n} \right)^2 + \left( \frac{2^{\beta_j} B_j}{\beta_j n^{\beta_j - 1}} \right)^{2 / \beta_j} + \left( \frac{B_j M_j}{\theta n^{\beta_j}} \right)^{2 / (\beta_j + 1)} \right].
\]
From now on, we fix an arbitrary $j \in \{1, \dots, d\}$.
Similarly to the proof of Lemma \ref{lem:smom_moments_light_tails}, the core idea is to compare the cumulative distribution functions
\[
    \sfF(t)
    = \int\limits_{-\infty}^{+\infty} \Phi_\theta\left(t - \frac un \right) \sfp_j^{*n}(u) \, \rmd u
    \quad \text{and} \quad
    \sfG(t)
    = \int\limits_{-\infty}^{+\infty} \Phi_\theta\left(t - \frac un \right) \sfs_j^{*n}(u) \, \rmd u.
\]
The first one is directly related to the density of $\SMM_{m, n}(\nu_j, \theta)$, which is equal to
\[
    (2m + 1) \binom{2m}{m} \sfF(t)^m \, \big(1 - \sfF(t) \big)^m \, \sfF'(t).
\]
However, the proof of Lemma \ref{lem:smom_moments_heavy_tails} is far more technical. The main obstacle is that we cannot use Chebyshev's inequality to specify the rate of decay of $\sfF(t)\big(1 - \sfF(t)\big)$ and of $\sfG(t)\big(1 - \sfG(t)\big)$ as $t$ approaches infinity. Instead, we prove the following non-trivial result (see Lemma \ref{lem:f-g_non-uniform_bound} below): if Assumption \ref{as:convolution} holds and $2 M_j \leq n \theta^2$, then, for any $t \in \R$ it holds that
\[
    \left| \sfF(t) - \sfG(t) \right|
    \lesssim \frac{M_j}{\theta n t} \left(1 + \frac{\theta}t + \frac{B_j}{n^{\beta_j - 1} |t|^{\beta_j}} \right).
\]
Combining this result with the bound on the second derivative of $\Phi_\theta$ \eqref{eq:phi_second_derivative_bound} and Lemma \ref{lem:expectation_comparison}, we obtain that
\begin{equation}
    \label{eq:f-g_non-uniform_bound}
    \left| \sfF(t) - \sfG(t) \right|
    \lesssim \frac{M_j}{\theta^2 n} \left\{1 \land \left(\frac{\theta}t + \frac{B_j \theta}{n^{\beta_j - 1} |t|^{\beta_j + 1}}\right) \right\}.
\end{equation}
Despite the simple statement, the proof of Lemma \ref{lem:f-g_non-uniform_bound} is quite technical. A reader can find it in Appendix \ref{sec:smoothmed_aux}. 
With the bound \eqref{eq:f-g_non-uniform_bound} at hand, the proof of Lemma \ref{lem:smom_moments_heavy_tails} is relatively simple. For convenience, we divide it into several steps.

\medskip

\noindent\textbf{Step 1: a bound on the tails of $\sfG$.}
\quad
The goal of this step is to specify the rate of decay of $\sfG(t) \big(1 - \sfG(t) \big)$ as $t$ tends to infinity.
First, consider the case $t > 0$. By the definition of $\sfG(t)$, it holds that
\begin{align}
    1 - \sfG(t)
    &
    = \int\limits_{-\infty}^{+\infty} \left(1 - \Phi_\theta\left( t - \frac{u}n \right) \right) \sfs_j^{*n}(u) \, \rmd u \notag
    \\&
    = n \int\limits_{-\infty}^{+\infty} \big(1 - \Phi_\theta(-y) \big) \sfs_j^{*n}(nt + ny) \, \rmd y \notag
    \\&
    = n \int\limits_{-\infty}^{-t/2} \big(1 - \Phi_\theta(-y) \big) \sfs_j^{*n}(nt + ny) \, \rmd y
    + n \int\limits_{-t/2}^{+\infty} \big(1 - \Phi_\theta(-y) \big) \sfs_j^{*n}(nt + ny) \, \rmd y. \label{eq:g_tail}
\end{align}
If $y \leq -t / 2$, then
\[
    1 - \Phi_\theta(-y)
    \leq 1 - \Phi_\theta(t / 2)
    \leq \exp\left\{- \frac{t^2}{8 \theta^2} \right\},
\]
and we have
\begin{equation}
    \label{eq:left_interval}
    n \int\limits_{-\infty}^{-t/2} \big(1 - \Phi_\theta(-y) \big) \sfs_j^{*n}(nt + ny) \, \rmd y
    \leq \exp\left\{- \frac{t^2}{8 \theta^2} \right\} \int\limits_{-\infty}^{-t/2} \sfs_j^{*n}(nt + ny) \, n \, \rmd y
    \leq \exp\left\{- \frac{t^2}{8 \theta^2} \right\}.
\end{equation}
Otherwise, if $y > -t / 2$, then, due to Assumption \ref{as:convolution}, it holds that
\begin{align}
    \label{eq:right_interval}
    n \int\limits_{-t/2}^{+\infty} \big(1 - \Phi_\theta(-y) \big) \sfs_j^{*n}(nt + ny) \, \rmd y
    &\notag
    \leq \int\limits_{-t/2}^{+\infty} \frac{B_j n^2}{n^{(1 + \beta_j) / \beta_j} + n^{1 + \beta_j} (t + y)^{1 + \beta_j}} \, \rmd y
    \\&
    \leq \int\limits_{0}^{+\infty} \frac{B_j}{n^{\beta_j - 1} (t/2 + v)^{1 + \beta_j}} \, \rmd v
    = \frac{2^{\beta_j} B_j}{\beta_j n^{\beta_j - 1} t^{\beta_j}}.
\end{align}
Plugging the inequalities \eqref{eq:left_interval}, \eqref{eq:right_interval} into \eqref{eq:g_tail}, we obtain that
\[
    1 - \sfG(t) \lesssim \frac{2^{\beta_j} B_j}{\beta n^{\beta_j - 1} t^{\beta_j}} + \exp\left\{- \frac{t^2}{8 \theta^2} \right\}
    \quad \text{for all $t > 0$.}
\]
Similarly, we can prove that
\[
    \sfG(t) \lesssim \frac{2^{\beta_j} B_j}{\beta_j n^{\beta_j - 1} |t|^{\beta_j}} + \exp\left\{- \frac{t^2}{8 \theta^2} \right\}
    \quad \text{for all $t < 0$.}
\]
Hence, for any $t \in \R$, we have
\begin{equation}
    \label{eq:g_tail_inequality}
    \sfG(t) \big( 1 - \sfG(t) \big)
    \lesssim \min\left\{ \frac14, \frac{2^{\beta_j} B_j}{\beta_j n^{\beta_j - 1} |t|^{\beta_j}} + \exp\left\{- \frac{t^2}{8 \theta^2} \right\} \right\}.
\end{equation}

\medskip

\noindent\textbf{Step 2: bound on the second moment.}
\quad
The second moment of $\SMM_{m, n}(\nu_j, \theta)$ satisfies the inequality
\begin{align*}
    \Exp \, \SMM_{m, n}(\nu_j, \theta)^2
    &
    = (2m + 1) \binom{2m}{m} \int\limits_{-\infty}^{+\infty} t^2 \sfF(t)^m \, \big(1 - \sfF(t) \big)^m \, \sfF'(t) \, \rmd t
    \\&
    \leq (2m + 1) \binom{2m}{m} \sup\limits_{t \in \R} \left\{ t^2 \, \sfF(t)^m \, \big(1 - \sfF(t) \big)^m \right\}.
\end{align*}
On the other hand, \eqref{eq:f-g_non-uniform_bound} and \eqref{eq:g_tail_inequality} imply that
\begin{equation}
    \label{eq:f_tail_inequality}
    \sfF(t) \big( 1 - \sfF(t) \big)
    \lesssim \min\left\{ \frac14, \frac{2^{\beta_j} B_j}{\beta_j n^{\beta_j - 1} |t|^{\beta_j}} + \exp\left\{- \frac{t^2}{8 \theta^2} \right\} + \frac{M_j}{\theta n t} \left(1 + \frac{B_j}{n^{\beta_j - 1} |t|^{\beta_j}} \right) \right\}
    \quad \text{for all $t \in \R$.}
\end{equation}
This yields
\[
    \sup\limits_{t \in \R} \big\{ t^2 \, \sfF(t)^m \, \big(1 - \sfF(t) \big)^m \big\}
    \lesssim 4^{-m} \left[ 1 + \theta^2 + \left( \frac{M_j}{\theta n} \right)^2 + \left( \frac{2^{\beta_j} B_j}{\beta_j n^{\beta_j - 1}} \right)^{2 / \beta_j} + \left( \frac{B_j M_j}{\theta n^{\beta_j}} \right)^{2 / (\beta_j + 1)} \right]
    \quad \text{if $m\beta_j > 2$}.
\]
Since
\[
    \binom{2m}{m}
    = \frac{(2m)!}{m! \cdot m!}
    = \prod\limits_{j = 1}^m \frac{2j}{j} \cdot \prod\limits_{j = 1}^m \frac{2j - 1}{j}
    \leq 4^m,
\]
we obtain that
\[
    \Exp \, \SMM_{m, n}(\nu_j, \theta)^2
    \lesssim m \left[ 1 + \theta^2 + \left( \frac{M_j}{\theta n} \right)^2 + \left( \frac{2^{\beta_j} B_j}{\beta_j n^{\beta_j - 1}} \right)^{2 / \beta_j} + \left( \frac{B_j M_j}{\theta n^{\beta_j}} \right)^{2 / (\beta_j + 1)} \right].
\]

\medskip

\noindent\textbf{Step 3: bound on the expectation.}
\quad
It remains to bound the expectation of $\SMM_{m, n}(\nu_j, \theta)$. For this purpose, we use Lemma \ref{lem:f-g_comparison}, which yields that
\begin{align*}
    &
    \left( (2m + 1) \binom{2m}{m} \right)^{-1} \left| \Exp \SMM_{m, n}(\nu_j, \theta) \right|
    \\&
    = \left| \int\limits_{-\infty}^{+\infty} t \, \sfF(t)^m \big(1 - \sfF(t)\big)^m \sfF'(t) \,\rmd t - \int\limits_{-\infty}^{+\infty} t \, \sfG(t)^m \big(1 - \sfG(t)\big)^m \sfG'(t) \,\rmd t \right|
    \\&
    \leq \int\limits_{-\infty}^{+\infty} \sfG(t)^m \big(1 - \sfG(t)\big)^m \big| \sfF(t) - \sfG(t)\big| \,\rmd t
    + \frac{m}2 \int\limits_{-\infty}^{+\infty} \sfG(t)^{m - 1} \big(1 - \sfG(t)\big)^{m - 1} \big( \sfF(t) - \sfG(t)\big)^2 \,\rmd t
    \\&\quad
    + m^2 \sup\limits_{t \in \R} \left\{ |t| \big(\sfF(t) - \sfG(t) \big)^2 \cdot \max\limits_{\theta \in [\sfF(t)\land \sfG(t), \sfF(t)\lor \sfG(t)]} \left( \theta^{m - 2} (1 - \theta)^{m - 2} \right) \right\}.
\end{align*}
Note that the requirement 
\[
    |t| \sfG(t)^m \big(1 - \sfG(t) \big)^m \big| \sfF(t) - \sfG(t) \big| \rightarrow 0
    \quad \text{and} \quad
    |t| \sfG(t)^{m - 1} \big(1 - \sfG(t) \big)^{m - 1} \big( \sfF(t) - \sfG(t) \big)^2 \rightarrow 0
    \quad \text{$t \rightarrow \infty$}
\]
is satisfied, because of the inequalities \eqref{eq:f-g_non-uniform_bound}, \eqref{eq:g_tail_inequality} and the conditions of the lemma. These inequalities also imply that
\begin{align*}
    &
    \left| \int\limits_{-\infty}^{+\infty} t \, \sfF(t)^m \big(1 - \sfF(t)\big)^m \sfF'(t) \,\rmd t - \int\limits_{-\infty}^{+\infty} t \, \sfG(t)^m \big(1 - \sfG(t)\big)^m \sfG'(t) \,\rmd t \right|
    \\&
    \lesssim \int\limits_{-\infty}^{+\infty} \left[ \frac14 \land \left( \frac{2^{\beta_j} B}{\beta_j n^{\beta_j - 1} |t|^{\beta_j}} + \exp\left\{- \frac{t^2}{8 \theta^2} \right\} \right) \right]^m \cdot \big|\sfF(t) - \sfG(t) \big| \,\rmd t
    \\&\quad
    + \frac{m}2 \int\limits_{-\infty}^{+\infty} \left[ \frac14 \land \left( \frac{2^{\beta_j} B_j}{\beta_j n^{\beta_j - 1} |t|^{\beta_j}} + \exp\left\{- \frac{t^2}{8 \theta^2} \right\} \right) \right]^{m - 1} \big(\sfF(t) - \sfG(t) \big)^2 \,\rmd t
    \\&\quad
    + m^2 \sup\limits_{t \in \R} \left\{ |t| \big(\sfF(t) - \sfG(t) \big)^2 \left[ \frac14 \land \left( \frac{2^{\beta_j} B_j}{\beta_j n^{\beta_j - 1} |t|^{\beta_j}} + \frac{M_j}{\theta n t} \left(1 + \frac{B_j}{n^{\beta_j - 1} |t|^{\beta_j}} \right) + \exp\left\{- \frac{t^2}{8 \theta^2} \right\} \right) \right]^{m - 2} \right\}.
\end{align*}
Taking into account the bound \eqref{eq:f-g_non-uniform_bound} on the absolute value of the difference $\sfF(t) - \sfG(t)$, we obtain that
\begin{align*}
    &
    \left| \int\limits_{-\infty}^{+\infty} t \, \sfF(t)^m \big(1 - \sfF(t)\big)^m \sfF'(t) \,\rmd t - \int\limits_{-\infty}^{+\infty} t \, \sfG(t)^m \big(1 - \sfG(t)\big)^m \sfG'(t) \,\rmd t \right|
    \\&
    \lesssim \frac{M_j}{\theta^2 n} \int\limits_{-\infty}^{+\infty} \left[ \frac14 \land \left( \frac{2^{\beta_j} B_j}{\beta_j n^{\beta_j - 1} |t|^{\beta_j}} + \exp\left\{- \frac{t^2}{8 \theta^2} \right\} \right) \right]^m \,\rmd t
    \\&\quad
    + \frac{m}2 \left( \frac{M_j}{\theta^2 n} \right)^2 \int\limits_{-\infty}^{+\infty} \left[ \frac14 \land \left( \frac{2^{\beta_j} B_j}{\beta_j n^{\beta_j - 1} |t|^{\beta_j}} + \exp\left\{- \frac{t^2}{8 \theta^2} \right\} \right) \right]^{m - 1} \,\rmd t
    \\&\quad
    + m^2 \left( \frac{M_j}{\theta^2 n} \right)^2 \sup\limits_{t \in \R} \left\{ |t| \left[ \frac14 \land \left( \frac{2^{\beta_j} B_j}{\beta_j n^{\beta_j - 1} |t|^{\beta_j}} + \frac{M_j}{\theta n t} \left(1 + \frac{B_j}{n^{\beta_j - 1} |t|^{\beta_j}} \right) + \exp\left\{- \frac{t^2}{8 \theta^2} \right\} \right) \right]^{m - 2} \right\}.
\end{align*}
The inequality $(a + b)^k \leq 2^{k - 1}(a^k + b^k)$, which holds for any $k \geq 1$ and positive $a$ and $b$,  implies that
\begin{align*}
    &
    \left| \int\limits_{-\infty}^{+\infty} t \, \sfF(t)^m \big(1 - \sfF(t)\big)^m \sfF'(t) \,\rmd t - \int\limits_{-\infty}^{+\infty} t \, \sfG(t)^m \big(1 - \sfG(t)\big)^m \sfG'(t) \,\rmd t \right|
    \\&
    \lesssim \frac{M_j}{\theta^2 n} \int\limits_{-\infty}^{+\infty} \left[ \frac14 \land \frac{2^{\beta_j + 1} B_j}{\beta_j n^{\beta_j - 1} |t|^{\beta_j}} \right]^m \,\rmd t + \frac{M_j}{\theta^2 n} \int\limits_{-\infty}^{+\infty} \left[ \frac14 \land 2\exp\left\{- \frac{t^2}{8 \theta^2} \right\} \right]^m \,\rmd t
    \\&\quad
    + \frac{m}2 \left( \frac{M_j}{\theta^2 n} \right)^2 \int\limits_{-\infty}^{+\infty} \left[ \frac14 \land \frac{2^{\beta_j + 1} B_j}{\beta_j n^{\beta_j - 1} |t|^{\beta_j}} \right]^{m - 1} \,\rmd t + \frac{m}2 \left( \frac{M_j}{\theta^2 n} \right)^2 \int\limits_{-\infty}^{+\infty} \left[ \frac14 \land 2 \exp\left\{- \frac{t^2}{8 \theta^2} \right\} \right]^{m - 1} \,\rmd t
    \\&\quad
    + m^2 \left( \frac{M_j}{\theta^2 n} \right)^2 \sup\limits_{t \in \R} \left\{ |t| \left[ \frac14 \land \frac{2^{\beta_j + 1} B_j}{\beta_j n^{\beta_j - 1} |t|^{\beta_j}} \right]^{m - 2} \right\}
    + m^2 \left( \frac{M_j}{\theta^2 n} \right)^2 \sup\limits_{t \in \R} \left\{ |t| \left[ \frac14 \land \frac{4 M_j}{\theta n |t|} \right]^{m - 2} \right\}
    \\&\quad
    + m^2 \left( \frac{M_j}{\theta^2 n} \right)^2 \sup\limits_{t \in \R} \left\{ |t| \left[ \frac14 \land \frac{8 M_j B_j}{\theta n^{\beta_j} |t|^{\beta_j + 1}} \right]^{m - 2} \right\}
    + m^2 \left( \frac{M_j}{\theta^2 n} \right)^2 \sup\limits_{t \in \R} \left\{ |t| \left[ \frac14 \land 8 \exp\left\{- \frac{t^2}{8 \theta^2} \right\} \right]^{m - 2} \right\}.
\end{align*}
Due to Proposition \ref{prop:integral}, it holds that
\[
    \int\limits_{-\infty}^{+\infty} \left[ \frac14 \land \frac{2^{\beta_j + 1} B_j}{\beta n^{\beta_j - 1} |t|^{\beta_j}} \right]^m \,\rmd t + \int\limits_{-\infty}^{+\infty} \left[ \frac14 \land 2\exp\left\{- \frac{t^2}{8 \theta^2} \right\} \right]^m \,\rmd t \lesssim 4^{-m} \left[1 + \theta + \left( \frac{2^{\beta_j + 1} B_j}{\beta_j n^{\beta_j - 1}} \right)^{1 / \beta_j} \right].
\]
Since
\begin{align*}
    &
    \sup\limits_{t \in \R} \left\{ |t| \left[ \frac14 \land \frac{2^{\beta_j + 1} B_j}{\beta_j n^{\beta_j - 1} |t|^{\beta_j}} \right]^{m - 2} \right\}
    + \sup\limits_{t \in \R} \left\{ |t| \left[ \frac14 \land \frac{4 M_j}{\theta n t} \right]^{m - 2} \right\}
    \\&\quad
    + \sup\limits_{t \in \R} \left\{ |t| \left[ \frac14 \land \frac{8 M_j B_j}{\theta n^{\beta_j} |t|^{\beta_j + 1}} \right]^{m - 2} \right\}
    + \sup\limits_{t \in \R} \left\{ |t| \left[ \frac14 \land 4 \exp\left\{- \frac{t^2}{8 \theta^2} \right\} \right]^{m - 2} \right\}
    \\&
    \lesssim 4^{-m} \left[ 1 + \theta + \frac{M_j}{\theta n} + \left( \frac{M_j B_j}{\theta n^{\beta_j}} \right)^{1 / (\beta_j + 1)} + \left( \frac{2^{\beta_j + 1} B_j}{\beta_j n^{\beta_j - 1}} \right)^{1 / \beta_j} \right],
\end{align*}
we conclude that
\begin{align*}
    &
    \left| \int\limits_{-\infty}^{+\infty} t \, \sfF(t)^m \big(1 - \sfF(t)\big)^m \sfF'(t) \,\rmd t - \int\limits_{-\infty}^{+\infty} t \, \sfG(t)^m \big(1 - \sfG(t)\big)^m \sfG'(t) \,\rmd t \right|
    \\&
    \lesssim 4^{-m} \cdot \frac{M_j}{\theta^2 n}  \left[1 + \theta + \left( \frac{2^{\beta_j + 1} B_j}{\beta_j n^{\beta_j - 1}} \right)^{1 / \beta_j} \right]
    \\&\quad
    + m^2 \cdot 4^{-m} \cdot \left( \frac{M_j}{\theta^2 n} \right)^2 \left[1 + \theta + \frac{M_j}{\theta n} + \left( \frac{M_j B_j}{\theta n^{\beta_j}} \right)^{1 / (\beta_j + 1)} + \left( \frac{2^{\beta_j + 1} B_j}{\beta_j n^{\beta_j - 1}} \right)^{1 / \beta_j} \right].
\end{align*}
Then it holds that
\begin{align*}
    \left| \Exp \SMM_{m, n}(\nu_j, \theta) \right|
    &
    \lesssim (2m + 1) \binom{2m}{m} \cdot 4^{-m} \cdot \frac{M_j}{\theta^2 n}  \left[1 + \theta + \left( \frac{2^{\beta_j + 1} B_j}{\beta_j n^{\beta_j - 1}} \right)^{1 / \beta_j} \right]
    \\&\quad
    + (2m + 1) \binom{2m}{m} \cdot m^2 4^{-m} \cdot \left( \frac{M_j}{\theta^2 n} \right)^2 \left[1 + \theta + \frac{M_j}{\theta n} + \left( \frac{M_j B_j}{\theta n^{\beta_j}} \right)^{1 / (\beta_j + 1)} + \left( \frac{2^{\beta_j + 1} B_j}{\beta_j n^{\beta_j - 1}} \right)^{1 / \beta_j} \right]
    \\&
    \lesssim \frac{m M_j}{\theta^2 n}  \left[1 + \theta + \left( \frac{2^{\beta_j} B_j}{n^{\beta_j - 1}} \right)^{1 / \beta_j} \right].
\end{align*}
The proof is finished.

\subsection{Technical results}
\label{sec:smoothmed_aux}

\begin{lemma}
    \label{lem:expectation_comparison}
    Let $h : \R \rightarrow \R$ be a twice differentiable function with uniformly bounded second derivative:
    \[
        |h''(x)| \leq H \quad \text{for all $x \in \R$.}
    \]
    Let $\sfq(x)$ be a probability density of a random variable and let
    \[
        \rho(x) = \frac{\sfq(x) + \sfq(-x)}2
        \quad \text{and} \quad
        \psi(x) = \frac{\sfq(x) - \sfq(-x)}2
    \]
    stand for its symmetric and antisymmetric parts, respectively.
    Assume that there exists $M > 0$ such that
    the function $\psi(x)$ fulfils
    \[
        \int\limits_{-\infty}^{+\infty} x \psi(x) \rmd x = 0
        \quad \text{and} \quad
        \int\limits_{-\infty}^{+\infty} x^2 |\psi(x)| \rmd x \leq M.
    \]
    Then, for any positive integer $n$, it holds that
    \[
        \left| \int h\left(\frac xn\right) \sfq^{*n}(x) \rmd x
        - \int h\left(\frac xn\right) \rho^{*n}(x) \rmd x \right|
        \leq \frac{H M}{2 n},
    \]
    provided that the integrals in the left-hand side converge.
\end{lemma}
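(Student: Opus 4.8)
The cleanest route is to reinterpret both integrals probabilistically and then interpolate coordinate by coordinate, in a Lindeberg-type fashion. The key device is a Rademacher symmetrization: if $X$ has density $\sfq$ and $\varepsilon$ is an independent Rademacher sign ($\Prob(\varepsilon = 1) = \Prob(\varepsilon = -1) = 1/2$), then $\varepsilon X$ has density $\tfrac12 \sfq(x) + \tfrac12 \sfq(-x) = \rho(x)$. So let $X_1, \dots, X_n$ be i.i.d.\ with density $\sfq$ and $\varepsilon_1, \dots, \varepsilon_n$ i.i.d.\ Rademacher signs independent of the $X_i$; then $\varepsilon_1 X_1, \dots, \varepsilon_n X_n$ are i.i.d.\ with density $\rho$, and
\[
    \int h\!\left(\frac xn\right) \sfq^{*n}(x) \, \rmd x = \EE\, h\!\left( \frac1n \sum_{i=1}^n X_i \right), \qquad \int h\!\left(\frac xn\right) \rho^{*n}(x) \, \rmd x = \EE\, h\!\left( \frac1n \sum_{i=1}^n \varepsilon_i X_i \right).
\]
It remains to bound the difference of these two expectations.

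\noindent I would telescope along the hybrid sums. For $k \in \{0, 1, \dots, n\}$, set
\[
    T_k = \frac1n \big( X_1 + \dots + X_k + \varepsilon_{k+1} X_{k+1} + \dots + \varepsilon_n X_n \big),
\]
so that $T_0$ corresponds to the $\rho^{*n}$ integral, $T_n$ to the $\sfq^{*n}$ integral, and $\EE\, h(T_n) - \EE\, h(T_0) = \sum_{k=1}^n \big( \EE\, h(T_k) - \EE\, h(T_{k-1}) \big)$. For fixed $k$, let $W_k = \tfrac1n \big( X_1 + \dots + X_{k-1} + \varepsilon_{k+1} X_{k+1} + \dots + \varepsilon_n X_n \big)$, which is independent of $(X_k, \varepsilon_k)$; then $T_k = W_k + X_k/n$ while $T_{k-1} = W_k + \varepsilon_k X_k / n$. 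Conditioning on $W_k$ and using that $X_k$ has density $\sfq$ and $\varepsilon_k X_k$ has density $\rho$, we obtain
\[
    \EE\, h(T_k) - \EE\, h(T_{k-1}) = \EE\!\left[ \int h\!\left( W_k + \frac xn \right) \big( \sfq(x) - \rho(x) \big) \, \rmd x \right] = \EE\!\left[ \int h\!\left( W_k + \frac xn \right) \psi(x) \, \rmd x \right].
\]

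\noindent The core estimate is a second-order Taylor expansion of $h$ inside the inner integral. For any $w \in \R$, write $h(w + x/n) = h(w) + h'(w) \tfrac xn + \tfrac12 h''(\xi_{w,x}) \tfrac{x^2}{n^2}$ with $\xi_{w,x}$ between $w$ and $w + x/n$. Since $\int \psi = 0$ (automatic, as $\int \sfq = \int \rho = 1$) and $\int x\, \psi(x)\, \rmd x = 0$ by hypothesis, the zeroth- and first-order terms integrate to zero against $\psi$, and using $|h''| \le H$ with $\int x^2 |\psi(x)|\, \rmd x \le M$,
\[
    \left| \int h\!\left( w + \frac xn \right) \psi(x) \, \rmd x \right| = \left| \frac1{2n^2} \int h''(\xi_{w,x})\, x^2\, \psi(x) \, \rmd x \right| \le \frac{H}{2n^2} \int x^2 |\psi(x)|\, \rmd x \le \frac{HM}{2n^2}.
\]
As this bound does not depend on $w$, it also bounds $\big| \EE\, h(T_k) - \EE\, h(T_{k-1}) \big|$, and summing the $n$ telescoping terms yields $\big| \EE\, h(T_n) - \EE\, h(T_0) \big| \le n \cdot \tfrac{HM}{2n^2} = \tfrac{HM}{2n}$, which is the claim.

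\noindent The only point demanding a little care is integrability: the telescoping and the Fubini step require each hybrid expectation $\EE\, h(T_k)$ to be finite, which is where the convergence of the two integrals in the statement is used. Since $\sfq \le 2\rho$ pointwise (because $|\psi| \le \rho$), the density $\sfq^{*k} * \rho^{*(n-k)}$ of $n T_k$ is pointwise bounded by $2^k \rho^{*n}$, so $\EE\, |h(T_k)| \le 2^k \int |h(x/n)| \rho^{*n}(x)\, \rmd x < \infty$, and likewise for the conditional integrals; the term-by-term Taylor split is then legitimate since each piece is absolutely integrable against $\psi$ (using $\int |\psi| \le 1$, $\int |x\psi| \le \sqrt M$, and $\int x^2 |\psi| \le M$). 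I do not expect a genuine difficulty here: the single real idea is the Rademacher realization $\varepsilon X \sim \rho$, after which the proof is a routine Lindeberg swap followed by a Taylor estimate.
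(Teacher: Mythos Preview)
Your proof is correct and follows essentially the same approach as the paper: a Lindeberg-type telescoping that swaps one coordinate at a time from $\sfq$ to $\rho$, followed by a second-order Taylor expansion exploiting $\int \psi = 0$, $\int x\,\psi = 0$, and $\int x^2|\psi| \le M$. The paper works directly with the product densities $\pi_k = \prod_{i\le k}\sfq(x_i)\prod_{i>k}\rho(x_i)$, while you realize the same hybrid via the Rademacher coupling $\varepsilon X \sim \rho$; this is a cosmetic difference, and your treatment of integrability (via $\sfq \le 2\rho$) is in fact more careful than the paper's.
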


\begin{proof}
    Let $X_1, \dots, X_n$ be i.i.d. random variables with the density $\sfq(x)$. It is known that $(X_1 + \ldots + X_n) \sim \sfq^{*n}(x)$. Then the integral 
    \[
        \int h\left(\frac xn\right) \sfq^{*n}(x) \rmd x
    \]
    admits a representation
    \[
        \int h\left(\frac xn\right) \sfq^{*n}(x) \rmd x
        = \Exp h\left(\frac{X_1 + \ldots + X_n}n \right)
        = \int h\left( \frac{x_1 + \ldots + x_n}n \right) \sfq(x_1) \dots \sfq(x_n) \rmd x_1 \dots \rmd x_n.
    \]
    Similarly, it holds that
    \[
        \int h\left(\frac xn\right) \rho^{*n}(x) \rmd x 
        = \int h\left( \frac{x_1 + \ldots + x_n}n \right) \rho(x_1) \dots \rho(x_n) \rmd x_1 \dots \rmd x_n,
    \]
    and thus,
    \begin{align*}
        \int h\left(\frac xn\right) \sfq^{*n}(x) \rmd x
        - \int h\left(\frac xn\right) \rho^{*n}(x) \rmd x
        &
        = \int h\left( \frac{x_1 + \ldots + x_n}n \right) \sfq(x_1) \dots \sfq(x_n) \rmd x_1 \dots \rmd x_n
        \\&\quad
        - \int h\left( \frac{x_1 + \ldots + x_n}n \right) \rho(x_1) \dots \rho(x_n) \rmd x_1 \dots \rmd x_n.
    \end{align*}
    Let us introduce
    \[
        \pi_k(x_1, \dots, x_n) = \prod\limits_{i = 1}^k \sfq(x_i) \prod\limits_{i = k + 1}^n \rho(x_i), 
        \quad \text{where $k \in \{0, \dots, n\}$}.
    \]
    Then it holds that
    \begin{align*}
        &
        \int h\left( \frac{x_1 + \ldots + x_n}n \right) \sfq(x_1) \dots \sfp(x_n) \rmd x_1 \dots \rmd x_n
        \\&\quad
        - \int h\left( \frac{x_1 + \ldots + x_n}n \right) \rho(x_1) \dots \rho(x_n) \rmd x_1 \dots \rmd x_n
        \\&
        = \int h\left( \frac{x_1 + \ldots + x_n}n \right) \pi_n(x_1, \dots, x_n) \rmd x_1 \dots \rmd x_n
        \\&\quad
        - \int h\left( \frac{x_1 + \ldots + x_n}n \right) \pi_0(x_1, \dots, x_n) \rmd x_1 \dots \rmd x_n
        \\&
        = \sum\limits_{k = 1}^n \int h\left( \frac{x_1 + \ldots + x_n}n \right) \big( \pi_k(x_1, \dots, x_n) - \pi_{k-1} (x_1, \dots, x_n) \big) \rmd x_1 \dots \rmd x_n.
    \end{align*}
    Let us fix any $k \in \{1, \dots, n\}$ and consider
    \[
        \int h\left( \frac{x_1 + \ldots + x_n}n \right) \big( \pi_k(x_1, \dots, x_n) - \pi_{k-1} (x_1, \dots, x_n) \big) \rmd x_1 \dots \rmd x_n.
    \]
    Note that, according to the definition of $\pi_k$, we have
    \[
        \pi_k(x_1, \dots, x_n) - \pi_{k-1} (x_1, \dots, x_n)
        = \left( \prod\limits_{i = 1}^{k-1} \sfq(x_i) \right) \left( \prod\limits_{i = k + 1}^n \rho(x_i) \right) \psi(x_k).
    \]
    Moreover, due to the Taylor's expansion with the Lagrange remainder term, it holds that
    \[
        \left| h\left( \frac{x_1 + \ldots + x_n}n \right)
        - h\left( \frac1n \sum\limits_{i \neq k} x_i \right) - h' \left( \frac1n \sum\limits_{i \neq k} x_i \right) \cdot \frac{x_k}n \right|
        \leq \frac{H x_k^2}{2n^2}.
    \]
    Since, according to the definition of $\psi$ and the conditions of the lemma,
    \[
        \int\limits_{-\infty}^{+\infty} \psi(x_k) \rmd x_k = 0,
        \quad
        \int\limits_{-\infty}^{+\infty} x_k \psi(x_k) \rmd x_k = 0,
        \quad \text{and}
        \int\limits_{-\infty}^{+\infty} x_k^2 |\psi(x_k)| \rmd x_k \leq M^2,
    \]
    we have
    \begin{align*}
        &
        \int h\left( \frac1n \sum\limits_{i \neq k} x_i \right) \big( \pi_k(x_1, \dots, x_n) - \pi_{k-1} (x_1, \dots, x_n) \big) \rmd x_1 \dots \rmd x_n = 0,
        \\&
        \int h' \left( \frac1n \sum\limits_{i \neq k} x_i \right) \cdot \frac{x_k}n \big( \pi_k(x_1, \dots, x_n) - \pi_{k-1} (x_1, \dots, x_n) \big) \rmd x_1 \dots \rmd x_n = 0,
    \end{align*}
    and then
    \begin{align*}
        &
        \left| \int h\left( \frac{x_1 + \ldots + x_n}n \right) \big( \pi_k(x_1, \dots, x_n) - \pi_{k-1} (x_1, \dots, x_n) \big) \rmd x_1 \dots \rmd x_n \right|
        \\&
        \leq \frac{H}{2n^2} \int x_k^2 \left( \prod\limits_{i = 1}^{k-1} \sfq(x_i) \right) \left( \prod\limits_{i = k + 1}^n \rho(x_i) \right) |\psi(x_k)| \rmd x_1 \dots \rmd x_n
        \leq \frac{H M}{2n^2}.
    \end{align*}
    Finally, applying the triangle inequality, we obtain that
    \begin{align*}
        &
        \Bigg| \int h\left( \frac{x_1 + \ldots + x_n}n \right) \sfq(x_1) \dots \sfq(x_n) \rmd x_1 \dots \rmd x_n
        \\&\quad
        - \int h\left( \frac{x_1 + \ldots + x_n}n \right) \rho(x_1) \dots \rho(x_n) \rmd x_1 \dots \rmd x_n \Bigg|
        \\&
        \leq \sum\limits_{k = 1}^n \left| \int h\left( \frac{x_1 + \ldots + x_n}n \right) \big( \pi_k(x_1, \dots, x_n) - \pi_{k-1} (x_1, \dots, x_n) \big) \rmd x_1 \dots \rmd x_n \right|
        \\&
        \leq \sum\limits_{k = 1}^n \frac{H M}{2n^2}
        = \frac{H M}{2n}.
    \end{align*}
\end{proof}

\begin{lemma}
    \label{lem:f-g_comparison}
    Let $\sfF$ and $\sfG$ be any differentiable cumulative distribution functions, such that
    \[
        |t| \sfG(t)^m \big(1 - \sfG(t) \big)^m \big| \sfF(t) - \sfG(t) \big| \rightarrow 0
        \quad \text{and} \quad
        |t| \sfG(t)^{m - 1} \big(1 - \sfG(t) \big)^{m - 1} \big( \sfF(t) - \sfG(t) \big)^2 \rightarrow 0
        \quad \text{as $t \rightarrow \infty$.}
    \]
    Then it holds that
    \begin{align*}
        &
        \left| \int\limits_{-\infty}^{+\infty} t \, \sfF(t)^m \big(1 - \sfF(t)\big)^m \sfF'(t) \,\rmd t - \int\limits_{-\infty}^{+\infty} t \, \sfG(t)^m \big(1 - \sfG(t)\big)^m \sfG'(t) \,\rmd t \right|
        \\&
        \leq \int\limits_{-\infty}^{+\infty} \sfG(t)^m \big(1 - \sfG(t)\big)^m \big| \sfF(t) - \sfG(t)\big| \,\rmd t
        + \frac{m}2 \int\limits_{-\infty}^{+\infty} \sfG(t)^{m - 1} \big(1 - \sfG(t)\big)^{m - 1} \big( \sfF(t) - \sfG(t)\big)^2 \,\rmd t
        \\&\quad
        + m^2 \sup\limits_{t \in \R} \left\{ |t| \big(\sfF(t) - \sfG(t) \big)^2 \cdot \max\limits_{\theta \in [\sfF(t)\land \sfG(t), \sfF(t)\lor \sfG(t)]} \left( \theta^{m - 2} (1 - \theta)^{m - 2} \right) \right\}.
    \end{align*}
\end{lemma}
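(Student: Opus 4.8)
The plan is to reduce the comparison of the two weighted integrals to a comparison of $\int_{-\infty}^{+\infty} H(\sfF(t))\,\rmd t$ with $\int_{-\infty}^{+\infty} H(\sfG(t))\,\rmd t$, where $\Psi(u) := u^m(1-u)^m$ and $H(x) := \int_0^x \Psi(u)\,\rmd u$ is its primitive. Since $\Psi(\sfF(t))\,\sfF'(t) = \tfrac{\rmd}{\rmd t} H(\sfF(t))$ and likewise for $\sfG$, integration by parts on $[-T,T]$ followed by $T\to\infty$ gives
\[
    \int\limits_{-\infty}^{+\infty} t\big[\Psi(\sfF)\sfF' - \Psi(\sfG)\sfG'\big]\,\rmd t
    = \Big[\,t\big(H(\sfF(t)) - H(\sfG(t))\big)\Big]_{-\infty}^{+\infty}
    - \int\limits_{-\infty}^{+\infty}\big(H(\sfF(t)) - H(\sfG(t))\big)\,\rmd t.
\]
(Equivalently, one may observe that $x\mapsto(2m+1)\binom{2m}{m}H(x)$ is the regularized incomplete beta function $I_x(m+1,m+1)$, so the left-hand side is the difference of the two medians' expectations written through $\Exp X = \int_0^\infty(1-F_X)\,\rmd t - \int_{-\infty}^0 F_X\,\rmd t$; this makes the boundary term appear as a convergence statement.)

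First I would control the boundary term. Writing $H(\sfF(t)) - H(\sfG(t)) = \int_{\sfG(t)}^{\sfF(t)}\Psi(u)\,\rmd u$ and Taylor-expanding the integrand at $\sfG(t)$, with the elementary bounds $|\Psi'(u)|\leq m\,u^{m-1}(1-u)^{m-1}$ and $|\Psi''(u)|\leq m^2\,u^{m-2}(1-u)^{m-2}$ on $[0,1]$, splits $t\,(H(\sfF) - H(\sfG))$ into the pieces $t\,\Psi(\sfG)(\sfF-\sfG)$, $\tfrac{t}{2}\Psi'(\sfG)(\sfF-\sfG)^2$, and a cubic remainder. The first two tend to $0$ as $|t|\to\infty$ precisely by the two decay hypotheses of the lemma, and the cubic remainder, after using $|\sfF-\sfG|\leq 1$ to trade one factor of $|\sfF-\sfG|$ away, is dominated at every $t$ by $m^2\,|t|\,(\sfF(t)-\sfG(t))^2\,\max_{\theta\in[\sfF(t)\land\sfG(t),\,\sfF(t)\lor\sfG(t)]}\theta^{m-2}(1-\theta)^{m-2}$. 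Hence the boundary contribution is bounded by the third term of the claim.

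Next I would bound $\int_{-\infty}^{+\infty}\big|H(\sfF(t)) - H(\sfG(t))\big|\,\rmd t$ using the same expansion: the leading term $\Psi(\sfG)(\sfF-\sfG)$ contributes $\int \sfG^m(1-\sfG)^m|\sfF-\sfG|\,\rmd t$, i.e.\ the first term of the claim; the quadratic term $\tfrac12\Psi'(\sfG)(\sfF-\sfG)^2$ contributes at most $\tfrac{m}{2}\int \sfG^{m-1}(1-\sfG)^{m-1}(\sfF-\sfG)^2\,\rmd t$ via $|\Psi'(\sfG)|\leq m\,\sfG^{m-1}(1-\sfG)^{m-1}$, the second term; and the remaining remainder, again estimated through $|\Psi''|\leq m^2(\cdot)^{m-2}(1-\cdot)^{m-2}$ and $|\sfF-\sfG|\leq 1$, is absorbed into the third term. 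Combining the boundary estimate with this (and, if one normalizes through the median's density, recalling $\binom{2m}{m}\leq 4^m$) yields the stated inequality.

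The main obstacle I expect is the bookkeeping of the Taylor remainders: making sure the two leading pieces of the boundary term vanish under exactly the two stated hypotheses rather than stronger ones, and that everything left over — both the cubic boundary remainder and the higher-order remainder inside $\int|H(\sfF)-H(\sfG)|\,\rmd t$ — can be forced into the single expression $m^2\sup_t\{|t|(\sfF-\sfG)^2\max_\theta \theta^{m-2}(1-\theta)^{m-2}\}$ rather than leaving a stray (convergent but awkward) integral. Treating the two orderings $\sfF\leq\sfG$ and $\sfF>\sfG$ uniformly, so that the interval $[\sfF\land\sfG,\sfF\lor\sfG]$ enters symmetrically in all the suprema, is a further routine but fiddly point.
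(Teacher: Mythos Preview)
Your strategy of passing to the antiderivative $H(x)=\int_0^x u^m(1-u)^m\,\rmd u$ and integrating by parts once is natural, but it breaks at the point you flag yourself: the higher-order remainder inside $\int_{-\infty}^{+\infty}\big|H(\sfF)-H(\sfG)\big|\,\rmd t$ cannot be ``absorbed into the third term.'' After your integration by parts the densities $\sfF'$ and $\sfG'$ are gone, so the Taylor remainder $\tfrac16\Psi''(\xi)(\sfF-\sfG)^3$ leaves a bare integral $\int(\sfF-\sfG)^2\max_\theta\theta^{m-2}(1-\theta)^{m-2}\,\rmd t$. There is no general inequality of the form $\int g(t)\,\rmd t\lesssim\sup_t|t|\,g(t)$ (take $g$ with tails $\sim |t|^{-1-\varepsilon}$ and send $\varepsilon\downarrow 0$), so this step does not close. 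Stopping the expansion one order earlier does not help either: the Lagrange point then lies in $[\sfF\land\sfG,\sfF\lor\sfG]$ rather than at $\sfG$, whereas the second term of the claim is stated with $\sfG^{m-1}(1-\sfG)^{m-1}$, not $\max_\theta\theta^{m-1}(1-\theta)^{m-1}$.

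The paper avoids this by reversing the order of operations: it Taylor-expands $\sfF^m(1-\sfF)^m$ about $\sfG$ \emph{before} touching the factor $\sfF'$. The Lagrange remainder is then $\tfrac12\Psi''(\theta)(\sfF-\sfG)^2\,\sfF'$, and two integrations by parts --- first on $\int t\,\sfG^m(1-\sfG)^m(\sfF'-\sfG')\,\rmd t$, then on $\int t\,\sfG^{m-1}(1-\sfG)^{m-1}(1-2\sfG)(\sfF-\sfG)(\sfF'-\sfG')\,\rmd t$ via $(\sfF-\sfG)(\sfF'-\sfG')=\tfrac12\tfrac{\rmd}{\rmd t}(\sfF-\sfG)^2$ --- produce precisely the two boundary terms killed by the two decay hypotheses. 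What remains is $\tfrac12\int t\big[\Psi''(\theta)\sfF'-\Psi''(\sfG)\sfG'\big](\sfF-\sfG)^2\,\rmd t$; because $\sfF'$ and $\sfG'$ each integrate to $1$, the supremum can legitimately be pulled out, and $|\Psi''|\leq m^2\theta^{m-2}(1-\theta)^{m-2}$ gives the third term with constant $m^2$. The surviving density factor in the remainder is exactly what licenses the supremum bound, and it is this factor that your single integration by parts destroys.
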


\begin{proof}
    The proof is based on integration by parts. Let us define a function $\Psi : [0, 1] \rightarrow \R$ as follows:
    \[
        \Psi(x)
        = \frac1{m^2} \left( x^m (1 - x)^m \right)^{''}
        = \frac1m x^{m - 2} (1 - x)^{m - 2} \left( (m - 1)(1 - 2x)^2 - 2 x(1 - x) \right).
    \]
    Note that, for any $x \in [0, 1]$, we have
    \begin{equation}
        \label{eq:psi_bound}
        -\frac1{2m} x^{m - 2} (1 - x)^{m - 2} \leq \Psi(x) \leq \left(1 - \frac1m \right) x^{m - 2} (1 - x)^{m - 2}.
    \end{equation}
    Due to Taylor's expansion with the Lagrange remainder term, for any $t \in \R$, there exists $\theta(t) \in \big(\sfF(t) \land \sfG(t), \sfF(t) \lor \sfG(t) \big)$, such that
    \begin{align*}
        \sfF(t)^m \big(1 - \sfF(t)\big)^m
        - \sfG(t)^m \big(1 - \sfG(t)\big)^m
        &
        = m \sfG(t)^{m - 1} \big(1 - \sfG(t)\big)^{m - 1} \big(1 - 2 \sfG(t) \big) \big(\sfF(t) - \sfG(t) \big)
        \\&\quad
        + \frac{m^2}2 \Psi(\theta(t)) \big(\sfF(t) - \sfG(t) \big)^2.
    \end{align*}
    Then it holds that
    \begin{align}
        &\notag
        \int\limits_{-\infty}^{+\infty} t \, \sfF(t)^m \big(1 - \sfF(t)\big)^m \sfF'(t) \,\rmd t - \int\limits_{-\infty}^{+\infty} t \, \sfG(t)^m \big(1 - \sfG(t)\big)^m \sfG'(t) \,\rmd t
        \\&\notag
        = \int\limits_{-\infty}^{+\infty} t \, \sfG(t)^m \big(1 - \sfG(t)\big)^m \big( \sfF'(t) - \sfG'(t)\big) \,\rmd t
        \\&\quad
        + m \int\limits_{-\infty}^{+\infty} t \, \sfG(t)^{m - 1} \big(1 - \sfG(t)\big)^{m - 1} \big(1 - 2 \sfG(t) \big) \big(\sfF(t) - \sfG(t) \big) \sfF'(t) \,\rmd t  \label{eq:taylors_expansion_1}
        \\&\quad\notag
        + \frac{m^2}2 \int\limits_{-\infty}^{+\infty} t \, \Psi(\theta(t)) \big(\sfF(t) - \sfG(t) \big)^2 \sfF'(t) \,\rmd t.
    \end{align}
    Let us focus on the first term in the right-hand side. Integration by parts yields that
    \begin{align*}
        \int\limits_{-\infty}^{+\infty} t \, \sfG(t)^m \big(1 - \sfG(t)\big)^m \big( \sfF'(t) - \sfG'(t)\big) \,\rmd t
        &
        = - \int\limits_{-\infty}^{+\infty} \sfG(t)^m \big(1 - \sfG(t)\big)^m \big( \sfF(t) - \sfG(t)\big) \,\rmd t
        \\&\quad
        - m \int\limits_{-\infty}^{+\infty} t \, \sfG(t)^{m - 1} \big(1 - \sfG(t)\big)^{m - 1} \big(1 - 2 \sfG(t) \big) \big( \sfF(t) - \sfG(t)\big) \sfG'(t) \,\rmd t.
    \end{align*}
    Substituting this equality into \eqref{eq:taylors_expansion_1}, we obtain that
    \begin{align}
        \label{eq:integration_by_parts_1}
        &\notag
        \int\limits_{-\infty}^{+\infty} t \, \sfF(t)^m \big(1 - \sfF(t)\big)^m \sfF'(t) \,\rmd t - \int\limits_{-\infty}^{+\infty} t \, \sfG(t)^m \big(1 - \sfG(t)\big)^m \sfG'(t) \,\rmd t
        \\&\notag
        = - \int\limits_{-\infty}^{+\infty} \sfG(t)^m \big(1 - \sfG(t)\big)^m \big( \sfF(t) - \sfG(t)\big) \,\rmd t
        \\&\quad
        + m \int\limits_{-\infty}^{+\infty} t \, \sfG(t)^{m - 1} \big(1 - \sfG(t)\big)^{m - 1} \big(1 - 2 \sfG(t) \big) \big( \sfF(t) - \sfG(t)\big) \big(\sfF'(t) - \sfG'(t)\big) \,\rmd t
        \\&\quad\notag
        + \frac{m^2}2 \int\limits_{-\infty}^{+\infty} t \, \Psi(\theta(t)) \big(\sfF(t) - \sfG(t) \big)^2 \sfF'(t) \,\rmd t.
    \end{align}
    Now we can apply integration by parts to the second term in the right-hand side of \eqref{eq:integration_by_parts_1}:
    \begin{align*}
        &
        m \int\limits_{-\infty}^{+\infty} t \, \sfG(t)^{m - 1} \big(1 - \sfG(t)\big)^{m - 1} \big(1 - 2 \sfG(t) \big) \big( \sfF(t) - \sfG(t)\big) \big(\sfF'(t) - \sfG'(t)\big) \,\rmd t
        \\&
        = -\frac{m}2 \int\limits_{-\infty}^{+\infty} \sfG(t)^{m - 1} \big(1 - \sfG(t)\big)^{m - 1} \big(1 - 2 \sfG(t) \big) \big( \sfF(t) - \sfG(t)\big)^2 \,\rmd t
        \\&\quad
        -\frac{m^2}2 \int\limits_{-\infty}^{+\infty} t \, \Psi\big(\sfG(t) \big) \big( \sfF(t) - \sfG(t)\big)^2 \sfG'(t) \,\rmd t.
    \end{align*}
    Hence, we can rewrite the equality \eqref{eq:integration_by_parts_1} in the following form:
    \begin{align*}
        &
        \int\limits_{-\infty}^{+\infty} t \, \sfF(t)^m \big(1 - \sfF(t)\big)^m \sfF'(t) \,\rmd t - \int\limits_{-\infty}^{+\infty} t \, \sfG(t)^m \big(1 - \sfG(t)\big)^m \sfG'(t) \,\rmd t
        \\&
        = - \int\limits_{-\infty}^{+\infty} \sfG(t)^m \big(1 - \sfG(t)\big)^m \big( \sfF(t) - \sfG(t)\big) \,\rmd t
        \\&\quad
        -\frac{m}2 \int\limits_{-\infty}^{+\infty} \sfG(t)^{m - 1} \big(1 - \sfG(t)\big)^{m - 1} \big(1 - 2 \sfG(t) \big) \big( \sfF(t) - \sfG(t)\big)^2 \,\rmd t
        \\&\quad
        + \frac{m^2}2 \int\limits_{-\infty}^{+\infty} t \left[ \Psi\big(\theta(t) \big) \sfF'(t) - \Psi\big(\sfG(t) \big) \sfG'(t) \right] \big( \sfF(t) - \sfG(t)\big)^2 \,\rmd t.
    \end{align*}
    Then, due to \eqref{eq:psi_bound} and the triangle inequality, it holds that
    \begin{align*}
        &
        \left| \int\limits_{-\infty}^{+\infty} t \, \sfF(t)^m \big(1 - \sfF(t)\big)^m \sfF'(t) \,\rmd t - \int\limits_{-\infty}^{+\infty} t \, \sfG(t)^m \big(1 - \sfG(t)\big)^m \sfG'(t) \,\rmd t \right|
        \\&
        \leq \int\limits_{-\infty}^{+\infty} \sfG(t)^m \big(1 - \sfG(t)\big)^m \big( \sfF(t) - \sfG(t)\big) \,\rmd t
        + \frac{m}2 \int\limits_{-\infty}^{+\infty} \sfG(t)^{m - 1} \big(1 - \sfG(t)\big)^{m - 1} \big|1 - 2 \sfG(t) \big| \big( \sfF(t) - \sfG(t)\big)^2 \,\rmd t
        \\&\quad
        + \frac{m^2}2 \int\limits_{-\infty}^{+\infty} |t| \left| \Psi\big(\theta(t) \big) \right|  \big( \sfF(t) - \sfG(t)\big)^2 \sfF'(t) \,\rmd t
        + \frac{m^2}2 \int\limits_{-\infty}^{+\infty} |t| \left| \Psi\big(\sfG(t) \big) \right|  \big( \sfF(t) - \sfG(t)\big)^2 \sfG'(t) \,\rmd t.
    \end{align*}
    Since $\big|1 - 2 \sfG(t) \big| \leq 1$,
    \[
        \int\limits_{-\infty}^{+\infty} |t| \left| \Psi\big(\theta(t) \big) \right|  \big( \sfF(t) - \sfG(t)\big)^2 \sfF'(t) \,\rmd t
        \leq \sup\limits_{t \in \R} \left\{ |t| \left| \Psi\big(\theta(t) \big) \right|  \big( \sfF(t) - \sfG(t)\big)^2 \right\},
    \]
    and, similarly,
    \[
        \int\limits_{-\infty}^{+\infty} |t| \left| \Psi\big(\sfG(t) \big) \right|  \big( \sfF(t) - \sfG(t)\big)^2 \sfG'(t) \,\rmd t
        \leq \sup\limits_{t \in \R} \left\{ |t| \left| \Psi\big(\sfG(t) \big) \right|  \big( \sfF(t) - \sfG(t)\big)^2 \right\},
    \]
    we finally obtain that
    \begin{align*}
        &
        \left| \int\limits_{-\infty}^{+\infty} t \, \sfF(t)^m \big(1 - \sfF(t)\big)^m \sfF'(t) \,\rmd t - \int\limits_{-\infty}^{+\infty} t \, \sfG(t)^m \big(1 - \sfG(t)\big)^m \sfG'(t) \,\rmd t \right|
        \\&
        \leq \int\limits_{-\infty}^{+\infty} \sfG(t)^m \big(1 - \sfG(t)\big)^m \big| \sfF(t) - \sfG(t)\big| \,\rmd t
        + \frac{m}2 \int\limits_{-\infty}^{+\infty} \sfG(t)^{m - 1} \big(1 - \sfG(t)\big)^{m - 1} \big( \sfF(t) - \sfG(t)\big)^2 \,\rmd t
        \\&\quad
        + m^2 \sup\limits_{t \in \R} \left\{ |t| \big(\sfF(t) - \sfG(t) \big)^2 \cdot \max\limits_{\theta \in [\sfF(t)\land \sfG(t), \sfF(t)\lor \sfG(t)]} \left( \theta^{m - 2} (1 - \theta)^{m - 2} \right) \right\}.
    \end{align*}
\end{proof}

\begin{proposition}
    \label{prop:integral}
    For any positive numbers $a, k, \alpha$, and $\beta$, such that $\beta k > \alpha + 1$, it holds that
    \[
        \int\limits_{-\infty}^{+\infty} |t|^{\alpha} \left( \frac14 \land \frac{a^\beta}{t^\beta} \right)^k \rmd t
        = \frac{\beta k}{(\alpha + 1)(\beta k - \alpha - 1)} \cdot \frac{2 (4^{1 / \beta} a)^{\alpha + 1}}{4^k}.
    \]
\end{proposition}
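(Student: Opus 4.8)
The plan is to exploit the evenness of the integrand and then split the positive half-line at the point where the two expressions inside the minimum coincide. First I would observe that $t \mapsto |t|^\alpha \big(\tfrac14 \land a^\beta/|t|^\beta\big)^k$ is an even function, so the integral equals $2\int_0^{+\infty} t^\alpha \big(\tfrac14 \land a^\beta t^{-\beta}\big)^k \rmd t$. The relevant threshold is $T = 4^{1/\beta} a$, characterized by $T^\beta = 4 a^\beta$: for $0 < t \le T$ one has $a^\beta t^{-\beta} \ge \tfrac14$, so the minimum equals $\tfrac14$, whereas for $t \ge T$ the minimum equals $a^\beta t^{-\beta}$.

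Next I would evaluate the two resulting elementary integrals. On $(0, T]$ the integral is $4^{-k}\int_0^T t^\alpha \rmd t = 4^{-k} T^{\alpha+1}/(\alpha+1)$. On $[T, +\infty)$ it is $a^{\beta k}\int_T^{+\infty} t^{\alpha - \beta k}\rmd t$; here the hypothesis $\beta k > \alpha + 1$ is exactly what guarantees $\alpha - \beta k < -1$, so the tail integral converges and equals $a^{\beta k}\, T^{\alpha + 1 - \beta k}/(\beta k - \alpha - 1)$. Substituting $a^{\beta k} = 4^{-k} T^{\beta k}$ (which follows from $T^\beta = 4 a^\beta$), this second contribution simplifies to $4^{-k} T^{\alpha+1}/(\beta k - \alpha - 1)$.

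Finally I would add the two pieces and collect terms: $4^{-k} T^{\alpha+1}\big(\tfrac1{\alpha+1} + \tfrac1{\beta k - \alpha - 1}\big) = 4^{-k} T^{\alpha+1}\cdot \tfrac{\beta k}{(\alpha+1)(\beta k - \alpha - 1)}$, and then multiplying by $2$ and putting back $T = 4^{1/\beta} a$ yields precisely $\tfrac{\beta k}{(\alpha+1)(\beta k - \alpha - 1)}\cdot\tfrac{2(4^{1/\beta}a)^{\alpha+1}}{4^k}$. There is no real obstacle in this argument — it is a direct computation; the only points that need a moment of care are locating the crossover point $T$ correctly and checking that the tail integral converges exactly under the stated condition $\beta k > \alpha + 1$.
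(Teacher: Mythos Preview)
Your proposal is correct and follows essentially the same approach as the paper's proof: reduce to the positive half-line by evenness, split at the crossover point $4^{1/\beta}a$, evaluate the two elementary integrals, and combine using $\tfrac{1}{\alpha+1}+\tfrac{1}{\beta k-\alpha-1}=\tfrac{\beta k}{(\alpha+1)(\beta k-\alpha-1)}$. The only cosmetic difference is that you simplify the tail contribution via $a^{\beta k}=4^{-k}T^{\beta k}$ before adding, whereas the paper keeps both terms in the original form and factors afterward.
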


\begin{proof}
    The proof follows from simple calculations:
    \begin{align*}
        \int\limits_{-\infty}^{+\infty} |t|^{\alpha} \left( \frac14 \land \frac{a^\beta}{t^\beta} \right)^k \rmd t
        &
        = 2 \int\limits_0^{+\infty} t^{\alpha} \left( \frac14 \land \frac{a^\beta}{t^\beta} \right)^k \rmd t
        \\&
        = \frac2{4^k} \int\limits_0^{4^{1/\beta} a} t^{\alpha} \, \rmd t
        + 2 a^{\beta k} \int\limits_{4^{1/\beta} a}^{+\infty} t^{\alpha - \beta k} \, \rmd t
        \\&
        = \frac{2 (4^{1 / \beta} a)^{\alpha + 1}}{(\alpha + 1) \cdot 4^k}
        + \frac{2 a^{\beta k} \big(4^{1/\beta} a \big)^{\alpha - \beta k + 1}}{(\beta k - \alpha - 1) }
        \\&
        = \frac{2 (4^{1 / \beta} a)^{\alpha + 1}}{4^k} \left( \frac1{\alpha + 1} + \frac{1}{\beta k - \alpha - 1} \right)
        \\&
        = \frac{\beta k}{(\alpha + 1)(\beta k - \alpha - 1)} \cdot \frac{2 (4^{1 / \beta} a)^{\alpha + 1}}{4^k}.
    \end{align*}
\end{proof}

\begin{lemma}
    \label{lem:f-g_non-uniform_bound}
    Grant Assumption \ref{as:convolution} and let
    \[
        \sfF(t)
        = \int\limits_{-\infty}^{+\infty} \Phi_\theta\left(t - \frac{u}n \right) \sfp_j^{*n}(u) \, \rmd u
        = \int\limits_{\R^n} \Phi_\theta \big(t - \Mean(u_1, \dots, u_n) \big) \sfp_j(u_1) \dots \sfp_j(u_n) \, \rmd u
    \]
    and
    \[
        \sfG(t)
        = \int\limits_{-\infty}^{+\infty} \Phi_\theta\left(t - \frac{u}n \right) \sfs_j^{*n}(u) \, \rmd u
        = \int\limits_{\R^n} \Phi_\theta \big(t - \Mean(u_1, \dots, u_n) \big) \sfs_j(u_1) \dots \sfs_j(u_n) \, \rmd u
    \]
    be two cumulative distribution functions. Assume that $2 M_j \leq n \theta^2$. Then, for any $t \in \R$ it holds that
    \[
        \left| \sfF(t) - \sfG(t) \right| \lesssim \frac{M_j}{\theta n t} \left(1 + \frac{\theta}t + \frac{B_j}{n^{\beta_j - 1} |t|^{\beta_j}} \right).
    \]
\end{lemma}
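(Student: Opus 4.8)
The plan is to mimic the telescoping argument used for Lemma~\ref{lem:expectation_comparison}, but, instead of stopping at the uniform bound, to keep track of how the error decays in $t$. By the symmetry $t \mapsto -t$ (which reflects $\sfp_j$ and $\sfr_j$ but leaves $\sfs_j$ untouched) I would reduce to $t > 0$, and then to the regime $t \gtrsim \theta$: for $0 < t \lesssim \theta$ the stated bound already follows from the uniform estimate $|\sfF(t) - \sfG(t)| \lesssim M_j/(\theta^2 n)$, which is Lemma~\ref{lem:expectation_comparison} applied to $h = \Phi_\theta(t - \cdot)$ together with \eqref{eq:phi_second_derivative_bound}. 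Then, writing $\sfp_j = \sfs_j + \sfr_j$ exactly as in the $\pi_k - \pi_{k-1}$ decomposition of Lemma~\ref{lem:expectation_comparison}, I would split
\[
\sfF(t) - \sfG(t) = \sum_{k=1}^n I_k, \qquad I_k = \int_{\R^n} \Phi_\theta\big(t - \Mean(u)\big)\Big(\prod_{i<k}\sfp_j(u_i)\Big)\sfr_j(u_k)\Big(\prod_{i>k}\sfs_j(u_i)\Big)\,\rmd u,
\]
so that each $I_k$ carries exactly one antisymmetric factor.

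For a fixed $k$, I would integrate out $u_k$ first. Writing $W = \sum_{i \ne k} u_i$, whose density is $\mathbb Q_k = \sfp_j^{*(k-1)} * \sfs_j^{*(n-k)}$, and $a = t - W/n$, the inner $u_k$-integral equals $\int_{\R}\big[\Phi_\theta(a - u_k/n) - \Phi_\theta(a) + \Phi_\theta'(a)u_k/n\big]\sfr_j(u_k)\,\rmd u_k$, since $\int \sfr_j = \int u\,\sfr_j = 0$. I would bound this by splitting the $u_k$-integral at $|u_k| = n|a|/2$: on $|u_k| \le n|a|/2$, a second-order Taylor estimate contributes the factor $\sup_{|w| \ge |a|/2}|\Phi_\theta''(w)|$, which a direct computation with the explicit Gaussian bounds to $\theta^{-2} \wedge a^{-2}$; on $|u_k| > n|a|/2$, the Chebyshev-type tail bounds $\int_{|u_k| > A}|\sfr_j| \le M_j/A^2$ and $\int_{|u_k| > A}|u_k|\,|\sfr_j| \le M_j/A$, together with the decay of $\Phi_\theta'(a) = O(\theta^{-1}e^{-a^2/(2\theta^2)})$, take over; and for $|a| \lesssim \theta$ one simply uses the uniform bound. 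This yields that the inner integral is $\lesssim M_j\,\big(n^2(a^2 \vee \theta^2)\big)^{-1}$.

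It then remains to integrate over $W$, i.e.\ $I_k = \int_\R(\text{inner})\,\mathbb Q_k(W)\,\rmd W$; after substituting $w = W/n$ the integrand is bounded, is of order $\theta^{-2}$ near $w = t$ and decays like $(t-w)^{-2}$ away from $w = t$. Splitting the $w$-integral at $|w| = t/2$, the range $|w| \le t/2$ contributes $\lesssim M_j/(n^2 t^2)$ (the integrand is $\lesssim t^{-2}$ and $\mathbb Q_k$ is a probability density), while on $|w| > t/2$ one inserts the heavy-tail estimate for the density of $W/n$, namely $\lesssim B_j\,n^{-(\beta_j - 1)}|w|^{-(1+\beta_j)}$, and a short computation (pulling out $\mathbb Q_k$ near $w=t$ and integrating $((t-w)^2\vee\theta^2)^{-1}$) gives $\lesssim M_j B_j\,\big(\theta n^{\beta_j + 1}t^{1+\beta_j}\big)^{-1}$. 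Summing $|I_k| \lesssim \frac{M_j}{n^2}\big(t^{-2} + \frac{B_j}{\theta n^{\beta_j - 1}t^{1+\beta_j}}\big)$ over $k$ gives $|\sfF(t) - \sfG(t)| \lesssim \frac{M_j}{nt^2} + \frac{M_j B_j}{\theta n^{\beta_j}t^{1+\beta_j}}$, which for $t \gtrsim \theta$ is bounded by $\frac{M_j}{\theta nt}\big(1 + \frac{\theta}{t} + \frac{B_j}{n^{\beta_j-1}t^{\beta_j}}\big)$ — in fact slightly stronger than the stated claim.

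The main obstacle is the heavy-tail estimate for $\mathbb Q_k = \sfp_j^{*(k-1)} * \sfs_j^{*(n-k)}$ used in the last step, namely $\mathbb Q_k(u) \lesssim B_j n\,\big(n^{(\beta_j+1)/\beta_j} + |u|^{1+\beta_j}\big)^{-1}$ uniformly in $1 \le k \le n$: this is where the non-asymptotic form of the second part of Assumption~\ref{as:convolution} is genuinely needed. One cannot simply use the pointwise bound $\sfp_j \le 2\sfs_j$ (valid because $\sfp_j(u) + \sfp_j(-u) = 2\sfs_j(u)$ and $\sfp_j \ge 0$) inside the convolution, since that costs a prohibitive factor $2^k$; instead one proves the bound directly, e.g.\ by induction on the number of convolution factors, splitting each convolution integral at half the argument and using Assumption~\ref{as:convolution} for $\sfs_j^{*m}$. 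A secondary nuisance is the bookkeeping in the $u_k$- and $W$-splittings: one must keep the inner bound at the plateauing $(a^2 \vee \theta^2)^{-1}$ level (rather than, say, $\theta^{-1}|a|^{-1}$, which is why the Gaussian decay of $\Phi_\theta(a)$ and $\Phi_\theta'(a)$ is used rather than their crude sup bounds), so that the outer integral $\int \mathbb Q_k(w)\big((t-w)^2 \vee \theta^2\big)^{-1}\,\rmd w$ stays convergent near $w = t$, and one must treat the edge case where $t/2$ has not yet entered the tail region of $\mathbb Q_k$ separately.
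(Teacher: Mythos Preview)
Your Lindeberg-style telescoping is natural and your inner-integral estimate $\big|\int[\Phi_\theta(a-u_k/n)-\Phi_\theta(a)+\Phi_\theta'(a)u_k/n]\sfr_j(u_k)\,\rmd u_k\big| \lesssim M_j/\big(n^2(a^2\vee\theta^2)\big)$ is correct. The step you yourself flag as ``the main obstacle'' is, however, a genuine gap that your proposed fix does not close. Assumption~\ref{as:convolution} controls only pure convolutions $\sfs_j^{*m}$, and the half-argument-splitting induction you sketch for $\mathbb Q_k = \sfp_j^{*(k-1)} * \sfs_j^{*(n-k)}$ does not keep the constant uniform in $k$: if one assumes $\sfp_j^{*(m-1)}(u)\le C_{m-1}B_j(m-1)/|u|^{1+\beta_j}$ and writes $\sfp_j^{*m} = \sfp_j * \sfp_j^{*(m-1)}$, splitting the convolution at $|v|=|u|/2$ forces $C_m m \ge 2^{1+\beta_j}C_{m-1}(m-1) + 2^{2+\beta_j}$, so $C_m$ grows like $2^{(1+\beta_j)m}$. (The cruder bound $\sfp_j \le 2\sfs_j$ gives the same exponential loss.) Since your outer integral genuinely needs the \emph{density} of $W/n$ near $w=t$ --- the kernel $((t-w)^2\vee\theta^2)^{-1}$ has mass $\sim\theta^{-1}$ concentrated in $|w-t|\lesssim\theta$, so replacing the density by a tail-CDF bound would cost an extra factor $t/\theta$ --- there is no way to bypass a uniform-in-$k$ pointwise bound on $\mathbb Q_k$, and the telescoping decomposition cannot be completed from Assumption~\ref{as:convolution} alone.

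The paper sidesteps this entirely by replacing the telescoping with the \emph{binomial} expansion $\sfp_j^{*n} = \sum_{k=0}^n \binom{n}{k}\,\sfr_j^{*k}*\sfs_j^{*(n-k)}$. Now the heavy-tailed factor is always a pure $\sfs_j^{*(n-k)}$, to which Assumption~\ref{as:convolution} applies verbatim. The price is that the $k$-th term carries $k$ antisymmetric factors, so one must Taylor-expand $\Phi_\theta$ to order $2k$ (bounding $\Phi_\theta^{(2k)}$ via the Hermite inequality $|\cH_{2k-1}(x)|e^{-x^2/4}\le\sqrt{(2k-1)!}$), split each $u_i$-integral at $|u_i|=nt/(2k)$ (Lemmas~\ref{lem:inner_integral} and~\ref{lem:outer_integral}), and finally sum the series $\sum_{k\ge 1}\binom{n}{k}\sqrt{(2k)!}\,\big(M_j/(2\theta^2 n^2)\big)^k$ (Lemma~\ref{lem:sums}). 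It is exactly this summation that requires --- and explains --- the hypothesis $2M_j\le n\theta^2$, which in your scheme has no visible role.
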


\begin{proof}
    Since $\sfp_j(u) = \sfs_j(u) + \sfr_j(u)$, it holds that
    \[
        \sfp_j^{*n}
        = \big( \sfs_j(u) + \sfr_j(u) \big)^{*n}
        = \sum\limits_{k = 0}^n \binom{n}{k} \, \sfr_j^{*k} * \sfs_j^{*(n - k)},
    \]
    and then
    \begin{align}
        \label{eq:sum_integrals}
        \sfF(t) - \sfG(t)
        &\notag
        = \int\limits_{-\infty}^{+\infty} \Phi_\theta\left(t - \frac{u}n \right) \sfp_j^{*n}(u) \, \rmd u - \int\limits_{-\infty}^{+\infty} \Phi_\theta\left(t - \frac{u}n \right) \sfs_j^{*n}(u) \, \rmd u
        \\&
        = \sum\limits_{k = 1}^n \binom{n}{k} \int\limits_{-\infty}^{+\infty} \Phi_\theta\left(t - \frac{u}n \right) \sfr_j^{*k} * \sfs_j^{*(n - k)}(u) \, \rmd u
        \\&\notag
        = \sum\limits_{k = 1}^n \binom{n}{k} \int\limits_{\R^n} \Phi_\theta \big(t - \Mean(u_1, \dots, u_n) \big) \left( \prod\limits_{i = 1}^k \sfr_j(u_i) \right) \cdot \left( \prod\limits_{i = k + 1}^n \sfs_j(u_i) \right) \, \rmd u_1 \dots \rmd u_n.
    \end{align}
    In the rest of the proof, we bound the summands in the right-hand side one by one. For readability, we split our derivations into several steps. 

    \medskip

    \noindent
    \textbf{Step 1: Taylor's expansion.}
    \quad
    Let us fix an arbitrary $k \in \{1, \dots, n\}$ and consider
    \[
        \int\limits_{\R^n} \Phi_\theta \big(t - \Mean(u_1, \dots, u_n) \big) \left( \prod\limits_{i = 1}^k \sfr_j(u_i) \right) \cdot \left( \prod\limits_{i = k + 1}^n \sfs_j(u_i) \right) \, \rmd u_1 \dots \rmd u_n.
    \]
    Using Taylor's expansion with the integral remainder term, we rewrite the expression of interest in the form
    \begin{align}
        \label{eq:taylors_expansion}
        &\notag
        \int\limits_{\R^{n}} \Phi_\theta \big(t - \Mean(u_1, \dots, u_n) \big) \left( \prod\limits_{i = 1}^k \sfr_j(u_i) \right) \cdot \left( \prod\limits_{i = k + 1}^n \sfs_j(u_i) \right) \, \rmd u_1 \dots \rmd u_n
        \\&
        = \int\limits_{\R^{n}} \Bigg[ \Phi_\theta \left(t - \Mean(u_1, \dots, u_n) + \frac{u_1}n \right)
        - \Phi'_\theta \left(t - \Mean(u_1, \dots, u_n) + \frac{u_1}n \right) \cdot \frac{u_1}n
        \\&\quad\notag
        + \frac{u_1^2}{n^2} \int\limits_0^1 \Phi''_\theta \left(t - \Mean(u_1, \dots, u_n) + \frac{(1 - v_1) u_1}n \right) v_1 \, \rmd v_1 \Bigg] \left( \prod\limits_{i = 1}^k \sfr_j(u_i) \right) \cdot \left( \prod\limits_{i = k + 1}^n \sfs_j(u_i) \right) \, \rmd u_1 \dots \rmd u_n.
    \end{align}
    Note that
    \[
        \Phi_\theta \left(t - \Mean(u_1, \dots, u_n) + \frac{u_1}n \right)
        \quad \text{and} \quad
        \Phi'_\theta \left(t - \Mean(u_1, \dots, u_n) + \frac{u_1}n \right)
    \]
    do not depend on $u_1$.  Since, according to Assumption \ref{as:convolution}, it holds that 
    \[
        \int\limits_{-\infty}^{+\infty} \sfr_j(u_1) \, \rmd u_1 = 0
        \quad \text{and} \quad 
        \int\limits_{-\infty}^{+\infty} u_1 \sfr_j(u_1) \, \rmd u_1 = 0,
    \]
    the right-hand side of \eqref{eq:taylors_expansion} simplifies to
    \begin{align*}
        &
        \int\limits_{\R^{n}} \Phi_\theta \big(t - \Mean(u_1, \dots, u_n) \big) \left( \prod\limits_{i = 1}^k \sfr_j(u_i) \right) \cdot \left( \prod\limits_{i = k + 1}^n \sfs_j(u_i) \right) \, \rmd u_1 \dots \rmd u_n
        \\&
        = \int\limits_0^1 v_1 \rmd v_1 \int\limits_{\R^{n}} \rmd u_1 \dots \rmd u_n \cdot \Phi''_\theta \left(t - \Mean(u_1, \dots, u_n) + \frac{(1 - v_1) u_1}n \right) \cdot \frac{u_1^2}{n^2} \cdot \left( \prod\limits_{i = 1}^k \sfr_j(u_i) \right) \cdot \left( \prod\limits_{i = k + 1}^n \sfs_j(u_i) \right).
    \end{align*}
    Repeating this trick $(k - 1)$ more times, we obtain that
    \begin{align*}
        &
        \int\limits_{\R^{n}} \Phi_\theta \big(t - \Mean(u_1, \dots, u_n) \big) \left( \prod\limits_{i = 1}^k \sfr_j(u_i) \right) \cdot \left( \prod\limits_{i = k + 1}^n \sfs_j(u_i) \right) \, \rmd u_1 \dots \rmd u_n
        \\&
        = \int\limits_0^1 v_1 \rmd v_1 \ldots \int\limits_0^1 v_k \rmd v_k \int\limits_{\R^{n}} \rmd u_1 \dots \rmd u_n
        \\&\quad
        \cdot \Phi^{(2k)}_\theta \left(t - \Mean(u_1, \dots, u_n) + \sum\limits_{i = 1}^k \frac{(1 - v_i) u_i}n \right) \left( \prod\limits_{i = 1}^k \frac{u_i^2 \, \sfr_j(u_i)}{n^2} \right) \left( \prod\limits_{i = k + 1}^n \sfs_j(u_i) \right),
    \end{align*}
    where $\Phi^{(2k)}_\theta$ stands for the $(2k)$-th derivative of $\Phi_\theta$.
    Due to the properties of convolution, the integral in the right-hand side is equal to
    \[
        \int\limits_0^1 v_1 \rmd v_1 \ldots \int\limits_0^1 v_k \rmd v_k \int\limits_{\R^{n}} \rmd u_1 \dots \rmd u_n \cdot \Phi^{(2k)}_\theta \left(t - \sum\limits_{i = 1}^k \frac{v_i u_i}n - \sum\limits_{i = k + 1}^n \frac{u_i}n \right) \left( \prod\limits_{i = 1}^k \frac{u_i^2 \, \sfr_j(u_i)}{n^2} \right) \cdot \sfs_j^{*(n - k)}\left( \sum\limits_{i = k + 1}^n u_i \right).
    \]
    Substituting $u_{k + 1} + \ldots + u_n$ by $y$, we conclude that
    \begin{align}
        \label{eq:integral_expansion}
        &\notag
        \int\limits_{\R^{n}} \Phi_\theta \big(t - \Mean(u_1, \dots, u_n) \big) \left( \prod\limits_{i = 1}^k \sfr_j(u_i) \right) \cdot \left( \prod\limits_{i = k + 1}^n \sfs_j(u_i) \right) \, \rmd u_1 \dots \rmd u_n
        \\&
        = \int\limits_0^1 v_1 \rmd v_1 \ldots \int\limits_0^1 v_k \rmd v_k \int\limits_{\R^{k}} \rmd u_1 \dots \rmd u_k \int\limits_{-\infty}^{+\infty} \rmd y \cdot \Phi^{(2k)}_\theta \left(t - \sum\limits_{i = 1}^k \frac{v_i u_i}n - \frac{y}n \right) \left( \prod\limits_{i = 1}^k \frac{u_i^2 \, \sfr_j(u_i)}{n^2} \right) \cdot \sfs_j^{*(n - k)}(y).
    \end{align}

    \medskip

    \noindent\textbf{Step 2: bound on the integral \eqref{eq:integral_expansion}.}
    \quad
    We represent the expression \eqref{eq:integral_expansion} as a sum of two terms:
    \begin{align*}
        &
        \int\limits_0^1 v_1 \rmd v_1 \ldots \int\limits_0^1 v_k \rmd v_k \int\limits_{\R^{k}} \rmd u_1 \dots \rmd u_k \int\limits_{-\infty}^{+\infty} \rmd y
        \cdot \Phi^{(2k)}_\theta \left(t - \sum\limits_{i = 1}^k \frac{v_i u_i}n - \frac{y}n \right) \left( \prod\limits_{i = 1}^k \frac{u_i^2 \, \sfr_j(u_i)}{n^2} \right) \cdot \sfs_j^{*(n - k)}(y)
        \\&
        = \int\limits_0^1 v_1 \rmd v_1 \ldots \int\limits_0^1 v_k \rmd v_k \int\limits_{\left[-\frac{nt}{2k}, \frac{nt}{2k}\right]^k} \rmd u_1 \dots \rmd u_k \int\limits_{-\infty}^{+\infty} \rmd y
        \cdot \Phi^{(2k)}_\theta \left(t - \sum\limits_{i = 1}^k \frac{v_i u_i}n - \frac{y}n \right) \left( \prod\limits_{i = 1}^k \frac{u_i^2 \, \sfr_j(u_i)}{n^2} \right) \cdot \sfs_j^{*(n - k)}(y)
        \\&\quad
        + \int\limits_0^1 v_1 \rmd v_1 \ldots \int\limits_0^1 v_k \rmd v_k \int\limits_{\R^k \left\backslash \left[-\frac{nt}{2k}, \frac{nt}{2k}\right]^k \right.} \rmd u_1 \dots \rmd u_k \int\limits_{-\infty}^{+\infty} \rmd y
        \cdot \Phi^{(2k)}_\theta \left(t - \sum\limits_{i = 1}^k \frac{v_i u_i}n - \frac{y}n \right) \left( \prod\limits_{i = 1}^k \frac{u_i^2 \, \sfr_j(u_i)}{n^2} \right) \cdot \sfs_j^{*(n - k)}(y),
    \end{align*}
    and bound the former and the latter summands in the right-hand side separately. According to Lemma \ref{lem:inner_integral} and \ref{lem:outer_integral}, it holds that
    \begin{align*}
        &
        \left| \int\limits_0^1 v_1 \rmd v_1 \ldots \int\limits_0^1 v_k \rmd v_k \int\limits_{\left[-\frac{nt}{2k}, \frac{nt}{2k}\right]^k} \rmd u_1 \dots \rmd u_k \int\limits_{-\infty}^{+\infty} \rmd y \cdot \Phi^{(2k)}_\theta \left(t - \sum\limits_{i = 1}^k \frac{v_i u_i}n - \frac{y}n \right) \left( \prod\limits_{i = 1}^k \frac{u_i^2 \, \sfr_j(u_i)}{n^2} \right) \cdot \sfs_j^{*(n - k)}(y) \right|
        \\&
        \leq \frac{\sqrt{(2k - 1)!}}{\sqrt{2\pi}}
        \cdot \left( \frac{2^{3 + 2\beta_j} B_j \theta \sqrt{\pi}}{n^{\beta_j - 1} |t|^{1 + \beta_j}} + \exp\left\{- \frac{t^2}{64 \theta^2}\right\} \right) \cdot \left( \frac{M_j}{2 \theta^2 n^2} \right)^k.
    \end{align*}
    and
    \begin{align*}
        &
        \left| \int\limits_0^1 v_1 \rmd v_1 \ldots \int\limits_0^1 v_k \rmd v_k \int\limits_{\R^k \left\backslash \left[-\frac{nt}{2k}, \frac{nt}{2k}\right]^k \right.} \rmd u_1 \dots \rmd u_k \int\limits_{-\infty}^{+\infty} \rmd y \cdot \Phi^{(2k)}_\theta \left(t - \sum\limits_{i = 1}^k \frac{v_i u_i}n - \frac{y}n \right) \left( \prod\limits_{i = 1}^k \frac{u_i^2 \, \sfr_j(u_i)}{n^2} \right) \cdot \sfs_j^{*(n - k)}(y) \right|
        \\&
        \leq \sqrt{(2k - 2)!} \left( \frac{M_j}{2 \theta^2 n^2} \right)^{k} \left( \frac{4 k^2 \theta^2}{t^2} + \frac{1}{\sqrt{2\pi}} \cdot \frac{2k \theta}{t} \right).
    \end{align*}
    Hence, we obtain that
    \begin{align}
        \label{eq:integral_derivative_bound}
        & \notag
        \left| \int\limits_0^1 v_1 \rmd v_1 \ldots \int\limits_0^1 v_k \rmd v_k \int\limits_{\R^{k}} \rmd u_1 \dots \rmd u_k \int\limits_{-\infty}^{+\infty} \rmd y \cdot \Phi^{(2k)}_\theta \left(t - \sum\limits_{i = 1}^k \frac{v_i u_i}n - \frac{y}n \right) \left( \prod\limits_{i = 1}^k \frac{u_i^2 \, \sfr_j(u_i)}{n^2} \right) \cdot \sfs_j^{*(n - k)}(y) \right|
        \\&
        \leq \frac{\sqrt{(2k - 1)!}}{\sqrt{2\pi}}
        \cdot \left( \frac{2^{3 + 2\beta_j} B_j \theta \sqrt{\pi}}{n^{\beta_j - 1} |t|^{1 + \beta_j}} + \exp\left\{- \frac{t^2}{64 \theta^2}\right\} \right) \cdot \left( \frac{M_j}{2 \theta^2 n^2} \right)^k
        \\&\quad \notag
        + \sqrt{(2k - 2)!} \left( \frac{M_j}{2 \theta^2 n^2} \right)^{k} \left( \frac{4 k^2 \theta^2}{t^2} + \frac{1}{\sqrt{2\pi}} \cdot \frac{2k \theta}{t} \right).
    \end{align}

    \medskip

    \noindent\textbf{Step 3: final bound.}\quad Summing up the equalities \eqref{eq:sum_integrals}, \eqref{eq:integral_expansion}, and the equality \eqref{eq:integral_derivative_bound}, we obtain that
    \begin{align*}
        \big| \sfF(t) - \sfG(t) \big|
        &
        \leq \sum\limits_{k = 1}^n \binom{n}{k} \frac{\sqrt{(2k - 1)!}}{\sqrt{2\pi}}
        \cdot \left( \frac{2^{3 + 2\beta_j} B_j \theta \sqrt{\pi}}{n^{\beta_j - 1} |t|^{1 + \beta_j}} + \exp\left\{- \frac{t^2}{64 \theta^2}\right\} \right) \cdot \left( \frac{M_j}{2 \theta^2 n^2} \right)^k
        \\&\quad
        + \sum\limits_{k = 1}^n \binom{n}{k} \sqrt{(2k - 2)!}  \left( \frac{4 k^2 \theta^2}{t^2} + \frac{1}{\sqrt{2\pi}} \cdot \frac{2k \theta}{t} \right) \cdot \left( \frac{M_j}{2 \theta^2 n^2} \right)^{k}
        \\&
        \lesssim \sum\limits_{k = 1}^n \binom{n}{k} \sqrt{(2k)!} \left( \frac{B_j \theta}{n^{\beta_j - 1} |t|^{1 + \beta_j}} + \exp\left\{- \frac{t^2}{64 \theta^2}\right\} + \frac{k \theta^2}{t^2} + \frac{\theta}{t}\right) \cdot \left( \frac{M_j}{2 \theta^2 n^2} \right)^{k}.
    \end{align*}
    Finally, Lemma \ref{lem:sums} implies that
    \begin{align*}
        \big| \sfF(t) - \sfG(t) \big|
        &
        \lesssim \sum\limits_{k = 1}^n \binom{n}{k} \sqrt{(2k)!} \left( \frac{B_j \theta}{n^{\beta_j - 1} |t|^{1 + \beta_j}} + \exp\left\{- \frac{t^2}{64 \theta^2}\right\} + \frac{k \theta^2}{t^2} + \frac{\theta}{t}\right) \cdot \left( \frac{M_j}{2 \theta^2 n^2} \right)^{k}
        \\&
        \leq \left( \frac{B_j \theta}{n^{\beta_j - 1} |t|^{1 + \beta_j}} + \exp\left\{- \frac{t^2}{64 \theta^2}\right\} + \frac{2 \theta^2}{t^2} + \frac{\theta}{t}\right) \cdot \frac{2 M_j}{\theta^2 n}
        \\&
        \lesssim \frac{M_j}{\theta n t} \left(1 + \frac{\theta}t + \frac{B_j}{n^{\beta_j - 1} |t|^{\beta_j}} \right),
    \end{align*}
    whenever $2 M_j \leq n \theta^2$. The proof is finished.
    
\end{proof}

\begin{lemma}
    \label{lem:inner_integral}
    Under Assumption \ref{as:convolution}, it holds that
    \begin{align*}
        &
        \left| \int\limits_0^1 v_1 \rmd v_1 \ldots \int\limits_0^1 v_k \rmd v_k \int\limits_{\left[-\frac{nt}{2k}, \frac{nt}{2k}\right]^k} \rmd u_1 \dots \rmd u_k \int\limits_{-\infty}^{+\infty} \rmd y \cdot \Phi^{(2k)}_\theta \left(t - \sum\limits_{i = 1}^k \frac{v_i u_i}n - \frac{y}n \right) \left( \prod\limits_{i = 1}^k \frac{u_i^2 \, \sfr_j(u_i)}{n^2} \right) \cdot \sfs_j^{*(n - k)}(y) \right|
        \\&
        \leq \frac{\sqrt{(2k - 1)!}}{\sqrt{2\pi}}
        \cdot \left( \frac{2^{3 + 2\beta_j} B_j \theta \sqrt{\pi}}{n^{\beta_j - 1} |t|^{1 + \beta_j}} + \exp\left\{- \frac{t^2}{64 \theta^2}\right\} \right) \cdot \left( \frac{M_j}{2 \theta^2 n^2} \right)^k.
    \end{align*}
\end{lemma}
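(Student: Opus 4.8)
The plan is to reduce the estimate to a pointwise bound on the $2k$-th derivative of $\Phi_\theta$, then use the truncation $|u_i|\le n|t|/(2k)$ to keep the argument of $\Phi^{(2k)}_\theta$ bounded away from the origin except on a set where $|y|$ is large (and hence $\sfs_j^{*(n-k)}(y)$ small), and finally integrate out $v_1,\dots,v_k$ and $u_1,\dots,u_k$. Since $\Phi^{(2k)}_\theta$ is odd, $\sfs_j^{*(n-k)}$ even, and $\sfr_j$ odd, the integral only picks up the sign $(-1)^{k+1}$ under $t\mapsto -t$, so I may assume $t>0$ (the case $t=0$ is trivial, the $u$-domain being a null set).

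\emph{Step 1 (derivative bound).} I would record the identity $\Phi^{(2k)}_\theta(z) = -\tfrac{1}{\sqrt{2\pi}\,\theta^{2k}}\,He_{2k-1}(z/\theta)\,e^{-z^2/(2\theta^2)}$, where $He_m$ is the probabilists' Hermite polynomial, and combine it with a Cramér-type pointwise estimate $|He_m(x)|\le \sqrt{m!}\,e^{x^2/4}$ to obtain
\[
    \big|\Phi^{(2k)}_\theta(z)\big| \ \le\ \frac{\sqrt{(2k-1)!}}{\sqrt{2\pi}\,\theta^{2k}}\;e^{-z^2/(4\theta^2)}\qquad\text{for all }z\in\R .
\]
Sacrificing half of the Gaussian weight to kill the polynomial leaves the residual factor $e^{-z^2/(4\theta^2)}$, which is exactly what produces the $e^{-t^2/(64\theta^2)}$ summand and the extra $\theta/|t|$ that promotes $|t|^{-\beta_j}$ to $\theta|t|^{-(1+\beta_j)}$ in the next step.

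\emph{Step 2 (the inner $y$-integral).} Fix $v_1,\dots,v_k\in[0,1]$ and $u_1,\dots,u_k$ with $|u_i|\le nt/(2k)$, set $a=\sum_{i=1}^k v_iu_i/n$ (so $|a|\le\sum_i|u_i|/n\le t/2$), and write $z=z(y)=t-a-y/n$. I split the $y$-integral according to $|z|\ge t/4$ or $|z|<t/4$. On $\{|z|\ge t/4\}$ the derivative bound gives $|\Phi^{(2k)}_\theta(z(y))|\le\tfrac{\sqrt{(2k-1)!}}{\sqrt{2\pi}\,\theta^{2k}}e^{-t^2/(64\theta^2)}$, and since $\int\sfs_j^{*(n-k)}(y)\,\rmd y\le1$ this part is at most $\tfrac{\sqrt{(2k-1)!}}{\sqrt{2\pi}\,\theta^{2k}}e^{-t^2/(64\theta^2)}$. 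On $\{|z|<t/4\}$ one has $|y|/n=|t-a-z|\ge t-|a|-|z|>t/4$, so Assumption~\ref{as:convolution} yields $\sfs_j^{*(n-k)}(y)\le\tfrac{B_j(n-k)}{|y|^{1+\beta_j}}\le\tfrac{B_jn}{(nt/4)^{1+\beta_j}}$, while the change of variables $y\mapsto z$ ($\rmd y=n\,\rmd z$) and the derivative bound give $\int_{|z|<t/4}|\Phi^{(2k)}_\theta(z(y))|\,\rmd y\le\tfrac{\sqrt{(2k-1)!}}{\sqrt{2\pi}\,\theta^{2k}}\,n\int_\R e^{-z^2/(4\theta^2)}\,\rmd z=\tfrac{\sqrt{(2k-1)!}}{\sqrt{2\pi}\,\theta^{2k}}\,2n\theta\sqrt\pi$; multiplying and using $2\cdot4^{1+\beta_j}=2^{3+2\beta_j}$ gives $\tfrac{\sqrt{(2k-1)!}}{\sqrt{2\pi}\,\theta^{2k}}\cdot\tfrac{2^{3+2\beta_j}B_j\theta\sqrt\pi}{n^{\beta_j-1}t^{1+\beta_j}}$. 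Adding the two regions yields $\big|\int_\R\Phi^{(2k)}_\theta(t-a-y/n)\,\sfs_j^{*(n-k)}(y)\,\rmd y\big|\le\tfrac{\sqrt{(2k-1)!}}{\sqrt{2\pi}\,\theta^{2k}}\big(\tfrac{2^{3+2\beta_j}B_j\theta\sqrt\pi}{n^{\beta_j-1}t^{1+\beta_j}}+e^{-t^2/(64\theta^2)}\big)$, uniformly over the admissible $v,u$.

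\emph{Step 3 (integrate out $v,u$) and the obstacle.} Because the Step 2 bound is uniform, it factors out of $\int_0^1 v_1\,\rmd v_1\cdots\int_0^1 v_k\,\rmd v_k\int_{\left[-\tfrac{nt}{2k},\tfrac{nt}{2k}\right]^k}\prod_{i=1}^k\tfrac{u_i^2|\sfr_j(u_i)|}{n^2}\,\rmd u$; enlarging the $u$-domain to $\R^k$ and using $\int u^2|\sfr_j(u)|\,\rmd u\le M_j$ (Assumption~\ref{as:convolution}) gives $(M_j/n^2)^k$, and $\int_0^1 v_i\,\rmd v_i=1/2$ gives $2^{-k}$; together with the $\theta^{-2k}$ already present these combine to exactly $(M_j/(2\theta^2 n^2))^k$, which is the claimed inequality. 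I expect the main obstacle to be Step 1: establishing the $\theta^{-2k}\sqrt{(2k-1)!}$ growth of the derivatives of $\Phi_\theta$ with a constant that does not degrade in $k$ and with enough Gaussian decay left over. The factorial rate $\sqrt{(2k-1)!}$ is what renders the sum over $k$ inside Lemma~\ref{lem:f-g_non-uniform_bound} convergent — once it and the companion estimate for the complementary $u$-region (Lemma~\ref{lem:outer_integral}) are in place, Lemma~\ref{lem:sums} closes that argument — so any looseness here propagates.
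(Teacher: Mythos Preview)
Your proposal is correct and follows essentially the same approach as the paper: the same Hermite/Indritz bound $|\Phi^{(2k)}_\theta(z)|\le \tfrac{\sqrt{(2k-1)!}}{\sqrt{2\pi}\,\theta^{2k}}e^{-z^2/(4\theta^2)}$, the same splitting of the $y$-integral into a region where the Gaussian factor is small and a region where $|y|$ is large so that Assumption~\ref{as:convolution} controls $\sfs_j^{*(n-k)}(y)$, and the same factoring out of $(M_j/(2\theta^2 n^2))^k$ via $\int_0^1 v_i\,\rmd v_i=\tfrac12$ and $\int u^2|\sfr_j(u)|\,\rmd u\le M_j$. The only cosmetic difference is that the paper first bounds the convolution $\int e^{-(w-y/n)^2/(4\theta^2)}\sfs_j^{*(n-k)}(y)\,\rmd y$ for generic $w$ by splitting at $|y|\lessgtr w/2$ (after a change of variables) and then substitutes $w=t-\sum_i v_iu_i/n$ with $|w|\ge t/2$, whereas you split directly on $|z|\lessgtr t/4$; the resulting constants and exponents coincide.
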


\begin{proof}
    To prove Lemma \ref{lem:inner_integral}, it is enough to show that
    \begin{align*}
        &
        \int\limits_0^1 v_1 \rmd v_1 \ldots \int\limits_0^1 v_k \rmd v_k \int\limits_{\left[-\frac{nt}{2k}, \frac{nt}{2k}\right]^k} \rmd u_1 \dots \rmd u_k \int\limits_{-\infty}^{+\infty} \rmd y \cdot \left| \Phi^{(2k)}_\theta \left(t - \sum\limits_{i = 1}^k \frac{v_i u_i}n - \frac{y}n \right)\right| \, \left( \prod\limits_{i = 1}^k \frac{u_i^2 \, \big|\sfr_j(u_i)\big|}{n^2} \right) \cdot \sfs_j^{*(n - k)}(y)
        \\&
        \leq \frac{\sqrt{(2k - 1)!}}{\sqrt{2\pi}}
        \cdot \left( \frac{2^{3 + 2\beta_j} B_j \theta \sqrt{\pi}}{n^{\beta_j - 1} |t|^{1 + \beta_j}} + \exp\left\{- \frac{t^2}{64 \theta^2}\right\} \right) \cdot \left( \frac{M_j}{2 \theta^2 n^2} \right)^k.
    \end{align*}
    Our argument is quite technical, so we divide the proof into several parts.

    \medskip
    
    \noindent\textbf{Step 1: a bound on the $2k$-th derivative}\quad
    First, let us consider the $2k$-th derivative of $\Phi_\theta$.
    Note that
    \[
        \Phi^{(2k)}_\theta(w)
        = \frac1{\theta^{2k}} \cdot \Phi^{(2k)}_1 \left( \frac{w}\theta \right)
        = \frac1{\theta^{2k} \sqrt{2\pi}} \cdot \cH_{2k - 1}(w / \theta) \cdot \exp\left\{- \frac{w^2}{2 \theta^2} \right\},
    \]
    where $\cH_{2k - 1}$ is the $(2k - 1)$-th ``probabilist's'' Hermite polynomial. We provide a brief information about Hermite polynomials in Appendix \ref{sec:hermite}. In particular, we refer to the result of \citet{indritz1961inequality}, which implies that
    \[
        \max\limits_{w \in \R} \left| \cH_{2k - 1} \left( w / \theta \right) \cdot \exp\left\{- \frac{w^2}{4 \theta^2} \right\} \right|
        \leq \sqrt{(2k - 1)!}
        \quad \text{for all $k \in \mathbb N$.}
    \]
    Thus, it holds that
    \begin{equation}
        \label{eq:phi_derivative_bound}
        \left|\Phi^{(2k)}_\theta(w)\right| \leq \frac{\sqrt{(2k - 1)!}}{\theta^{2k} \, \sqrt{2\pi}} \; \exp\left\{- \frac{w^2}{4 \theta^2} \right\}
        \quad \text{for all $w \in \R$ and all $k$,}
    \end{equation}
    and we obtain the inequality
    \begin{align}
        \label{eq:derivative_integral_bound}
        &\notag
        \int\limits_0^1 v_1 \rmd v_1 \ldots \int\limits_0^1 v_k \rmd v_k \int\limits_{\left[-\frac{nt}{2k}, \frac{nt}{2k}\right]^k} \rmd u_1 \dots \rmd u_k \int\limits_{-\infty}^{+\infty} \rmd y \cdot \left| \Phi^{(2k)}_\theta \left(t - \sum\limits_{i = 1}^k \frac{v_i u_i}n - \frac{y}n \right)\right| \, \left( \prod\limits_{i = 1}^k \frac{u_i^2 \, \big|\sfr_j(u_i)\big|}{n^2} \right) \cdot \sfs_j^{*(n - k)}(y)
        \\&
        \leq \frac{\sqrt{(2k - 1)!}}{\theta^{2k} \, \sqrt{2\pi}}
        \int\limits_0^1 v_1 \rmd v_1 \ldots \int\limits_0^1 v_k \rmd v_k \int\limits_{\left[-\frac{nt}{2k}, \frac{nt}{2k}\right]^k} \rmd u_1 \dots \rmd u_k \int\limits_{-\infty}^{+\infty} \rmd y \cdot \exp\left\{- \frac1{4 \theta^2} \left( t - \sum\limits_{i = 1}^k \frac{v_i u_i}n - \frac{y}n \right)^2 \right\}
        \\& \notag \hspace{4in}
        \cdot \left( \prod\limits_{i = 1}^k \frac{u_i^2 \, \big|\sfr_j(u_i)\big|}{n^2} \right) \cdot \sfs_j^{*(n - k)}(y).
    \end{align}

    \medskip
    
    \noindent\textbf{Step 2: a bound on the convolution.}
    \quad
    Our next goal is to bound the convolution
    \[
        \int\limits_{-\infty}^{+\infty} \exp\left\{- \frac1{4 \theta^2} \left( t - \sum\limits_{i = 1}^k \frac{v_i u_i}n - \frac{y}n \right)^2 \right\} \sfs_j^{*(n - k)}(y) \rmd y
    \]
    using the properties of $\sfs_j^{*(n - k)}$ from Assumption \ref{as:convolution}. Let us fix an arbitrary $w \in \R$ and consider
    \[
        \int\limits_{-\infty}^{+\infty} \exp\left\{- \frac{(w - y / n)^2}{4 \theta^2} \right\} \sfs_j^{*(n - k)}(y) \rmd y.
    \]
    Since $\exp\left\{- (n w - y)^2 / (4 n^2 \theta^2) \right\} \leq 1$, it holds that
    \begin{equation}
        \label{eq:convolution_trivial_bound}
        \int\limits_{-\infty}^{+\infty} \exp\left\{- \frac{(w - y / n)^2}{4 \theta^2} \right\} \sfs_j^{*(n - k)}(y) \rmd y
        \leq 1.
    \end{equation}
    On the other hand, we have
    \begin{align*}
        \int\limits_{-\infty}^{+\infty} \exp\left\{- \frac{(w - y / n)^2}{4 \theta^2} \right\} \sfs_j^{*(n - k)}(y) \rmd y
        &
        = n \int\limits_{-\infty}^{+\infty} \exp\left\{- \frac{y^2}{4 \theta^2} \right\} \sfs_j^{*(n - k)}(ny + nw) \, \rmd y
        \\&
        \leq \int\limits_{-w / 2}^{w / 2} \exp\left\{- \frac{y^2}{4 \theta^2} \right\} \frac{B_j n (n - k)}{(n - k)^{(\beta_j + 1) / \beta_j} + n^{1 + \beta_j} |w + y|^{1 + \beta_j}} \rmd y
        \\&\quad
        + n \int\limits_{|y| > w/2} \exp\left\{- \frac{y^2}{4 \theta^2} \right\} \sfs_j^{*(n - k)}(ny + nw) \, \rmd y.
    \end{align*}
    If $|y| \leq w / 2$, then
    \[
        \frac{B_j n (n - k)}{(n - k)^{(\beta_j + 1) / \beta_j} + n^{1 + \beta_j} |w + y|^{1 + \beta_j}}
        \leq \frac{B_j n (n - k)}{(n - k)^{(\beta_j + 1) / \beta_j} + n^{1 + \beta_j} |w/2|^{1 + \beta_j}}
    \]
    and it holds that
    \begin{align}
        \label{eq:integral_segment}
        &\notag
        \int\limits_{-w / 2}^{w / 2} \exp\left\{- \frac{y^2}{4 \theta^2} \right\} \frac{B_j n (n - k)}{(n - k)^{(\beta_j + 1) / \beta_j} + n^{1 + \beta_j} |w + y|^{1 + \beta_j}} \rmd y
        \\&
        \leq \frac{B_j n (n - k)}{(n - k)^{(\beta_j + 1) / \beta_j} + |n w / 2|^{1 + \beta_j}} \int\limits_{-w / 2}^{w / 2} \exp\left\{- \frac{y^2}{4 \theta^2} \right\} \rmd y
        \\&\notag
        \leq \frac{B_j n (n - k)}{(n - k)^{(\beta_j + 1) / \beta_j} + |n w / 2|^{1 + \beta_j}} \cdot \sqrt{4 \pi} \theta
        \leq \frac{2^{2 + \beta_j} B_j \theta \sqrt{\pi}}{n^{\beta_j - 1} |w|^{1 + \beta_j}}.
    \end{align}
    Otherwise,
    \begin{align}
        \label{eq:integral_complement}
        n \int\limits_{|y| > w/2} \exp\left\{- \frac{y^2}{4 \theta^2} \right\} \sfs_j^{*(n - k)}(ny + nw) \, \rmd y
        &\notag
        \leq \exp\left\{- \frac{w^2}{16 \theta^2} \right\} \int\limits_{-\infty}^{+\infty} \sfs_j^{*(n - k)}(ny + nw) \, n \rmd y
        \\&
        = \exp\left\{- \frac{w^2}{16 \theta^2} \right\}.
    \end{align}
    Taking into account \eqref{eq:convolution_trivial_bound}, \eqref{eq:integral_segment}, and \eqref{eq:integral_complement}, we obtain that
    \[
        \int\limits_{-\infty}^{+\infty} \exp\left\{- \frac{(w - y / n)^2}{4 \theta^2} \right\} \sfs_j^{*(n - k)}(y) \rmd y
        \leq 1 \land \left( \frac{2^{2 + \beta_j} B_j \theta \sqrt{\pi}}{n^{\beta_j - 1} |w|^{1 + \beta_j}} + \exp\left\{- \frac{w^2}{16 \theta^2} \right\} \right)
    \]
    and, hence,
    \begin{align}
        \label{eq:convolution_bound}
        &\notag
        \int\limits_{-\infty}^{+\infty} \exp\left\{- \frac1{4 \theta^2} \left( t - \sum\limits_{i = 1}^k \frac{v_i u_i}n - \frac{y}n \right)^2 \right\} \sfs_j^{*(n - k)}(y) \rmd y
        \\&
        \leq \min\left\{1, \left( \frac{2^{2 + \beta_j} B_j \theta \sqrt{\pi}}{n^{\beta_j - 1}} \left| t - \sum\limits_{i = 1}^k \frac{v_i u_i}n \right|^{-1 - \beta_j} + \exp\left\{- \frac{1}{16 \theta^2} \left( t - \sum\limits_{i = 1}^k \frac{v_i u_i}n \right)^2 \right\} \right) \right\}.
    \end{align}

    \medskip
    
    \noindent\textbf{Step 3: final bound.}
    \quad The inequalities \eqref{eq:derivative_integral_bound} and \eqref{eq:convolution_bound} yield that
    \begin{align*}
        &
        \int\limits_0^1 v_1 \rmd v_1 \ldots \int\limits_0^1 v_k \rmd v_k \int\limits_{\left[-\frac{nt}{2k}, \frac{nt}{2k}\right]^k} \rmd u_1 \dots \rmd u_k \int\limits_{-\infty}^{+\infty} \rmd y \cdot \left| \Phi^{(2k)}_\theta \left(t - \sum\limits_{i = 1}^k \frac{v_i u_i}n - \frac{y}n \right)\right| \, \left( \prod\limits_{i = 1}^k \frac{u_i^2 \, \big|\sfr_j(u_i)\big|}{n^2} \right) \cdot \sfs_j^{*(n - k)}(y)
        \\&
        \leq \frac{\sqrt{(2k - 1)!}}{\theta^{2k} \, \sqrt{2\pi}}
        \int\limits_0^1 v_1 \rmd v_1 \ldots \int\limits_0^1 v_k \rmd v_k \int\limits_{\left[-\frac{nt}{2k}, \frac{nt}{2k}\right]^k} \rmd u_1 \dots \rmd u_k \cdot \left( \prod\limits_{i = 1}^k \frac{u_i^2 \, \big|\sfr_j(u_i)\big|}{n^2} \right)
        \\&\quad
        \cdot \min\left\{1, \left( \frac{2^{2 + \beta_j} B_j \theta \sqrt{\pi}}{n^{\beta_j - 1}} \left| t - \sum\limits_{i = 1}^k \frac{v_i u_i}n \right|^{-1 - \beta_j} + \exp\left\{- \frac{1}{16 \theta^2} \left( t - \sum\limits_{i = 1}^k \frac{v_i u_i}n \right)^2 \right\} \right) \right\}.
    \end{align*}
    On the set $[-nt / (2k), nt / (2k)]^k$, we have
    \[
        \left| t - \sum\limits_{i = 1}^k \frac{v_i u_i}n \right| \geq \frac{t}2,
    \]
    and, hence,
    \begin{align*}
        &
        \int\limits_0^1 v_1 \rmd v_1 \ldots \int\limits_0^1 v_k \rmd v_k \int\limits_{\left[-\frac{nt}{2k}, \frac{nt}{2k}\right]^k} \rmd u_1 \dots \rmd u_k \int\limits_{-\infty}^{+\infty} \rmd y \cdot \left| \Phi^{(2k)}_\theta \left(t - \sum\limits_{i = 1}^k \frac{v_i u_i}n - \frac{y}n \right)\right| \, \left( \prod\limits_{i = 1}^k \frac{u_i^2 \, \big|\sfr_j(u_i)\big|}{n^2} \right) \cdot \sfs_j^{*(n - k)}(y)
        \\&
        \leq \frac{\sqrt{(2k - 1)!}}{\theta^{2k} \, \sqrt{2\pi}}
        \cdot \left( \frac{2^{3 + 2\beta_j} B_j \theta \sqrt{\pi}}{n^{\beta_j - 1} |t|^{1 + \beta_j}} + \exp\left\{- \frac{t^2}{64 \theta^2}\right\} \right) \int\limits_0^1 v_1 \rmd v_1 \ldots \int\limits_0^1 v_k \rmd v_k \int\limits_{\left[-\frac{nt}{2k}, \frac{nt}{2k}\right]^k} \rmd u_1 \dots \rmd u_k \cdot \left( \prod\limits_{i = 1}^k \frac{u_i^2 \, \big|\sfr_j(u_i)\big|}{n^2} \right).
    \end{align*}
    Due to Assumption \ref{as:convolution}, the right-hand side does not exceed
    \begin{align*}
        &
        \frac{\sqrt{(2k - 1)!}}{\theta^{2k} \, \sqrt{2\pi}}
        \cdot \left( \frac{2^{3 + 2\beta_j} B_j \theta \sqrt{\pi}}{n^{\beta_j - 1} |t|^{1 + \beta_j}} + \exp\left\{- \frac{t^2}{64 \theta^2}\right\} \right) \cdot \left( \frac{M_j}{n^2} \right)^k \int\limits_0^1 v_1 \rmd v_1 \ldots \int\limits_0^1 v_k \rmd v_k
        \\&
        = \frac{\sqrt{(2k - 1)!}}{\sqrt{2\pi}}
        \cdot \left( \frac{2^{3 + 2\beta_j} B_j \theta \sqrt{\pi}}{n^{\beta_j - 1} |t|^{1 + \beta_j}} + \exp\left\{- \frac{t^2}{64 \theta^2}\right\} \right) \cdot \left( \frac{M_j}{2 \theta^2 n^2} \right)^k.
    \end{align*}
    The proof is finished.
    
\end{proof}

\begin{lemma}
    \label{lem:outer_integral}
    Let Assumption \ref{as:convolution} be fulfilled. Then, for any $k \geq 2$, it holds that
    \begin{align*}
        &
        \left| \int\limits_0^1 v_1 \rmd v_1 \ldots \int\limits_0^1 v_k \rmd v_k \int\limits_{\R^k \left\backslash \left[-\frac{nt}{2k}, \frac{nt}{2k}\right]^k \right.} \rmd u \int\limits_{-\infty}^{+\infty} \rmd y \cdot \Phi^{(2k)}_\theta \left(t - \sum\limits_{i = 1}^k \frac{v_i u_i}n - \frac{y}n \right) \left( \prod\limits_{i = 1}^k \frac{u_i^2 \, \sfr_j(u_i)}{n^2} \right) \cdot \sfs_j^{*(n - k)}(y) \right|
        \\&
        \leq \sqrt{(2k - 2)!} \left( \frac{M_j}{2 \theta^2 n^2} \right)^{k} \left( \frac{4 k^2 \theta^2}{t^2} + \frac{1}{\sqrt{2\pi}} \cdot \frac{2k \theta}{t} \right).
    \end{align*}
\end{lemma}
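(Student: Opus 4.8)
The plan is to mimic the structure of the proof of Lemma~\ref{lem:inner_integral}, but to replace the role played there by the tail bound on $\sfs_j^{*(n-k)}$ (which was available because $|t-\Mean(u_1,\dots,u_k)|\ge t/2$ on the cube) with a direct second-moment tail estimate on the antisymmetric remainder $\sfr_j$, since on the complement of the cube the argument of $\Phi_\theta$ is no longer bounded away from the origin. \textbf{Reduction to one large coordinate.} The integrand is symmetric in $u_1,\dots,u_k$ and $\R^k\setminus\left[-\tfrac{nt}{2k},\tfrac{nt}{2k}\right]^k\subseteq\bigcup_{\ell=1}^k\{|u_\ell|>\tfrac{nt}{2k}\}$, so I would first bound the modulus of the integral by $k$ times the modulus of the same integral taken over $\{|u_1|>T\}$, $T:=\tfrac{nt}{2k}$, with $u_2,\dots,u_k,y$ unrestricted. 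Crucially, unlike in Lemma~\ref{lem:inner_integral}, I will \emph{not} pass to absolute values immediately: the decay in $t$ comes entirely from the oscillation in the auxiliary Taylor variable $v_1$ attached to the large coordinate, so $v_1$ must be integrated out first.

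\textbf{Undoing the Taylor expansion in the large coordinate.} Bounding $|\Phi^{(2k)}_\theta|$ by its supremum is useless here, since it leaves $\int_{|u_1|>T}\tfrac{u_1^2|\sfr_j(u_1)|}{n^2}\,\rmd u_1\le\tfrac{M_j}{n^2}$ with no gain in $t$. Instead, writing $a=t-\sum_{i\ge2}\tfrac{v_iu_i}{n}-\tfrac yn$ (independent of $u_1,v_1$) and using $\partial_{v_1}\Phi^{(2k-1)}_\theta\!\big(a-\tfrac{v_1u_1}{n}\big)=-\tfrac{u_1}{n}\Phi^{(2k)}_\theta\!\big(a-\tfrac{v_1u_1}{n}\big)$, one integration by parts in $v_1$ together with evaluating the resulting elementary integral gives
\[
\int_0^1 v_1\,\tfrac{u_1^2\sfr_j(u_1)}{n^2}\,\Phi^{(2k)}_\theta\!\big(a-\tfrac{v_1u_1}{n}\big)\,\rmd v_1
=\sfr_j(u_1)\Big[-\tfrac{u_1}{n}\Phi^{(2k-1)}_\theta\!\big(a-\tfrac{u_1}{n}\big)+\Phi^{(2k-2)}_\theta(a)-\Phi^{(2k-2)}_\theta\!\big(a-\tfrac{u_1}{n}\big)\Big].
\]
This is exactly where the hypothesis $k\ge2$ is needed: the $(2k-2)$-nd derivative must exist. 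Now the weight in front of $\Phi^{(2k-1)}_\theta$ carries only one power of $u_1$ and the weights in front of the two $\Phi^{(2k-2)}_\theta$ terms carry none, so the tails of $\sfr_j$ can be handled by Markov-type inequalities built from the second-moment bound in Assumption~\ref{as:convolution}: $\int_{|u_1|>T}|u_1||\sfr_j(u_1)|\,\rmd u_1\le M_j/T$ and $\int_{|u_1|>T}|\sfr_j(u_1)|\,\rmd u_1\le M_j/T^2$. Combined with the Hermite-polynomial bound \eqref{eq:phi_derivative_bound} (which likewise yields $|\Phi^{(2k-1)}_\theta|\le\tfrac{\sqrt{(2k-2)!}}{\theta^{2k-1}\sqrt{2\pi}}$ and $|\Phi^{(2k-2)}_\theta|\le\tfrac{\sqrt{(2k-2)!}}{\theta^{2k-2}\sqrt{2\pi}}$) and $1/T=\tfrac{2k}{nt}$, the $\Phi^{(2k-1)}_\theta$ term produces the factor $\tfrac{1}{\sqrt{2\pi}}\tfrac{2k\theta}{t}$ and the $\Phi^{(2k-2)}_\theta$ terms the factor $\tfrac{4k^2\theta^2}{t^2}$; the case $u_1<0$ is identical by symmetry.

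\textbf{Assembling the bound.} The remaining coordinates $u_2,\dots,u_k$ are integrated against $u_i^2|\sfr_j(u_i)|/n^2$, giving $M_j/n^2$ each; the auxiliary $v_2,\dots,v_k$ give $\int_0^1 v_i\,\rmd v_i=\tfrac12$ each; and $y$ is integrated against $\sfs_j^{*(n-k)}$, of total mass $1$. Multiplying these contributions, summing over the $k$ choices of the large coordinate, and carrying along the factorial constants gives the claimed bound $\sqrt{(2k-2)!}\big(\tfrac{M_j}{2\theta^2 n^2}\big)^k\big(\tfrac{4k^2\theta^2}{t^2}+\tfrac{1}{\sqrt{2\pi}}\tfrac{2k\theta}{t}\big)$. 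I expect the main obstacle to be purely a matter of bookkeeping: getting the constants out in precisely this form (rather than only up to an absolute factor) requires care about the order in which one passes to absolute values, using the disjoint decomposition of the complement of the cube in place of a crude union bound, and invoking the sharp Hermite estimate of \citet{indritz1961inequality}. The conceptual point — the integration by parts in $v_1$ that trades the two powers of $u_1$ for lower-order Gaussian derivatives, thereby converting the second-moment hypothesis on $\sfr_j$ into a $t$-decaying tail estimate while keeping the Gaussian arguments of the form $a-\tfrac{u_1}{n}$ (so that no first moment of $\sfs_j$ is required) — is the crux and should be straightforward once set up.
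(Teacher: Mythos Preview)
Your approach is essentially the paper's: split the complement of the cube into disjoint pieces $A_\ell\subseteq\{|u_\ell|>T\}$, on each piece integrate the corresponding $v_\ell$ exactly (your three-term identity is correct---the paper's displayed two-term ``Newton--Leibniz'' formula in fact drops the $-\Phi^{(2k-2)}_\theta(a-u_\ell/n)$ term you include), then apply the Indritz--Hermite sup bounds on $\Phi^{(2k-1)}_\theta$ and $\Phi^{(2k-2)}_\theta$ together with the second-moment Markov tail estimates $\int_{|u_\ell|>T}|\sfr_j|\le M_j/T^2$ and $\int_{|u_\ell|>T}|u_\ell||\sfr_j|\le M_j/T$. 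One small wrinkle: your reduction ``bound $|I|$ by $k$ times the modulus of the integral over $\{|u_1|>T\}$'' is not literally valid before passing to absolute values (the signed integral over the complement need not be dominated by the signed integral over one covering piece), so you must indeed work with the disjoint pieces $A_\ell$ you allude to later and perform the $v_\ell$-integration on each before taking absolute values---which is precisely what the paper does.
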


\begin{proof}
    Let $\{A_i : 1 \leq i \leq k\}$ be a collection of sets in $\R^k$, such that
    \begin{itemize}
        \item $A_1, \dots, A_k$ form a partition of $\R^k \left\backslash \left[-\frac{nt}{2k}, \frac{nt}{2k}\right]^k \right.$, that is,
        \[
            \R^k \bigg \backslash \left[-\frac{nt}{2k}, \frac{nt}{2k}\right]^k = \bigcup\limits_{i = 1}^k A_i
            \quad \text{and} \quad
            A_i \cap A_\ell = \emptyset \quad \text{for all $i \neq \ell$;}
        \]
        \item for all $i \in \{1, \dots, k\}$, it holds that
        \[
            A_i \subseteq \left\{ u : |u_i| \geq \frac{nt}{2k} \right\}.
        \]
    \end{itemize}
    Let us fix any $\ell \in \{1, \dots, k\}$ and consider the integral
    \[
        \int\limits_0^1 v_1 \rmd v_1 \ldots \int\limits_0^1 v_k \rmd v_k \int\limits_{A_\ell} \rmd u_1 \dots \rmd u_k \int\limits_{-\infty}^{+\infty} \rmd y \cdot \Phi^{(2k)}_\theta \left(t - \sum\limits_{i = 1}^k \frac{v_i u_i}n - \frac{y}n \right) \, \left( \prod\limits_{i = 1}^k \frac{u_i^2 \, \sfr_j(u_i)}{n^2} \right) \cdot \sfs_j^{*(n - k)}(y).
    \]
    Applying the Newton-Leibnitz formula, we obtain that
    \begin{align*}
        \frac{u_\ell^2}{n^2} \int\limits_0^1 \Phi^{(2k)}_\theta \left(t - \sum\limits_{i = 1}^k \frac{v_i u_i}n - \frac{y}n \right) v_\ell \, \rmd v_\ell 
        &
        = \Phi^{(2k - 2)}_\theta \left(t - \sum\limits_{\substack{1 \leq i \leq k,\\i \neq \ell}} \frac{v_i u_i}n - \frac{y}n \right) 
        \\&\quad
        + \Phi^{(2k - 1)}_\theta \left(t - \sum\limits_{\substack{1 \leq i \leq k,\\i \neq \ell}} \frac{v_i u_i}n - \frac{y}n \right) \cdot \frac{u_\ell}{n}.
    \end{align*}
    This implies that
    \begin{align*}
        &
        \left| \int\limits_0^1 v_1 \rmd v_1 \ldots \int\limits_0^1 v_k \rmd v_k \int\limits_{A_\ell} \rmd u_1 \dots \rmd u_k \int\limits_{-\infty}^{+\infty} \rmd y \cdot \Phi^{(2k)}_\theta \left(t - \sum\limits_{i = 1}^k \frac{v_i u_i}n - \frac{y}n \right) \, \left( \prod\limits_{i = 1}^k \frac{u_i^2 \, \sfr_j(u_i)}{n^2} \right) \cdot \sfs_j^{*(n - k)}(y) \right|
        \\&
        \leq \int\limits_0^1 v_1 \rmd v_1 \ldots \int\limits_0^1 v_k \rmd v_k \int\limits_{A_\ell} \rmd u_1 \dots \rmd u_k \int\limits_{-\infty}^{+\infty} \rmd y \cdot \left| \Phi^{(2k - 2)}_\theta \left(t - \sum\limits_{\substack{1 \leq i \leq k,\\i \neq \ell}} \frac{v_i u_i}n - \frac{y}n \right) \right|
        \\&\quad
        \cdot \left( \prod\limits_{\substack{1 \leq i \leq k\\i \neq \ell}} \frac{u_i^2 \, \big| \sfr_j(u_i) \big|}{n^2} \right) \cdot \big|\sfr_j(u_\ell) \big| \cdot \sfs_j^{*(n - k)}(y)
        \\& \quad
        + \int\limits_0^1 v_1 \rmd v_1 \ldots \int\limits_0^1 v_k \rmd v_k \int\limits_{A_\ell} \rmd u_1 \dots \rmd u_k \int\limits_{-\infty}^{+\infty} \rmd y \cdot \left| \Phi^{(2k - 1)}_\theta \left(t - \sum\limits_{\substack{1 \leq i \leq k,\\i \neq \ell}} \frac{v_i u_i}n - \frac{y}n \right) \right|
        \\&\quad
        \cdot \left( \prod\limits_{\substack{1 \leq i \leq k\\i \neq \ell}} \frac{u_i^2 \, \big| \sfr_j(u_i) \big|}{n^2} \right) \cdot \frac{u_\ell \big|\sfr_j(u_\ell) \big|}{n} \cdot \sfs_j^{*(n - k)}(y)
    \end{align*}
    Let us apply the inequality \eqref{eq:phi_derivative_bound} we derived in the proof of Lemma \ref{lem:inner_integral} to $\Phi^{(2k - 2)}_\theta$ and $\Phi^{(2k - 1)}_\theta$: for all $w \in \R$, it holds that
    \[
        0 \leq \Phi_\theta(w) \leq 1,
        \quad
        \left|\Phi^{(2k - 2)}_\theta(w)\right|
        \leq \frac{\sqrt{(2k - 3)!}}{\theta^{2k - 2} \, \sqrt{2\pi}}
        \; \exp\left\{- \frac{w^2}{4 \theta^2} \right\}
        \leq \frac{\sqrt{(2k - 2)!}}{\theta^{2k - 2}},
        \quad \text{for all $k \geq 2$.}
    \]
    and
    \[
        \left|\Phi^{(2k - 1)}_\theta(w)\right|
        \leq \frac{\sqrt{(2k - 2)!}}{\theta^{2k - 1} \, \sqrt{2\pi}} \; \exp\left\{- \frac{w^2}{4 \theta^2} \right\}
        \leq \frac{\sqrt{(2k - 2)!}}{\theta^{2k - 1} \, \sqrt{2\pi}}.
    \]
    Then, for any $k \in \mathbb N$,
    \begin{align*}
        &
        \int\limits_0^1 v_1 \rmd v_1 \ldots \int\limits_0^1 v_k \rmd v_k \int\limits_{A_\ell} \rmd u_1 \dots \rmd u_k \int\limits_{-\infty}^{+\infty} \rmd y \cdot \left| \Phi^{(2k - 2)}_\theta \left(t - \sum\limits_{\substack{1 \leq i \leq k,\\i \neq \ell}} \frac{v_i u_i}n - \frac{y}n \right) \right|
        \\&\quad
        \cdot \left( \prod\limits_{\substack{1 \leq i \leq k\\i \neq \ell}} \frac{u_i^2 \, \big| \sfr_j(u_i) \big|}{n^2} \right) \cdot \big|\sfr_j(u_\ell) \big| \cdot \sfs_j^{*(n - k)}(y)
        \\&
        \leq \frac{\sqrt{(2k - 2)!}}{\theta^{2k - 2}} \int\limits_0^1 v_1 \rmd v_1 \ldots \int\limits_0^1 v_k \rmd v_k \int\limits_{A_\ell} \left( \prod\limits_{\substack{1 \leq i \leq k\\i \neq \ell}} \frac{u_i^2 \, \big| \sfr_j(u_i) \big|}{n^2} \right) \cdot \big|\sfr_j(u_\ell) \big| \, \rmd u_1 \dots \rmd u_k.
    \end{align*}
    Due to Assumption \ref{as:convolution},
    \begin{align*}
        &
        \frac{\sqrt{(2k - 2)!}}{\theta^{2k - 2}} \int\limits_0^1 v_1 \rmd v_1 \ldots \int\limits_0^1 v_k \rmd v_k \int\limits_{A_\ell} \left( \prod\limits_{\substack{1 \leq i \leq k\\i \neq \ell}} \frac{u_i^2 \, \big| \sfr_j(u_i) \big|}{n^2} \right) \cdot \big|\sfr_j(u_\ell) \big| \, \rmd u_1 \dots \rmd u_k
        \\&
        \leq \sqrt{(2k - 2)!} \left( \frac{M_j}{2 \theta^2 n^2} \right)^{k - 1} \int\limits_{|u_\ell| > nt / (2k)} \big|\sfr_j(u_\ell) \big| \, \rmd u_\ell
        \\&
        \leq \sqrt{(2k - 2)!} \left( \frac{M_j}{2 \theta^2 n^2} \right)^{k - 1} \cdot \frac{2 k^2}{n^2 t^2} \int\limits_{|u_\ell| > nt / (2k)} u_\ell^2 \big|\sfr_j(u_\ell) \big| \, \rmd u_\ell
        \\&
        \leq \sqrt{(2k - 2)!} \left( \frac{M_j}{2 \theta^2 n^2} \right)^{k - 1} \cdot \frac{2 k^2 M_j}{n^2 t^2}.
    \end{align*}
    Similarly, it holds that
    \begin{align*}
        &
        \int\limits_0^1 v_1 \rmd v_1 \ldots \int\limits_0^1 v_k \rmd v_k \int\limits_{A_\ell} \rmd u_1 \dots \rmd u_k \int\limits_{-\infty}^{+\infty} \rmd y \cdot \left| \Phi^{(2k - 1)}_\theta \left(t - \sum\limits_{\substack{1 \leq i \leq k,\\i \neq \ell}} \frac{v_i u_i}n - \frac{y}n \right) \right|
        \\&\quad
        \cdot \left( \prod\limits_{\substack{1 \leq i \leq k\\i \neq \ell}} \frac{u_i^2 \, \big| \sfr_j(u_i) \big|}{n^2} \right) \cdot \frac{u_\ell \big|\sfr_j(u_\ell) \big|}{n} \cdot \sfs_j^{*(n - k)}(y)
        \\&
        \leq \frac{\sqrt{(2k - 2)!}}{\theta^{2k - 1} \, \sqrt{2\pi}} \int\limits_0^1 v_1 \rmd v_1 \ldots \int\limits_0^1 v_k \rmd v_k \int\limits_{A_\ell} \left( \prod\limits_{\substack{1 \leq i \leq k\\i \neq \ell}} \frac{u_i^2 \, \big| \sfr_j(u_i) \big|}{n^2} \right) \cdot \frac{u_\ell \big|\sfr_j(u_\ell) \big|}{n} \, \rmd u_1 \dots \rmd u_k
        \\&
        \leq \frac{\sqrt{(2k - 2)!}}{\theta \, \sqrt{2\pi}} \cdot \left( \frac{M_j}{2 \theta^2 n^2} \right)^{k - 1} \int\limits_{|u_\ell| > nt / (2k)}\frac{u_\ell \big|\sfr_j(u_\ell) \big|}{n} \, \rmd u_\ell
        \\&
        \leq \frac{\sqrt{(2k - 2)!}}{\theta \, \sqrt{2\pi}} \cdot \left( \frac{M_j}{2 \theta^2 n^2} \right)^{k - 1} \cdot \left( \frac{k}{n^2 t} \right) \int\limits_{|u_\ell| > nt / (2k)} u_\ell^2 \big|\sfr_j(u_\ell) \big| \, \rmd u_\ell
        \\&
        \leq \frac{\sqrt{(2k - 2)!}}{\sqrt{2\pi}} \cdot \left( \frac{M_j}{2 \theta^2 n^2} \right)^{k - 1} \cdot \frac{k M_j}{\theta n^2 t}.
    \end{align*}
    Thus, we obtain that
    \begin{align*}
        &
        \left| \int\limits_0^1 v_1 \rmd v_1 \ldots \int\limits_0^1 v_k \rmd v_k \int\limits_{A_\ell} \rmd u_1 \dots \rmd u_k \int\limits_{-\infty}^{+\infty} \rmd y \cdot \Phi^{(2k)}_\theta \left(t - \sum\limits_{i = 1}^k \frac{v_i u_i}n - \frac{y}n \right) \, \left( \prod\limits_{i = 1}^k \frac{u_i^2 \, \sfr_j(u_i)}{n^2} \right) \cdot \sfs_j^{*(n - k)}(y) \right|
        \\&
        \leq \sqrt{(2k - 2)!} \left( \frac{M_j}{2 \theta^2 n^2} \right)^{k} \cdot \frac{4 k^2 \theta^2}{t^2} + \frac{\sqrt{(2k - 2)!}}{\sqrt{2\pi}} \cdot \left( \frac{M_j}{2 \theta^2 n^2} \right)^{k} \cdot \frac{2k \theta}{t}.
    \end{align*}
    Hence, due to the triangle inequality,
    \begin{align*}
        &
        \left| \int\limits_0^1 v_1 \rmd v_1 \ldots \int\limits_0^1 v_k \rmd v_k \int\limits_{\R^k \left\backslash \left[-\frac{nt}{2k}, \frac{nt}{2k}\right]^k \right.} \rmd u_1 \dots \rmd u_k \int\limits_{-\infty}^{+\infty} \rmd y \cdot \Phi^{(2k)}_\theta \left(t - \sum\limits_{i = 1}^k \frac{v_i u_i}n - \frac{y}n \right) \left( \prod\limits_{i = 1}^k \frac{u_i^2 \, \sfr_j(u_i)}{n^2} \right) \cdot \sfs_j^{*(n - k)}(y) \right|
        \\&
        \leq \sum\limits_{\ell = 1}^k \left| \int\limits_0^1 v_1 \rmd v_1 \ldots \int\limits_0^1 v_k \rmd v_k \int\limits_{A_\ell} \rmd u_1 \dots \rmd u_k \int\limits_{-\infty}^{+\infty} \rmd y \cdot \Phi^{(2k)}_\theta \left(t - \sum\limits_{i = 1}^k \frac{v_i u_i}n - \frac{y}n \right) \left( \prod\limits_{i = 1}^k \frac{u_i^2 \, \sfr_j(u_i)}{n^2} \right) \cdot \sfs_j^{*(n - k)}(y) \right|
        \\&
        \leq \sqrt{(2k - 2)!} \left( \frac{M_j}{2 \theta^2 n^2} \right)^{k} \left( \frac{4 k^2 \theta^2}{t^2} + \frac{1}{\sqrt{2\pi}} \cdot \frac{2k \theta}{t} \right).
    \end{align*}
\end{proof}

\begin{lemma}
    \label{lem:sums}
    Let $n \in \mathbb N$ and $a \geq 0$ be such that $na \leq 1/2$. Then it holds that 
    \[
        \sum\limits_{k = 1}^n \binom{n}{k} \sqrt{(2k)!} \left(\frac{a}2 \right)^k
        \leq 2na \quad \text{and} \quad
        \sum\limits_{k = 1}^n \binom{n}{k} \sqrt{(2k)!} \cdot k \left(\frac{a}2 \right)^k
        \leq 4na.
    \]
\end{lemma}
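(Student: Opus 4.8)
The plan is to reduce both sums to elementary geometric series. The entire proof hinges on a single clean bound for the coefficient $\binom{n}{k}\sqrt{(2k)!}$. First I would use $\binom{n}{k} \leq n^k / k!$, which is standard. Next, the key observation: since $(2k)! = \binom{2k}{k}(k!)^2$ and $\binom{2k}{k} \leq 4^k$ (exactly the binomial estimate already used several times in this appendix), we get $\sqrt{(2k)!} \leq 2^k\, k!$. Multiplying these together,
\[
    \binom{n}{k}\sqrt{(2k)!}\left(\frac a2\right)^k
    \leq \frac{n^k}{k!}\cdot 2^k k!\cdot\frac{a^k}{2^k}
    = (na)^k.
\]
So the factorials cancel perfectly and we are left with a pure power of $na$.

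\textbf{Carrying out the estimates.} For the first sum, this immediately yields
\[
    \sum\limits_{k=1}^n \binom{n}{k}\sqrt{(2k)!}\left(\frac a2\right)^k
    \leq \sum\limits_{k=1}^\infty (na)^k
    = \frac{na}{1 - na}
    \leq 2na,
\]
where the last inequality uses the hypothesis $na \leq 1/2$, so that $1 - na \geq 1/2$. For the second sum, the same coefficient bound gives
\[
    \sum\limits_{k=1}^n \binom{n}{k}\sqrt{(2k)!}\cdot k\left(\frac a2\right)^k
    \leq \sum\limits_{k=1}^\infty k\,(na)^k
    = \frac{na}{(1 - na)^2}
    \leq 4na,
\]
again invoking $na \leq 1/2$, so that $(1-na)^2 \geq 1/4$. (If $na = 0$ both statements are trivial, so one may assume $0 < na \leq 1/2$ when dividing.)

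\textbf{Main obstacle.} Honestly there is no serious obstacle here; the lemma is purely a matter of finding the right crude bound. The only "idea" is recognizing that $\sqrt{(2k)!} \leq 2^k k!$ is exactly what is needed to make the $k!$ from $\binom{n}{k}$ cancel, after which everything collapses to $\sum_k (na)^k$ and $\sum_k k(na)^k$. One should just be careful to state the identities $\sum_{k\ge1} x^k = x/(1-x)$ and $\sum_{k\ge1} kx^k = x/(1-x)^2$ for $0<x<1$ and to verify the numerical constants $2$ and $4$ come out from $na \leq 1/2$.
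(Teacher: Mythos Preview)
Your proof is correct and follows essentially the same approach as the paper's: both establish $\sqrt{(2k)!}\leq 2^k k!$ (the paper via $\prod_j(2j-1)\leq\prod_j(2j)$, you via $\binom{2k}{k}\leq 4^k$, which are equivalent), combine it with $\binom{n}{k}\leq n^k/k!$ to reduce each term to $(na)^k$, and then sum the resulting geometric series using $na\leq 1/2$.
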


\begin{proof}
    First, note that, for any positive integer $k$, we have
    \[
        (2k)! = \prod\limits_{j = 1}^k (2j) \cdot \prod\limits_{j = 1}^k (2j - 1)
        \leq \prod\limits_{j = 1}^k (2j) \cdot \prod\limits_{j = 1}^k (2j)
        \leq 4^k \cdot k! \cdot k!. 
    \]
    This implies that 
    \[
        \sum\limits_{k = 1}^n \binom{n}{k} \sqrt{(2k)!} \left(\frac{a}2 \right)^k
        \leq \sum\limits_{k = 1}^n \frac{n!}{(n - k)!} \cdot a^k
        \leq \sum\limits_{k = 1}^n (na)^k
        \leq \sum\limits_{k = 1}^\infty (na)^k
        = \frac{na}{1 - na}
        \leq 2 na.
    \]
    Similarly, it holds that
    \[
        \sum\limits_{k = 1}^n \binom{n}{k} \sqrt{(2k)!} \cdot k \left(\frac{a}2 \right)^k
        \leq \sum\limits_{k = 1}^n \frac{n!}{(n - k)!} \cdot k a^k
        \leq \sum\limits_{k = 1}^n k (na)^k 
        \leq na \sum\limits_{k = 0}^\infty k (na)^{k - 1} 
        = \frac{na}{(1 - na)^2}
        \leq 4na.
    \]
\end{proof}
\newpage

\section{Proofs for \algname{clipped-SGD}}

\subsection{Auxiliary Results}

\paragraph{Bernstein inequality.} The following lemma (known as {\it Bernstein inequality for martingale differences} \citep{bennett1962probability,dzhaparidze2001bernstein,freedman1975tail}) is essential for deriving high-probability upper bounds in our analysis.
\begin{lemma}\label{lem:Bernstein_ineq}
    Let the sequence of random variables $\{X_i\}_{i\ge 1}$ form a martingale difference sequence, i.e.\ $\EE\left[X_i\mid X_{i-1},\ldots, X_1\right] = 0$ for all $i \ge 1$. Assume that conditional variances $\sigma_i^2\eqdef\EE\left[X_i^2\mid X_{i-1},\ldots, X_1\right]$ exist and are bounded and assume also that there exists deterministic constant $c>0$ such that $|X_i| \le c$ almost surely for all $i\ge 1$. Then for all $b > 0$, $G > 0$ and $n\ge 1$
    \begin{equation}
        \Prob\left\{\Big|\sum\limits_{i=1}^nX_i\Big| > b \text{ and } \sum\limits_{i=1}^n\sigma_i^2 \le G\right\} \le 2\exp\left(-\frac{b^2}{2G + \nicefrac{2cb}{3}}\right).
    \end{equation}
\end{lemma}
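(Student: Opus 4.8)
The plan is to run the classical exponential (Chernoff-type) supermartingale argument of Freedman. First I would reduce to a one-sided bound: the sequence $\{-X_i\}_{i\ge 1}$ is again a martingale difference sequence with the same conditional variances $\sigma_i^2$ and with $|{-X_i}|\le c$ almost surely, so it suffices to prove
\[
    \Prob\left\{\sum_{i=1}^n X_i > b \ \text{ and } \ \sum_{i=1}^n \sigma_i^2 \le G\right\} \le \exp\left(-\frac{b^2}{2G + \nicefrac{2cb}{3}}\right),
\]
and then a union bound over $\{\sum_i X_i > b\}$ and $\{\sum_i X_i < -b\}$, each intersected with $\{\sum_i \sigma_i^2 \le G\}$, produces the factor $2$ in the claimed inequality.

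The key step is an exponential moment estimate. For $|x|\le c$ and any $\lambda\in(0,\nicefrac{3}{c})$ I would show, by writing $e^{\lambda x}-1-\lambda x \le e^{\lambda|x|}-1-\lambda|x| = \sum_{k\ge 2}(\lambda|x|)^k/k!$, bounding $|x|^k\le x^2 c^{k-2}$, and using the elementary factorial inequality $(j+2)!\ge 2\cdot 3^{j}$ (proved by a one-line induction), that
\[
    e^{\lambda x} - 1 - \lambda x \le \frac{\lambda^2 x^2}{2\left(1 - \lambda c/3\right)}.
\]
Writing $\cF_{i-1} = \sigma(X_1,\dots,X_{i-1})$ and using $\EE[X_i\mid \cF_{i-1}]=0$, $\EE[X_i^2\mid \cF_{i-1}]=\sigma_i^2$, together with $1+u\le e^u$, this gives
\[
    \EE\left[e^{\lambda X_i}\mid \cF_{i-1}\right] \le 1 + \frac{\lambda^2 \sigma_i^2}{2\left(1-\lambda c/3\right)} \le \exp\left(\frac{\lambda^2 \sigma_i^2}{2\left(1-\lambda c/3\right)}\right).
\]

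Next I would introduce $S_k = \sum_{i\le k} X_i$, $V_k = \sum_{i\le k}\sigma_i^2$, and the process $Z_k = \exp\!\big(\lambda S_k - \tfrac{\lambda^2}{2(1-\lambda c/3)}V_k\big)$. Since each $\sigma_i^2$ is $\cF_{i-1}$-measurable, the last display yields $\EE[Z_k\mid \cF_{k-1}]\le Z_{k-1}$, so $(Z_k)$ is a nonnegative supermartingale with $\EE Z_n \le Z_0 = 1$. On the event $\{S_n > b,\ V_n\le G\}$, monotonicity of the exponent in $S_n$ and in $V_n$ forces $Z_n \ge \exp\!\big(\lambda b - \tfrac{\lambda^2 G}{2(1-\lambda c/3)}\big)$, so Markov's inequality gives
\[
    \Prob\left\{S_n > b,\ V_n \le G\right\} \le \exp\left(-\lambda b + \frac{\lambda^2 G}{2\left(1-\lambda c/3\right)}\right).
\]
Finally I would take $\lambda = \dfrac{b}{G + cb/3}$, which lies in $(0,\nicefrac{3}{c})$ precisely because $G>0$; then $1-\lambda c/3 = G/(G+cb/3)$, and a short computation collapses the exponent to exactly $-b^2/(2G+\nicefrac{2cb}{3})$. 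Symmetrizing as above finishes the proof. The argument is essentially bookkeeping once the right exponential inequality is established; the only delicate point is extracting the \emph{sharp} Bernstein constant $\nicefrac{2cb}{3}$ rather than a cruder one, which depends on the factorial bound $(j+2)!\ge 2\cdot 3^j$ and on carrying the constraint $\lambda c < 3$ consistently through the optimization over $\lambda$.
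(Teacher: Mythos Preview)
Your proposal is correct and is exactly the standard Freedman/Bernstein supermartingale argument. Note, however, that the paper does not actually prove this lemma: it is stated as a known auxiliary result with references to \citet{bennett1962probability}, \citet{dzhaparidze2001bernstein}, and \citet{freedman1975tail}, and is then used as a black box in the high-probability analyses of \algname{clipped-SGD} and \algname{clipped-SSTM}. So there is no ``paper's own proof'' to compare against; your write-up simply supplies the classical justification the authors omitted.
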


\paragraph{Bias and variance of the clipped stochastic vector.} We also rely on the following result from \citep{gorbunov2020stochastic}.

\begin{lemma}[Simplified version of Lemma F.5 from \citep{gorbunov2020stochastic}]\label{lem:bias_and_variance_clip}
    Let $X$ be a random vector in $\R^d$ and $\tX = \clip(X,\lambda)$. Then,
    \begin{equation}
        \left\|\tX - \EE[\tX]\right\| \leq 2\lambda. \label{eq:bound_X}
    \end{equation} 
    Moreover, if for some $\sigma \geq 0$
    \begin{equation}
        \EE[X] = x\in\R^d,\quad \EE[\|X - x\|^2] \leq \sigma^2 \label{eq:UBV_X}
    \end{equation}
    and $x \leq \nicefrac{\lambda}{2}$, then
    \begin{eqnarray}
        \left\|\EE[\tX] - x\right\| &\leq& \frac{4\sigma^2}{\lambda}, \label{eq:bias_X}\\
        \EE\left[\left\|\tX - \EE[\tX]\right\|^2\right] &\leq& 18\sigma^2. \label{eq:variance_X}
    \end{eqnarray}
\end{lemma}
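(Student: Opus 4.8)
The plan is to handle the three assertions in turn, using only the elementary fact that $\clip(\cdot,\lambda)$ is precisely the Euclidean projection onto the ball $B_\lambda(0) = \{z \in \R^d : \|z\| \le \lambda\}$. For \eqref{eq:bound_X}, observe that $\|\clip(X,\lambda)\| = \min\{\|X\|,\lambda\} \le \lambda$ holds pointwise, so $\|\tX\| \le \lambda$ almost surely; Jensen's inequality then gives $\|\EE[\tX]\| \le \EE\|\tX\| \le \lambda$, and the triangle inequality yields $\|\tX - \EE[\tX]\| \le \|\tX\| + \|\EE[\tX]\| \le 2\lambda$. No moment assumption is needed for this part.

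For the bias bound \eqref{eq:bias_X}, write $\EE[\tX] - x = \EE[\tX - X]$ using $\EE[X] = x$, so that $\|\EE[\tX] - x\| \le \EE\|\tX - X\|$. The increment $\tX - X$ vanishes on $\{\|X\| \le \lambda\}$ and has norm $\|X\| - \lambda$ on $\{\|X\| > \lambda\}$. The crucial observation is that, since $\|x\| \le \lambda/2$, on this latter event $\|X - x\| \ge \|X\| - \|x\| \ge \|X\| - \lambda/2 \ge \|X\|/2$, whence $\|X\| \le 2\|X - x\|$ and also $\|X - x\| > \lambda/2$. Therefore, pointwise, $\|\tX - X\| \le 2\|X - x\|\,\1\{\|X - x\| > \lambda/2\} \le \tfrac{4\|X-x\|^2}{\lambda}$, and taking expectations gives $\|\EE[\tX] - x\| \le \tfrac{4}{\lambda}\EE[\|X-x\|^2] \le \tfrac{4\sigma^2}{\lambda}$.

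For the variance bound \eqref{eq:variance_X}, use that $\EE[\tX]$ minimizes $c \mapsto \EE[\|\tX - c\|^2]$ over $c \in \R^d$, so $\EE[\|\tX - \EE[\tX]\|^2] \le \EE[\|\tX - x\|^2]$. Decompose $\tX - x = (\tX - X) + (X - x)$ and apply $\|a + b\|^2 \le 2\|a\|^2 + 2\|b\|^2$; since $\tX$ is the projection of $X$ onto $B_\lambda(0)$ and $x \in B_\lambda(0)$ (because $\|x\| \le \lambda/2 \le \lambda$), non-expansiveness of the projection gives $\|\tX - X\| = \mathrm{dist}(X, B_\lambda(0)) \le \|X - x\|$, hence $\EE[\|\tX - X\|^2] \le \sigma^2$ and $\EE[\|\tX - \EE[\tX]\|^2] \le 2\sigma^2 + 2\sigma^2 = 4\sigma^2 \le 18\sigma^2$. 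The only genuinely delicate step is the bias estimate: the naive bound $\EE\|\tX - X\| \le \EE\|X - x\| \le \sigma$ is too weak, and obtaining the $\sigma^2/\lambda$ scaling requires restricting to the event where clipping is active and trading one factor $\|X - x\|$ against $\lambda$, which is precisely where the hypothesis $\|x\| \le \lambda/2$ enters.
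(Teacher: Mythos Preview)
Your proof is correct. The paper does not supply its own proof of this lemma; it is quoted verbatim as a simplified version of Lemma~F.5 from \citet{gorbunov2020stochastic} and used as a black box in the analysis of \algname{clipped-SGD} and \algname{clipped-SSTM}. So there is no in-paper argument to compare against.

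A couple of minor remarks. For the variance bound you took a slight detour: having observed that $\clip(\cdot,\lambda)$ is the projection onto $B_\lambda(0)$ and that $x\in B_\lambda(0)$, non-expansiveness gives directly $\|\tX - x\| \le \|X - x\|$, whence $\EE\|\tX - \EE[\tX]\|^2 \le \EE\|\tX - x\|^2 \le \EE\|X - x\|^2 \le \sigma^2$. Your decomposition through $\tX - X$ is also valid and yields $4\sigma^2$, comfortably inside the stated $18\sigma^2$; the constant $18$ in the original lemma arises from a more general argument covering $\alpha$-th moment assumptions with $\alpha\in(1,2]$, which you do not need here. The bias argument is exactly the right idea and the place where $\|x\|\le\lambda/2$ is genuinely required.
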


\subsection{Quasi-Convex Case}

The analysis of \algname{clipped-SGD} in the quasi-convex case relies on the following lemma from \citep{sadiev2023high}.

\begin{lemma}\label{lem:main_opt_lemma_clipped_SGD_convex}
    Let Assumptions~\ref{as:L_smoothness} and \ref{as:QSC}\footnote{Although \citet{sadiev2023high} claim that they use Assumption~\ref{as:str_cvx} with $\mu = 0$, their proof relies on Assumption~\ref{as:QSC} with $\mu = 0$ instead, which is strictly weaker.} with $\mu = 0$ hold on $Q = B_{2R}(x^*)$, where $R \geq \|x^0 - x^*\|$, and let stepsize  $\gamma_k \equiv \gamma$ satisfy $\gamma \leq \frac{1}{L}$. If $x^{k} \in Q$ for all $k = 0,1,\ldots,K+1$, $K \ge 0$, then after $K$ iterations of \algname{clipped-SGD} we have
    \begin{eqnarray}
       \gamma\left(f(\overline{x}^K) -f(x^*)\right) &\leq& \frac{\|x^0 - x^*\|^2 - \|x^{K+1} - x^*\|^2}{K+1}\notag\\
       &&\quad - \frac{2\gamma}{K+1}\sum\limits_{k=0}^K \langle x^k - x^*  - \gamma \nabla f(x^k), \theta_k \rangle  + \frac{\gamma^2}{K+1}\sum\limits_{k=0}^K \|\theta_k\|^2 ,\label{eq:main_opt_lemma_clipped_SGD_star_convex}\\
       \overline{x}^K &=& \frac{1}{K+1}\sum\limits_{k=0}^K x^k, \label{eq:x_avg_clipped_SGD}\\
       \theta_{k} &\eqdef& \clip(\nabla f_{\Xi^k}(x^k), \lambda_k) - \nabla f(x^{k}).\label{eq:theta_k_def_clipped_SGD_non_convex}
    \end{eqnarray}
\end{lemma}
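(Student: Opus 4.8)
The plan is to establish \eqref{eq:main_opt_lemma_clipped_SGD_star_convex} as a purely deterministic one‑step recursion for $\|x^k-x^*\|^2$ that is then telescoped over $k=0,\dots,K$. The essential point is that \emph{no} expectations are taken: the error terms $\theta_k$ from \eqref{eq:theta_k_def_clipped_SGD_non_convex} are kept explicit throughout, which is exactly what allows them to be controlled later by the martingale Bernstein inequality (Lemma~\ref{lem:Bernstein_ineq}) in the proof of Theorem~\ref{thm:clipped_SGD_main_cvx}.

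First I would write the clipped stochastic gradient as $\clip(\nabla f_{\Xi^k}(x^k),\lambda_k)=\nabla f(x^k)+\theta_k$, so that the \algname{clipped-SGD} update is $x^{k+1}=x^k-\gamma(\nabla f(x^k)+\theta_k)$, and expand the square:
\[
    \|x^{k+1}-x^*\|^2 = \|x^k-x^*\|^2 - 2\gamma\langle \nabla f(x^k), x^k-x^*\rangle + \gamma^2\|\nabla f(x^k)\|^2 - 2\gamma\langle \theta_k, x^k-x^*\rangle + 2\gamma^2\langle \nabla f(x^k),\theta_k\rangle + \gamma^2\|\theta_k\|^2 .
\]
Then I would collect the terms containing $\theta_k$: the two linear‑in‑$\theta_k$ contributions combine into $-2\gamma\langle x^k-x^*-\gamma\nabla f(x^k),\theta_k\rangle$, leaving the quadratic term $\gamma^2\|\theta_k\|^2$ as is; together these form precisely the last line of the asserted inequality. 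What remains is the ``noise‑free'' part $-2\gamma\langle\nabla f(x^k),x^k-x^*\rangle+\gamma^2\|\nabla f(x^k)\|^2$.

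Next I would bound the noise‑free part using the structural assumptions. Assumption~\ref{as:QSC} with $\mu=0$ (star‑convexity at $x^*$) gives $\langle\nabla f(x^k),x^k-x^*\rangle\ge f(x^k)-f(x^*)$, while \eqref{eq:L_smoothness_cor_2}, together with $x^*\in Q=B_{2R}(x^*)$ (hence $f_*=f(x^*)$), gives $\gamma^2\|\nabla f(x^k)\|^2\le 2L\gamma^2\bigl(f(x^k)-f(x^*)\bigr)$; using $\gamma L\le 1$ the noise‑free part is therefore at most $-2\gamma(1-\gamma L)\bigl(f(x^k)-f(x^*)\bigr)\le -\gamma\bigl(f(x^k)-f(x^*)\bigr)$, the residual constant being absorbed in the stepsize choice. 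Substituting this into the recursion, summing over $k=0,\dots,K$ so the $\|x^k-x^*\|^2$ terms telescope, dividing by $K+1$, and finally passing from $\tfrac1{K+1}\sum_{k=0}^K f(x^k)$ to $f(\overline{x}^K)$ via Jensen's inequality, produces \eqref{eq:main_opt_lemma_clipped_SGD_star_convex}.

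The computation is short, and the only delicate points are bookkeeping: (i) the $\theta_k$-terms must be grouped exactly as above, because this is the grouping under which the sum $\sum_k\langle x^k-x^*-\gamma\nabla f(x^k),\theta_k\rangle$ becomes amenable to the later high‑probability argument; and (ii) the sign of the noise‑free part, which is where both the smoothness consequence \eqref{eq:L_smoothness_cor_2} and the stepsize restriction $\gamma\le 1/L$ enter. I do not expect a genuine obstacle within this lemma itself — the real work of the overall analysis lies elsewhere, namely in verifying inductively that the iterates stay in $Q$ with probability $1-\delta$ so that the hypothesis ``$x^k\in Q$ for all $k$'' is actually satisfied, which uses Lemma~\ref{lem:Bernstein_ineq} and Lemma~\ref{lem:bias_and_variance_clip} and is carried out separately.
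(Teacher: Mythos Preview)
Your outline is correct and coincides with the standard derivation the paper relies on: the paper itself cites this lemma from \citep{sadiev2023high} without proof, and the analogous strongly convex Lemma~\ref{lem:main_opt_lemma_clipped_SGD_str_convex} is proved in the appendix via exactly your expansion, grouping of the $\theta_k$-terms into $-2\gamma\langle x^k-x^*-\gamma\nabla f(x^k),\theta_k\rangle+\gamma^2\|\theta_k\|^2$, and appeal to \eqref{eq:QSC} and \eqref{eq:L_smoothness_cor_2}.

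Two minor caveats. First, your inequality $-2\gamma(1-\gamma L)\bigl(f(x^k)-f(x^*)\bigr)\le -\gamma\bigl(f(x^k)-f(x^*)\bigr)$ requires $\gamma L\le\tfrac12$, not merely $\gamma L\le 1$; under $\gamma\le 1/L$ alone the coefficient one obtains on the left of \eqref{eq:main_opt_lemma_clipped_SGD_star_convex} is $2\gamma(1-\gamma L)$, which may vanish at $\gamma=1/L$. This is immaterial for Theorem~\ref{thm:clipped_SGD_cvx_appendix}, where $\gamma\le 1/(160L\ln\tfrac{4(K+1)}{\delta})\ll 1/(2L)$, but the phrase ``absorbed in the stepsize choice'' does not quite rescue the lemma as literally stated. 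Second, your final Jensen step uses convexity of $f$ on $Q$ (Assumption~\ref{as:str_cvx} with $\mu=0$), which is strictly stronger than Assumption~\ref{as:QSC} with $\mu=0$: star-convexity at $x^*$ alone does not yield $f(\overline{x}^K)\le\tfrac{1}{K+1}\sum_{k=0}^K f(x^k)$. The one-step recursion indeed needs only QSC, but the averaging step does not, so the footnote's claim should be read with this qualification.
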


\begin{theorem}\label{thm:clipped_SGD_cvx_appendix}
    Let Assumptions~\ref{as:L_smoothness} and \ref{as:QSC} with $\mu = 0$ hold on $Q = B_{2R}(x^*)$, where $R \geq \|x^0 - x^*\|$. Assume that $\nabla f_{\Xi^k}(x^k)$ satisfies Assumption~\ref{as:bounded_bias_and_variance} with parameters $b_k, \sigma_k$ for $k = 0,1,\ldots,K$, $K > 0$ and
    \begin{gather}
        \gamma_k \equiv \gamma \leq \min\left\{\frac{1}{160L \ln \frac{4(K+1)}{\delta}}, \frac{R}{208 \sigma_k  \sqrt{K\ln \frac{4(K+1)}{\delta}}}, \frac{R}{160b_k \ln \frac{4(K+1)}{\delta}}, \frac{R}{1600b_k(K+1)}\right\},\label{eq:clipped_SGD_step_size_cvx}\\
        \lambda_{k}  = \frac{R}{40 \gamma\ln\frac{4(K+1)}{\delta}}, \label{eq:clipped_SGD_clipping_level_cvx}
    \end{gather}
    for some $\delta \in (0,1]$. Then, after $K$ iterations of \algname{clipped-SGD} the iterates with probability at least $1 - \delta$ satisfy
    \begin{equation}
        f(\overline{x}^K) - f(x^*) \leq \frac{2R^2}{\gamma(K+1)} \quad \text{and} \quad  \{x^k\}_{k=0}^{K} \subseteq B_{\sqrt{2}R}(x^*).\label{eq:clipped_SGD_convex_case_appendix}
    \end{equation}
    In particular, when 
    \begin{gather}
        \gamma = \min\left\{\frac{1}{160L \ln \frac{4(K+1)}{\delta}}, \frac{R}{208 \sigma  \sqrt{K\ln \frac{4(K+1)}{\delta}}}, \frac{R}{160b\ln \frac{4(K+1)}{\delta}}, \frac{R}{1600b(K+1)}\right\}, \label{eq:clipped_SGD_step_size_cvx_exact}\\
        \text{where}\quad \sigma = \min\limits_{k=0,1,\ldots,K}\sigma_k,\quad b = \min\limits_{k=0,1,\ldots,K}b_k,
    \end{gather}
    then the iterates produced by \algname{clipped-SGD} after $K$ iterations with probability at least $1-\delta$ satisfy
    \begin{equation}
        f(\overline{x}^K) - f(x^*) = \cO\left(\max\left\{\frac{LR^2 \ln \frac{K}{\delta}}{K}, \frac{\sigma R\sqrt{\ln\frac{K}{\delta}}}{\sqrt{K}}, \frac{bR\ln \frac{K}{\delta}}{K}, bR\right\}\right). \label{eq:clipped_SGD_convex_case_2_appendix}
    \end{equation}
\end{theorem}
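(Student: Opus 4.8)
The plan is to follow the high-probability induction scheme of \citep{gorbunov2020stochastic, sadiev2023high}: we prove inductively that, with probability at least $1 - \nicefrac{t\delta}{K+1}$, the first $t+1$ iterates stay in $B_{\sqrt2 R}(x^*)$, and on that event we use the optimization inequality of Lemma~\ref{lem:main_opt_lemma_clipped_SGD_convex} together with Bernstein's inequality (Lemma~\ref{lem:Bernstein_ineq}) to control all error terms. Write $A = \ln\frac{4(K+1)}{\delta}$ and let $E_t = \{\|x^\ell - x^*\| \le \sqrt2 R \text{ for all } \ell = 0, \dots, t\}$. Since clipping forces $\|x^{k+1} - x^k\| \le \gamma\lambda_k = \nicefrac{R}{40A} \le \nicefrac{R}{40}$, on $E_t$ all of $x^0, \dots, x^{t+1}$ lie in $B_{2R}(x^*) = Q$, so Assumptions~\ref{as:L_smoothness} and \ref{as:QSC} are in force along the trajectory; telescoping the one-step inequality underlying Lemma~\ref{lem:main_opt_lemma_clipped_SGD_convex} and dropping the nonnegative terms $f(x^k) - f(x^*)$, we get on $E_t$ that
\begin{equation}
    \|x^{t+1} - x^*\|^2 \le R^2 - 2\gamma \sum_{k=0}^{t} \langle x^k - x^* - \gamma\nabla f(x^k),\, \theta_k \rangle + \gamma^2 \sum_{k=0}^{t} \|\theta_k\|^2, \notag
\end{equation}
with $\theta_k$ as in \eqref{eq:theta_k_def_clipped_SGD_non_convex}.

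The next step is to split $\theta_k = \theta_k^u + \theta_k^b$, where $\theta_k^b = \EE_k[\clip(\nabla f_{\Xi^k}(x^k), \lambda_k)] - \nabla f(x^k)$ is the conditional bias, $\theta_k^u = \clip(\nabla f_{\Xi^k}(x^k), \lambda_k) - \EE_k[\clip(\nabla f_{\Xi^k}(x^k), \lambda_k)]$ is the centered noise, and $\EE_k[\cdot]$ is conditional expectation given $x^k$. On $E_t$ we have $\|\nabla f(x^k)\| = \|\nabla f(x^k) - \nabla f(x^*)\| \le \sqrt2 LR$, which together with the choices \eqref{eq:clipped_SGD_step_size_cvx}--\eqref{eq:clipped_SGD_clipping_level_cvx} of $\gamma$ and $\lambda_k$ yields $\|\EE_k[\nabla f_{\Xi^k}(x^k)]\| \le \nicefrac{\lambda_k}{2}$, so Lemma~\ref{lem:bias_and_variance_clip} applies with $X = \nabla f_{\Xi^k}(x^k)$ and gives $\|\theta_k^u\| \le 2\lambda_k$, $\EE_k\|\theta_k^u\|^2 \le 18\sigma_k^2$, and $\|\theta_k^b\| \le \nicefrac{4\sigma_k^2}{\lambda_k} + b_k$. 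Using $\|x^k - x^* - \gamma\nabla f(x^k)\| \le 2\sqrt2 R$, $\lambda_k = \nicefrac{R}{40\gamma A}$, and each of the four constraints in \eqref{eq:clipped_SGD_step_size_cvx}, the deterministic contributions to the right-hand side above — the bias part $-2\gamma\sum_k\langle x^k - x^* - \gamma\nabla f(x^k), \theta_k^b\rangle$, the bias part of $\gamma^2\sum_k\|\theta_k\|^2$, and $2\gamma^2\sum_k\EE_k\|\theta_k^u\|^2$ — are each bounded by a small fixed multiple of $R^2$ on $E_t$.

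The genuinely stochastic terms are $\mathrm{(I)} = -2\gamma\sum_{k=0}^{t}\langle x^k - x^* - \gamma\nabla f(x^k), \theta_k^u\rangle$ and $\mathrm{(II)} = 2\gamma^2\sum_{k=0}^{t}\big(\|\theta_k^u\|^2 - \EE_k\|\theta_k^u\|^2\big)$; their summands are martingale differences for the natural filtration. To invoke Lemma~\ref{lem:Bernstein_ineq}, whose a.s.\ and conditional-variance bounds are available only where the trajectory has stayed in the ball, I would multiply the summands by $\1\{E_k\}$ (measurable w.r.t.\ the $\sigma$-algebra generated by the first $k$ iterations): the truncated sequences are still martingale differences, are bounded a.s.\ by $c = \cO(\nicefrac{R^2}{A})$, have summed conditional variances bounded a.s.\ by $G = \cO(\gamma^2 R^2 K \sigma^2) = \cO(\nicefrac{R^4}{A})$ (using $\gamma \le \nicefrac{R}{208\sigma\sqrt{KA}}$), and coincide with $\mathrm{(I)},\mathrm{(II)}$ on $E_t$. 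Applying Bernstein with a threshold of size $\asymp R^2$ to each sum then makes the exponent $\gtrsim A$, so each bad event has probability at most $2e^{-A} = \nicefrac{\delta}{2(K+1)}$. On the intersection of $E_t$ with these two good events, $\|x^{t+1} - x^*\|^2 \le R^2 + (\text{small})\cdot R^2 \le 2R^2$, i.e.\ $E_{t+1}$ holds, and a union bound over $t$ closes the induction: $\Prob(E_{K+1}) \ge 1 - \delta$, which is the second assertion in \eqref{eq:clipped_SGD_convex_case_appendix}. On that event Lemma~\ref{lem:main_opt_lemma_clipped_SGD_convex} applies in full, and bounding its right-hand side exactly as above gives $\gamma(f(\overline{x}^K) - f(x^*)) \le \nicefrac{2R^2}{K+1}$, the first assertion; substituting the explicit step size \eqref{eq:clipped_SGD_step_size_cvx_exact} and expanding the maximum of the four resulting quantities yields \eqref{eq:clipped_SGD_convex_case_2_appendix}. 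The expected main obstacle is organizational rather than conceptual: correctly interleaving the induction with the truncated-martingale device so that Bernstein is applied to honestly bounded martingale differences, and tracking numerical constants so that every error contribution, once summed, stays below $R^2$. The single place where the precise constants in \eqref{eq:clipped_SGD_step_size_cvx}--\eqref{eq:clipped_SGD_clipping_level_cvx} are genuinely needed is verifying $\|\EE_k[\nabla f_{\Xi^k}(x^k)]\| \le \nicefrac{\lambda_k}{2}$, the hypothesis that unlocks Lemma~\ref{lem:bias_and_variance_clip}.
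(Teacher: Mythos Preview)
Your proposal is correct and follows essentially the same route as the paper's proof: the same inductive high-probability scheme, the same decomposition $\theta_k = \theta_k^u + \theta_k^b$, the same application of Lemma~\ref{lem:bias_and_variance_clip} after verifying $\|\EE_k[\nabla f_{\Xi^k}(x^k)]\| \le \lambda_k/2$, and the same split into deterministic terms plus two Bernstein-controlled martingale sums. The only cosmetic difference is that the paper enforces the a.s.\ bounds by replacing $x^k - x^* - \gamma\nabla f(x^k)$ with a truncated vector $\eta_k$ (set to $0$ when its norm exceeds $2R$), whereas you multiply the summands by $\1\{E_k\}$; since $E_k$ is $\mathcal{F}_{k-1}$-measurable and implies $\eta_k = x^k - x^* - \gamma\nabla f(x^k)$, the two devices are equivalent.
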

\begin{proof}
    Our proof follows similar steps to the one given by \citet{sadiev2023high}. The main difference comes due to the presence of the bias in $\nabla f_{\Xi^k}(x^k)$. Therefore, for completeness, we provide the full proof here.

    Let $R_k = \|x^k - x^*\|$ for all $k\geq 0$. Our next objective is to establish, by induction, that $R_{l} \leq 2R$ with a high probability. This will enable us to apply the result from Lemma~\ref{lem:main_opt_lemma_clipped_SGD_convex} and subsequently utilize Bernstein's inequality to estimate the stochastic component of the upper bound. To be more precise, for each $k = 0,\ldots, K+1$, we consider the probability event $E_k$, defined as follows: inequalities
    \begin{eqnarray}
        - 2\gamma\sum\limits_{l=0}^{t-1} \langle x^l - x^* - \gamma \nabla f(x^l), \theta_l \rangle  + \gamma^2\sum\limits_{l=0}^{t-1} \|\theta_l\|^2 &\leq&  R^2, \label{eq:clipped_SGD_convex_induction_inequality_1}\\
        R_t &\leq& \sqrt{2}R \label{eq:clipped_SGD_convex_induction_inequality_2}
    \end{eqnarray}
    hold for all $t = 0,1,\ldots, k$ simultaneously. We aim to demonstrate through induction that $\Prob\{E_k\} \geq 1 - \nicefrac{k\delta}{(K+1)}$ for all $k = 0,1,\ldots, K+1$. The base case, $k = 0$, is trivial. Assuming that the statement holds for some $k = T - 1 \leq K$, specifically, $\Prob\{E_{T-1}\} \geq 1 - \nicefrac{(T-1)\delta}{(K+1)}$, we need to establish that $\Prob\{E_{T}\} \geq 1 - \nicefrac{T\delta}{(K+1)}$.

    To begin, we observe that the probability event $E_{T-1}$ implies that $x_t \in B_{\sqrt{2}R}(x^*)$ for all $t = 0,1,\ldots, T-1$. Furthermore, $E_{T-1}$ implies that
    \begin{eqnarray*}
        \|x^T - x^*\| = \|x^{T-1} - x^* - \gamma \tnabla f_{\Xi^{T-1}}(x^{T-1})\| \leq \|x^{T-1} - x^*\| + \gamma\|\tnabla f_{\Xi^{T-1}}(x^{T-1})\| \leq \sqrt{2}R + \gamma \lambda \overset{\eqref{eq:clipped_SGD_clipping_level_cvx}}{\leq} 2R,
    \end{eqnarray*}
    i.e., $x^0, x^1, \ldots, x^T \in B_{2R}(x^*)$. Hence, with $E_{T-1}$ implying $\{x^k\}_{k=0}^{T} \subseteq Q$, we confirm that the conditions of Lemma~\ref{lem:main_opt_lemma_clipped_SGD_convex} are met, resulting in
    \begin{eqnarray}
        \gamma \left(f(\overline{x}^{t-1}) -f(x^*)\right) &\leq& \frac{\|x^0 - x^*\|^2 - \|x^{t} - x^*\|^2}{t}\notag\\
       &&\quad - \frac{2\gamma}{t}\sum\limits_{l=0}^{t-1} \langle x^l - x^*  - \gamma \nabla f(x^l), \theta_l \rangle  + \frac{\gamma^2}{t}\sum\limits_{l=0}^{t-1} \|\theta_l\|^2 \label{eq:clipped_SGD_convex_technical_1}
    \end{eqnarray}
    for all $t=1,\ldots, T$ simultaneously and for all $t = 1, \ldots, T-1$ this probability event also implies that
    \begin{eqnarray}
        f(\overline{x}^{t-1}) -f(x^*) &\leq& \frac{1}{\gamma t}\left(R^2 - 2\gamma\sum\limits_{l=0}^{t-1} \langle x^l - x^* - \gamma \nabla f(x^l), \theta_l \rangle  + \gamma^2\sum\limits_{l=0}^{t-1} \|\theta_l\|^2\right)  \overset{\eqref{eq:clipped_SGD_convex_induction_inequality_1}}{\leq} \frac{2R^2}{\gamma t}. \label{eq:clipped_SGD_convex_technical_1_1}
    \end{eqnarray}
    Considering that $f(\overline{x}^{T-1}) -f(x^*) \geq 0$, we can further deduce from \eqref{eq:clipped_SGD_convex_technical_1} that when $E_{T-1}$ holds, the following holds as well:
    \begin{eqnarray}
        R_T^2 \leq R^2 - 2\gamma\sum\limits_{l=0}^{t-1} \langle x^l - x^* - \gamma \nabla f(x^l), \theta_l \rangle  + \gamma^2\sum\limits_{l=0}^{t-1} \|\theta_l\|^2. \label{eq:clipped_SGD_convex_technical_2}
    \end{eqnarray}
    Next, we define random vectors
    \begin{equation}
        \eta_t = \begin{cases} x^t - x^* - \gamma \nabla f(x^t),& \text{if } \|x^t - x^* - \gamma \nabla f(x^t)\| \leq 2R,\\ 0,&\text{otherwise}, \end{cases} \notag
    \end{equation}
    for all $t = 0,1,\ldots, T-1$. As per their definition, these random vectors are bounded with probability 1
    \begin{equation}
        \|\eta_t\| \leq 2R. \label{eq:clipped_SGD_convex_technical_6}
    \end{equation}
    Moreover, for $t = 0,\ldots, T-1$ event $E_{T-1}$ implies
    \begin{eqnarray}
        \|\nabla f(x^{t})\| &\overset{\eqref{eq:L_smoothness}}{\leq}& L\|x^t - x^*\|  \overset{\eqref{eq:clipped_SGD_convex_induction_inequality_2}}{\leq}  \sqrt{2}LR \overset{\eqref{eq:clipped_SGD_step_size_cvx},\eqref{eq:clipped_SGD_clipping_level_cvx}}{\leq} \frac{\lambda}{2},\label{eq:clipped_SGD_convex_technical_4}\\
        \left\|\EE_{\Xi^t}[\nabla f(x^{t})]\right\| &\leq& \left\|\EE_{\Xi^t}[\nabla f(x^{t})] - \nabla f(x^{t})\right\| + \|\nabla f(x^{t})\| \overset{\eqref{eq:bias_bound}, \eqref{eq:clipped_SGD_convex_technical_4}}{\leq} b_t + \sqrt{2}LR \overset{\eqref{eq:clipped_SGD_step_size_cvx},\eqref{eq:clipped_SGD_clipping_level_cvx}}{\leq} \frac{\lambda}{2}, \label{eq:clipped_SGD_convex_technical_4_bias}\\
        \|x^t - x^* - \gamma \nabla f(x^t)\| &\leq& \|x^t - x^*\| + \gamma \|\nabla f(x^t)\| \overset{\eqref{eq:clipped_SGD_convex_technical_4}}{\leq} \sqrt{2}R (1+L\gamma) \overset{\eqref{eq:clipped_SGD_step_size_cvx}}{\leq} 2R.\notag
    \end{eqnarray}
    The latter inequality means that $E_{T-1}$ implies $\eta_t = x^t - x^* - \gamma \nabla f(x^t)$ for $t = 0,\ldots, T-1$.
    Next, we define the unbiased part and the bias of $\theta_{t}$ as $\theta_{t}^u$ and $\theta_{t}^b$, respectively:
    \begin{equation}
        \theta_{t}^u = \clip(\nabla f_{\Xi^t}(x^{t}), \lambda_t) - \EE_{\Xi^t}\left[\clip(\nabla f_{\Xi^t}(x^{t}), \lambda_t)\right],\quad \theta_{t}^b = \EE_{\Xi^t}\left[\clip(\nabla f_{\Xi^t}(x^{t}), \lambda_t)\right] - \nabla f(x^{t}). \label{eq:clipped_SGD_convex_theta_u_b}
    \end{equation}
    We notice that $\theta_{t} = \theta_{t}^u + \theta_{t}^b$. Using new notation, we get that $E_{T-1}$ implies
    \begin{eqnarray}
        R_T^2 
        &\leq& R^2 \underbrace{-2\gamma\sum\limits_{t=0}^{T-1}\langle \theta_t^u, \eta_t\rangle}_{\circledOne}  \underbrace{-2\gamma\sum\limits_{t=0}^{T-1}\langle \theta_t^b, \eta_t\rangle}_{\circledTwo} + \underbrace{2\gamma^2\sum\limits_{t=0}^{T-1}\left(\left\|\theta_{t}^u\right\|^2 - \EE_{\Xi^t}\left[\left\|\theta_{t}^u\right\|^2\right]\right)}_{\circledThree}\notag\\
        &&\quad + \underbrace{2\gamma^2\sum\limits_{t=0}^{T-1}\EE_{\Xi^t}\left[\left\|\theta_{t}^u\right\|^2\right]}_{\circledFour} + \underbrace{2\gamma^2\sum\limits_{t=0}^{T-1}\left\|\theta_{t}^b\right\|^2}_{\circledFive}. \label{eq:clipped_SGD_convex_technical_7}
    \end{eqnarray}

    To conclude our inductive proof successfully, we must obtain sufficiently strong upper bounds with high probability for the terms $\circledOne, \circledTwo, \circledThree, \circledFour, \circledFive$. In other words, we need to demonstrate that $\circledOne + \circledTwo + \circledThree + \circledFour + \circledFive \leq R^2$ with a high probability. In the subsequent stages of the proof, we will rely on the bounds for the norms and second moments of $\theta_{t}^u$ and $\theta_{t}^b$. First, as per the definition of the clipping operator, we can assert with probability $1$ that
     \begin{equation}
        \|\theta_{t}^u\| \leq 2\lambda. \label{eq:clipped_SGD_convex_norm_theta_u_bound}
    \end{equation}
    Furthermore, given that $E_{T-1}$ implies $\|\EE_{\Xi^t}[\nabla f_{\Xi^t}(x^{t})]\| \leq \nicefrac{\lambda}{2}$ for $t = 0,1,\ldots,T-1$ (as per \eqref{eq:clipped_SGD_convex_technical_4_bias}), then, according to Lemma~\ref{lem:bias_and_variance_clip}, we can deduce that $E_{T-1}$ implies
    \begin{eqnarray}
        \|\theta_{t}^b\| &\leq& \left\|\EE_{\Xi^t}\left[\clip(\nabla f_{\Xi^t}(x^{t}), \lambda_t)\right] - \EE_{\Xi^t}[\nabla f_{\Xi^t}(x^{t})]\right\| +  \left\|\EE_{\Xi^t}[\nabla f_{\Xi^t}(x^{t})] - \nabla f(x^{t})\right\|\notag\\
        &\overset{\eqref{eq:bias_X}, \eqref{eq:bias_bound}}{\leq}& \frac{4\sigma^2}{\lambda} + b_t, \label{eq:clipped_SGD_convex_norm_theta_b_bound} \\
        \EE_{\Xi^t}\left[\|\theta_{t}^u\|^2\right] &\overset{\eqref{eq:variance_X}}{\leq}& 18 \sigma^2. \label{eq:clipped_SGD_convex_second_moment_theta_u_bound}
    \end{eqnarray}

    \textbf{Upper bound for $\circledOne$.} By definition of $\theta_{t}^u$, we readily observe that $\EE_{\Xi^t}[\theta_{t}^u] = 0$ and
    \begin{equation}
        \EE_{\Xi^t}\left[-2\gamma\langle\theta_t^u, \eta_t\rangle\right] = 0. \notag
    \end{equation}
    Next, the sum $\circledOne$ contains only the terms that are bounded with probability $1$:
    \begin{equation}
        |2\gamma\left\langle \theta_{t}^u, \eta_t\right\rangle| \leq 2\gamma \|\theta_{t}^u\| \cdot \|\eta_t\| \overset{\eqref{eq:clipped_SGD_convex_technical_6},\eqref{eq:clipped_SGD_convex_norm_theta_u_bound}}{\leq} 8\gamma \lambda R\overset{\eqref{eq:clipped_SGD_clipping_level_cvx}}{=} \frac{R^2}{5\ln\frac{4(K+1)}{\delta}} \eqdef c. \label{eq:clipped_SGD_convex_technical_8} 
    \end{equation}
    The conditional variances $\sigma_t^2 \eqdef \EE_{\Xi^t}[4\gamma^2\langle\theta_t^u, \eta_t\rangle^2]$ of the summands are bounded:
    \begin{equation}
        \sigma_t^2 \leq \EE_{\Xi^t}\left[4\gamma^2\|\theta_{t}^u\|^2\cdot \|\eta_t\|^2\right] \overset{\eqref{eq:clipped_SGD_convex_technical_6}}{\leq} 16\gamma^2 R^2 \EE_{\Xi^t}\left[\|\theta_{t}^u\|^2\right]. \label{eq:clipped_SGD_convex_technical_9}
    \end{equation}
    To summarize, we have demonstrated that $\{-2\gamma\left\langle \theta_{t}^u, \eta_t\right\rangle\}_{t=0}^{T-1}$ is a bounded martingale difference sequence with bounded conditional variances $\{\sigma_t^2\}_{t=0}^{T-1}$. Therefore, one can apply Bernstein's inequality (Lemma~\ref{lem:Bernstein_ineq}) with $X_t = -2\gamma \left\langle \theta_{t}^u, \eta_t\right\rangle$, parameter $c$ as in \eqref{eq:clipped_SGD_convex_technical_8}, $b = \frac{R^2}{5}$, $G = \frac{R^4}{150\ln\frac{4(K+1)}{\delta}}$ and get
    \begin{equation*}
        \Prob\left\{|\circledOne| > \frac{R^2}{5}\quad \text{and}\quad \sum\limits_{t=0}^{T-1} \sigma_{t}^2 \leq \frac{R^4}{150\ln\frac{4(K+1)}{\delta}}\right\} \leq 2\exp\left(- \frac{b^2}{2G + \nicefrac{2cb}{3}}\right) = \frac{\delta}{2(K+1)},
    \end{equation*}
    which is equivalent to
    \begin{equation}
        \Prob\left\{ E_{\circledOne} \right\} \geq 1 - \frac{\delta}{2(K+1)},\quad \text{for}\quad E_{\circledOne} = \left\{ \text{either} \quad  \sum\limits_{t=0}^{T-1} \sigma_{t}^2 > \frac{R^4}{150\ln\frac{4(K+1)}{\delta}} \quad \text{or}\quad |\circledOne| \leq \frac{R^2}{5}\right\}. \label{eq:clipped_SGD_convex_sum_1_upper_bound}
    \end{equation}
    Additionally, event $E_{T-1}$ implies that
    \begin{eqnarray}
        \sum\limits_{t=0}^{T-1} \sigma_{t}^2 &\overset{\eqref{eq:clipped_SGD_convex_technical_9}}{\leq}& 16\gamma^2 R^2 \sum\limits_{t=0}^{T-1}  \EE_{\Xi^t}\left[\|\theta_{t}^u\|^2\right] \overset{\eqref{eq:clipped_SGD_convex_second_moment_theta_u_bound}}{\leq} 288\gamma^2 R^2 \sigma^{2}T\notag\\
        &\overset{\eqref{eq:clipped_SGD_step_size_cvx}}{\leq}&  \frac{R^4}{150 \ln\frac{4(K+1)}{\delta}}. \label{eq:clipped_SGD_convex_sum_1_variance_bound}
    \end{eqnarray}

    \textbf{Upper bound for $\circledTwo$.} From $E_{T-1}$ it follows that
\begin{eqnarray}
        \circledTwo &=& -2\gamma\sum\limits_{t=0}^{T-1}\langle \theta_t^b, \eta_t \rangle \leq 2\gamma\sum\limits_{t=0}^{T-1}\|\theta_{t}^b\|\cdot \|\eta_t\| \overset{\eqref{eq:clipped_SGD_convex_technical_6},\eqref{eq:clipped_SGD_convex_norm_theta_b_bound}}{\leq}  \frac{16 \gamma \sigma^2 T R}{\lambda} + 4\gamma R b T \notag\\ &\overset{\eqref{eq:clipped_SGD_clipping_level_cvx}}{=}& \frac{2\gamma^2\sigma^2 T \ln \frac{4(K+1)}{\delta}}{5} +  4\gamma R b T \overset{\eqref{eq:clipped_SGD_step_size_cvx}}{\leq} \frac{R^2}{5}. \label{eq:clipped_SGD_convex_sum_2_upper_bound}
\end{eqnarray}

\textbf{Upper bound for $\circledThree$.} By construction, we have
    \begin{equation}
        \EE_{\Xi^t}\left[2\gamma^2\left(\left\|\theta_{t}^u\right\|^2 - \EE_{\Xi^t}\left[\left\|\theta_{t}^u\right\|^2\right]\right)\right] = 0. \notag
    \end{equation}
    Next, the sum $\circledThree$ contains only the terms that are bounded with probability $1$:
    \begin{eqnarray}
        \left|2\gamma^2\left(\left\|\theta_{t}^u\right\|^2 - \EE_{\Xi^t}\left[\left\|\theta_{t}^u\right\|^2\right]\right)\right| &\leq& 2\gamma^2\left( \|\theta_{t}^u\|^2 +   \EE_{\Xi^t}\left[\left\|\theta_{t}^u\right\|^2\right]\right)\notag\\
        &\overset{\eqref{eq:clipped_SGD_convex_norm_theta_u_bound}}{\leq}& 16\gamma^2\lambda^2\overset{\eqref{eq:clipped_SGD_clipping_level_cvx}}{=} \frac{R^2}{100\ln^2\frac{4(K+1)}{\delta}} \le \frac{R^2}{5 \ln\frac{4(K+1)}{\delta}}\eqdef c. \label{eq:clipped_SGD_convex_technical_10}
    \end{eqnarray}
    The conditional variances $\widetilde\sigma_t^2 \eqdef \EE_{\Xi^t}\left[4\gamma^4\left(\left\|\theta_{t}^u\right\|^2 - \EE_{\Xi^t}\left[\left\|\theta_{t}^u\right\|^2\right]\right)^2\right]$ of the summands are bounded:
    \begin{eqnarray}
        \widetilde\sigma_t^2 &\overset{\eqref{eq:clipped_SGD_convex_technical_10}}{\leq}& \frac{R^2}{5 \ln\frac{4(K+1)}{\delta}} \EE_{\Xi^t}\left[2\gamma^2\left|\left\|\theta_{t}^u\right\|^2 - \EE_{\Xi^t}\left[\left\|\theta_{t}^u\right\|^2\right]\right|\right] \leq \frac{4\gamma^2 R^2}{5\ln\frac{4(K+1)}{\delta}} \EE_{\Xi^t}\left[\|\theta_{t}^u\|^2\right]. \label{eq:clipped_SGD_convex_technical_11}
    \end{eqnarray}
    To summarize, we have demonstrated that $\left\{2\gamma^2\left(\left\|\theta_{t}^u\right\|^2 - \EE_{\Xi^t}\left[\left\|\theta_{t}^u\right\|^2\right]\right)\right\}_{t=0}^{T-1}$ is a bounded martingale difference sequence with bounded conditional variances $\{\widetilde\sigma_t^2\}_{t=0}^{T-1}$. Therefore, one can apply Bernstein's inequality (Lemma~\ref{lem:Bernstein_ineq}) with $X_t = 2\gamma^2\left(\left\|\theta_{t}^u\right\|^2 - \EE_{\Xi^t}\left[\left\|\theta_{t}^u\right\|^2\right]\right)$, parameter $c$ as in \eqref{eq:clipped_SGD_convex_technical_10}, $b = \frac{R^2}{5}$, $G = \frac{R^4}{150\ln\frac{4(K+1)}{\delta}}$ and get
    \begin{equation*}
        \Prob\left\{|\circledThree| > \frac{R^2}{5}\quad \text{and}\quad \sum\limits_{t=0}^{T-1} \widetilde\sigma_{t}^2 \leq \frac{R^4}{150\ln\frac{4(K+1)}{\delta}}\right\} \leq 2\exp\left(- \frac{b^2}{2G + \nicefrac{2cb}{3}}\right) = \frac{\delta}{2(K+1)},
    \end{equation*}
    which is equivalent to
    \begin{equation}
        \Prob\left\{ E_{\circledThree} \right\} \geq 1 - \frac{\delta}{2(K+1)},\quad \text{for}\quad E_{\circledThree} = \left\{ \text{either} \quad  \sum\limits_{t=0}^{T-1} \widetilde\sigma_{t}^2 > \frac{R^4}{150\ln\frac{4(K+1)}{\delta}} \quad \text{or}\quad |\circledThree| \leq \frac{R^2}{5}\right\}. \label{eq:clipped_SGD_convex_sum_3_upper_bound}
    \end{equation}
    Additionally, event $E_{T-1}$ implies that
    \begin{eqnarray}
        \sum\limits_{t=0}^{T-1} \widetilde\sigma_{t}^2 &\overset{\eqref{eq:clipped_SGD_convex_technical_11}}{\leq}& \frac{4\gamma^2R^2}{5\ln\frac{4(K+1)}{\delta}} \sum\limits_{t=0}^{T-1}  \EE_{\Xi^t}\left[\|\theta_{t}^u\|^2\right] \overset{\eqref{eq:clipped_SGD_convex_second_moment_theta_u_bound}}{\leq}  
        \frac{72\gamma^2R^2\sigma^{2}T}{5\ln\frac{4(K+1)}{\delta}}\notag\\
        &\overset{\eqref{eq:clipped_SGD_step_size_cvx}}{\leq}& \frac{R^4}{150 \ln\frac{4(K+1)}{\delta}}. \label{eq:clipped_SGD_convex_sum_3_variance_bound}
    \end{eqnarray}

    \textbf{Upper bound for $\circledFour$.} From $E_{T-1}$ it follows that
\begin{eqnarray}
        \circledFour &=& 2\gamma^2\sum\limits_{t=0}^{T-1}\EE_{\Xi^t}\left[\left\|\theta_{t}^u\right\|^2\right] \overset{\eqref{eq:clipped_SGD_convex_second_moment_theta_u_bound}}{\leq} 36\gamma^2\sigma^{2}T \overset{\eqref{eq:clipped_SGD_step_size_cvx}}{\leq} \frac{R^2}{5}.\label{eq:clipped_SGD_convex_sum_4_upper_bound}
\end{eqnarray}

\textbf{Upper bound for $\circledFive$.} From $E_{T-1}$ it follows that
\begin{eqnarray}
        \circledFive &=& 2\gamma^2\sum\limits_{t=0}^{T-1}\left\|\theta_{t}^b\right\|^2 \overset{\eqref{eq:clipped_SGD_convex_norm_theta_b_bound}}{\leq} \frac{32\sigma^{4}T\gamma^2}{\lambda^{2}} + 2\gamma^2 b^2T \overset{\eqref{eq:clipped_SGD_clipping_level_cvx}}{=}   51200\cdot \frac{\sigma^{4}T\gamma^{4}\ln\frac{4(K+1)}{\delta}}{R^2} + 2\gamma^2 b^2T\overset{\eqref{eq:clipped_SGD_step_size_cvx}}{\leq} \frac{R^2}{5}.\label{eq:clipped_SGD_convex_sum_5_upper_bound}
\end{eqnarray}

That is, we derived the upper bounds for  $\circledOne, \circledTwo, \circledThree, \circledFour, \circledFive$. More specifically, the probability event $E_{T-1}$ implies:
\begin{gather*}
        R_T^2 \overset{\eqref{eq:clipped_SGD_convex_technical_7}}{\leq} R^2 + \circledOne + \circledTwo + \circledThree + \circledFour + \circledFive,\\
        \circledTwo \overset{\eqref{eq:clipped_SGD_convex_sum_2_upper_bound}}{\leq} \frac{R^2}{5},\quad \circledFour \overset{\eqref{eq:clipped_SGD_convex_sum_4_upper_bound}}{\leq} \frac{R^2}{5}, \quad \circledFive \overset{\eqref{eq:clipped_SGD_convex_sum_5_upper_bound}}{\leq} \frac{R^2}{5},\\
        \sum\limits_{t=0}^{T-1} \sigma_t^2 \overset{\eqref{eq:clipped_SGD_convex_sum_1_variance_bound}}{\leq} \frac{R^4}{150 \ln\frac{4(K+1)}{\delta}},\quad \sum\limits_{t=0}^{T-1} \widetilde\sigma_t^2 \overset{\eqref{eq:clipped_SGD_convex_sum_3_variance_bound}}{\leq} \frac{R^4}{150 \ln\frac{4(K+1)}{\delta}}.
\end{gather*}
    In addition, we also have (see \eqref{eq:clipped_SGD_convex_sum_1_upper_bound}, \eqref{eq:clipped_SGD_convex_sum_3_upper_bound} and our induction assumption)
\begin{equation*}
        \Prob\{E_{T-1}\} \geq 1 - \frac{(T-1)\delta}{K+1},\quad \Prob\{E_{\circledOne}\} \geq 1 - \frac{\delta}{2(K+1)},\quad \Prob\{E_{\circledThree}\} \geq 1 - \frac{\delta}{2(K+1)},
\end{equation*}
    where 
\begin{eqnarray*}
        E_{\circledOne} &=& \left\{ \text{either} \quad  \sum\limits_{t=0}^{T-1} \sigma_{t}^2 > \frac{R^4}{150\ln\frac{4(K+1)}{\delta}} \quad \text{or}\quad |\circledOne| \leq \frac{R^2}{5}\right\},\\
        E_{\circledThree} &=& \left\{ \text{either} \quad  \sum\limits_{t=0}^{T-1} \widetilde\sigma_{t}^2 > \frac{R^4}{150\ln\frac{4(K+1)}{\delta}} \quad \text{or}\quad |\circledThree| \leq \frac{R^2}{5}\right\}.
\end{eqnarray*}
    Therefore, probability event $E_{T-1} \cap E_{\circledOne} \cap E_{\circledThree}$ implies
\begin{eqnarray}
        R_T^2 &\leq& R^2 + \frac{R^2}{5} + \frac{R^2}{5} + \frac{R^2}{5} + \frac{R^2}{5} + \frac{R^2}{5} = 2R^2, \notag
\end{eqnarray}
    which is equivalent to \eqref{eq:clipped_SGD_convex_induction_inequality_1} and \eqref{eq:clipped_SGD_convex_induction_inequality_2} for $t = T$, and 
\begin{equation*}
        \Prob\{E_T\} \geq \Prob\left\{E_{T-1} \cap E_{\circledOne} \cap E_{\circledThree}\right\} = 1 - \Prob\left\{\overline{E}_{T-1} \cup \overline{E}_{\circledOne} \cup \overline{E}_{\circledThree}\right\} \geq 1 - \Prob\{\overline{E}_{T-1}\} - \Prob\{\overline{E}_{\circledOne}\} - \Prob\{\overline{E}_{\circledThree}\} \geq 1 - \frac{T\delta}{K+1}.
\end{equation*}
    We have now completed the inductive part of our proof. That is, for all $k = 0,1,\ldots,K+1$, we have $\Prob\{E_k\} \geq 1 - \nicefrac{k\delta}{(K+1)}$. Notably, when $k = K+1$, we can conclude that with a probability of at least $1 - \delta$:
\begin{equation*}
        f(\overline{x}^K) - f(x^*) \overset{\eqref{eq:clipped_SGD_convex_technical_1_1}}{\leq}\frac{2R^2}{\gamma(K+1)}
\end{equation*}
    and $\{x^k\}_{k=0}^{K} \subseteq Q$, which follows from \eqref{eq:clipped_SGD_convex_induction_inequality_2}.
    
Finally, if
\begin{equation*}
        \gamma = \min\left\{\frac{1}{160L \ln \frac{4(K+1)}{\delta}}, \frac{R}{208 \sigma  \sqrt{K\ln \frac{4(K+1)}{\delta}}}, \frac{R}{160b\ln \frac{4(K+1)}{\delta}}, \frac{R}{1600b(K+1)}\right\},
\end{equation*}
then with probability at least $1-\delta$
\begin{eqnarray*}
        f(\overline{x}^K) - f(x^*) &\leq& \frac{2R^2}{\gamma(K+1)}\\
        &=& \max\left\{\frac{320 LR^2 \ln \frac{4(K+1)}{\delta}}{K+1}, \frac{416\sigma R\sqrt{K\ln\frac{4(K+1)}{\delta}}}{K+1}, \frac{320 bR\ln\frac{4(K+1)}{\delta}}{K+1}, 3200bR\right\}\notag\\
        &=& \cO\left(\max\left\{\frac{L R^2 \ln \frac{K}{\delta}}{K}, \frac{ \sigma R\sqrt{\ln\frac{K}{\delta}}}{\sqrt{K}}, \frac{bR\ln\frac{K}{\delta}}{K}, bR\right\}\right).
\end{eqnarray*}
This concludes the proof.
\end{proof}

\subsection{Quasi-Strongly Convex Case}

\begin{lemma}\label{lem:main_opt_lemma_clipped_SGD_str_convex}
    Let Assumptions~\ref{as:L_smoothness} and \ref{as:QSC} with $\mu = 0$ hold on $Q = B_{2R}(x^*)$, where $R \geq \|x^0 - x^*\|$, and let stepsize  $\gamma_k \equiv \gamma$ satisfy $\gamma \leq \frac{1}{L}$. If $x^{k} \in Q$ for all $k = 0,1,\ldots,K$, $K \ge 0$, then after $K$ iterations of \algname{clipped-SGD} we have
    \begin{eqnarray}
        \|x^{K+1} - x^*\|^2 &\leq& \exp(-\gamma\mu(K+1))\|x^0 - x^*\|^2  - 2\gamma \sum\limits_{k=0}^K \exp(-\gamma\mu(K-k)) \langle x^k - x^* - \gamma \nabla f(x^k), \theta_k \rangle\notag\\
        &&\quad + \gamma^2 \sum\limits_{k=0}^K \exp(-\gamma\mu(K-k))\|\theta_k\|^2, \label{eq:main_opt_lemma_clipped_SGD_QSC}
    \end{eqnarray}
    where $\theta_k$ is defined in \eqref{eq:theta_k_def_clipped_SGD_non_convex}.
\end{lemma}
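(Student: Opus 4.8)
The plan is a standard one-step descent computation followed by unrolling a geometric recursion; the only subtlety is keeping the contraction factor from quasi-strong convexity while discarding the (non-positive) functional-gap terms produced by smoothness. (I note in passing that the hypothesis ``with $\mu=0$'' should read ``with parameter $\mu$'', since $\mu$ appears in the conclusion; I treat $\mu\geq 0$ throughout, the case $\mu=0$ reducing to $\exp(0)=1$.)

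First I would rewrite the \algname{clipped-SGD} update~\eqref{eq:clipped_SGD} in terms of $\theta_k$ from~\eqref{eq:theta_k_def_clipped_SGD_non_convex}: since $\clip(\nabla f_{\Xi^k}(x^k),\lambda_k) = \nabla f(x^k) + \theta_k$, we get $x^{k+1}-x^* = \bigl(x^k - x^* - \gamma\nabla f(x^k)\bigr) - \gamma\theta_k$. Expanding the squared norm,
\[
    \|x^{k+1}-x^*\|^2 = \|x^k - x^* - \gamma\nabla f(x^k)\|^2 - 2\gamma\langle x^k - x^* - \gamma\nabla f(x^k),\,\theta_k\rangle + \gamma^2\|\theta_k\|^2 .
\]
The cross term and the $\gamma^2\|\theta_k\|^2$ term already match the summands in~\eqref{eq:main_opt_lemma_clipped_SGD_QSC}, so the work reduces to bounding the deterministic gradient-step term $\|x^k - x^* - \gamma\nabla f(x^k)\|^2$.

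Next I would expand $\|x^k - x^* - \gamma\nabla f(x^k)\|^2 = \|x^k-x^*\|^2 - 2\gamma\langle\nabla f(x^k),x^k-x^*\rangle + \gamma^2\|\nabla f(x^k)\|^2$. Since $x^k\in Q$ by hypothesis, Assumption~\ref{as:QSC} applied at $x^k$ (rearranged) gives $\langle\nabla f(x^k),x^k-x^*\rangle \geq f(x^k)-f(x^*) + \tfrac{\mu}{2}\|x^k-x^*\|^2$, and~\eqref{eq:L_smoothness_cor_2} gives $\|\nabla f(x^k)\|^2 \leq 2L\bigl(f(x^k)-f(x^*)\bigr)$ (using $f_*=f(x^*)$). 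Combining,
\[
    \|x^k - x^* - \gamma\nabla f(x^k)\|^2 \leq (1-\gamma\mu)\|x^k-x^*\|^2 - 2\gamma(1-\gamma L)\bigl(f(x^k)-f(x^*)\bigr).
\]
Because $\gamma\leq 1/L$ and $f(x^k)-f(x^*)\geq 0$, the last term is non-positive and may be dropped; together with $1-\gamma\mu \leq e^{-\gamma\mu}$ this yields the one-step recursion
\[
    \|x^{k+1}-x^*\|^2 \leq e^{-\gamma\mu}\|x^k-x^*\|^2 - 2\gamma\langle x^k - x^* - \gamma\nabla f(x^k),\,\theta_k\rangle + \gamma^2\|\theta_k\|^2 .
\]

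Finally I would unroll this recursion over $k=0,\dots,K$. Writing $r_k := \|x^k-x^*\|^2$ and $c_k := -2\gamma\langle x^k - x^* - \gamma\nabla f(x^k),\theta_k\rangle + \gamma^2\|\theta_k\|^2$, we have $r_{k+1}\leq e^{-\gamma\mu}r_k + c_k$, and an immediate induction gives $r_{K+1}\leq e^{-\gamma\mu(K+1)}r_0 + \sum_{k=0}^K e^{-\gamma\mu(K-k)}c_k$, which is exactly~\eqref{eq:main_opt_lemma_clipped_SGD_QSC}. There is no real obstacle here: the whole argument is elementary convex-analysis bookkeeping, and the only points requiring a little care are (i) invoking the assumptions legitimately, which is licensed by the hypothesis $x^k\in Q$ for all $k$, (ii) the sign check that lets us discard the $(1-\gamma L)\bigl(f(x^k)-f(x^*)\bigr)$ term, and (iii) the elementary inequality $1-t\leq e^{-t}$ used to turn the multiplicative factor into an exponential.
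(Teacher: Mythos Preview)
Your proof is correct and follows essentially the same route as the paper: expand the update in terms of $\theta_k$, apply quasi-strong convexity~\eqref{eq:QSC} and the smoothness bound~\eqref{eq:L_smoothness_cor_2}, drop the non-positive $(1-\gamma L)(f(x^k)-f(x^*))$ term, pass from $1-\gamma\mu$ to $e^{-\gamma\mu}$, and unroll. The only difference is cosmetic---you bound $\|x^k-x^*-\gamma\nabla f(x^k)\|^2$ as a block, whereas the paper expands $\|x^{k+1}-x^*\|^2$ directly and regroups---and you correctly flag the typo ``$\mu=0$'' in the hypothesis.
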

\begin{proof}
    Using the update rule of \algname{clipped-SGD}, we obtain
     \begin{eqnarray*}
        \|x^{k+1} - x^*\|^2 &=& \|x^k - x^*\|^2 - 2\gamma \langle x^k - x^*,  \clip(\nabla f_{\Xi^k}(x^k), \lambda_k)\rangle + \gamma^2\|\clip(\nabla f_{\Xi^k}(x^k), \lambda_k)\|^2\\
        &=& \|x^k - x^*\|^2 -2\gamma \langle x^k - x^*, \nabla f(x^k) \rangle - 2\gamma \langle x^k - x^*, \theta_k \rangle\\
        &&\quad + \gamma^2\|\nabla f(x^k)\|^2 + 2\gamma^2 \langle \nabla f(x^k), \theta_k \rangle + \gamma^2\|\theta_k\|^2\\
        &=& \|x^k - x^*\|^2 - 2\gamma \langle x^k - x^* - \gamma \nabla f(x^k), \theta_k \rangle\\
        &&\quad  - 2\gamma \langle x^k - x^*, \nabla f(x^k) \rangle + \gamma^2\|\nabla f(x^k)\|^2 + \gamma^2\|\theta_k\|^2\\
        &\overset{\eqref{eq:QSC}, \eqref{eq:L_smoothness_cor_2}}{\leq}&  (1-\gamma\mu)\|x^k - x^*\|^2 - 2\gamma \langle x^k - x^* - \gamma \nabla f(x^k), \theta_k \rangle\\
        &&\quad  - 2\gamma (f(x^k) - f(x^*)) + 2L\gamma^2 (f(x^k) - f(x^*)) + \gamma^2\|\theta_k\|^2\\
        &\overset{\gamma \leq \nicefrac{1}{L}}{\leq}& (1-\gamma\mu)\|x^k - x^*\|^2 - 2\gamma \langle x^k - x^* - \gamma \nabla f(x^k), \theta_k \rangle + \gamma^2\|\theta_k\|^2\\
        &&\quad  - 2\gamma (f(x^k) - f(x^*)) + 2L\gamma^2 (f(x^k) - f(x^*)) + \gamma^2\|\theta_k\|^2\\
        &\overset{\gamma \leq \nicefrac{1}{L}}{\leq}& (1-\gamma\mu)\|x^k - x^*\|^2 - 2\gamma \langle x^k - x^* - \gamma \nabla f(x^k), \theta_k \rangle + \gamma^2\|\theta_k\|^2\\
        &\leq& \exp(-\gamma\mu)\|x^k - x^*\|^2 - 2\gamma \langle x^k - x^* - \gamma \nabla f(x^k), \theta_k \rangle + \gamma^2\|\theta_k\|^2.
    \end{eqnarray*}
    Unrolling the recurrence, we obtain \eqref{eq:main_opt_lemma_clipped_SGD_QSC}.
\end{proof}

\begin{theorem}\label{thm:clipped_SGD_str_cvx_appendix}
    Let Assumptions~\ref{as:L_smoothness} and \ref{as:QSC} with $\mu > 0$ hold on $Q = B_{2R}(x^*)$, where $R \geq \|x^0 - x^*\|$. Assume that $\nabla f_{\Xi^k}(x^k)$ satisfies Assumption~\ref{as:bounded_bias_and_variance} with parameters $b_k, \sigma_k$ for $k = 0,1,\ldots,K$, $K > 0$ and
    \begin{eqnarray}
        0< \gamma &\leq& \min\left\{\frac{1}{400 L\ln \tfrac{4(K+1)}{\delta}}, \frac{\ln(B_K)}{\mu(K+1)}, \frac{\ln(C_K)}{\mu(1+ \nicefrac{K}{2})}, \frac{2\ln(D)}{\mu (K+1)}\right\}, \label{eq:gamma_SGDA_str_mon}\\
        B_K &=& \max\left\{2, \frac{(K+1)\mu^2R^2}{5400\sigma^2\ln\left(\frac{4(K+1)}{\delta}\right)\ln^2(B_K)} \right\} = \cO\!\left(\!\max\!\left\{2, \frac{K\mu^2R^2}{\sigma^2\ln\left(\!\frac{K}{\delta}\!\right)\ln^2\left(\!\max\!\left\{2, \frac{K\mu^2R^2}{\sigma^2\ln\left(\!\frac{K}{\delta}\!\right)} \right\}\right)} \right\}\!\right), \label{eq:B_K_SGD_str_cvx_2} \\
        C_K &=& \max\left\{2, \frac{(\frac{K}{2}+1)\mu R}{480 b\ln\left(\frac{4(K+1)}{\delta}\right)\ln(C_K)} \right\} = \cO\left(\max\left\{2, \frac{K\mu R}{b\ln\left(\frac{K}{\delta}\right)\ln\left(\max\left\{2, \frac{K\mu R}{b\ln\left(\frac{K}{\delta}\right)} \right\}\right)} \right\}\right), \label{eq:C_K_SGD_str_cvx_2} \\
        D &=& \max\left\{2, \frac{\mu R}{80 b\ln(D)} \right\} = \cO\left(\max\left\{2, \frac{\mu R}{b\ln\left(\max\left\{2, \frac{\mu R}{b} \right\}\right)} \right\}\right), \label{eq:D_K_SGD_str_cvx_2} \\
        \lambda_k &=& \frac{\exp(-\gamma\mu(1 + \nicefrac{k}{2}))R}{120\gamma \ln \tfrac{4(K+1)}{\delta}}, \label{eq:lambda_SGDA_str_mon}
    \end{eqnarray}
    for some $\delta \in (0,1]$ and $b = \max_{k=0,1,\ldots,K} b_k$, $\sigma = \max_{k=0,1,\ldots,K} \sigma_k$. Then, after $K$ iterations the iterates produced by \algname{clipped-SGD} with probability at least $1 - \delta$ satisfy 
    \begin{equation}
        \|x^{K+1} - x^*\|^2 \leq 2\exp(-\gamma\mu(K+1))R^2. \label{eq:main_result_str_cvx_SGD_appendix}
    \end{equation}
    In particular, when $\gamma$ equals the minimum from \eqref{eq:gamma_SGDA_str_mon}, then the iterates produced by \algname{clipped-SGD} after $K$ iterations with probability at least $1-\delta$ satisfy
    \begin{equation}
       \|x^{K} - x^*\|^2 = \cO\left(\max\left\{R^2\exp\left(- \frac{\mu K}{L \ln \tfrac{K}{\delta}}\right), \frac{\sigma^2\ln\left(\frac{K}{\delta}\right)\ln^2\left(B_K\right)}{K\mu^2}, \frac{bR\ln\left(\frac{K}{\delta}\right)\ln\left(C_K\right)}{K\mu}, \frac{bR\ln(D)}{\mu}\right\}\right). \label{eq:clipped_SGD_str_cvx_appendix}
    \end{equation}
\end{theorem}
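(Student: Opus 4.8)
\quad The plan is to mirror the argument for Theorem~\ref{thm:clipped_SGD_cvx_appendix}, replacing the ``averaging'' optimality lemma by its quasi-strongly convex counterpart, Lemma~\ref{lem:main_opt_lemma_clipped_SGD_str_convex}. First I would reduce the statement to the claim that, with probability at least $1-\delta$, $\|x^{K+1}-x^*\|^2\le 2\exp(-\gamma\mu(K+1))R^2$ whenever $\gamma$ satisfies \eqref{eq:gamma_SGDA_str_mon} and $\lambda_k$ is chosen as in \eqref{eq:lambda_SGDA_str_mon}; the explicit rate \eqref{eq:clipped_SGD_str_cvx_appendix} then follows by substituting the concrete value of $\gamma$ (the minimum of the four expressions) and observing that the bounds $\gamma\le\ln(B_K)/(\mu(K+1))$, $\gamma\le\ln(C_K)/(\mu(1+\nicefrac{K}{2}))$ and $\gamma\le 2\ln(D)/(\mu(K+1))$ convert the contraction factor $\exp(-\gamma\mu K)$ into reciprocals of (powers of) $B_K$, $C_K$ and $D$, reproducing the four-way maximum exactly as in the final display of the proof of Theorem~\ref{thm:clipped_SGD_cvx_appendix}.

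The core is an induction over probability events $E_k$ asserting that, for all $t\le k$, $\|x^t-x^*\|^2\le 2\exp(-\gamma\mu t)R^2$ and the reparametrized partial noise sums (see below) stay below $R^2$, with $\Prob\{E_k\}\ge 1-\nicefrac{k\delta}{(K+1)}$. On $E_{T-1}$ all of $x^0,\dots,x^{T-1}$ lie in $B_{\sqrt{2}R}(x^*)$, and since the clipping level $\lambda_k$ in \eqref{eq:lambda_SGDA_str_mon} decays like $\exp(-\nicefrac{\gamma\mu k}{2})$ --- matching the shrinking radius --- one checks $\|x^T-x^*\|\le\sqrt{2}\exp(-\nicefrac{\gamma\mu(T-1)}{2})R+\gamma\lambda_{T-1}\le 2R$, so $x^0,\dots,x^T\in Q$ and Lemma~\ref{lem:main_opt_lemma_clipped_SGD_str_convex} applies with horizon $T-1$. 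Writing $\theta_k=\theta_k^u+\theta_k^b$ for the unbiased part and the bias of the clipped stochastic vector, and letting $\eta_k$ be the truncation of $x^k-x^*-\gamma\nabla f(x^k)$ at level $2\exp(-\nicefrac{\gamma\mu k}{2})R$, I would verify on $E_{T-1}$ that $\|\nabla f(x^k)\|\le L\|x^k-x^*\|\le\sqrt{2}L\exp(-\nicefrac{\gamma\mu k}{2})R$ and hence, combined with the bias bound $b_k$ from Assumption~\ref{as:bounded_bias_and_variance}, that $\|\EE_{\Xi^k}[\nabla f_{\Xi^k}(x^k)]\|\le\nicefrac{\lambda_k}{2}$; Lemma~\ref{lem:bias_and_variance_clip} then gives $\|\theta_k^u\|\le 2\lambda_k$, $\|\theta_k^b\|\le\nicefrac{4\sigma^2}{\lambda_k}+b_k$, $\EE_{\Xi^k}[\|\theta_k^u\|^2]\le 18\sigma^2$, and $\eta_k=x^k-x^*-\gamma\nabla f(x^k)$.

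To exploit Lemma~\ref{lem:main_opt_lemma_clipped_SGD_str_convex} I would factor the weights as $\exp(-\gamma\mu(t-1-k))=\exp(-\gamma\mu(t-1))\exp(\gamma\mu k)$, so that after multiplying \eqref{eq:main_opt_lemma_clipped_SGD_QSC} by $\exp(\gamma\mu t)$ the weight on the $k$-th noise term becomes $\exp(\gamma\mu k)$, independent of the horizon $t$; the partial sums then form a genuine martingale and a single application of Bernstein's inequality (Lemma~\ref{lem:Bernstein_ineq}) with the usual either/or stopping-time device handles all $t\le T$ at once. The residual splits into five pieces as in \eqref{eq:clipped_SGD_convex_technical_7}: two martingale sums $\circledOne$, $\circledThree$ (the linear and the centered-square terms) and three deterministic sums $\circledTwo$, $\circledFour$, $\circledFive$. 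The point of the decaying $\lambda_k$ is that the summands $\exp(\gamma\mu k)\langle\theta_k^u,\eta_k\rangle$ are bounded uniformly in $k$ by a quantity of order $R^2/(\gamma\ln(\nicefrac{K}{\delta}))$, the $k$-dependence of $\lambda_k$ cancelling that of $\|\eta_k\|$; whereas in the Bernstein variance estimate the weights $\exp(\gamma\mu k)$ do accumulate, producing a geometric sum of order $\gamma\exp(\gamma\mu K)\sigma^2R^2/\mu$ (up to logarithmic factors). The three deterministic sums, controlled via $\|\theta_k^b\|\le\nicefrac{4\sigma^2}{\lambda_k}+b_k$ and $\EE_{\Xi^k}[\|\theta_k^u\|^2]\le 18\sigma^2$, are analogous geometric series. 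Each of the five is then forced below $\nicefrac{R^2}{5}$ (in the reparametrized scale) precisely by the bounds on $\gamma$ in \eqref{eq:gamma_SGDA_str_mon}, where $B_K$, $C_K$, $D$ are fixed points engineered so that the factor $\exp(\gamma\mu K)$ is absorbed; combining $E_{T-1}$ with the two Bernstein events closes the induction with the required probability.

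I expect the principal obstacle to be the bookkeeping of the several competing exponentials --- the horizon factor $\exp(-\gamma\mu(t-1))$, the accumulation weight $\exp(\gamma\mu k)$, and the decaying clipping radius $\lambda_k\propto\exp(-\nicefrac{\gamma\mu k}{2})$ --- inside the Bernstein variance term, and, relatedly, verifying that the implicitly defined constants $B_K$, $C_K$, $D$ in \eqref{eq:B_K_SGD_str_cvx_2}--\eqref{eq:D_K_SGD_str_cvx_2} are well posed (each solves an equation of the form $x=\max\{2,\,a/(\log x)^{c}\}$, which has a unique root by monotonicity) and indeed satisfy the $\cO(\cdot)$ bounds stated there. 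The optimality-lemma step and the martingale estimates are otherwise routine adaptations of the quasi-convex proof given above.
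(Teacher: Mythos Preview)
Your proposal is correct and follows essentially the same approach as the paper's proof: induction on events $E_T$ asserting the geometric contraction $\|x^t-x^*\|^2\le 2\exp(-\gamma\mu t)R^2$, Lemma~\ref{lem:main_opt_lemma_clipped_SGD_str_convex} to produce the weighted noise recursion, the five-way decomposition with Bernstein applied to the two martingale pieces and direct bounds for the remaining three, and the decaying clipping level $\lambda_k\propto\exp(-\gamma\mu k/2)$ matched to the shrinking radius so that the per-step Bernstein constant $c$ stays uniform.

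Two cosmetic differences worth flagging. First, the paper does \emph{not} reparametrize: it simply applies Lemma~\ref{lem:Bernstein_ineq} at each induction step to the $T$-weighted sum $\sum_{t<T}\exp(-\gamma\mu(T{-}1{-}t))X_t$, with $T$-dependent threshold $b=\tfrac15\exp(-\gamma\mu T)R^2$ and $T$-dependent $G$; your rescaling by $\exp(\gamma\mu T)$ is equivalent but not needed, and your remark that ``a single application handles all $t\le T$ at once'' is slightly off --- Lemma~\ref{lem:Bernstein_ineq} is for a fixed horizon, so you still spend one Bernstein event per induction step (two, counting $\circledOne$ and the centered-square term), exactly as the paper does. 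Second, the paper defines $E_k$ by the distance bound alone (the partial-sum bound is not carried in the event, unlike in the convex proof), and it labels the centered-square martingale term $\circledFour$ rather than $\circledThree$; neither affects the argument.
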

\begin{proof}
    Our proof follows similar steps to the one given by \citet{sadiev2023high}. The main difference comes due to the presence of the bias in $\nabla f_{\Xi^k}(x^k)$. Therefore, for completeness, we provide the full proof here.

    Let $R_k = \|x^k - x^*\|$ for all $k\geq 0$. As in the previous results, the main part of the proof is inductive. More precisely, for each $k = 0,1,\ldots,K+1$ we consider probability event $E_k$ as follows: inequalities
    \begin{equation}
        R_t^2 \leq 2 \exp(-\gamma\mu t) R^2 \label{eq:induction_inequality_str_mon_SGDA}
    \end{equation}
    hold for $t = 0,1,\ldots,k$ simultaneously. We aim to demonstrate through induction that $\Prob\{E_k\} \geq 1 - \nicefrac{k\delta}{(K+1)}$ for all $k = 0,1,\ldots, K+1$. The base case, $k = 0$, is trivial. Assuming that the statement holds for some $k = T - 1 \leq K$, specifically, $\Prob\{E_{T-1}\} \geq 1 - \nicefrac{(T-1)\delta}{(K+1)}$, we need to establish that $\Prob\{E_{T}\} \geq 1 - \nicefrac{T\delta}{(K+1)}$. Since $R_t^2 \leq 2\exp(-\gamma\mu t) R^2 \leq 2R^2$, we have $x^t \in B_{2R}(x^*)$, where function $f$ is $L$-smooth. Thus, $E_{T-1}$ implies
    \begin{eqnarray}
        \|\nabla f(x^t)\| &\leq& L\|x^t - x^*\| \overset{\eqref{eq:induction_inequality_str_mon_SGDA}}{\leq} \sqrt{2}L\exp(- \nicefrac{\gamma\mu t}{2})R, \label{eq:operator_bound_x_t_SGDA_str_mon}\\
        \left\|\EE_{\Xi^t}[\nabla f_{\Xi^t}(x^t)]\right\| &\leq& \left\|\EE_{\Xi^t}[\nabla f_{\Xi^t}(x^t)] - \nabla f(x^t)\right\| + \|\nabla f(x^t)\| \overset{\eqref{eq:bias_bound},\eqref{eq:operator_bound_x_t_SGDA_str_mon}}{\leq} b + \sqrt{2}L\exp(- \nicefrac{\gamma\mu t}{2})R\notag\\
        &\overset{\eqref{eq:gamma_SGDA_str_mon},\eqref{eq:C_K_SGD_str_cvx_2},\eqref{eq:lambda_SGDA_str_mon}}{\leq}& \frac{\lambda_t}{2} \label{eq:operator_bound_x_t_SGDA_str_mon_bias} 
    \end{eqnarray}
    and
    \begin{eqnarray}
        \|\theta_t\|^2 &\leq& 2\|\tnabla f_{\Xi}(x^t)\|^2 + 2\|\nabla f(x^t)\|^2 \overset{\eqref{eq:operator_bound_x_t_SGDA_str_mon}}{\leq} \frac{5}{2}\lambda_t^2 \overset{\eqref{eq:lambda_SGDA_str_mon}}{\leq} \frac{\exp(-\gamma\mu t)R^2}{4\gamma^2} \label{eq:omega_bound_x_t_SGDA_str_mon}
    \end{eqnarray}
    for all $t = 0, 1, \ldots, T-1$, where we use that $\|a+b\|^2 \leq 2\|a\|^2 + 2\|b\|^2$ holding for all $a,b \in \R^d$. 

    Using Lemma~\ref{lem:main_opt_lemma_clipped_SGD_str_convex}, we obtain that $E_{T-1}$ implies
    \begin{eqnarray}
        R_T^2 &\leq& \exp(-\gamma\mu T)R^2 - 2\gamma \sum\limits_{t=0}^{T-1} \exp(-\gamma\mu(T-1-t)) \langle x^t - x^* - \gamma \nabla f(x^t), \theta_t \rangle\notag\\
        &&\quad + \gamma^2 \sum\limits_{t=0}^{T-1} \exp(-\gamma\mu(T-1-t))\|\theta_t\|^2. \notag
    \end{eqnarray}
    Next, we define random vectors
    \begin{gather}
        \eta_t = \begin{cases} x^t - x^* - \gamma \nabla f(x^t),& \text{if } \|x^t - x^* - \gamma \nabla f(x^t)\| \leq \sqrt{2}(1 + \gamma L) \exp(- \nicefrac{\gamma\mu t}{2})R,\\ 0,& \text{otherwise}, \end{cases} \label{eq:eta_t_SGDA_str_mon}
    \end{gather}
    for $t = 0, 1, \ldots, T-1$. As per their definition, these random vectors are bounded with probability 1
    \begin{equation}
         \|\eta_t\| \leq \sqrt{2}(1 + \gamma L)\exp(-\nicefrac{\gamma\mu t}{2})R \label{eq:eta_t_bound_SGDA_str_mon} 
    \end{equation}
    for all $t = 0, 1, \ldots, T-1$. Moreover, for $t = 0,\ldots, T-1$ event $E_{T-1}$ implies $\|\nabla f(x^t)\| \leq \sqrt{2}L\exp(-\nicefrac{\gamma\mu t}{2})R$ (due to \eqref{eq:operator_bound_x_t_SGDA_str_mon}) and
    \begin{eqnarray*}
        \|x^t - x^* - \gamma \nabla f(x^t)\| &\leq& \|x^t - x^*\| + \gamma \|\nabla f(x^t)\|\\
        &\overset{\eqref{eq:operator_bound_x_t_SGDA_str_mon}}{\leq}& \sqrt{2}(1 + \gamma L)\exp(-\nicefrac{\gamma\mu t}{2})R
    \end{eqnarray*}
    for $t = 0, 1, \ldots, T-1$. The latter inequality means that $E_{T-1}$ implies  $\eta_t = x^t - x^* - \gamma \nabla f(x^t)$ for all $t = 0,1,\ldots,T-1$, meaning that from $E_{T-1}$ it follows that
    \begin{eqnarray}
        R_T^2 &\leq& \exp(-\gamma\mu T)R^2 - 2\gamma \sum\limits_{t=0}^{T-1} \exp(-\gamma\mu(T-1-t)) \langle \eta_t, \theta_t \rangle + \gamma^2 \sum\limits_{t=0}^{T-1} \exp(-\gamma\mu(T-1-t)) \|\theta_t\|^2. \notag
    \end{eqnarray}
    Next, we define the unbiased part and the bias of $\theta_{t}$ as $\theta_{t}^u$ and $\theta_{t}^b$, respectively:
    \begin{gather}
        \theta_t^u \eqdef \clip(\nabla f_{\Xi^t}(x^t), \lambda_t) -  \EE_{\Xi^t}\left[\clip(\nabla f_{\Xi^t}(x^t), \lambda_t)\right],\quad \theta_t^b \eqdef \EE_{\Xi^t}\left[\clip(\nabla f_{\Xi^t}(x^t), \lambda_t)\right] - \nabla f(x^t), \label{eq:omega_unbias_bias_SGDA_str_mon}
    \end{gather}
    for all $t = 0,\ldots, T-1$. We notice that $\theta_{t} = \theta_{t}^u + \theta_{t}^b$. Using new notation, we get that $E_{T-1}$ implies
    \begin{eqnarray}
        R_T^2 &\leq& \exp(-\gamma\mu T) R^2  \underbrace{-2\gamma \sum\limits_{t=0}^{T-1} \exp(-\gamma\mu(T-1-t)) \langle \eta_t, \theta_t^u \rangle}_{\circledOne} \notag\\ &&\quad  \underbrace{-2\gamma \sum\limits_{t=0}^{T-1} \exp(-\gamma\mu(T-1-t)) \langle \eta_t, \theta_t^b \rangle}_{\circledTwo} 
        + \underbrace{2\gamma^2 \sum\limits_{t=0}^{T-1} \exp(-\gamma\mu(T-1-t)) \EE_{\Xi}\left[\|\theta^u_t\|^2\right]}_{\circledThree}\notag\\
        &&\quad + \underbrace{2\gamma^2 \sum\limits_{t=0}^{T-1} \exp(-\gamma\mu(T-1-t))\left( \|\theta^u_t\|^2 -  \EE_{\Xi}\left[\|\theta^u_t\|^2\right]\right)}_{\circledFour} + \underbrace{2\gamma^2 \sum\limits_{t=0}^{T-1} \exp(-\gamma\mu(T-1-t)) \|\theta^b_t\|^2}_{\circledFive}. \label{eq:SGDA_str_mon_12345_bound}
    \end{eqnarray}
    where we also apply inequality $\|a+b\|^2 \leq 2\|a\|^2 + 2\|b\|^2$ holding for all $a,b \in \R^d$ to upper bound $\|\theta_t\|^2$. To conclude our inductive proof successfully, we must obtain sufficiently strong upper bounds with high probability for the terms $\circledOne, \circledTwo, \circledThree, \circledFour, \circledFive$.  In other words, we need to demonstrate that $\circledOne + \circledTwo + \circledThree + \circledFour + \circledFive  \leq \exp(-\gamma\mu T) R^2$ with high probability. In the subsequent stages of the proof, we will rely on the bounds for the norms and second moments of $\theta_{t}^u$ and $\theta_{t}^b$. First, as per the definition of the clipping operator, we can assert with probability $1$ that
    \begin{equation}
        \|\theta_t^u\| \leq 2\lambda_t.\label{eq:omega_magnitude_str_mon}
    \end{equation}
    Furthermore, given that $E_{T-1}$ implies $\|\EE_{\Xi^t}[\nabla f_{\Xi^t}(x^{t})]\| \leq \nicefrac{\lambda}{2}$ for $t = 0,1,\ldots,T-1$ (as per \eqref{eq:operator_bound_x_t_SGDA_str_mon_bias}), then, according to Lemma~\ref{lem:bias_and_variance_clip}, we can deduce that $E_{T-1}$ implies
    \begin{eqnarray}
        \left\|\theta_t^b\right\| &\leq& \left\|\EE_{\Xi^t}\left[\clip(\nabla f_{\Xi^t}(x^t), \lambda_t)\right] - \EE_{\Xi^t}\left[\nabla f_{\Xi^t}(x^t)\right]\right\| + \left\|\EE_{\Xi^t}\left[\nabla f_{\Xi^t}(x^t)\right] - \nabla f(x^t)\right\| \notag\\
        &\leq& \frac{4\sigma^2}{\lambda_t} + b, \label{eq:bias_omega_str_mon}\\
        \EE_{\Xi^t}\left[\left\|\theta_t^u\right\|^2\right] &\leq& 18 \sigma^2, \label{eq:variance_omega_str_mon}
    \end{eqnarray}
    for all $t = 0,1, \ldots, T-1$.

\paragraph{Upper bound for $\circledOne$.} By definition of $\theta_{t}^u$, we readily observe that $\EE_{\Xi^t}[\theta_{t}^u] = 0$ and
    \begin{equation*}
        \EE_{\Xi^t}\left[-2\gamma \exp(-\gamma\mu(T-1-t)) \langle \eta_t, \theta_t^u \rangle\right] = 0.
    \end{equation*}
    Next, the sum $\circledOne$ contains only the terms that are bounded with probability $1$:
    \begin{eqnarray}
        |-2\gamma \exp(-\gamma\mu(T-1-t)) \langle \eta_t, \theta_t^u \rangle | &\leq& 2\gamma\exp(-\gamma\mu (T - 1 - t)) \|\eta_t\|\cdot \|\theta_t^u\|\notag\\
        &\overset{\eqref{eq:eta_t_bound_SGDA_str_mon},\eqref{eq:omega_magnitude_str_mon}}{\leq}& 4\sqrt{2}\gamma (1 + \gamma L) \exp(-\gamma\mu (T - 1 - \nicefrac{t}{2})) R \lambda_t\notag\\
        &\overset{\eqref{eq:gamma_SGDA_str_mon},\eqref{eq:lambda_SGDA_str_mon}}{\leq}& \frac{\exp(-\gamma\mu T)R^2}{5\ln\tfrac{4(K+1)}{\delta}} \eqdef c. \label{eq:SGDA_str_mon_technical_1_1}
    \end{eqnarray}
    The conditional variances $\sigma_t^2 \eqdef \EE_{\Xi^t}\left[4\gamma^2 \exp(-2\gamma\mu(T-1-t)) \langle \eta_t, \theta_t^u \rangle^2\right]$ of the summands are bounded:
    \begin{eqnarray}
        \sigma_t^2 &\leq& \EE_{\Xi^t}\left[4\gamma^2\exp(-\gamma\mu (2T - 2 - 2t)) \|\eta_t\|^2\cdot \|\theta_t^u\|^2\right]\notag\\
        &\overset{\eqref{eq:eta_t_bound_SGDA_str_mon}}{\leq}& 8\gamma^2 (1 + \gamma L)^2 \exp(-\gamma\mu (2T - 2 - t)) R^2 \EE_{\Xi^t}\left[\|\theta_t^u\|^2\right]\notag\\
        &\overset{\eqref{eq:gamma_SGDA_str_mon}}{\leq}& 10\gamma^2\exp(-\gamma\mu (2T - t))R^2 \EE_{\Xi^t}\left[\|\theta_t^u\|^2\right]. \label{eq:SGDA_str_mon_technical_1_2}
    \end{eqnarray}

    To summarize, we have demonstrated that $\{2\gamma (1-\gamma\mu)^{T-1-t} \langle \eta_t, \theta_t^u \rangle\}_{t = 0}^{T-1}$ is a bounded martingale difference sequence with bounded conditional variances $\{\sigma_t^2\}_{t = 0}^{T-1}$. Therefore, one can apply Bernstein's inequality (Lemma~\ref{lem:Bernstein_ineq}) with $X_t = 2\gamma (1-\gamma\mu)^{T-1-t} \langle \eta_t, \theta_t^u \rangle$, parameter $c$ as in \eqref{eq:SGDA_str_mon_technical_1_1}, $b = \tfrac{1}{5}\exp(-\gamma\mu T) R^2$, $G = \tfrac{\exp(-2 \gamma\mu T) R^4}{150\ln\frac{4(K+1)}{\delta}}$ and get
    \begin{equation*}
        \Prob\left\{|\circledOne| > \frac{1}{5}\exp(-\gamma\mu T) R^2 \text{ and } \sum\limits_{t=0}^{T-1}\sigma_t^2 \leq \frac{\exp(- 2\gamma\mu T) R^4}{150\ln\tfrac{4(K+1)}{\delta}}\right\} \leq 2\exp\left(- \frac{b^2}{2F + \nicefrac{2cb}{3}}\right) = \frac{\delta}{2(K+1)},
    \end{equation*}
    which is equivalent to
    \begin{equation}
        \Prob\{E_{\circledOne}\} \geq 1 - \frac{\delta}{2(K+1)},\quad \text{for}\quad E_{\circledOne} = \left\{\text{either} \quad \sum\limits_{t=0}^{T-1}\sigma_t^2 > \frac{\exp(- 2\gamma\mu T) R^4}{150\ln\tfrac{4(K+1)}{\delta}}\quad \text{or}\quad |\circledOne| \leq \frac{1}{5}\exp(-\gamma\mu T) R^2\right\}. \label{eq:bound_1_SGDA_str_mon}
    \end{equation}
    Additionally, event $E_{T-1}$ implies that
    \begin{eqnarray}
        \sum\limits_{t=0}^{T-1}\sigma_t^2 &\overset{\eqref{eq:SGDA_str_mon_technical_1_2}}{\leq}& 10\gamma^2\exp(- 2\gamma\mu T)R^2\sum\limits_{t=0}^{T-1} \frac{\EE_{\Xi^t}\left[\|\theta_t^u\|^2\right]}{\exp(-\gamma\mu t)}\notag\\ 
        &\overset{\eqref{eq:variance_omega_str_mon}, T \leq K+1}{\leq}& 180\gamma^2\exp(-2\gamma\mu T) R^2 \sigma^2 \sum\limits_{t=0}^{K} \frac{1}{\exp(-\gamma\mu t)}\notag\\
        &\overset{\eqref{eq:lambda_SGDA_str_mon}}{\leq}& 180\gamma^2\exp(-2\gamma\mu T) R^{2} \sigma^2 (K+1)\exp(\gamma\mu K)\notag\\
        &\overset{\eqref{eq:gamma_SGDA_str_mon}, \eqref{eq:B_K_SGD_str_cvx_2}}{\leq}& \frac{\exp(-2\gamma\mu T)R^4}{150\ln\tfrac{4(K+1)}{\delta}}. \label{eq:bound_1_variances_SGDA_str_mon}
    \end{eqnarray}

    \paragraph{Upper bound for $\circledTwo$.} From $E_{T-1}$ it follows that
    \begin{eqnarray}
        \circledTwo &\leq& 2\gamma \exp(-\gamma\mu (T-1)) \sum\limits_{t=0}^{T-1} \frac{\|\eta_t\|\cdot \|\theta_t^b\|}{\exp(-\gamma\mu t)}\notag\\
        &\overset{\eqref{eq:eta_t_bound_SGDA_str_mon}, \eqref{eq:bias_omega_str_mon}}{\leq}& \sqrt{2} \gamma (1+\gamma L) \exp(-\gamma\mu (T-1)) R  \sum\limits_{t=0}^{T-1} \exp(\nicefrac{\gamma\mu t}{2})\left(\frac{4\sigma^2}{\lambda_t} + b\right)\notag\\
        &\overset{\eqref{eq:gamma_SGDA_str_mon},\eqref{eq:lambda_SGDA_str_mon}}{\leq}& 3840 \gamma^2\exp(-\gamma\mu T) \sigma^{2} (K+1) \exp\left(\gamma\mu T\right) \ln\tfrac{4(K+1)}{\delta} \notag \\
        &&\quad +2\gamma \exp(-\gamma\mu T)R (K+1) \exp(\nicefrac{\gamma\mu T}{2})b\notag\\
        &\overset{\eqref{eq:gamma_SGDA_str_mon},\eqref{eq:B_K_SGD_str_cvx_2},\eqref{eq:D_K_SGD_str_cvx_2}}{\leq}& \frac{1}{5}\exp(-\gamma\mu T) R^2. \label{eq:bound_2_SGDA_str_mon}
    \end{eqnarray}

    \paragraph{Upper bound for $\circledThree$.} From $E_{T-1}$ it follows that
    \begin{eqnarray}
        \circledThree &=& 2\gamma^2 \exp(-\gamma\mu (T-1)) \sum\limits_{t=0}^{T-1} \frac{\EE_{\Xi^t}\left[\|\theta_t^u\|^2\right]}{\exp(-\gamma\mu t)} \notag\\
        &\overset{\eqref{eq:variance_omega_str_mon}}{\leq}& 144\gamma^2\exp(-\gamma\mu (T-1)) \sigma^2\sum\limits_{t=0}^{T-1} \frac{1}{\exp(-\gamma\mu t)} \notag\\
        &\overset{\eqref{eq:lambda_SGDA_str_mon}}{\leq}&  144\gamma^2 \exp(-\gamma\mu (T-1)) \sigma^2 (K+1)\exp(\gamma\mu K) \notag\\
        &\overset{\eqref{eq:gamma_SGDA_str_mon}}{\leq}& \frac{1}{5} \exp(-\gamma\mu T) R^2. \label{eq:bound_3_SGDA_str_mon}
    \end{eqnarray}

    \paragraph{Upper bound for $\circledFour$.} By construction, we have
    \begin{equation*}
        2\gamma^2 \exp(-\gamma\mu (T-1-t))\EE_{\Xi^t}\left[\|\theta_t^u\|^2  -\EE_{\Xi^t}\left[\|\theta_t^u\|^2\right] \right] = 0.
    \end{equation*}
    Next, the sum $\circledFour$ contains only the terms that are bounded with probability $1$:
    \begin{eqnarray}
        2\gamma^2 \exp(-\gamma\mu (T-1-t))\left| \|\theta_t^u\|^2  -\EE_{\Xi^t}\left[\|\theta_t^u\|^2\right] \right| 
        &\overset{\eqref{eq:omega_magnitude_str_mon}}{\leq}& \frac{16\gamma^2 \exp(-\gamma\mu T) \lambda_l^2}{\exp(-\gamma\mu (1+t))}\notag\\
        &\overset{\eqref{eq:lambda_SGDA_str_mon}}{\leq}& \frac{\exp(-\gamma\mu T)R^2}{5\ln\tfrac{4(K+1)}{\delta}}\notag\\
        &\eqdef& c. \label{eq:SGDA_str_mon_technical_4_1}
    \end{eqnarray}
    The conditional variances
    \begin{equation*}
        \widetilde\sigma_t^2 \eqdef \EE_{\Xi^t}\left[4\gamma^4 \exp(-2\gamma\mu (T-1-t)) \left|\|\theta_t^u\|^2  -\EE_{\Xi^t}\left[\|\theta_t^u\|^2\right] \right|^2 \right]
    \end{equation*}
    of the summands are bounded:
    \begin{eqnarray}
        \widetilde\sigma_t^2 &\overset{\eqref{eq:SGDA_str_mon_technical_4_1}}{\leq}& \frac{2\gamma^2\exp(-2\gamma\mu T)R^2}{5\exp(-\gamma\mu (1+t))\ln\tfrac{4(K+1)}{\delta}} \EE_{\Xi^t}\left[ \left|\|\theta_t^u\|^2  -\EE_{\Xi^t}\left[\|\theta_t^u\|^2\right] \right|\right]\notag\\
        &\leq& \frac{4\gamma^2\exp(-2\gamma\mu T)R^2}{5\exp(-\gamma\mu (1+t))\ln\tfrac{4(K+1)}{\delta}} \EE_{\Xi^t}\left[\|\theta_t^u\|^2\right]. \label{eq:SGDA_str_mon_technical_4_2}
    \end{eqnarray}
    To summarize, we have demonstrated that $\left\{2\gamma^2 (1-\gamma\mu)^{T-1-t}\left( \|\theta_t^u\|^2 -\EE_{\Xi^t}\left[\|\theta_t^u\|^2\right]\right)\right\}_{t = 0}^{T-1}$ is a bounded martingale difference sequence with bounded conditional variances $\{\widetilde\sigma_t^2\}_{t = 0}^{T-1}$. Therefore, one can apply Bernstein's inequality (Lemma~\ref{lem:Bernstein_ineq}) with $X_t = 2\gamma^2 (1-\gamma\mu)^{T-1-t}\left( \|\theta_t^u\|^2 -\EE_{\Xi^t}\left[\|\theta_t^u\|^2\right]\right)$, parameter $c$ as in \eqref{eq:SGDA_str_mon_technical_4_1}, $b = \tfrac{1}{5}\exp(-\gamma\mu T) R^2$, $G = \tfrac{\exp(-2 \gamma\mu T) R^4}{150\ln\frac{4(K+1)}{\delta}}$ and get
    \begin{equation*}
        \Prob\left\{|\circledFour| > \frac{1}{5}\exp(-\gamma\mu T) R^2 \text{ and } \sum\limits_{l=0}^{T-1}\widetilde\sigma_t^2 \leq \frac{\exp(-2\gamma\mu T) R^4}{150\ln\frac{4(K+1)}{\delta}}\right\} \leq 2\exp\left(- \frac{b^2}{2G + \nicefrac{2cb}{3}}\right) = \frac{\delta}{2(K+1)},
    \end{equation*}
    which is equivalent to
    \begin{equation}
        \Prob\{E_{\circledFour}\} \geq 1 - \frac{\delta}{2(K+1)},\quad \text{for}\quad E_{\circledFour} = \left\{\text{either} \quad \sum\limits_{t=0}^{T-1}\widetilde\sigma_t^2 > \frac{\exp(-2\gamma\mu T) R^4}{150\ln\tfrac{4(K+1)}{\delta}}\quad \text{or}\quad |\circledFour| \leq \frac{1}{5}\exp(-\gamma\mu T) R^2\right\}. \label{eq:bound_4_SGDA_str_mon}
    \end{equation}
    Additionally, event $E_{T-1}$ implies that
    \begin{eqnarray}
        \sum\limits_{l=0}^{T-1}\widetilde\sigma_t^2 &\overset{\eqref{eq:SGDA_str_mon_technical_4_2}}{\leq}& \frac{4\gamma^2\exp(-\gamma\mu (2T-1))R^2}{5\ln\tfrac{4(K+1)}{\delta}} \sum\limits_{t=0}^{T-1} \frac{\EE_{\Xi^t}\left[\|\theta_l^u\|^2\right]}{\exp(-\gamma\mu t)}\notag\\ &\overset{\eqref{eq:variance_omega_str_mon}, T \leq K+1}{\leq}& \frac{72\gamma^2\exp(-\gamma\mu (2T-1)) R^2 \sigma^2}{5\ln\tfrac{4(K+1)}{\delta}} \sum\limits_{t=0}^{K} \frac{1}{\exp(-\gamma\mu t)}\notag\\
        &\overset{\eqref{eq:lambda_SGDA_str_mon}}{\leq}& \frac{72\gamma^2\exp(-\gamma\mu (2T-1)) R^{2} \sigma^2 (K+1)\exp(\gamma\mu K)}{5\ln\tfrac{4(K+1)}{\delta}} \notag\\
        &\overset{\eqref{eq:gamma_SGDA_str_mon}}{\leq}& \frac{\exp(-2\gamma\mu T)R^4}{150\ln\tfrac{4(K+1)}{\delta}}. \label{eq:bound_4_variances_SGDA_str_mon}
    \end{eqnarray}

    \paragraph{Upper bound for $\circledFive$.} From $E_{T-1}$ it follows that
    \begin{eqnarray}
        \circledFive &=&  2\gamma^2 \sum\limits_{l=0}^{T-1} \exp(-\gamma\mu (T-1-t)) \|\theta_t^b\|^2\notag\\
        &\overset{\eqref{eq:bias_omega_str_mon}}{\leq}& \gamma^2 \exp(-\gamma\mu (T-1)) \sum\limits_{t=0}^{T-1} \exp(\gamma\mu t) \left(\frac{64\sigma^{4}}{\lambda_t^{2}} + 2b^2\right)\notag\\
        &\overset{\eqref{eq:lambda_SGDA_str_mon}, T \leq K+1}{\leq}& \frac{921 600\gamma^{4} \exp(-\gamma\mu (T-1)) \sigma^{4} \ln^{2}\tfrac{4(K+1)}{\delta}}{R^{2}} \sum\limits_{t=0}^{K} \exp\left(2\gamma\mu\left(1 + \frac{t}{2}\right)\right)\exp(\gamma\mu t)\notag\\
        && \quad + 2\gamma^2\exp(-\gamma\mu (T-1))b^2 \sum\limits_{t=0}^{K} \exp(\gamma\mu t)\notag\\ 
        &\leq& \frac{921 600\gamma^{4} \exp(-\gamma\mu (T-3)) \sigma^{4} \ln^{2}\tfrac{4(K+1)}{\delta} (K+1) \exp(2\gamma\mu K)}{R^{2}}\notag\\
        &&\quad + 2\gamma^2\exp(-\gamma\mu (T-1))b^2 \exp(\gamma\mu K) (K+1) \notag\\
        &\overset{\eqref{eq:gamma_SGDA_str_mon}, \eqref{eq:B_K_SGD_str_cvx_2}, \eqref{eq:D_K_SGD_str_cvx_2}}{\leq}& \frac{1}{5}\exp(-\gamma\mu T) R^2. \label{eq:bound_5_SGDA_str_mon}
    \end{eqnarray}

    That is, we derived the upper bounds for  $\circledOne, \circledTwo, \circledThree, \circledFour, \circledFive$. More specifically, the probability event $E_{T-1}$ implies:
    \begin{gather*}
        R_T^2 \overset{\eqref{eq:SGDA_str_mon_12345_bound}}{\leq} \exp(-\gamma\mu T) R^2 + \circledOne + \circledTwo + \circledThree + \circledFour + \circledFive ,\\
        \circledTwo \overset{\eqref{eq:bound_2_SGDA_str_mon}}{\leq} \frac{1}{5}\exp(-\gamma\mu T)R^2,\quad \circledThree \overset{\eqref{eq:bound_3_SGDA_str_mon}}{\leq} \frac{1}{5}\exp(-\gamma\mu T)R^2,\\ \circledFive \overset{\eqref{eq:bound_5_SGDA_str_mon}}{\leq} \frac{1}{5}\exp(-\gamma\mu T)R^2\\
        \sum\limits_{t=0}^{T-1}\sigma_t^2 \overset{\eqref{eq:bound_1_variances_SGDA_str_mon}}{\leq}  \frac{\exp(-2\gamma\mu T)R^4}{150\ln\tfrac{4(K+1)}{\delta}},\quad \sum\limits_{t=0}^{T-1}\widetilde\sigma_t^2 \overset{\eqref{eq:bound_4_variances_SGDA_str_mon}}{\leq} \frac{\exp(-2\gamma\mu T)R^4}{150\ln\tfrac{4(K+1)}{\delta}}.
    \end{gather*}
     Moreover, we also have (see \eqref{eq:bound_1_SGDA_str_mon}, \eqref{eq:bound_4_SGDA_str_mon} and our induction assumption)
     \begin{gather*}
        \Prob\{E_{T-1}\} \geq 1 - \frac{(T-1)\delta}{K+1},\\
        \Prob\{E_{\circledOne}\} \geq 1 - \frac{\delta}{2(K+1)}, \quad \Prob\{E_{\circledFour}\} \geq 1 - \frac{\delta}{2(K+1)}.
    \end{gather*}
    where
    \begin{eqnarray}
        E_{\circledOne}&=&  \left\{\text{either} \quad \sum\limits_{t=0}^{T-1}\sigma_t^2 > \frac{\exp(- 2\gamma\mu T) R^4}{150\ln\tfrac{4(K+1)}{\delta}}\quad \text{or}\quad |\circledOne| \leq \frac{1}{5}\exp(-\gamma\mu T) R^2\right\},\notag\\
        E_{\circledFour}&=& \left\{\text{either} \quad \sum\limits_{t=0}^{T-1}\widetilde\sigma_t^2 > \frac{\exp(-2\gamma\mu T) R^4}{150\ln\tfrac{4(K+1)}{\delta}}\quad \text{or}\quad |\circledFour| \leq \frac{1}{5}\exp(-\gamma\mu T) R^2\right\}.\notag
    \end{eqnarray}
     Therefore, probability event $E_{T-1} \cap E_{\circledOne} \cap E_{\circledFour} $ implies
    \begin{eqnarray*}
        R_T^2 &\overset{\eqref{eq:SGDA_str_mon_12345_bound}}{\leq}& \exp(-\gamma\mu T) R^2 + \circledOne + \circledTwo + \circledThree + \circledFour + \circledFive\\
        &\leq& 2\exp(-\gamma\mu T) R^2,
    \end{eqnarray*}
    which is equivalent to \eqref{eq:induction_inequality_str_mon_SGDA} for $t = T$, and
    \begin{equation}
        \Prob\{E_T\} \geq \Prob\{E_{T-1} \cap E_{\circledOne} \cap E_{\circledFour} \} = 1 - \Prob\{\overline{E}_{T-1} \cup \overline{E}_{\circledOne} \cup \overline{E}_{\circledFour} \} \geq 1 - \frac{T\delta}{K+1}. \notag
    \end{equation}
    We have now completed the inductive part of our proof. That is, for all $k = 0,1,\ldots,K+1$, we have $\Prob\{E_k\} \geq 1 - \nicefrac{k\delta}{(K+1)}$. Notably, when $k = K+1$, we can conclude that with a probability of at least $1 - \delta$:
    \begin{equation}
        \|x^{K+1} - x^*\|^2 \leq 2\exp(-\gamma\mu (K+1))R^2. \notag
    \end{equation}
    Finally, if 
    \begin{eqnarray*}
        \gamma &=& \min\left\{\frac{1}{400 L\ln \tfrac{4(K+1)}{\delta}}, \frac{\ln(B_K)}{\mu(K+1)}, \frac{\ln(C_K)}{\mu(1+ \nicefrac{K}{2})}, \frac{2\ln(D)}{\mu (K+1)}\right\}, \notag\\
        B_K &=& \max\left\{2, \frac{(K+1)\mu^2R^2}{5400\sigma^2\ln\left(\frac{4(K+1)}{\delta}\right)\ln^2(B_K)} \right\} = \cO\!\left(\!\max\!\left\{2, \frac{K\mu^2R^2}{\sigma^2\ln\left(\!\frac{K}{\delta}\!\right)\ln^2\left(\!\max\!\left\{2, \frac{K\mu^2R^2}{\sigma^2\ln\left(\!\frac{K}{\delta}\!\right)} \right\}\right)} \right\}\!\right),  \\
        C_K &=& \max\left\{2, \frac{(\frac{K}{2}+1)\mu R}{480 b\ln\left(\frac{4(K+1)}{\delta}\right)\ln(C_K)} \right\} = \cO\left(\max\left\{2, \frac{K\mu R}{b\ln\left(\frac{K}{\delta}\right)\ln\left(\max\left\{2, \frac{K\mu R}{b\ln\left(\frac{K}{\delta}\right)} \right\}\right)} \right\}\right), \\
        D &=& \max\left\{2, \frac{\mu R}{80 b\ln(D)} \right\} = \cO\left(\max\left\{2, \frac{\mu R}{b\ln\left(\max\left\{2, \frac{\mu R}{b} \right\}\right)} \right\}\right),
    \end{eqnarray*}
    then with probability at least $1-\delta$
    \begin{eqnarray*}
        \|x^{K+1} - x^*\|^2 &\leq& 2\exp(-\gamma\mu (K+1))R^2\\
        &\leq& 2R^2\max\left\{\exp\left(-\frac{\mu(K+1)}{400 L \ln \tfrac{4(K+1)}{\delta}}\right), \frac{1}{B_K}, \frac{1}{C_K}, \frac{1}{D} \right\}\\
        &=& \cO\left(\max\left\{R^2\exp\left(- \frac{\mu K}{L \ln \tfrac{K}{\delta}}\right), \frac{\sigma^2\ln\left(\frac{K}{\delta}\right)\ln^2\left(B_K\right)}{K\mu^2}, \frac{bR\ln\left(\frac{K}{\delta}\right)\ln\left(C_K\right)}{K\mu}, \frac{bR\ln(D)}{\mu}\right\}\right).
    \end{eqnarray*}
    This concludes the proof.
\end{proof}
\newpage

\section{Proofs for \algname{clipped-SSTM}}

\subsection{Convex Case}

The analysis of \algname{clipped-SSTM} in the convex case relies on the following lemma from \citep{sadiev2023high}.

\begin{lemma}[Lemma F.1 from \citep{sadiev2023high}]\label{lem:main_opt_lemma_clipped_SSTM}
    Let Assumptions~\ref{as:L_smoothness} and \ref{as:str_cvx} with $\mu = 0$ hold on $Q = B_{3R}(x^*)$, where $R \ge \|x^0 - x^*\|$, and let stepsize parameter $a$ satisfy $a\ge 1$. If $x^{k}, y^k, z^k \in B_{3R}(x^*)$ for all $k = 0,1,\ldots,N$, $N \ge 0$, then after $N$ iterations of \algname{clipped-SSTM} for all $z\in B_{3R}(x^*)$ we have
    \begin{eqnarray}
        A_N\left(f(y^N) - f(z)\right) &\le& \frac{1}{2}\|z^0 - z\|^2 - \frac{1}{2}\|z^{N} - z\|^2 + \sum\limits_{k=0}^{N-1}\alpha_{k+1}\left\langle \theta_{k+1}, z - z^{k} + \alpha_{k+1} \nabla f(x^{k+1})\right\rangle\notag\\
        &&\quad + \sum\limits_{k=0}^{N-1}\alpha_{k+1}^2\left\|\theta_{k+1}\right\|^2,\label{eq:main_opt_lemma_clipped_SSTM}\\
        \theta_{k+1} &\eqdef& \clip(\nabla f_{\Xi^k}(x^{k+1}), \lambda_k) - \nabla f(x^{k+1}).\label{eq:theta_k+1_def_clipped_SSTM}
    \end{eqnarray}
\end{lemma}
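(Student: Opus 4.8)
The statement is the standard descent (``similar triangles'') lemma for \algname{clipped-SSTM}, and I would prove it by the usual potential-function argument for accelerated gradient methods, carrying the gradient-estimation error $\theta_{k+1}$ from \eqref{eq:theta_k+1_def_clipped_SSTM} as a deterministic perturbation: no expectations are taken here, and the probabilistic control of $\theta_{k+1}$ is deferred to Theorem~\ref{thm:clipped_SSTM_main_cvx}. Write $g^{k+1} = \clip(\nabla f_{\Xi^k}(x^{k+1}), \lambda_k)$, so that \eqref{eq:z_SSTM} reads $z^{k+1} = z^k - \alpha_{k+1} g^{k+1}$ and $\theta_{k+1} = g^{k+1} - \nabla f(x^{k+1})$. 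The plan is: (i) derive a one-step inequality relating the potential $A_{k+1} f(y^{k+1}) + \tfrac12\|z^{k+1}-z\|^2$ to $A_k f(y^k) + \tfrac12\|z^k - z\|^2$ up to a term $\alpha_{k+1} f(z)$ and terms linear/quadratic in $\theta_{k+1}$; (ii) telescope over $k = 0,\dots,N-1$.

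For step (i) I would first record the two algebraic identities forced by the updates \eqref{eq:x_SSTM}--\eqref{eq:y_SSTM}, namely $A_{k+1} x^{k+1} = A_k y^k + \alpha_{k+1} z^k$ and $y^{k+1} - x^{k+1} = \tfrac{\alpha_{k+1}}{A_{k+1}}(z^{k+1} - z^k) = -\tfrac{\alpha_{k+1}^2}{A_{k+1}} g^{k+1}$. Then: (a) apply $L$-smoothness \eqref{eq:L_smoothness} at $x^{k+1}$ along the step $y^{k+1}-x^{k+1}$ to get $f(y^{k+1}) \le f(x^{k+1}) - \tfrac{\alpha_{k+1}^2}{A_{k+1}}\langle \nabla f(x^{k+1}), g^{k+1}\rangle + \tfrac{L\alpha_{k+1}^4}{2A_{k+1}^2}\|g^{k+1}\|^2$ (this uses $x^{k+1},y^{k+1}\in Q$, hence the hypothesis that all iterates remain in $B_{3R}(x^*)$); (b) split $A_{k+1} f(x^{k+1}) = A_k f(x^{k+1}) + \alpha_{k+1} f(x^{k+1})$ and apply convexity (Assumption~\ref{as:str_cvx} with $\mu=0$) twice, $f(x^{k+1}) \le f(y^k) + \langle \nabla f(x^{k+1}), x^{k+1} - y^k\rangle$ and $f(x^{k+1}) \le f(z) + \langle \nabla f(x^{k+1}), x^{k+1} - z\rangle$, so that by the first identity the combined inner product collapses to $\alpha_{k+1}\langle \nabla f(x^{k+1}), z^k - z\rangle$; (c) expand the mirror step via the elementary identity $\alpha_{k+1}\langle g^{k+1}, z^k - z\rangle = \tfrac12\|z^k - z\|^2 - \tfrac12\|z^{k+1} - z\|^2 + \tfrac{\alpha_{k+1}^2}{2}\|g^{k+1}\|^2$. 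Substituting $\nabla f(x^{k+1}) = g^{k+1} - \theta_{k+1}$ throughout and collecting, the $\|g^{k+1}\|^2$ contributions combine with total coefficient $\tfrac{\alpha_{k+1}^2}{2}\bigl(\tfrac{L\alpha_{k+1}^2}{A_{k+1}} - 1\bigr)$, which is $\le 0$ as soon as $A_{k+1}\ge L\alpha_{k+1}^2$, while the $\theta_{k+1}$-contributions regroup exactly into $\alpha_{k+1}\langle \theta_{k+1}, z - z^k + \alpha_{k+1}\nabla f(x^{k+1})\rangle + \alpha_{k+1}^2\|\theta_{k+1}\|^2$.

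For step (ii) the condition $A_{k+1}\ge L\alpha_{k+1}^2$ must be checked from the stepsize rule: with $\alpha_{k+1} = \tfrac{k+2}{2aL}$, $A_{k+1} = A_k + \alpha_{k+1}$, $\alpha_0 = A_0$ and $a\ge1$, an easy induction gives $A_{k+1}\ge aL\alpha_{k+1}^2\ge L\alpha_{k+1}^2$. Dropping the non-positive $\|g^{k+1}\|^2$ term, the one-step bound becomes $A_{k+1}f(y^{k+1}) + \tfrac12\|z^{k+1}-z\|^2 \le A_k f(y^k) + \tfrac12\|z^k-z\|^2 + \alpha_{k+1}f(z) + \alpha_{k+1}\langle\theta_{k+1}, z - z^k + \alpha_{k+1}\nabla f(x^{k+1})\rangle + \alpha_{k+1}^2\|\theta_{k+1}\|^2$; summing over $k=0,\dots,N-1$ telescopes the potential. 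Since $\sum_{k=0}^{N-1}\alpha_{k+1} = A_N - A_0$, $y^0 = z^0 = x^0$, and $A_0$ is of order $1/(aL)\le 1/L$, the residual $A_0(f(y^0)-f(z))$ is absorbed into the $\tfrac12\|z^0-z\|^2$ already present on the right-hand side (alternatively $A_0$ may be taken arbitrarily small), which yields \eqref{eq:main_opt_lemma_clipped_SSTM}.

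The main obstacle I anticipate is the bookkeeping in step (i): after the substitution $\nabla f(x^{k+1}) = g^{k+1} - \theta_{k+1}$ there are several cross terms to reconcile — $-\alpha_{k+1}^2\langle\nabla f(x^{k+1}),g^{k+1}\rangle = -\alpha_{k+1}^2\|g^{k+1}\|^2 + \alpha_{k+1}^2\langle\theta_{k+1},g^{k+1}\rangle$, the splitting $\langle\theta_{k+1},g^{k+1}\rangle = \langle\theta_{k+1},\nabla f(x^{k+1})\rangle + \|\theta_{k+1}\|^2$, the term $-\alpha_{k+1}\langle\theta_{k+1},z^k-z\rangle$, and the two $\|g^{k+1}\|^2$ contributions from (a) and (c) — and one must combine them so that exactly the stated expression $\alpha_{k+1}\langle\theta_{k+1}, z - z^k + \alpha_{k+1}\nabla f(x^{k+1})\rangle$ plus a clean $\alpha_{k+1}^2\|\theta_{k+1}\|^2$ survive while the $g^{k+1}$-dependence cancels up to the non-positive square, without inadvertently dropping or double-counting a $\theta_{k+1}$-term. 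A secondary subtlety, in contrast to the in-expectation accelerated analyses, is that $g^{k+1}$ is both biased and clipped, so every manipulation above must be a per-realization identity/inequality valid for all $z\in B_{3R}(x^*)$, with no conditional expectations invoked; the whole lemma is purely deterministic given that the iterates stay in the ball.
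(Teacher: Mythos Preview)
The paper does not actually prove this lemma: it is quoted verbatim as Lemma~F.1 from \citet{sadiev2023high} and used as a black box. Your reconstruction via the standard similar-triangles potential argument (smoothness at $x^{k+1}$, two convexity applications collapsing to $\alpha_{k+1}\langle\nabla f(x^{k+1}),z^k-z\rangle$ through the identity $A_{k+1}x^{k+1}=A_ky^k+\alpha_{k+1}z^k$, expansion of the mirror step, and the substitution $\nabla f(x^{k+1})=g^{k+1}-\theta_{k+1}$) is correct, and the bookkeeping you worry about does indeed resolve exactly into $\alpha_{k+1}\langle\theta_{k+1},z-z^k+\alpha_{k+1}\nabla f(x^{k+1})\rangle+\alpha_{k+1}^2\|\theta_{k+1}\|^2$ with the non-positive $\tfrac{\alpha_{k+1}^2}{2}\bigl(\tfrac{L\alpha_{k+1}^2}{A_{k+1}}-1\bigr)\|g^{k+1}\|^2$ residual, using $A_{k+1}\ge aL\alpha_{k+1}^2$ from Lemma~\ref{lem:alpha_k_A_K_lemma}.

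One small correction to your final telescoping step: you do not need to ``absorb'' the term $A_0(f(y^0)-f(z))$ into $\tfrac12\|z^0-z\|^2$ or take $A_0$ arbitrarily small. In the paper's setup (see \eqref{eq:alpha_k_A_k_def} in Lemma~\ref{lem:alpha_k_A_K_lemma}) one has $\alpha_0=A_0=0$, so this boundary term vanishes identically and the telescoped inequality is exactly \eqref{eq:main_opt_lemma_clipped_SSTM}. Your alternative argument (bounding $A_0(f(y^0)-f(z))$ by $\tfrac{L}{2}\|z^0-z\|^2$ using $A_0\le 1/L$) would not in general give the clean factor $\tfrac12$ in front of $\|z^0-z\|^2$ that the statement claims.
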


Next, we also use the following technical result from \citep{gorbunov2020stochastic}.

\begin{lemma}[Lemma~E.1 from \citep{gorbunov2020stochastic}]\label{lem:alpha_k_A_K_lemma}
    Let sequences $\{\alpha_k\}_{k\ge0}$ and $\{A_k\}_{k\ge 0}$ satisfy
    \begin{equation}
        \alpha_{0} = A_0 = 0,\quad A_{k+1} = A_k + \alpha_{k+1},\quad \alpha_{k+1} = \frac{k+2}{2aL}\quad \forall k \geq 0, \label{eq:alpha_k_A_k_def}
    \end{equation}
    where $a > 0$, $L > 0$. Then for all $k\ge 0$
    \begin{eqnarray}
        A_{k+1} &=& \frac{(k+1)(k+4)}{4aL}, \label{eq:A_k+1_explicit}\\
        A_{k+1} &\geq& a L \alpha_{k+1}^2. \label{eq:A_k+1_lower_bound}
    \end{eqnarray}
\end{lemma}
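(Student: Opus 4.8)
The plan is to prove both claims by elementary manipulation: first I would pin down the closed form \eqref{eq:A_k+1_explicit} for $A_{k+1}$, and then \eqref{eq:A_k+1_lower_bound} will drop out as a one-line quadratic comparison.

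For \eqref{eq:A_k+1_explicit} I would argue by induction on $k \geq 0$. The base case is $A_1 = A_0 + \alpha_1 = 0 + \tfrac{0+2}{2aL} = \tfrac{1}{aL}$, which matches $\tfrac{(0+1)(0+4)}{4aL}$. For the inductive step, assuming $A_{k+1} = \tfrac{(k+1)(k+4)}{4aL}$, I would compute
\[
A_{k+2} = A_{k+1} + \alpha_{k+2} = \frac{(k+1)(k+4)}{4aL} + \frac{k+3}{2aL} = \frac{(k+1)(k+4) + 2(k+3)}{4aL},
\]
and then verify the polynomial identity $(k+1)(k+4) + 2(k+3) = k^2 + 7k + 10 = (k+2)(k+5)$, which is precisely the claimed numerator with $k$ replaced by $k+1$. (Equivalently, one may bypass induction and simply sum the arithmetic progression: $A_{k+1} = \sum_{j=0}^{k}\alpha_{j+1} = \tfrac{1}{2aL}\sum_{j=0}^{k}(j+2) = \tfrac{1}{4aL}\big((k+2)(k+3) - 2\big)$, and then use $(k+2)(k+3) - 2 = (k+1)(k+4)$.)

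For \eqref{eq:A_k+1_lower_bound} I would substitute the closed forms. Since $\alpha_{k+1} = \tfrac{k+2}{2aL}$, we get $aL\alpha_{k+1}^2 = \tfrac{(k+2)^2}{4aL}$, so the inequality $A_{k+1} \geq aL\alpha_{k+1}^2$ is equivalent to $(k+1)(k+4) \geq (k+2)^2$, i.e.\ $k^2 + 5k + 4 \geq k^2 + 4k + 4$, i.e.\ $k \geq 0$, which holds throughout the stated range. This finishes the proof.

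Honestly, there is no real obstacle here: the whole argument is a short induction (or a finite sum) followed by a single quadratic inequality. The only thing to be careful about is the index bookkeeping — the recursion and the formula for $\alpha$ are indexed at $k+1$ rather than $k$ — and getting the elementary polynomial identities right.
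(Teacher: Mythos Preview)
Your proof is correct. The paper does not supply its own proof of this lemma --- it simply cites it as Lemma~E.1 from \citep{gorbunov2020stochastic} --- but your induction (or equivalently the direct summation of the arithmetic progression) for \eqref{eq:A_k+1_explicit} and the one-line quadratic comparison $(k+1)(k+4)\ge(k+2)^2\iff k\ge 0$ for \eqref{eq:A_k+1_lower_bound} are exactly the natural elementary arguments, and there is nothing more to it.
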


\begin{theorem}\label{thm:clipped_SSTM_cvx_appendix}
    Let Assumptions~\ref{as:L_smoothness} and \ref{as:str_cvx} with $\mu = 0$ hold on $Q = B_{3R}(x^*)$, where $R \geq \|x^0 - x^*\|$. Assume that $\nabla f_{\Xi^k}(x^{k+1})$ satisfies Assumption~\ref{as:bounded_bias_and_variance} with parameters $b_k, \sigma_k$ for $k = 0,1,\ldots,K$, $K > 0$ and
    \begin{gather}
        a \geq \max\left\{97200\ln^2\frac{4K}{\delta}, \frac{1800\sigma(K+1)\sqrt{K}\sqrt{\ln\frac{4K}{\delta}}}{LR}, \frac{4b(K+2)^2}{15LR}, \frac{60b(K+2)\ln\frac{4K}{\delta}}{LR}\right\},\label{eq:clipped_SSTM_parameter_a}\\
        \lambda_{k} = \frac{R}{30\alpha_{k+1}\ln\frac{4K}{\delta}}, \label{eq:clipped_SSTM_clipping_level}
    \end{gather}
    for some $\delta \in (0,1]$ and $b = \max_{k=0,1,\ldots,K} b_k$, $\sigma = \max_{k=0,1,\ldots,K} \sigma_k$. Then, after $K$ iterations of \algname{clipped-SSTM} the iterates with probability at least $1 - \delta$ satisfy
    \begin{equation}
        f(y^K) - f(x^*) \leq \frac{6aL R^2}{K(K+3)} \quad \text{and} \quad  \{x^k\}_{k=0}^{K+1}, \{z^k\}_{k=0}^{K}, \{y^k\}_{k=0}^K \subseteq B_{2R}(x^*). \label{eq:clipped_SSTM_convex_case_appendix}
    \end{equation}
    In particular, when parameter $a$ equals the maximum from \eqref{eq:clipped_SSTM_parameter_a}, then the iterates produced by \algname{clipped-SSTM} after $K$ iterations with probability at least $1-\delta$ satisfy
    \begin{equation}
        f(y^K) - f(x^*) = \cO\left(\max\left\{\frac{LR^2\ln^2\frac{K}{\delta}}{K^2}, \frac{\sigma R \sqrt{\ln\frac{K}{\delta}}}{\sqrt{K}}, \frac{bR\ln\frac{K}{\delta}}{K}, bR\right\}\right). \label{eq:clipped_SSTM_convex_case_2_appendix}
    \end{equation}
\end{theorem}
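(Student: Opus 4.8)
The plan is to transport, to the accelerated setting, the inductive high‑probability scheme used for \algname{clipped-SGD} in Theorem~\ref{thm:clipped_SGD_cvx_appendix}, now driven by the accelerated descent inequality of Lemma~\ref{lem:main_opt_lemma_clipped_SSTM} together with the bookkeeping identities $A_{k+1}=\tfrac{(k+1)(k+4)}{4aL}$ and $A_{k+1}\ge aL\alpha_{k+1}^2$ from Lemma~\ref{lem:alpha_k_A_K_lemma} (cf.\ \eqref{eq:A_k+1_explicit}--\eqref{eq:A_k+1_lower_bound}). Writing $R_k=\|z^k-x^*\|$, I would introduce for each iteration index $k$ the event $E_k$ asserting that (i) $\tfrac12\|z^0-x^*\|^2-\tfrac12\|z^t-x^*\|^2$ plus the accumulated stochastic terms is at most $R^2$, and (ii) $x^t,y^t,z^t\in B_{2R}(x^*)$, for all $t\le k$, and establish $\Prob\{E_k\}\ge 1-\tfrac{k\delta}{K+1}$ by induction on $k$; the base case $k=0$ is immediate since $z^0=y^0=x^0$ lies within $R$ of $x^*$.

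For the induction step, $E_{T-1}$ forces all $x^t,y^t,z^t$ with $t\le T-1$ into $B_{2R}(x^*)\subseteq B_{3R}(x^*)=Q$, so Assumptions~\ref{as:L_smoothness}--\ref{as:str_cvx} hold along the trajectory and Lemma~\ref{lem:main_opt_lemma_clipped_SSTM} applies with $z=x^*$, giving
\[
A_T\big(f(y^T)-f(x^*)\big) \le \tfrac12\|z^0-x^*\|^2 - \tfrac12\|z^T-x^*\|^2 + \sum_{k=0}^{T-1}\alpha_{k+1}\big\langle \theta_{k+1},\, x^*-z^k+\alpha_{k+1}\nabla f(x^{k+1})\big\rangle + \sum_{k=0}^{T-1}\alpha_{k+1}^2\|\theta_{k+1}\|^2 .
\]
Since $f(y^T)-f(x^*)\ge 0$, this bounds $\|z^T-x^*\|^2$ by $R^2$ plus the two stochastic sums; ball membership then propagates from the $z$-iterates to $x^{k+1}$ and $y^{k+1}$ because these are convex combinations of earlier $y$- and $z$-iterates, and $z^{k+1}=z^k-\alpha_{k+1}\clip(\cdot,\lambda_k)$ moves by at most $\alpha_{k+1}\lambda_k=\tfrac{R}{30\ln(4K/\delta)}$ by \eqref{eq:clipped_SSTM_clipping_level}. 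Exactly as in the \algname{clipped-SGD} proof I would replace the ``virtual iterate'' $\zeta_k\eqdef x^*-z^k+\alpha_{k+1}\nabla f(x^{k+1})$ by a deterministically bounded surrogate $\tilde\zeta_k$ (on $E_{T-1}$ one has $\|z^k-x^*\|\le 2R$, $x^{k+1}\in B_{2R}(x^*)$ hence $\|\nabla f(x^{k+1})\|\le 2LR$ by \eqref{eq:L_smoothness}, and $\alpha_{k+1}\le\tfrac{K+2}{2aL}$, so $\|\zeta_k\|=\cO(R)$ and $\tilde\zeta_k=\zeta_k$ under $E_{T-1}$, using the lower bound on $a$), split $\theta_{k+1}=\theta_{k+1}^u+\theta_{k+1}^b$ into centered part and bias, and decompose the right‑hand side into five pieces mirroring \eqref{eq:clipped_SGD_convex_technical_7}: a martingale term $\circledOne$ from $\sum_k\alpha_{k+1}\langle\theta_{k+1}^u,\tilde\zeta_k\rangle$, a bias cross‑term $\circledTwo$ from $\sum_k\alpha_{k+1}\langle\theta_{k+1}^b,\tilde\zeta_k\rangle$, a centered fluctuation $\circledThree=\sum_k\alpha_{k+1}^2(\|\theta_{k+1}^u\|^2-\EE\|\theta_{k+1}^u\|^2)$, its mean $\circledFour=\sum_k\alpha_{k+1}^2\EE\|\theta_{k+1}^u\|^2$, and the bias‑squared term $\circledFive=\sum_k\alpha_{k+1}^2\|\theta_{k+1}^b\|^2$.

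Lemma~\ref{lem:bias_and_variance_clip}, applied with clipping level \eqref{eq:clipped_SSTM_clipping_level} after checking $\|\EE_{\Xi^k}[\nabla f_{\Xi^k}(x^{k+1})]\|\le\lambda_k/2$ (which uses \eqref{eq:bias_bound}, $\|\nabla f(x^{k+1})\|\le 2LR$, and the lower bound on $a$), yields $\|\theta_{k+1}^u\|\le 2\lambda_k$, $\EE\|\theta_{k+1}^u\|^2\le 18\sigma^2$, $\|\theta_{k+1}^b\|\le 4\sigma^2/\lambda_k+b$. Then Bernstein's inequality, Lemma~\ref{lem:Bernstein_ineq}, applied to the two martingale‑difference sequences coming from $\circledOne$ and $\circledThree$ (conditional variances controlled on $E_{T-1}$ by the above), together with deterministic bounds for $\circledTwo,\circledFour,\circledFive$ on $E_{T-1}$, shows that the prescribed $\lambda_k$ and $a$ make each of $\circledOne,\dots,\circledFive$ at most $\tfrac15R^2$, each Bernstein step failing with probability at most $\tfrac{\delta}{2(K+1)}$. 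Hence $\|z^T-x^*\|^2\le 2R^2$ and $x^T,y^T,z^T\in B_{2R}(x^*)$, closing the induction; on $E_{K+1}$ the same inequality with $T=K+1$ gives $A_K\big(f(y^K)-f(x^*)\big)\lesssim R^2$, and since $A_K=\tfrac{K(K+3)}{4aL}$ this is $f(y^K)-f(x^*)\le\tfrac{6aLR^2}{K(K+3)}$. Substituting the explicit $a$ from \eqref{eq:clipped_SSTM_parameter_a} and taking the dominant of the four resulting terms gives \eqref{eq:clipped_SSTM_convex_case_2_appendix}.

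The main obstacle is the growth of the weights $\alpha_{k+1}\propto k$: the accumulated conditional variances for $\circledOne$ and $\circledThree$, and the mean term $\circledFour$, are of order $\sigma^2 R^2 K^3/(aL)^2$ (up to logarithms), while the bias contributions in $\circledTwo$ and $\circledFive$ are of order $bRK^2/(aL)$, so the whole argument closes only because $a$ is chosen large enough ($a\gtrsim\sigma K^{3/2}\sqrt{\ln(K/\delta)}/(LR)$ and $a\gtrsim bK^2/(LR)$) that even the heaviest weighted tail is absorbed into $\tfrac15R^2$. One must also verify that the decreasing clipping radius $\lambda_k\asymp 1/k$ keeps $\|\EE_{\Xi^k}[\nabla f_{\Xi^k}(x^{k+1})]\|\le\lambda_k/2$ valid at every step, and that the truncation defining $\tilde\zeta_k$ is consistent across all iterations; everything else is a mechanical adaptation of the \algname{clipped-SGD} argument.
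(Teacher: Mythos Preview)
Your overall architecture matches the paper's proof closely, but there is a real gap at the place you flag as needing verification: the bound $\|\nabla f(x^{k+1})\|\le 2LR$ from \eqref{eq:L_smoothness} is \emph{not} strong enough. The clipping level decays like $\lambda_k=\tfrac{R}{30\alpha_{k+1}\ln(4K/\delta)}=\tfrac{aLR}{15(k+2)\ln(4K/\delta)}$, so ensuring $\|\EE_{\Xi^k}[\nabla f_{\Xi^k}(x^{k+1})]\|\le\lambda_k/2$ via your bound would require $a\gtrsim (K+2)\ln(4K/\delta)$. None of the four lower bounds in \eqref{eq:clipped_SSTM_parameter_a} guarantee this: the first is only $\asymp\ln^2(4K/\delta)$, and the remaining three vanish when $\sigma$ and $b$ are small. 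For the same reason, your claim that $\|\zeta_k\|=\cO(R)$ fails, since $\alpha_{k+1}\cdot 2LR=(k+2)R/a$ need not be bounded.

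The missing ingredient is that $\|\nabla f(x^{t+1})\|$ itself decays like $1/t$. The paper extracts this from the induction hypothesis: on $E_{T-1}$ one already knows $f(y^t)-f(x^*)\le \tfrac{6aLR^2}{t(t+3)}$, so by \eqref{eq:L_smoothness_cor_2}, $\|\nabla f(y^t)\|\le\sqrt{2L(f(y^t)-f(x^*))}\lesssim \sqrt{a}LR/t$; and from the update rule $A_t(y^t-x^{t+1})=\alpha_{t+1}(x^{t+1}-z^t)$ one gets $\|x^{t+1}-y^t\|\le\tfrac{\alpha_{t+1}}{A_t}\cdot 4R\lesssim R/t$. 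Combining via \eqref{eq:L_smoothness} gives $\|\nabla f(x^{t+1})\|\lesssim \sqrt{a}LR/t$, and \emph{now} the condition $\|\nabla f(x^{t+1})\|\le\lambda_t/4$ reduces to $\sqrt{a}\gtrsim\ln(4K/\delta)$, which is exactly the first lower bound in \eqref{eq:clipped_SSTM_parameter_a}. This refined gradient bound is what makes the argument close; without it neither the surrogate construction for $\zeta_k$ nor the applicability of Lemma~\ref{lem:bias_and_variance_clip} is justified.
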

\begin{proof}
    Our proof follows similar steps to the one given by \citet{sadiev2023high}. The main difference comes due to the presence of the bias in $\nabla f_{\Xi^k}(x^k)$. Therefore, for completeness, we provide the full proof here.

    Let $R_k = \|z^k - x^*\|$, $\widetilde{R}_0 = R_0$, and $\widetilde{R}_{k+1} = \max\{\widetilde{R}_k, R_{k+1}\}$ for all $k\geq 0$. We will initially demonstrate through induction that for all $k\geq 0$, the iterates $x^{k+1}, z^k, y^k$ belong to $B_{\widetilde{R}_k}(x^*)$. The base of the induction is straightforward because $y^0 = z^0$, $\widetilde{R}_0 = R_0$, and $x^1 = \tfrac{A_0 y^0 + \alpha_1 z^0}{A_1} = z^0$. Now, assume that for some $l\ge 1$, $x^{l}, z^{l-1}, y^{l-1} \in B_{\widetilde{R}_{l-1}}(x^*)$. By the definitions of $R_l$ and $\widetilde{R}_l$, we have $z^l \in B_{R_{l}}(x^*)\subseteq B_{\widetilde{R}_{l}}(x^*)$. As $y^l$ is a convex combination of $y^{l-1}\in B_{\widetilde{R}_{l-1}}(x^*)\subseteq B_{\widetilde{R}_{l}}(x^*)$, it follows that $z^l\in B_{\widetilde{R}_{l}}(x^*)$ and, given the convex nature of $B_{\widetilde{R}_{l}}(x^*)$, we can conclude that $y^l \in B_{\widetilde{R}_{l}}(x^*)$. Finally, as $x^{l+1}$ is a convex combination of $y^l$ and $z^l$, it is evident that $x^{l+1}$ also lies in $B_{\widetilde{R}_{l}}(x^*)$.
    
    Our next objective is to establish, by induction, that $\widetilde{R}_{l} \leq 3R$ with high probability. This will enable us to apply the result from Lemma~\ref{lem:main_opt_lemma_clipped_SGD_convex} and subsequently utilize Bernstein's inequality to estimate the stochastic component of the upper bound. To be more precise, for each $k = 0,\ldots, K+1$, we consider the probability event $E_k$, defined as follows: inequalities
    \begin{gather}
        \sum\limits_{l=0}^{t-1}\alpha_{l+1}\left\langle\theta_{l+1}, x^* - z^{l} + \alpha_{l+1}\nabla f_{\Xi^l}(x^{l+1})\right\rangle + \sum\limits_{l=0}^{t-1}\alpha_{l+1}^2\left\|\theta_{l+1}\right\|^2 \leq R^2, \label{eq:clipped_SSTM_induction_inequality_1}\\
        R_t \leq 2R \label{eq:clipped_SSTM_induction_inequality_2}
    \end{gather}
    hold for all $t = 0,1,\ldots, k$ simultaneously. We aim to demonstrate through induction that $\Prob\{E_k\} \geq 1 - \nicefrac{k\delta}{(K+1)}$ for all $k = 0,1,\ldots, K+1$. The base case, $k = 0$, is trivial: the left-hand side of \eqref{eq:clipped_SSTM_induction_inequality_1} equals zero and $R \geq R_0$ by definition. Assuming that the statement holds for some $k = T - 1 \leq K$, specifically, $\Prob\{E_{T-1}\} \geq 1 - \nicefrac{(T-1)\delta}{(K+1)}$, we need to establish that $\Prob\{E_{T}\} \geq 1 - \nicefrac{T\delta}{(K+1)}$. 
    
   To begin, we observe that the probability event $E_{T-1}$ implies that $\widetilde{R}_t \leq 2R$ for all $t = 0,1,\ldots, T-1$. Moreover, it implies that
    \begin{eqnarray}
        \|z^{T} - x^*\| \overset{\eqref{eq:z_SSTM}}{\leq} \|z^{T} - x^*\| + \alpha_{T}\|\tnabla f_{\Xi^{T-1}}(x^{T})\| \leq 2R + \alpha_{T}\lambda_{T-1} \overset{\eqref{eq:clipped_SSTM_clipping_level}}{\leq} 3R.\notag
    \end{eqnarray}
    Hence, with $E_{T-1}$ implying $\{x^k\}_{k=0}^{T} \subseteq Q$, we confirm that the conditions of Lemma~\ref{lem:main_opt_lemma_clipped_SSTM} are met, resulting in
    \begin{eqnarray}
        A_t\left(f(y^t) - f(x^*)\right) &\le& \frac{1}{2}R_0^2 - \frac{1}{2}R_t^2 + \sum\limits_{l=0}^{t-1}\alpha_{l+1}\left\langle\theta_{l+1}, x^* - z^{l} + \alpha_{l+1} \nabla f(x^{l+1})\right\rangle + \sum\limits_{l=0}^{t-1}\alpha_{l+1}^2\left\|\theta_{l+1}\right\|^2 \label{eq:clipped_SSTM_technical_1}
    \end{eqnarray}
    for all $t = 0,1,\ldots, T$ simultaneously and for all $t = 1,\ldots, T-1$ this probability event also implies that
    \begin{eqnarray}
        f(y^t) - f(x^*) \overset{\eqref{eq:clipped_SSTM_induction_inequality_1}, \eqref{eq:clipped_SSTM_technical_1}}{\leq} \frac{\frac{1}{2}R_0^2 - \frac{1}{2}R_t^2 + R^2}{A_t} \leq \frac{3R^2}{2A_t} = \frac{6aLR^2}{t(t+3)}. \label{eq:clipped_SSTM_technical_1_1}
    \end{eqnarray}
    Considering that $f(y^{T}) -f(x^*) \geq 0$, we can further deduce from \eqref{eq:clipped_SSTM_technical_1} that when $E_{T-1}$ holds, the following holds as well:
    \begin{eqnarray}
        R_T^2 &\leq& R_0^2 + \underbrace{2\sum\limits_{t=0}^{T-1}\alpha_{t+1}\left\langle\theta_{t+1}, x^* - z^{t} + \alpha_{t+1} \nabla f(x^{t+1})\right\rangle + 2\sum\limits_{t=0}^{T-1}\alpha_{t+1}^2\left\|\theta_{t+1}\right\|^2}_{2B_T}\notag\\
        &\leq& R^2 + 2B_T. \label{eq:clipped_SSTM_technical_2}
    \end{eqnarray}
    
    Prior to our estimation of $B_T$, we need to establish several helpful inequalities. We start with showing that $E_{T-1}$ implies $\|\nabla f(x^{t+1})\| \leq \nicefrac{\lambda_t}{4}$ for all $t = 0,1,\ldots, T-1$. For $t = 0$ we have $x^1 = x^0$ and
    \begin{eqnarray}
        \|\nabla f(x^{1})\| = \|\nabla f(x^0)\| \overset{\eqref{eq:L_smoothness}}{\leq} L\|x^0 - x^*\| \leq \frac{R}{a\alpha_1} = \frac{\lambda_0}{2}\cdot \frac{60\ln\frac{4K}{\delta}}{a} \overset{\eqref{eq:clipped_SSTM_parameter_a}}{\leq} \frac{\lambda_0}{4}. \label{eq:clipped_SSTM_technical_3}
    \end{eqnarray}
    Next, for $t = 1,\ldots, T-1$ we have $\alpha_{t+1}(x^{t+1} - z^t) = A_t(y^t - x^{t+1})$ and event $E_{T-1}$ implies
    \begin{eqnarray}
        \|\nabla f(x^{t+1})\| &\leq& \|\nabla f(x^{t+1}) - \nabla f(y^t)\| + \|\nabla f(y^t)\| \notag\\
        &\overset{\eqref{eq:L_smoothness}, \eqref{eq:L_smoothness_cor_2}}{\leq}& L\|x^{t+1} - y^t\| + \sqrt{2L\left(f(y^t) - f(x^*)\right)} \notag\\
        &\overset{\eqref{eq:clipped_SSTM_technical_1_1}}{\leq}& \frac{L\alpha_{t+1}}{A_t}\|x^{t+1} - z^t\| + \sqrt{\frac{12aL^2R^2}{t(t+3)}}\notag\\
        &\leq& \frac{4LR\alpha_{t+1}}{A_t} + \sqrt{\frac{12aL^2R^2}{t(t+3)}} \notag\\
        &=&  \frac{R}{60\alpha_{t+1}\ln\frac{4K}{\delta}}\left(\frac{240L\alpha_{t+1}^2\ln\frac{4K}{\delta}}{A_t} + 60\sqrt{\frac{12aL^2\alpha_{t+1}^2\ln^2\frac{4K}{\delta}}{t(t+3)}}\right)\notag\\
        &\overset{\eqref{eq:A_k+1_explicit},\eqref{eq:clipped_SSTM_clipping_level}}{\leq}& \frac{\lambda_t}{2}\left(\frac{240L\left(\frac{t+2}{2aL}\right)^2\ln\frac{4K}{\delta}}{\frac{t(t+3)}{4aL}} + 60\sqrt{\frac{12aL^2\left(\frac{t+2}{2aL}\right)^2\ln^2\frac{4K}{\delta}}{t(t+3)}}\right) \notag\\
        &=& \frac{\lambda_t}{2} \left(\frac{240(t+2)^2\ln\frac{4K}{\delta}}{t(t+3)a} + 60\sqrt\frac{3(t+2)^2\ln\frac{4K}{\delta}}{t(t+3)a}\right) \notag\\
        &\leq& \frac{\lambda_t}{2} \left(\frac{540\ln\frac{4K}{\delta}}{a} + \frac{90\sqrt{3}\ln\frac{4K}{\delta}}{\sqrt{a}}\right) \overset{\eqref{eq:clipped_SSTM_parameter_a}}{\leq} \frac{\lambda_t}{4}, \label{eq:clipped_SSTM_technical_4}
    \end{eqnarray}
    where in the last row we use $\frac{(t+2)^2}{t(t+3)} \leq \frac{9}{4}$ for all $t \geq 1$. Therefore, probability event $E_{T-1}$ implies that
    \begin{eqnarray}
        \left\|\EE_{\Xi^t}[\nabla f_{\Xi^t}(x^{t+1})]\right\| \leq \left\|\EE_{\Xi^t}[\nabla f_{\Xi^t}(x^{t+1})] - \nabla f(x^{t+1})\right\| + \|\nabla f(x^{t+1})\| \leq b + \frac{\lambda_t}{4} \leq \frac{\lambda_t}{2} \label{eq:clipped_SSTM_technical_4_bias}
    \end{eqnarray}
    and
    \begin{eqnarray}
        \|x^* - z^{t} + \alpha_{t+1} \nabla f(x^{t+1})\| \leq \|x^* - z^{t}\| + \alpha_{t+1}\|\nabla f(x^{t+1})\| \overset{\eqref{eq:clipped_SSTM_induction_inequality_2}, \eqref{eq:clipped_SSTM_technical_3}, \eqref{eq:clipped_SSTM_technical_4}}{\leq} 2R + \frac{R}{60\ln\frac{4K}{\delta}} \leq 3R \label{eq:clipped_SSTM_technical_5}
    \end{eqnarray}
    for all $t = 0,1,\ldots, T-1$. Next, we define random vectors
    \begin{equation}
        \eta_t = \begin{cases} x^* - z^{t} + \alpha_{t+1} \nabla f(x^{t+1}),& \text{if } \|x^* - z^{t} + \alpha_{t+1} \nabla f(x^{t+1})\| \leq 3R,\\ 0,&\text{otherwise}, \end{cases} \notag
    \end{equation}
    for all $t = 0,1,\ldots, T-1$. As per their definition, these random vectors are bounded with probability 1
    \begin{equation}
        \|\eta_t\| \leq 3R. \label{eq:clipped_SSTM_technical_6}
    \end{equation}
    This means that $E_{T-1}$ implies $\eta_t = x^* - z^{t} + \alpha_{t+1} \nabla f(x^{t+1})$ for all $t = 0,1,\ldots, T-1$. Then, form $E_{T-1}$ it follows that
    \begin{eqnarray}
        B_T &=& \sum\limits_{t=0}^{T-1}\alpha_{t+1}\left\langle\theta_{t+1}, \eta_t\right\rangle + \sum\limits_{t=0}^{T-1}\alpha_{t+1}^2\left\|\theta_{t+1}\right\|^2. \notag
    \end{eqnarray}
    Next, we define the unbiased part and the bias of $\theta_{t}$ as $\theta_{t}^u$ and $\theta_{t}^b$, respectively:
    \begin{equation}
        \theta_{t}^u = \clip(\nabla f_{\Xi^t}(x^{t+1}), \lambda_t) - \EE_{\Xi^t}\left[\clip(\nabla f_{\Xi^t}(x^{t+1}), \lambda_t)\right],\quad \theta_{t}^b = \EE_{\Xi^t}\left[\clip(\nabla f_{\Xi^t}(x^{t+1}), \lambda_t)\right] - \nabla f(x^{t+1}). \label{eq:clipped_SSTM_convex_theta_u_b}
    \end{equation}
    We notice that $\theta_{t} = \theta_{t}^u + \theta_{t}^b$. Using new notation, we get that $E_{T-1}$ implies
    \begin{eqnarray}
        B_T &=& \sum\limits_{t=0}^{T-1}\alpha_{t+1}\left\langle\theta_{t+1}^u + \theta_{t+1}^b, \eta_t\right\rangle + \sum\limits_{t=0}^{T-1}\alpha_{t+1}^2\left\|\theta_{t+1}^u + \theta_{t+1}^b\right\|^2 \notag\\
        &\leq& \underbrace{\sum\limits_{t=0}^{T-1}\alpha_{t+1}\left\langle\theta_{t+1}^u, \eta_t\right\rangle}_{\circledOne} + \underbrace{\sum\limits_{t=0}^{T-1}\alpha_{t+1}\left\langle\theta_{t+1}^b, \eta_t\right\rangle}_{\circledTwo} + \underbrace{2\sum\limits_{t=0}^{T-1}\alpha_{t+1}^2\left(\left\|\theta_{t+1}^u\right\|^2 - \EE_{\Xi^t}\left[\left\|\theta_{t+1}^u\right\|^2\right]\right)}_{\circledThree}\notag\\
        &&\quad + \underbrace{2\sum\limits_{t=0}^{T-1}\alpha_{t+1}^2\EE_{\Xi^t}\left[\left\|\theta_{t+1}^u\right\|^2\right]}_{\circledFour} + \underbrace{2\sum\limits_{t=0}^{T-1}\alpha_{t+1}^2\left\|\theta_{t+1}^b\right\|^2}_{\circledFive}. \label{eq:clipped_SSTM_technical_7}
    \end{eqnarray}
    
    To conclude our inductive proof successfully, we must obtain sufficiently strong upper bounds with high probability for the terms $\circledOne, \circledTwo, \circledThree, \circledFour, \circledFive$. In other words, we need to demonstrate that $\circledOne + \circledTwo + \circledThree + \circledFour + \circledFive \leq R^2$ with a high probability. In the subsequent stages of the proof, we will rely on the bounds for the norms and second moments of $\theta_{t}^u$ and $\theta_{t}^b$. First, as per the definition of the clipping operator, we can assert with probability $1$ that
    \begin{equation}
        \|\theta_{t+1}^u\| \leq 2\lambda_{t}. \label{eq:clipped_SSTM_norm_theta_u_bound}
    \end{equation}
    Moreover, since $E_{T-1}$ implies that $\|\EE_{\Xi^t}\nabla f_{\Xi^t}(x^{t+1})\| \leq \nicefrac{\lambda_t}{2}$ for $t = 0,1,\ldots,T-1$ (see \eqref{eq:clipped_SSTM_technical_4_bias}), then, according to Lemma~\ref{lem:bias_and_variance_clip}, we can deduce that $E_{T-1}$ implies
    \begin{eqnarray}
        \|\theta_{t+1}^b\| &\leq& \left\|\EE_{\Xi^t}\left[\clip(\nabla f_{\Xi^t}(x^{t+1}), \lambda_t)\right] - \EE_{\Xi^t}\left[\nabla f_{\Xi^t}(x^{t+1})\right]\right\| + \left\|\EE_{\Xi^t}\left[\nabla f_{\Xi^t}(x^{t+1})\right] - \nabla f(x^{t+1})\right\|\notag\\
        &\leq& \frac{4\sigma^2}{\lambda_{t}} + b, \label{eq:clipped_SSTM_norm_theta_b_bound} \\
        \EE_{\Xi^t}\left[\|\theta_{t+1}^u\|^2\right] &\leq& 18 \sigma^2. \label{eq:clipped_SSTM_second_moment_theta_u_bound}
    \end{eqnarray}

    \textbf{Upper bound for $\circledOne$.} By definition of $\theta_{t}^u$, we readily observe that $\EE_{\Xi^t}[\theta_{t}^u] = 0$ and
    \begin{equation}
        \EE_{\Xi^t}\left[\alpha_{t+1}\left\langle\theta_{t+1}^u, \eta_t\right\rangle\right] = 0. \notag
    \end{equation}
    Next, the sum $\circledOne$ contains only the terms that are bounded with probability $1$:
    \begin{equation}
        |\alpha_{t+1}\left\langle\theta_{t+1}^u, \eta_t\right\rangle| \leq \alpha_{t+1} \|\theta_{t+1}^u\| \cdot \|\eta_t\| \overset{\eqref{eq:clipped_SSTM_technical_6},\eqref{eq:clipped_SSTM_norm_theta_u_bound}}{\leq} 6\alpha_{t+1}\lambda_t R \overset{\eqref{eq:clipped_SSTM_clipping_level}}{=} \frac{R^2}{5\ln\frac{4K}{\delta}} \eqdef c. \label{eq:clipped_SSTM_technical_8} 
    \end{equation}
    The conditional variances $\sigma_t^2 \eqdef \EE_{\Xi^t}[\alpha_{t+1}^2\left\langle\theta_{t+1}^u, \eta_t\right\rangle^2]$ of the summands are bounded:
    \begin{equation}
        \sigma_t^2 \leq \EE_{\Xi^t}\left[\alpha_{t+1}^2\|\theta_{t+1}^u\|^2\cdot \|\eta_t\|^2\right] \overset{\eqref{eq:clipped_SSTM_technical_6}}{\leq} 9\alpha_{t+1}^2R^2 \EE_{\Xi^t}\left[\|\theta_{t+1}^u\|^2\right]. \label{eq:clipped_SSTM_technical_9}
    \end{equation}
    To summarize, we have demonstrated that $\{\alpha_{t+1}\left\langle\theta_{t+1}^u, \eta_t\right\rangle\}_{t=0}^{T-1}$ is a bounded martingale difference sequence with bounded conditional variances $\{\sigma_t^2\}_{t=0}^{T-1}$. Therefore, one can apply Bernstein's inequality (Lemma~\ref{lem:Bernstein_ineq}) with $X_t = \alpha_{t+1}\left\langle\theta_{t+1}^u, \eta_t\right\rangle$, parameter $c$ as in \eqref{eq:clipped_SSTM_technical_8}, $b = \frac{R^2}{5}$, $G = \frac{R^4}{150\ln\frac{4K}{\delta}}$ and get
    \begin{equation*}
        \Prob\left\{|\circledOne| > \frac{R^2}{5}\quad \text{and}\quad \sum\limits_{t=0}^{T-1} \sigma_{t}^2 \leq \frac{R^4}{150\ln\frac{4K}{\delta}}\right\} \leq 2\exp\left(- \frac{b^2}{2G + \nicefrac{2cb}{3}}\right) = \frac{\delta}{2K},
    \end{equation*}
    which is equivalent to
    \begin{equation}
        \Prob\left\{ E_{\circledOne} \right\} \geq 1 - \frac{\delta}{2K},\quad \text{for}\quad E_{\circledOne} = \left\{ \text{either} \quad  \sum\limits_{t=0}^{T-1} \sigma_{t}^2 > \frac{R^4}{150\ln\frac{4K}{\delta}} \quad \text{or}\quad |\circledOne| \leq \frac{R^2}{5}\right\}. \label{eq:clipped_SSTM_sum_1_upper_bound}
    \end{equation}
    In addition, $E_{T-1}$ implies that
    \begin{eqnarray}
        \sum\limits_{t=0}^{T-1} \sigma_{t}^2 &\overset{\eqref{eq:clipped_SSTM_technical_9}}{\leq}& 9R^2 \sum\limits_{t=0}^{T-1} \alpha_{t+1}^2 \EE_{\Xi^t}\left[\|\theta_{t+1}^u\|^2\right] \overset{\eqref{eq:clipped_SSTM_second_moment_theta_u_bound}}{\leq} 162\sigma^2 R^2 \sum\limits_{t=0}^{T-1} \alpha_{t+1}^2\notag\\
        &=& \frac{162\sigma^2 R^{2}}{4 a^2 L^2}\sum\limits_{t=0}^{T-1} (t+2)^2 \notag\\
        &\leq& \frac{1}{a^2} \cdot \frac{81\sigma^2 R^{2} T(T+1)^2}{2 a^2 L^2} \overset{\eqref{eq:clipped_SSTM_parameter_a}}{\leq} \frac{R^4}{150 \ln\frac{4K}{\delta}}. \label{eq:clipped_SSTM_sum_1_variance_bound}
    \end{eqnarray}

    \textbf{Upper bound for $\circledTwo$.} From $E_{T-1}$ it follows that
    \begin{eqnarray}
        \circledTwo &\leq& \sum\limits_{t=0}^{T-1}\alpha_{t+1}\|\theta_{t+1}^b\|\cdot \|\eta_t\| \overset{\eqref{eq:clipped_SSTM_technical_6},\eqref{eq:clipped_SSTM_norm_theta_b_bound}}{\leq} 3R \sum\limits_{t=0}^{T-1}\alpha_{t+1} \left( \frac{4\sigma^2}{\lambda_t} + b\right) \notag\\
        &\overset{\eqref{eq:clipped_SSTM_clipping_level}}{\leq}& 360 \sigma^2 \ln\frac{4K}{\delta} \sum\limits_{t=0}^{T-1}\alpha_{t+1}^{2} + 3bR \sum\limits_{t=0}^{T-1}\alpha_{t+1}\notag\\
        &\leq& \frac{360\sigma^2 \ln\frac{4K}{\delta}}{4 a^{2} L^2}\sum\limits_{t=0}^{T-1} (t+2)^2 + \frac{3bR}{2aL}\sum\limits_{t=0}^{T-1}(t+2)\notag\\
        &\leq& \frac{360\sigma^2 \ln\frac{4K}{\delta} T(T+1)^2}{4 a^{2} L^2} + \frac{3bRT(T+1)}{2aL} \overset{\eqref{eq:clipped_SSTM_parameter_a}}{\leq} \frac{R^2}{5}. \label{eq:clipped_SSTM_sum_2_upper_bound}
    \end{eqnarray}

    \textbf{Upper bound for $\circledThree$.} First, we have
    \begin{equation}
        \EE_{\Xi^t}\left[2\alpha_{t+1}^2\left(\left\|\theta_{t+1}^u\right\|^2 - \EE_{\Xi^t}\left[\left\|\theta_{t+1}^u\right\|^2\right]\right)\right] = 0. \notag
    \end{equation}
    Next, the sum $\circledThree$ contains only the terms that are bounded with probability $1$:
    \begin{eqnarray}
        \left|2\alpha_{t+1}^2\left(\left\|\theta_{t+1}^u\right\|^2 - \EE_{\Xi^t}\left[\left\|\theta_{t+1}^u\right\|^2\right]\right)\right| &\leq& 2\alpha_{t+1}^2\left( \|\theta_{t+1}^u\|^2 +   \EE_{\Xi^t}\left[\left\|\theta_{t+1}^u\right\|^2\right]\right)\notag\\
        &\overset{\eqref{eq:clipped_SSTM_norm_theta_u_bound}}{\leq}& 16\alpha_{t+1}^2\lambda_t^2 \overset{\eqref{eq:clipped_SSTM_clipping_level}}{\leq} \frac{R^2}{5\ln\frac{4K}{\delta}} \eqdef c. \label{eq:clipped_SSTM_technical_10}
    \end{eqnarray}
    The conditional variances $\widetilde\sigma_t^2 \eqdef \EE_{\Xi^t}\left[4\alpha_{t+1}^4\left(\left\|\theta_{t+1}^u\right\|^2 - \EE_{\Xi^t}\left[\left\|\theta_{t+1}^u\right\|^2\right]\right)^2\right]$ of the summands are bounded:
    \begin{eqnarray}
        \widetilde\sigma_t^2 &\overset{\eqref{eq:clipped_SSTM_technical_10}}{\leq}& \frac{R^2}{5 \ln\frac{4K}{\delta}} \EE_{\Xi^t}\left[2\alpha_{t+1}^2\left|\left\|\theta_{t+1}^u\right\|^2 - \EE_{\Xi^k}\left[\left\|\theta_{t+1}^u\right\|^2\right]\right|\right] \leq \alpha_{t+1}^2R^2 \EE_{\Xi^t}\left[\|\theta_{t+1}^u\|^2\right], \label{eq:clipped_SSTM_technical_11}
    \end{eqnarray}
    To summarize, we have demonstrated that $\left\{2\alpha_{t+1}^2\left(\left\|\theta_{t+1}^u\right\|^2 - \EE_{\Xi^t}\left[\left\|\theta_{t+1}^u\right\|^2\right]\right)\right\}_{t=0}^{T-1}$ is a bounded martingale difference sequence with bounded conditional variances $\{\widetilde\sigma_t^2\}_{t=0}^{T-1}$. Therefore, one can apply Bernstein's inequality (Lemma~\ref{lem:Bernstein_ineq}) with $X_t = 2\alpha_{t+1}^2\left(\left\|\theta_{t+1}^u\right\|^2 - \EE_{\Xi^t}\left[\left\|\theta_{t+1}^u\right\|^2\right]\right)$, parameter $c$ as in \eqref{eq:clipped_SSTM_technical_10}, $b = \frac{R^2}{5}$, $G = \frac{R^4}{150\ln\frac{4K}{\delta}}$ and get
    \begin{equation*}
        \Prob\left\{|\circledThree| > \frac{R^2}{5}\quad \text{and}\quad \sum\limits_{t=0}^{T-1} \widetilde\sigma_{t}^2 \leq \frac{R^4}{150\ln\frac{4K}{\delta}}\right\} \leq 2\exp\left(- \frac{b^2}{2G + \nicefrac{2cb}{3}}\right) = \frac{\delta}{2K},
    \end{equation*}
    which is equivalent to
    \begin{equation}
        \Prob\left\{ E_{\circledThree} \right\} \geq 1 - \frac{\delta}{2K},\quad \text{for}\quad E_{\circledThree} = \left\{ \text{either} \quad  \sum\limits_{t=0}^{T-1} \widetilde\sigma_{t}^2 > \frac{R^4}{150\ln\frac{4K}{\delta}} \quad \text{or}\quad |\circledThree| \leq \frac{R^2}{5}\right\}. \label{eq:clipped_SSTM_sum_3_upper_bound}
    \end{equation}
    In addition, $E_{T-1}$ implies that
    \begin{eqnarray}
        \sum\limits_{t=0}^{T-1} \widetilde\sigma_{t}^2 &\overset{\eqref{eq:clipped_SSTM_technical_11}}{\leq}& R^2 \sum\limits_{t=0}^{T-1} \alpha_{t+1}^2 \EE_{\Xi^t}\left[\|\theta_{t+1}^u\|^2\right] \leq  9R^2 \sum\limits_{t=0}^{T-1} \alpha_{t+1}^2 \EE_{\Xi^t}\left[\|\theta_{t+1}^u\|^2\right] \overset{\eqref{eq:clipped_SSTM_sum_1_variance_bound}}{\leq} \frac{R^4}{150 \ln\frac{4K}{\delta}}. \label{eq:clipped_SSTM_sum_3_variance_bound}
    \end{eqnarray}

    \textbf{Upper bound for $\circledFour$.} From $E_{T-1}$ it follows that
    \begin{eqnarray}
        \circledFour &=& 2\sum\limits_{t=0}^{T-1}\alpha_{t+1}^2\EE_{\Xi^t}\left[\left\|\theta_{t+1}^u\right\|^2\right] \leq \frac{1}{R^2}\cdot 9R^2\sum\limits_{t=0}^{T-1}\alpha_{t+1}^2\EE_{\Xi^t}\left[\left\|\theta_{t+1}^u\right\|^2\right] \overset{\eqref{eq:clipped_SSTM_sum_1_variance_bound}}{\leq} \frac{R^2}{150\ln\frac{4K}{\delta}} \leq \frac{R^2}{5}.\label{eq:clipped_SSTM_sum_4_upper_bound}
    \end{eqnarray}

    \textbf{Upper bound for $\circledFive$.} From $E_{T-1}$ it follows that
    \begin{eqnarray}
        \circledFive &=& 2\sum\limits_{t=0}^{T-1}\alpha_{t+1}^2\left\|\theta_{t+1}^b\right\|^2 \leq 4 \sum\limits_{t=0}^{T-1}\alpha_{t+1}^2\left(\frac{16\sigma^4}{\lambda_{t}^{2}} + b^2\right)\notag\\
        &\overset{\eqref{eq:clipped_SSTM_clipping_level}}{=}& \frac{57600\sigma^4\ln^2 \frac{4K}{\delta}}{R^2}\sum\limits_{t=0}^{T-1}\alpha_{t+1}^4 + 4b^2\sum\limits_{t=0}^{T-1}\alpha_{t+1}^2 =  \frac{57600\sigma^4\ln^2 \frac{4K}{\delta}}{16a^4 L^4 R^2}\sum\limits_{t=0}^{T-1} (t+2)^4 + \frac{4b^2}{4a^2L^2}\sum\limits_{t=0}^{T-1}(t+2)^2 \notag\\
        &\leq&  \frac{57600\sigma^4\ln^2 \frac{4K}{\delta} T(T+1)^4}{16a^4 L^4 R^2} + \frac{4b^2 T(T+1)^2}{4a^2L^2} \overset{\eqref{eq:clipped_SSTM_parameter_a}}{\leq} \frac{R^2}{5}.\label{eq:clipped_SSTM_sum_5_upper_bound}
    \end{eqnarray}

    That is, we derived the upper bounds for  $\circledOne, \circledTwo, \circledThree, \circledFour, \circledFive$. More specifically, the probability event $E_{T-1}$ implies:
    \begin{gather*}
        B_T \overset{\eqref{eq:clipped_SSTM_technical_7}}{\leq} R^2 + \circledOne + \circledTwo + \circledThree + \circledFour + \circledFive,\\
        \circledTwo \overset{\eqref{eq:clipped_SSTM_sum_2_upper_bound}}{\leq} \frac{R^2}{5},\quad \circledFour \overset{\eqref{eq:clipped_SSTM_sum_4_upper_bound}}{\leq} \frac{R^2}{5}, \quad \circledFive \overset{\eqref{eq:clipped_SSTM_sum_5_upper_bound}}{\leq} \frac{R^2}{5},\\
        \sum\limits_{t=0}^{T-1} \sigma_t^2 \overset{\eqref{eq:clipped_SSTM_sum_1_variance_bound}}{\leq} \frac{R^4}{150 \ln\frac{4K}{\delta}},\quad \sum\limits_{t=0}^{T-1} \widetilde\sigma_t^2 \overset{\eqref{eq:clipped_SSTM_sum_3_variance_bound}}{\leq} \frac{R^4}{150 \ln\frac{4K}{\delta}}.
    \end{gather*}
    In addition, we also have (see \eqref{eq:clipped_SSTM_sum_1_upper_bound}, \eqref{eq:clipped_SSTM_sum_3_upper_bound} and our induction assumption)
    \begin{equation*}
        \Prob\{E_{T-1}\} \geq 1 - \frac{(T-1)\delta}{K},\quad \Prob\{E_{\circledOne}\} \geq 1 - \frac{\delta}{2K},\quad \Prob\{E_{\circledThree}\} \geq 1 - \frac{\delta}{2K},
    \end{equation*}
    where 
    \begin{eqnarray*}
        E_{\circledOne} &=& \left\{ \text{either} \quad  \sum\limits_{t=0}^{T-1} \sigma_{t}^2 > \frac{R^4}{150\ln\frac{4K}{\delta}} \quad \text{or}\quad |\circledOne| \leq \frac{R^2}{5}\right\},\\
        E_{\circledThree} &=& \left\{ \text{either} \quad  \sum\limits_{t=0}^{T-1} \widetilde\sigma_{t}^2 > \frac{R^4}{150\ln\frac{4K}{\delta}} \quad \text{or}\quad |\circledThree| \leq \frac{R^2}{5}\right\}.
    \end{eqnarray*}
    Therefore, probability event $E_{T-1} \cap E_{\circledOne} \cap E_{\circledThree}$ implies
    \begin{eqnarray}
        B_T &\leq& R^2 + \frac{R^2}{5} + \frac{R^2}{5} + \frac{R^2}{5} + \frac{R^2}{5} + \frac{R^2}{5} = 2R^2, \notag\\
        R_T^2 &\overset{\eqref{eq:clipped_SSTM_technical_2}}{\leq}& R^2 + 2R^2 \leq (2R)^2, \notag
    \end{eqnarray}
    which is equivalent to \eqref{eq:clipped_SSTM_induction_inequality_1} and \eqref{eq:clipped_SSTM_induction_inequality_2} for $t = T$, and 
    \begin{equation*}
        \Prob\{E_T\} \geq \Prob\left\{E_{T-1} \cap E_{\circledOne} \cap E_{\circledThree}\right\} = 1 - \Prob\left\{\overline{E}_{T-1} \cup \overline{E}_{\circledOne} \cup \overline{E}_{\circledThree}\right\} \geq 1 - \Prob\{\overline{E}_{T-1}\} - \Prob\{\overline{E}_{\circledOne}\} - \Prob\{\overline{E}_{\circledThree}\} \geq 1 - \frac{T\delta}{K}.
    \end{equation*}
    We have now completed the inductive part of our proof. That is, for all $k = 0,1,\ldots,K$, we have $\Prob\{E_k\} \geq 1 - \nicefrac{k\delta}{K}$. Notably, when $k = K$, we can conclude that with a probability of at least $1 - \delta$:
    \begin{equation*}
        f(y^K) - f(x^*) \overset{\eqref{eq:clipped_SSTM_technical_1_1}}{\leq} \frac{6aLR^2}{K(K+3)}
    \end{equation*}
    and $\{x^k\}_{k=0}^{K+1}, \{z^k\}_{k=0}^{K}, \{y^k\}_{k=0}^K \subseteq B_{2R}(x^*)$, which follows from \eqref{eq:clipped_SSTM_induction_inequality_2}.
    
    Finally, if
    \begin{equation*}
        a = \max\left\{97200\ln^2\frac{4K}{\delta}, \frac{1800\sigma(K+1)\sqrt{K}\sqrt{\ln\frac{4K}{\delta}}}{LR}, \frac{4b(K+2)^2}{15LR}, \frac{60b(K+2)\ln\frac{4K}{\delta}}{LR}\right\},
    \end{equation*}
    then with probability at least $1-\delta$
    \begin{eqnarray*}
        f(y^K) - f(x^*) &\leq& \frac{6aLR^2}{K(K+3)}\\
        &=& \cO\left(\max\left\{\frac{LR^2\ln^2\frac{K}{\delta}}{K^2}, \frac{\sigma R \sqrt{\ln\frac{K}{\delta}}}{\sqrt{K}}, \frac{bR\ln\frac{K}{\delta}}{K}, bR\right\}\right).
    \end{eqnarray*}
\end{proof}

\subsection{Strongly Convex Case}\label{appendix:restarted_clipped_sstm}
In the strongly convex case, we consider a restarted version of \algname{SSTM}, see Algorithm~\ref{alg:R-clipped-SSTM}.

\begin{algorithm}[h]
\caption{Restarted \algname{clipped-SSTM} (\algname{R-clipped-SSTM}) \citep{gorbunov2020stochastic}}
\label{alg:R-clipped-SSTM}   
\begin{algorithmic}[1]
\REQUIRE starting point $x^0$, number of restarts $\tau$, number of steps of \algname{clipped-SSTM} between restarts $\{K_t\}_{t=1}^{\tau}$, stepsize parameters $\{a_t\}_{t=1}^\tau$, clipping levels $\{\lambda_{k}^1\}_{k=0}^{K_1-1}$, $\{\lambda_{k}^2\}_{k=0}^{K_2-1}$, \ldots, $\{\lambda_{k}^\tau\}_{k=0}^{K_\tau - 1}$, smoothness constant $L$.
\STATE $\hat{x}^0 = x^0$
\FOR{$t=1,\ldots, \tau$}
\STATE Run \algname{clipped-SSTM} for $K_t$ iterations with stepsize parameter $a_{t}$, clipping levels $\{\lambda_{k}^t\}_{k=0}^{K_t-1}$, and starting point $\hat{x}^{t-1}$. Define the output of \algname{clipped-SSTM} by $\hat{x}^{t}$.
\ENDFOR
\ENSURE $\hat{x}^\tau$ 
\end{algorithmic}
\end{algorithm}

The main result for \algname{R-clipped-SSTM} is given below.

\begin{theorem}\label{thm:R_clipped_SSTM_str_cvx_appendix}
    Let Assumptions~\ref{as:L_smoothness} and \ref{as:str_cvx} with $\mu > 0$ hold on $Q = B_{3R}(x^*)$, where $R \geq \|x^0 - x^*\|$ and \algname{R-clipped-SSTM} runs \algname{clipped-SSTM} $\tau$ times. Assume that estimator $\nabla f_{\Xi^{k,t}}(x^{k+1,t})$ used in \algname{clipped-SSTM} at $k$-th iteration of stage $t$ satisfies Assumption~\ref{as:bounded_bias_and_variance} with parameters $b_k^t$ and $\sigma_k^t$ such that
    \begin{equation}
        b_k^t \leq \frac{15\mu R}{24\cdot 2^{t+1}}. \label{eq:R_clipped_SSTM_condition_on_bias}
    \end{equation}
    Let
    \begin{gather}
        K_t = \left\lceil \max\left\{ 2160\sqrt{\frac{LR_{t-1}^2}{\varepsilon_t}}\ln\frac{4320\sqrt{LR_{t-1}^2}\tau}{\sqrt{\varepsilon_t}\delta}, 4\left(\frac{5400\sigma^t R_{t-1}}{\varepsilon_t}\right)^2 \ln\left(\frac{8\tau}{\delta}\left(\frac{5400\sigma^t R_{t-1}}{\varepsilon_t}\right)^2\right) \right\} \right\rceil, \label{eq:R_clipped_SSTM_K_t} \\
        \varepsilon_t = \frac{\mu R_{t-1}^2}{4},\quad R_{t-1} = \frac{R}{2^{\nicefrac{(t-1)}{2}}}, \quad \tau = \left\lceil \log_2 \frac{\mu R^2}{2\varepsilon} \right\rceil, \label{eq:R_clipped_SSTM_epsilon_R_t_tau} \\
        a_t = \max\left\{97200\ln^2\frac{4K_t\tau}{\delta}, \frac{1800\sigma(K_t+1)\sqrt{K_t}\sqrt{\ln\frac{4K_t\tau}{\delta}}}{L R_t}, \frac{4b_t(K_t+2)^2}{15LR_t}, \frac{60b_t(K_t+2)\ln\frac{4K_t}{\delta}}{LR_t}\right\}, \label{eq:R_clipped_SSTM_parameter_a}\\
        \lambda_{k}^t = \frac{R_t}{30\alpha_{k+1}^t \ln\frac{4K_t\tau}{\delta}} \label{eq:R_clipped_SSTM_clipping_level}
    \end{gather}
    for $t = 1, \ldots, \tau$. Then to guarantee $f(\hat x^\tau) - f(x^*) \leq \varepsilon$ with probability $\geq 1 - \delta$ \algname{R-clipped-SSTM} requires
    \begin{equation}
        \cO\left(\max\left\{ \sqrt{\frac{L}{\mu}}\ln\left(\frac{\mu R^2}{\varepsilon}\right)\ln\left(\frac{\sqrt{L}}{\sqrt{\mu}\delta}\ln\left(\frac{\mu R^2}{\varepsilon}\right)\right), \frac{\sigma^2}{\mu \varepsilon} \ln\left(\frac{\sigma^2}{\mu \varepsilon \delta}\ln\left(\frac{\mu R^2}{\varepsilon}\right)\right)\right\} \right) \label{eq:R_clipped_SSTM_main_result_appendix}
    \end{equation}
    iterations. Moreover, with probability $\geq 1-\delta$ the iterates of \algname{R-clipped-SSTM} at stage $t$ stay in the ball $B_{2R_{t-1}}(x^*)$.
\end{theorem}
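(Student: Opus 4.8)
The proof is the standard restart argument carried out by induction over the stages $t=1,\dots,\tau$: on a high-probability event, the squared distance to the solution contracts by a factor two at each stage, i.e.\ $\|\hat x^t-x^*\|^2\le R_t^2$ with $R_t=R/2^{t/2}$ as in \eqref{eq:R_clipped_SSTM_epsilon_R_t_tau}. The base case $t=0$ is the hypothesis $R_0=R\ge\|x^0-x^*\|$. For the inductive step assume $\|\hat x^{t-1}-x^*\|^2\le R_{t-1}^2$. Stage $t$ of \algname{R-clipped-SSTM} is a single run of \algname{clipped-SSTM} from $\hat x^{t-1}$ for $K_t$ steps with the parameters $a_t,\{\lambda_k^t\}$ of \eqref{eq:R_clipped_SSTM_parameter_a}--\eqref{eq:R_clipped_SSTM_clipping_level}; these are exactly the choices demanded by Theorem~\ref{thm:clipped_SSTM_cvx_appendix} with the role of $R$ played by $R_{t-1}$ (note $R_{t-1}=\sqrt2\,R_t$, so the two differ only by an absolute constant), with confidence $\delta/\tau$, and with $K=K_t$. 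Since Assumptions~\ref{as:L_smoothness} and~\ref{as:str_cvx} hold on $B_{3R}(x^*)\supseteq B_{3R_{t-1}}(x^*)$ and $\hat x^{t-1}\in B_{R_{t-1}}(x^*)$, Theorem~\ref{thm:clipped_SSTM_cvx_appendix} is applicable on stage $t$.

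Feeding $K_t$ from \eqref{eq:R_clipped_SSTM_K_t} into the convex rate of Theorem~\ref{thm:clipped_SSTM_cvx_appendix} and using $\varepsilon_t=\mu R_{t-1}^2/4$, all terms that decay with the step count are made $\le$ a fixed fraction of $\varepsilon_t$; the only non-vanishing term, of order $b_tR_{t-1}$, is controlled by the bias budget \eqref{eq:R_clipped_SSTM_condition_on_bias}, which is precisely chosen so that $b_tR_{t-1}$ is also below that fraction of $\varepsilon_t$. Hence, with probability at least $1-\delta/\tau$,
\[
    f(\hat x^t)-f(x^*)\le \varepsilon_t,\qquad \{x^k\},\{y^k\},\{z^k\}\subseteq B_{2R_{t-1}}(x^*),
\]
and $\mu$-strong convexity gives $\tfrac{\mu}{2}\|\hat x^t-x^*\|^2\le f(\hat x^t)-f(x^*)\le\varepsilon_t=\tfrac{\mu}{4}R_{t-1}^2$, i.e.\ $\|\hat x^t-x^*\|^2\le\tfrac12R_{t-1}^2=R_t^2$. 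This closes the induction and yields the in-ball claim $B_{2R_{t-1}}(x^*)$ for stage $t$. A union bound over the $\tau$ stages makes the total failure probability at most $\tau\cdot(\delta/\tau)=\delta$, and after the last stage $f(\hat x^\tau)-f(x^*)\le\varepsilon_\tau=\mu R^2/2^{\tau+1}\le\varepsilon$ by the choice $\tau=\lceil\log_2(\mu R^2/(2\varepsilon))\rceil$.

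For the iteration count, with $\varepsilon_t=\mu R_{t-1}^2/4$ the first term of $K_t$ is $\widetilde{\cO}(\sqrt{LR_{t-1}^2/\varepsilon_t})=\widetilde{\cO}(\sqrt{L/\mu})$, independent of $t$, contributing $\widetilde{\cO}(\tau\sqrt{L/\mu})=\widetilde{\cO}(\sqrt{L/\mu}\,\ln(\mu R^2/\varepsilon))$ after summation; the second term is $\widetilde{\cO}((\sigma R_{t-1}/\varepsilon_t)^2)=\widetilde{\cO}(\sigma^2/(\mu^2R_{t-1}^2))=\widetilde{\cO}(\sigma^2\,2^t/(\mu^2R^2))$, which grows geometrically in $t$, so its sum is dominated by $t=\tau$ and equals $\widetilde{\cO}(\sigma^2\,2^\tau/(\mu^2R^2))=\widetilde{\cO}(\sigma^2/(\mu\varepsilon))$ because $2^\tau=\Theta(\mu R^2/\varepsilon)$. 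Adding the two contributions gives \eqref{eq:R_clipped_SSTM_main_result_appendix}.

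The routine parts are the two summations just sketched and the verification that the logarithmic factors $\ln(4K_t\tau/\delta)$ in $a_t$ and $\lambda_k^t$ meet the precise numerical hypotheses of Theorem~\ref{thm:clipped_SSTM_cvx_appendix} under $R\mapsto R_{t-1}$, $\delta\mapsto\delta/\tau$. The one point that requires genuine care — and which I expect to be the main obstacle — is the treatment of the bias: the $\cO(b_tR_{t-1})$ term in the convex bound does not shrink with $K_t$, so restarting alone cannot reach accuracy $\varepsilon$; one must impose the geometrically shrinking bias budget \eqref{eq:R_clipped_SSTM_condition_on_bias} (which, in the \algname{SMoM} instantiation, corresponds to a geometrically growing batch size $n$ across stages), and then track how the admissible $b_t$ at stage $t$ compares with $\varepsilon_t=\Theta(\mu R^2\,2^{-(t+1)})$ — this matching is where the $2^{-(t+1)}$ scaling in \eqref{eq:R_clipped_SSTM_condition_on_bias} comes from. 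Handling this together with the nested high-probability events is the delicate step.
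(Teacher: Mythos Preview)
Your proposal is correct and follows essentially the same approach as the paper's proof: induction over stages using Theorem~\ref{thm:clipped_SSTM_cvx_appendix} with $R\mapsto R_{t-1}$ and $\delta\mapsto\delta/\tau$, strong convexity to halve $\|\hat x^t-x^*\|^2$, a union bound over the $\tau$ stages, and the same geometric summation for $\sum_t K_t$. The paper organizes the induction slightly differently (carrying both $f(\hat x^l)-f(x^*)\le\varepsilon_l$ and $\|\hat x^l-x^*\|\le R_l$ simultaneously for all $l\le t$ rather than just the distance at the final $l=t$), but this is cosmetic; your identification of the bias budget \eqref{eq:R_clipped_SSTM_condition_on_bias} as the key non-routine ingredient matches the paper exactly.
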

\begin{proof}
    The proof of this theorem follows the same steps as the one given for Theorem~F.3 from \citep{sadiev2023high}. By induction we derive that for any $t = 1,\ldots, \tau$ with probability at least $1 - \nicefrac{t\delta}{\tau}$ inequalities
    \begin{equation}
        f(\hat{x}^l) - f(x^*) \leq \varepsilon_l, \quad \|\hat x^l - x^*\|^2 \leq  R_l^2 = \frac{R^2}{2^l}
    \end{equation}
    hold for $l = 1,\ldots, t$ simultaneously. We start with the base of the induction. Theorem~\ref{thm:clipped_SSTM_cvx_appendix} implies that with probability at least $1 - \nicefrac{\delta}{\tau}$
    \begin{eqnarray}
        f(\hat x^1) - f(x^*) &\leq& \frac{6a_1LR^2}{K_1(K_1+3)} \notag\\
        &\overset{\eqref{eq:R_clipped_SSTM_parameter_a}}{=}& \max\Bigg\{\frac{583200 LR^2\ln^2\frac{4K_1\tau}{\delta}}{K_1(K_1+3)}, \frac{10 800\sigma R(K_1+1)\sqrt{K_1 \ln\frac{4K_1\tau}{\delta}}}{K_1(K_1+3)},\notag\\
        &&\quad\quad\quad\quad \frac{24 b_1 R (K_1 + 2)^2}{15K_1(K_1+3)}, \frac{360 b_1 R (K_1 + 2) \ln \frac{4K_1}{\delta}}{K_1(K_1 + 3)}\Bigg\} \notag\\
        &\leq& \max\Bigg\{\frac{583200 LR^2\ln^2\frac{4K_1\tau}{\delta}}{K_1^2}, \frac{10 800\sigma R\sqrt{\ln\frac{4K_1\tau}{\delta}}}{\sqrt{K_1}},\notag\\
        &&\quad\quad\quad\quad \frac{24 b_1 R}{15}, \frac{360 b_1R \ln \frac{4K_1}{\delta}}{K_1}\Bigg\}\notag\\
        &\overset{\eqref{eq:R_clipped_SSTM_condition_on_bias},\eqref{eq:R_clipped_SSTM_K_t}}{\leq}& \varepsilon_1 = \frac{\mu R^2}{4} \notag
    \end{eqnarray}
    and, due to the strong convexity,
    \begin{equation*}
        \|\hat x^1 - x^*\|^2 \leq \frac{2(f(\hat x^1) - f(x^*))}{\mu} \leq \frac{R^2}{2} = R_1^2.
    \end{equation*}
    The base of the induction is proven. Now, assume that the statement holds for some $t = T < \tau$, i.e., with probability at least $1 - \nicefrac{T\delta}{\tau}$ inequalities
    \begin{equation}
        f(\hat{x}^l) - f(x^*) \leq \varepsilon_l, \quad \|\hat x^l - x^*\|^2 \leq  R_l^2 = \frac{R^2}{2^l}
    \end{equation}
    hold for $l = 1,\ldots, T$ simultaneously. In particular, with probability at least $1 - \nicefrac{T\delta}{\tau}$ we have $\|\hat x^T - x^*\|^2 \leq R_T^2$. Applying Theorem~\ref{thm:clipped_SSTM_cvx_appendix} and using union bound for probability events, we get that with probability at least $1 - \nicefrac{(T+1)\delta}{\tau}$
    \begin{eqnarray}
        f(\hat x^{T+1}) - f(x^*) &\leq& \frac{6a_{T+1}LR_{T}^2}{K_{T+1}(K_{T+1}+3)}\notag\\
        &\overset{\eqref{eq:R_clipped_SSTM_parameter_a}}{=}& \max\Bigg\{\frac{583200 LR_T^2\ln^2\frac{4K_{T+1}\tau}{\delta}}{K_{T+1}(K_{T+1}+3)}, \frac{10 800\sigma R_T(K_{T+1}+1)\sqrt{K_{T+1} \ln\frac{4K_{T+1}\tau}{\delta}}}{K_{T+1}(K_{T+1}+3)},\notag\\
        &&\quad\quad\quad\quad \frac{24 b_{T+1} R_T (K_{T+1} + 2)^2}{15K_{T+1}(K_{T+1}+3)}, \frac{360 b_{T+1} R_T (K_{T+1} + 2) \ln \frac{4K_{T+1}}{\delta}}{K_{T+1}(K_{T+1} + 3)}\Bigg\} \notag\\
        &\leq& \max\Bigg\{\frac{583200 LR_T^2\ln^2\frac{4K_{T+1}\tau}{\delta}}{K_{T+1}^2}, \frac{10 800\sigma R_T\sqrt{\ln\frac{4K_{T+1}\tau}{\delta}}}{\sqrt{K_{T+1}}},\notag\\
        &&\quad\quad\quad\quad \frac{24 b_{T+1} R_T}{15}, \frac{360 b_{T+1} R_T \ln \frac{4K_{T+1}}{\delta}}{K_{T+1}}\Bigg\}\notag\\
        &\overset{\eqref{eq:R_clipped_SSTM_condition_on_bias},\eqref{eq:R_clipped_SSTM_K_t}}{\leq}& \varepsilon_{T+1} = \frac{\mu R_T^2}{4} \notag
    \end{eqnarray}
    and, due to the strong convexity,
    \begin{equation*}
        \|\hat x^{T+1} - x^*\|^2 \leq \frac{2(f(\hat x^{T+1}) - f(x^*))}{\mu} \leq \frac{R_T^2}{2} = R_{T+1}^2.
    \end{equation*}
    Thus, we finished the inductive part of the proof. In particular, with probability at least $1 - \delta$ inequalities
    \begin{equation}
        f(\hat{x}^l) - f(x^*) \leq \varepsilon_l, \quad \|\hat x^l - x^*\|^2 \leq  R_l^2 = \frac{R^2}{2^l}\notag
    \end{equation}
    hold for $l = 1,\ldots, \tau$ simultaneously, which gives for $l = \tau$ that with probability at least $1 - \delta$
    \begin{equation*}
        f(\hat{x}^\tau) - f(x^*) \leq \varepsilon_\tau = \frac{\mu R_{\tau-1}^2}{4} = \frac{\mu R^2}{2^{\tau+1}} \overset{\eqref{eq:R_clipped_SSTM_epsilon_R_t_tau}}{\leq} \varepsilon.
    \end{equation*}
    It remains to calculate the overall number of iterations during all runs of \algname{clipped-SSTM}. We have
    \begin{eqnarray*}
        \sum\limits_{t=1}^\tau K_t &=& \cO\left(\sum\limits_{t=1}^\tau \max\left\{ \sqrt{\frac{LR_{t-1}^2}{\varepsilon_t}}\ln\left(\frac{\sqrt{LR_{t-1}^2}\tau}{\sqrt{\varepsilon_t}\delta}\right), \left(\frac{\sigma R_{t-1}}{\varepsilon_t}\right)^{2} \ln\left(\frac{\tau}{\delta}\left(\frac{\sigma R_{t-1}}{\varepsilon_t}\right)^{2}\right) \right\} \right)\\
        &=& \cO\left(\sum\limits_{t=1}^\tau \max\left\{ \sqrt{\frac{L}{\mu}}\ln\left(\frac{\sqrt{L}\tau}{\sqrt{\mu}\delta}\right), \left(\frac{\sigma}{\mu R_{t-1}}\right)^{2} \ln\left(\frac{\tau}{\delta}\left(\frac{\sigma }{\mu R_{t-1}}\right)^{2}\right) \right\} \right)\\
        &=& \cO\left(\max\left\{ \tau \sqrt{\frac{L}{\mu}}\ln\left(\frac{\sqrt{L}\tau}{\sqrt{\mu}\delta}\right), \sum\limits_{t=1}^\tau\left(\frac{\sigma \cdot 2^{\nicefrac{t}{2}}}{\mu R}\right)^{2} \ln\left(\frac{\tau}{\delta}\left(\frac{\sigma \cdot 2^{\nicefrac{t}{2}}}{\mu R}\right)^{2}\right) \right\} \right)\\
        &=& \cO\left(\max\left\{ \sqrt{\frac{L}{\mu}}\ln\left(\frac{\mu R^2}{\varepsilon}\right)\ln\left(\frac{\sqrt{L}}{\sqrt{\mu}\delta}\ln\left(\frac{\mu R^2}{\varepsilon}\right)\right), \left(\frac{\sigma}{\mu R}\right)^{2} \ln\left(\frac{\tau}{\delta}\left(\frac{\sigma \cdot 2^{\nicefrac{\tau}{2}}}{\mu R}\right)^{2}\right)\sum\limits_{t=1}^\tau 2^t\right\} \right)\\
        &=& \cO\left(\max\left\{ \sqrt{\frac{L}{\mu}}\ln\left(\frac{\mu R^2}{\varepsilon}\right)\ln\left(\frac{\sqrt{L}}{\sqrt{\mu}\delta}\ln\left(\frac{\mu R^2}{\varepsilon}\right)\right), \left(\frac{\sigma}{\mu R}\right)^{2} \ln\left(\frac{\tau}{\delta}\left(\frac{\sigma}{\mu R}\right)^{2}\cdot 2\right)2^\tau\right\} \right)\\
        &=& \cO\left(\max\left\{ \sqrt{\frac{L}{\mu}}\ln\left(\frac{\mu R^2}{\varepsilon}\right)\ln\left(\frac{\sqrt{L}}{\sqrt{\mu}\delta}\ln\left(\frac{\mu R^2}{\varepsilon}\right)\right), \left(\frac{\sigma^2}{\mu \varepsilon}\right) \ln\left(\frac{1}{\delta}\left(\frac{\sigma^2}{\mu \varepsilon}\right)\ln\left(\frac{\mu R^2}{\varepsilon}\right)\right)\right\} \right),
    \end{eqnarray*}
    which concludes the proof.
\end{proof}
\newpage

\section{PROPERTIES OF HERMITE POLYNOMIALS}
\label{sec:hermite}

This section collects some properties of Hermite polynomials which are used in the proof of Lemma \ref{lem:f-g_non-uniform_bound}. First, let us recall the definition. There are two versions of Hermite polynomials, which are referred to as ``physicist's'' and ``probabilist's'',  given by
\[
    \scH_n(x) = (-1)^n e^{x^2} \frac{\rmd^n}{\rmd x^n} e^{-x^2}
    \quad \text{and} \quad
    \cH_n(x) = (-1)^n e^{x^2 / 2} \frac{\rmd^n}{\rmd x^n} e^{-x^2 / 2}
    \quad n \in \mathbb N,
\]
respectively. Obviously, for any positive integer $n$, the following relation holds true:
\begin{equation}
    \label{eq:probabilist_physicist_relation}
    \cH_n(x) \equiv 2^{-n / 2} \scH_n\left( \frac{x}{\sqrt 2} \right).
\end{equation}
Hence, ``probabilist's'' Hermite polynomials inherit all the properties of ``physicist's'' ones. In \citep{indritz1961inequality}, the author proved that
\[
    \max\limits_{x \in \R} \left| \scH_n (x) \, e^{-x^2 / 2} \right|
    \leq \sqrt{2^n \cdot n!}
    \quad \text{for all $n \in \mathbb N$.}
\]
In view of \eqref{eq:probabilist_physicist_relation}, this implies that
\[
    \max\limits_{x \in \R} \left| \cH_n (x) \, e^{-x^2 / 4} \right|
    = 2^{-n / 2} \, \max\limits_{x \in \R} \left| \scH_n \left( \frac{x}{\sqrt 2} \right) \, e^{-x^2 / 4} \right|
    \leq \sqrt{n!}
    \quad \text{for all $n \in \mathbb N$.}
\]


\section{NUMERICAL EXPERIMENTS: ADDITIONAL DETAILS}\label{appendix:experiments_details}
For every combination of noise distribution and method, we tuned optimal parameters for 70000 steps and ran methods on 95000 steps, where one step is one oracle call. 

The optimal values of the learning rate and the clipping parameter were selected via grid search over the sets $\{0.002, 0.004, 0.008, 0.01, 0.02, 0.04\}$ and $\{0.75, 1, 1.5, 2, 4, 8\}$, respectively.

\begin{table}[h]
\begin{center}
\begin{tabular}{ | m{3cm} | m{5cm}| m{2cm} | m{1.75cm} | } 
  \hline
  Distribution & Method & Learning Rate & Clipping parameter \\ 
  \hline
  Cauchy & clipped-MB-SGD \newline Med-MB-SGD \newline MB-clipped-SGD \newline clipped-Med-MB-SGD \newline SMoM-MB-SGD \newline clipped-SMoM-MB-SGD & 0.004 \newline 0.002 \newline 0.01 \newline 0.002 \newline 0.002 \newline 0.008 & 4 \newline - \newline 4 \newline 2 \newline - \newline 1 \\ 
  \hline
  Cauchy + Exponential & clipped-MB-SGD \newline Med-MB-SGD \newline MB-clipped-SGD \newline clipped-Med-MB-SGD \newline SMoM-MB-SGD \newline clipped-SMoM-MB-SGD & 0.008 \newline 0.002 \newline 0.04 \newline 0.002 \newline 0.002 \newline 0.002 & 1.5 \newline - \newline 4 \newline 8 \newline - \newline 8 \\ 
  \hline
  Cauchy + Pareto & clipped-MB-SGD \newline Med-MB-SGD \newline MB-clipped-SGD \newline clipped-Med-MB-SGD \newline SMoM-MB-SGD \newline clipped-SMoM-MB-SGD & 0.008 \newline 0.002 \newline 0.02 \newline 0.002 \newline 0.002 \newline 0.008 & 1.5 \newline - \newline 0.75 \newline 8 \newline - \newline 1 \\
  \hline
\end{tabular}
\end{center}
\caption{Optimal parameters for different distributions and methods.}
\end{table}

We also provide plots, reflecting the dependence of the error on the number of iterations, where one iteration is one method's update, see Figure \ref{fig:enter-label} below. As we can see, \algname{clipped-SMoM-MB-SGD} converges much faster than the competitors due to the larger batch size.
\begin{figure*}[h]
    \includegraphics[width=\textwidth]{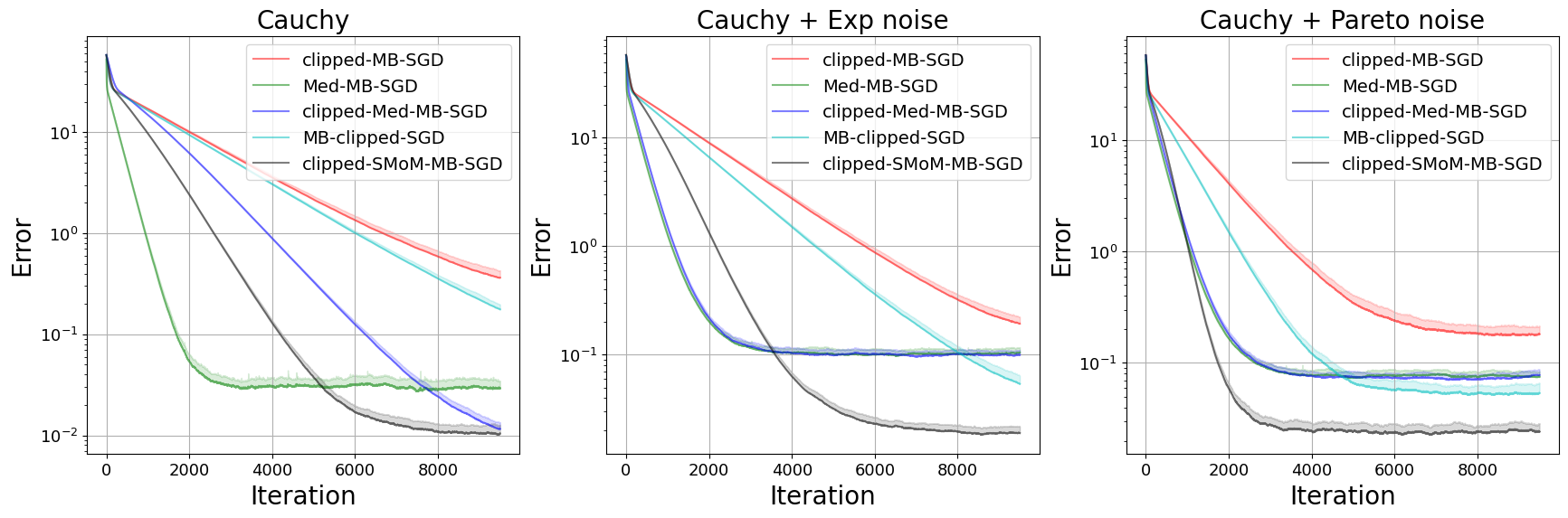}
    \centering
    \caption{Dependence of the mean error on the number of iterations with a standard deviation upper bound.}
    \label{fig:enter-label}
\end{figure*}

Finally, according to our theoretical findings, the error bound for the mini-batched SGD with clipped smoothed median of means grows logarithmically with $1/\delta$. In Figure \ref{fig:percentile_diff}, we plot the dependence of the confidence interval width on the number of iterations to illustrate this point.

\begin{figure*}[h]
    \includegraphics[width=\textwidth]{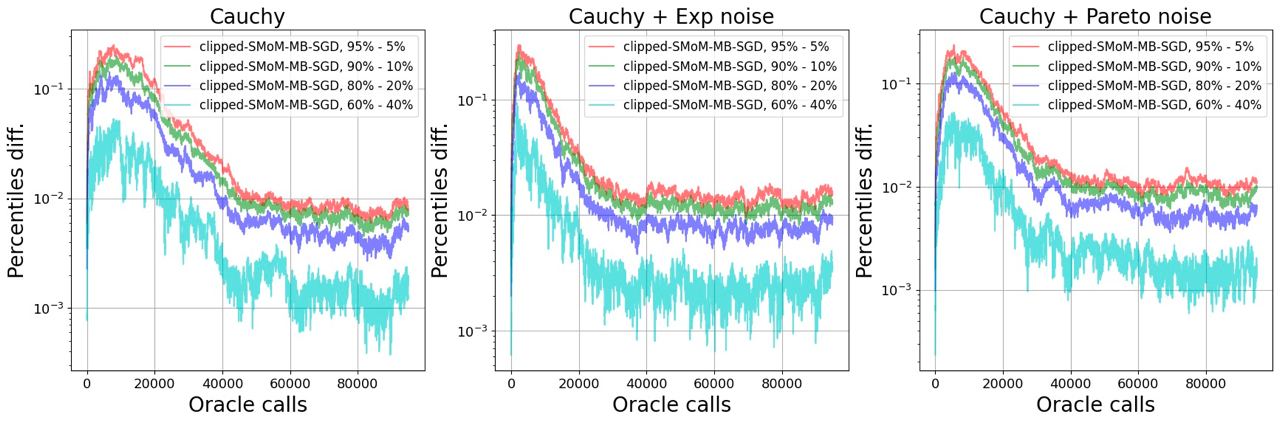}
    \centering
    \caption{Dependence of the confidence interval width for the error of mini-batched SGD with clipped smoothed median of means on the number of iterations.}
    \label{fig:percentile_diff}
\end{figure*}

\end{document}